\documentclass{amsart}
\usepackage[utf8]{inputenc}
\usepackage{amsthm,amsmath,amssymb, amscd}
\usepackage[colorlinks=true,allcolors=blue!80!black]{hyperref}
\usepackage[margin=1.25in]{geometry}
\usepackage[capitalise]{cleveref}
\usepackage[parfill]{parskip} 
\usepackage{graphics}
\usepackage{tikz, tikz-cd}
\usetikzlibrary{matrix, snakes, patterns, shadows.blur, backgrounds, decorations.markings}
\usepackage[all]{xy}
\usepackage{enumerate}
\usepackage{bbm,bm}
\usepackage{stmaryrd}
\usepackage{nicematrix}
\usepackage{dsfont}
\usepackage{caption}
\usepackage{subcaption}
\usepackage{import}
\usepackage[shortlabels]{enumitem}

\theoremstyle{theorem}
\newtheorem{thm}{Theorem}[section]
\newtheorem{prop}[thm]{Proposition}
\newtheorem{lem}[thm]{Lemma}
\newtheorem{cor}[thm]{Corollary}

\theoremstyle{definition}
\newtheorem{defn}[thm]{Definition}

\newtheorem{rem}[thm]{Remark}
\newtheorem{obs}[thm]{Observation}
\newtheorem{ex}[thm]{Example}
\newtheorem{notation}[thm]{Notation}

  \AddToHook{env/prop/begin}{\crefalias{thm}{prop}}
  \crefname{prop}{Proposition}{Propositions}
  \AddToHook{env/lem/begin}{\crefalias{thm}{lem}}
  \crefname{lem}{Lemma}{Lemmas}
  \AddToHook{env/cor/begin}{\crefalias{thm}{cor}}
  \crefname{cor}{Corollary}{Corollaries}
  \AddToHook{env/defn/begin}{\crefalias{thm}{defn}}
  \crefname{defn}{Definition}{Definitions}
  \AddToHook{env/rem/begin}{\crefalias{thm}{rem}}
  \crefname{rem}{Remark}{Remarks}
  \AddToHook{env/ex/begin}{\crefalias{thm}{ex}}
  \crefname{ex}{Example}{Example}
  \AddToHook{env/obs/begin}{\crefalias{thm}{obs}}
  \crefname{obs}{Observation}{Observations}
  \AddToHook{env/conj/begin}{\crefalias{thm}{conj}}
  \crefname{conj}{Conjecture}{Conjectures}
  \AddToHook{env/const/begin}{\crefalias{thm}{const}}
  \crefname{const}{Construction}{Constructions}
  \AddToHook{env/war/begin}{\crefalias{thm}{notation}}
  \crefname{notation}{Notation}{Notations}

\DeclareMathOperator\Diff{Diff}

\DeclareMathOperator\Isom{Isom}
\DeclareMathOperator\Aut{Aut}

\DeclareMathOperator\emb{Emb}
\DeclareMathOperator\Map{Map}

\DeclareMathOperator*\hocolim{hocolim}
\DeclareMathOperator\im{im}
\DeclareMathOperator\Conf{Conf}
\DeclareMathOperator\SO{SO}
\DeclareMathOperator\SU{SU}

\DeclareMathOperator\Or{O}

\DeclareMathOperator\GL{GL}

\DeclareMathOperator\Fr{Fr}
\DeclareMathOperator{\umb}{Sub}

\DeclareMathOperator{\Out}{Out}

\DeclareMathOperator\Stab{Stab}
\DeclareMathOperator\hofib{hofib}

\DeclareMathOperator\Sub{Sub}
\DeclareMathOperator\Sym{Sym}
\DeclareMathOperator{\Coll}{Coll}

\newcommand{\interior}[1]{\smash{\mathring{#1}}}
\newcommand{\scl}[1]{\widehat{#1}} 
\DeclareMathOperator\sep{Sep} 

\newcommand{\sepNP}{\sep^{\nparallel}} 
\newcommand{\ca}{\!\mid\!} 
\newcommand{\SepMfd}{\mathrm{SepMfd}} 

\newcommand{\hq}{/\!\!/} 

\newcommand{\fr}{\mathrm{fr}}

\newcommand{\Gr}{\mathrm{Gr}}
\newcommand{\Top}{\mathrm{Top}}
\newcommand{\FI}{\mathrm{FI}}

\newcommand{\id}{\mathrm{id}}

\newcommand{\BDiff}{B\!\Diff}

\newcommand{\BSO}{B\!\SO}

\newcommand{\calC}{\mathcal{C}}
\newcommand{\calD}{\mathcal{D}}

\newcommand{\calS}{\mathcal{S}}
\newcommand{\calI}{\mathcal{I}}

\newcommand{\calH}{\mathcal{H}}
\newcommand{\calF}{\mathcal{F}}
\newcommand{\bbZ}{\mathbb{Z}}
\newcommand{\bbQ}{\mathbb{Q}}
\newcommand{\bbR}{\mathbb{R}}
\newcommand{\bbC}{\mathbb{C}}

\newcommand{\Z}{\mathbb{Z}}
\newcommand{\W}{\mathbb{W}}
\newcommand{\bt}{\bullet}

\newcommand{\Kx}{K\widetilde{\times}I}
\newcommand{\CP}{\mathbb{CP}}

\newcommand{\rmD}{\mathcal{S}}
\newcommand{\rmC}{\mathcal{H}}
\newcommand{\Cthick}{\underline{\mathcal{H}}}
\newcommand{\cyl}{\mathrm{cyl}}

\newcommand{\tripod}
     {{\vcenter{\hbox{\begin{tikzpicture}
        \draw (0,0) -- ++(0:.1);
        \draw (0,0) -- ++(120:.1);
        \draw (0,0) -- ++(240:.1);
                     \end{tikzpicture}}}}}

\newcommand{\fakeenv}{} 
\newenvironment{restate}[2]  
{
  \renewcommand{\fakeenv}{#2} 
  \theoremstyle{plain}
  \newtheorem*{\fakeenv}{#1~\ref{#2}} 
  \begin{\fakeenv}  
}
{
  \end{\fakeenv}
}

\title[The prime decomposition fibre sequence for moduli spaces of reducible 3-manifolds]
{The prime decomposition fibre sequence for moduli spaces of reducible 3-manifolds}

\author{Rachael Boyd}
\address{School of Mathematics and Statistics, University of Glasgow, Glasgow G12 8QQ, UK}
\email{rachael.boyd@glasgow.ac.uk}
\urladdr{https://www.maths.gla.ac.uk/~rboyd/} 

\author{Corey Bregman}
\address{Department of Mathematics, Tufts University, Medford, MA 02155, USA}
\email{corey.bregman@tufts.edu}
\urladdr{https://sites.google.com/view/cbregman}

\author{Jan Steinebrunner}
\address{Gonville \& Caius College, University of Cambridge, Cambridge, UK}
\email{js2675@cam.ac.uk}
\urladdr{https://www.jan-steinebrunner.com} 

 \subjclass[2020]{
        57T20, 
        58D29  
        (primary),
        57M50,  
        55R40, 
        57S05, 
        58D05  
        (secondary)}

\begin{document}

\begin{abstract}
     We study the moduli space~$\BDiff^+(M)$, for $M$ a 
    reducible, oriented  3-manifold with irreducible prime factors $P_1,\ldots,P_n$. 
    A programme of C\'esar de S\'a--Rourke, Hendriks--Laudenbach, and Hendriks--McCullough studies the homotopy type of $\Diff^+(M)$ in terms of the $\Diff^+(P_i)$.
    Inspired by a delooping proposed by Hatcher,
    we construct a map from $\BDiff^+(M)$ to $\BDiff^+(P_1 \sqcup \dots \sqcup P_n)$, called the \emph{splitting map},
    that yields a \emph{prime decomposition fibre sequence}. 
    The fibre $\mathcal{H}_g(P_1, \dots, P_n)$ is a space of $1$-handle attachments which we describe geometrically as a homotopy colimit of certain configuration spaces on the~$P_i$.
    Firstly, this allows us to show that for $n>0$ the fibre is equivalent to a finite, connected cell complex.
    Secondly, this makes the fibre sequence an effective tool for computations, which we illustrate by computing the rational cohomology ring of~$\BDiff^+\!\left((S^1\times S^2)^{\sharp 2}\right)$.
\end{abstract}

\maketitle
\setcounter{tocdepth}{1} 
\tableofcontents

\section{Introduction}
Throughout this paper, $M$ is a compact, connected, oriented 3-manifold with prime decomposition 
$M=P_1\sharp\cdots \sharp P_n\sharp (S^1\times S^2)^{\sharp g}$,
where the $P_i$ are irreducible and not diffeomorphic to $S^3$. 
We will assume throughout that $M$ has no spherical boundary components, \emph{i.e.}~that no $P_i$ is a disc, see \cref{rem:spherical-boundary}.
Recall that $\Diff^+(M)$ is the topological group of orientation-preserving diffeomorphisms of $M$, equipped with the $C^\infty$~topology.
We will focus on the moduli space $\BDiff^+(M)$, which classifies smooth oriented $M$-bundles.

A programme announced by C\'esar de S\'a and Rourke \cite{CesardeSaRourke}, then completed by Hendriks and Laudenbach \cite{HendriksLaudenbach1984}, and Hendriks and McCullough \cite{HendriksMcCullough}, aimed to study diffeomorphisms of $M$ in terms of its irreducible prime factors $P_i$. 
More precisely, they construct a homotopy fibre sequence
\[
\begin{tikzcd}[column sep = large]
   \Omega C(M) \rar &
   \Diff^+_\partial(M) \rar &
   \prod_{i=1}^n \Diff^+_\partial(P_i \setminus \interior{D^3})
\end{tikzcd}
\]
and moreover show that it splits as a product when $g=0$ \cite[Theorem 2]{HendriksLaudenbach1984}.
Here $C(M)$ is a space of decompositions of $M$ into its prime factors and $\Diff_\partial^+(M) \le \Diff^+(M)$ denotes the subgroup of diffeomorphisms that fix the boundary point-wise.
The fibration in this sequence is only defined up to homotopy and is not a group homomorphism. In particular this  fibre sequence only describes $\Diff^+_\partial(M)$ as a space and not as a group.

Hatcher proposed a ``delooping'' of this fibre sequence, which describes the classifying space $\BDiff^+(M)$ and thus also captures group theoretic information about $\Diff^+(M)$.
This idea appeared in an unfinished draft available on the author's website \cite{HatcherReducible}.
In this paper we use tools developed in our previous work \cite{BoydBregmanSteinebrunner-finiteness} to realise this vision by constructing the \emph{prime decomposition fibre sequence}:
\[
\begin{tikzcd}[column sep = large]
   \mathcal{H}_{g}(P_1, \ldots , P_n) \rar &
   \BDiff^+(M) \rar["{\W}"] &
   \BDiff^+(P_1 \sqcup \dots \sqcup P_n).
\end{tikzcd}
\]
The \emph{splitting map} $\W$ admits a geometric description in terms of cutting $M$ along a collection of spheres and capping them with discs.
Its fibre is a moduli space of $1$-handle attachments to $P_1 \sqcup \dots \sqcup P_n$.
This is analogous to the space $C(M)$ in the work of \cite{CesardeSaRourke, HendriksLaudenbach1984, HendriksMcCullough}, but an advantage of our construction is that we provide an effective way of computing $\mathcal{H}_g(\dots)$ by expressing it as a finite homotopy colimit of spaces that admit explicit geometric descriptions in terms of framed configuration spaces of the $P_i$.
We are able to leverage this construction in two ways:
\begin{enumerate}
    \item\label{it:main-thesis-n>0}
    For $n>0$, we show that $\mathcal{H}_g(P_1\sqcup \dots \sqcup P_n)$ is equivalent to a finite, connected cell complex.
    \item\label{it:main-thesis-n=0}
    For $n=0$, we construct a spectral sequence converging to the rational cohomology ring of $\BDiff^+((S^1 \times S^2)^{\sharp g})$ and we completely compute this cohomology ring when $g = 2$.
\end{enumerate}
This is the first such cohomology computation for a $3$-manifold that goes significantly beyond the case of irreducible $3$-manifolds.

\subsection*{Separating systems and the splitting map}
The finiteness of the fibre in (\ref{it:main-thesis-n>0}) complements our previous work \cite{BoydBregmanSteinebrunner-finiteness}, where we showed that when $M$ has non-empty boundary $\BDiff_\partial (M)$ has the homotopy type of a finite CW-complex, proving a conjecture of Kontsevich for oriented $3$-manifolds with boundary.

A key tool introduced in \cite{BoydBregmanSteinebrunner-finiteness} was a space $\sepNP(M)$ of \emph{separating systems} for $M$. 
Elements of $\sepNP(M)$ are collections of spheres $\Sigma \subset M$ 
such that each component of $M \setminus \Sigma$ is a punctured irreducible prime factor of $M$ or a punctured 3-sphere.
This space forms a topological poset under inclusion and we showed that the geometric realisation of the nerve of this poset is contractible when $M\ncong S^1\times S^2$. 
This gave us a new model for $\BDiff^+(M)$:  
 \begin{equation*}\label{eqn model} 
	\BDiff^+(M)\simeq |N_\bullet(\sepNP(M))|\hq \Diff^+(M).
\end{equation*} 
The starting point for this paper is \cref{sec: the colimit formula} where we rewrite this model to express $\BDiff^+(M)$ as a homotopy colimit of terms of the form $\BDiff^+(M, \Sigma)$, 
where $\Diff^+(M, \Sigma)$ is the group of orientation-preserving diffeomorphisms that preserve $\Sigma$ set-wise.
This homotopy colimit is indexed by a category $\Gr(M)$ of \emph{dual graphs} $G_\Sigma$ to separating systems $\Sigma$, where the vertices of $G_\Sigma$ correspond to components of $M \setminus \Sigma$ and the edges to components of $\Sigma$.
\begin{restate}{Theorem}{thm:hocolim-intro}
    For every $M \ncong S^1 \times S^2$ there is a functor $\rmD\colon \Gr(M) \to \Top$ such that 
    \[
        \BDiff^+(M)\simeq \hocolim_{\Gamma \in \Gr(M)} \rmD(\Gamma).
    \]
    If $\Gamma = G_\Sigma$ is the dual graph of a sphere system $\Sigma \subset M$, then $\rmD(\Gamma)$ is homotopy equivalent to the classifying space of the group of those orientation-preserving diffeomorphisms $\varphi\in \Diff^+(M)$ such that for all spheres $S \subseteq \Sigma$ we have $\varphi(S) = S$ and $\varphi_{|S}$ is orientation preserving.
\end{restate}

Giansiracusa has given a similar description for the moduli spaces of $3$-dimensional handlebodies, writing $\BDiff^+((S^1 \times D^2)^{\natural g})$ (for $g \ge 2$) as a homotopy colimit over a finite category of graphs \cite{Giansiracusa11}.
(Note that the handlebody is irreducible and hence admits only the empty separating system, so our theorem is vacuous in this case.
Another point of difference is that the moduli spaces Giansiracusa considers are all aspherical.) 
The perspective taken in \cite{Giansiracusa11} is that the moduli spaces of handlebodies form a modular operad, which is the ``modular envelope'' of the framed little $2$-disc operad.
An analogous interpretation of \cref{thm:hocolim-intro} in terms of modular ($\infty$-)operads will appear in upcoming work of Barkan and the third author, as sketched in \cite[Proposition 4.16]{steinebrunner-CPH-lecture-notes}.

\begin{figure}[ht]
    \centering
    \def\svgwidth{.7\linewidth}
    \import{}{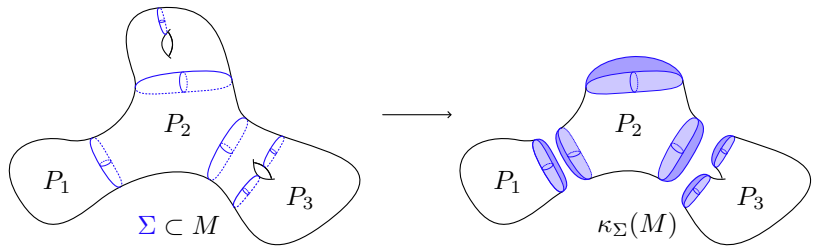}
    \caption{The cut-and-cap map $\kappa_\Sigma$.}
    \label{fig:cutting-and-capping}
\end{figure}

Given a fixed separating system $\Sigma \subset M$, we construct a cut-and-cap map
\[
    \kappa_\Sigma\colon \BDiff^+(M, \Sigma) \longrightarrow \BDiff^+(P_1 \sqcup \dots \sqcup P_n)
\]
that is roughly given by cutting $M$ along $\Sigma$, then capping in the punctures in $M \setminus \Sigma$ with $3$-discs, and finally discarding all connected components that are $3$-spheres, see \cref{fig:cutting-and-capping}.
In \cref{sec:wrongway map}, we assemble these maps into a map out of the homotopy colimit model for $\BDiff^+(M)$.
This yields the \emph{splitting map}
\[
    \W\colon \BDiff^+(M) \longrightarrow \BDiff^+(P_1\sqcup \dots \sqcup P_n)
\]
such that for each separating system $\Sigma \subset M$ the restriction of $\W$ to $\BDiff^+(M, \Sigma)$ gives the cut-and-cap map $\kappa_\Sigma$ described above.
The geometric description of $\kappa_\Sigma$ has the following consequence.
\begin{restate}{Corollary}{cor:W-pi1-surjective}
    The splitting map induces a surjection
    \[
        \pi_1\W\colon \pi_0(\Diff^+(M)) \twoheadrightarrow \pi_0(\Diff^+(P_1 \sqcup \dots \sqcup P_n))
    \]
    from the mapping class group of $M$ onto the mapping class group of $P_1 \sqcup \dots \sqcup P_n$.
\end{restate}

For every compact submanifold $A \subset \partial M$ let $\Diff_A^+(M) \le \Diff^+(M)$ be the subgroup of those diffeomorphisms that fix $A$ pointwise.
We also construct a variant of the splitting map
\[
    \W_A \colon \BDiff_A^+(M) \longrightarrow \BDiff_A^+(P_1 \sqcup \dots \sqcup P_n),
\]
compatible with $\W$, such that the homotopy fibres of $\W_A$ are equivalent to those of $\W$.

\subsection*{Describing the fibre}
Conceptually, the fibre of $\W$ is the space of possible $0$- and $1$-handle attachments one can make to $P_1 \sqcup \dots \sqcup P_n$ to build a manifold diffeomorphic to $M$.
In order to make the map $\W$ an effective tool for computations, we need to make this description more concrete.

Let $\Gr_{g,n}$ denote the category whose objects are pairs $(\Gamma,\sigma)$ of a finite, connected, graph $\Gamma$ with first Betti number $g$ and an $n$-tuple $(\sigma_1,\dots, \sigma_n)$ of distinct marked vertices such that every non-marked vertex is at least trivalent.
Morphisms in this category are generated by graph isomorphisms and edge-contractions, see \cref{defn:Gr_gn}.

Such a tuple $(\Gamma,\sigma)$ is the first half of a blueprint for building a manifold diffeomorphic to $M$ from the disjoint union $P_1 \sqcup \dots \sqcup P_n$.
(The irreducible component $P_i$ will correspond to the marked vertex $\sigma_i$.) 
We begin by adding a disjoint $S^3$ for each unmarked vertex, and then we attach a $1$-handle for each edge in the graph.
The second half of the data, needed to perform these handle-attachments topologically, is a choice of disjoint embeddings of discs into $P_1 \sqcup \dots \sqcup P_n$, 
or equivalently a choice of framed configurations.
We define
\[
    \Conf^{\fr+}_{(\Gamma,\sigma)}(P_1,\ldots,P_n):=\prod_{i=1}^n \Conf_{H_{\sigma_i}}^{\fr+}(P_i) \times \prod_{v \in V_\Gamma^{\rm sph}} \Conf_{H_v}^{\fr+}(S^3),
\]
where $\Conf^{\fr+}_A(N)$ denotes the space of positively framed configurations of a finite set $A$ in $N$, and for $v\in V_\Gamma$, $H_v$ is the set of half-edges at $v$, and $V_\Gamma^{\rm sph} = V_\Gamma \setminus \{\sigma_1,\dots, \sigma_n\}$ is the set of unmarked vertices in $\Gamma$. 
This configuration space comes with an action of the Lie group $ \smash{\prod_{E_\Gamma}\SO(3)\times\prod_{V_{\Gamma}^{\rm sph}}\SO(4)}$ by re-choosing framings along each edge and rotating the $S^3$ components. 
The homotopy quotient of $\Conf_{(\Gamma,\sigma)}^{\fr+}(P_1,\ldots,P_n)$ by this Lie group roughly parametrises the moduli space of all possible 0- and 1-handle attachments in the pattern determined by $(\Gamma,\sigma)$.

To relate this heuristic picture to the homotopy colimit in \cref{thm:hocolim-intro}, we introduce a space $\rmC^{\Gamma}_\sigma(P_1,\ldots,P_n)$ of separating systems augmented by an embedding $\iota\colon P(\Sigma\subset M)\hookrightarrow P_1\sqcup\cdots\sqcup P_n$, where $P(\Sigma\subset M)$ is a manifold called the \emph{maximal prime piece}, see \cref{defn - maximal prime piece}. Abstractly, $P(\Sigma\subset M)$ is diffeomorphic to $P_1\sqcup\cdots \sqcup P_n$ minus a union of open discs, and crucially, it is constructed so that the pair $(P(\Sigma\subset M),\iota)$ vary continuously with $\Sigma$ and functorially with respect to inclusions of sphere systems.
We then rewrite the fibre $\rmC_g(P_1,\ldots,P_n)$ as a homotopy colimit over $\Gr_{g,n}$ of the $\rmC_{\sigma}^\Gamma(P_1,\ldots,P_n)$ and establish the \emph{prime decomposition fibre sequence}:

\begin{restate}{Theorem}{thm:fibre-sequence-intro}
    For every compact, connected, oriented 3-manifold $M$ that is not diffeomorphic to~$S^1\times S^2$, there is a homotopy fibre sequence
    \[
    \hocolim_{(\Gamma, \sigma) \in \Gr_{g,n}} \rmC_\sigma^\Gamma(P_1, \dots, P_n) \longrightarrow \BDiff^+(M) \xrightarrow[\qquad]{\W} \BDiff^+(P_1 \sqcup \dots \sqcup P_n)
    \]
    where for each $(\Gamma, \sigma) \in \Gr_{g,n}$ there is a homotopy equivalence
    \[
        \rmC_\sigma^\Gamma(P_1,\dots,P_n) \simeq \Conf^{\fr+}_{(\Gamma,\sigma)}(P_1,\dots,P_n) \sslash
        \bigg(\prod_{E_\Gamma}\SO(3)\times\prod_{V_{\Gamma}^{\rm sph}}\SO(4)\bigg).
    \]
\end{restate}

The fibre sequence is particularly simple when $g=n=1$, in which case the homotopy colimit reduces to the homotopy colimit over the group 
$\Aut(\vcenter{\hbox{\begin{tikzpicture} \draw[fill] (0,0) circle (.05); \draw (0.13,0) circle (0.13); \end{tikzpicture}}}) = \bbZ/2$.
This yields a description of the moduli space of an irreducible $3$-manifold $P$ with one $1$-handle attached to it.

\begin{ex} (See \cref{ex:n=g=1} for more details.)
    For every irreducible $3$-manifold $P$ that is not $S^3$ or $D^3$, there is a homotopy fibre sequence
    \[
        \frac{\Conf_2(P) \times \SO(3)}{\bbZ/2} \longrightarrow
        \BDiff^+(P \sharp  (S^1 \times S^2)) \longrightarrow
        \BDiff^+(P)
    \]
    where $\bbZ/2$ acts by swapping the configuration points and inverting the element of $\SO(3)$. 
    We can for example use this to determine the classifying space of the identity component $\Diff_0(P \sharp (S^1\times S^2))$ of the diffeomorphism group.
    Suppose, for simplicity, that $P$ is aspherical and not diffeomorphic to $S^1\times D^2$.
    Then $\BDiff_0(P\sharp (S^1\times S^2))$ is homotopy equivalent to a product of $S^3$ and a wedge sum of infinitely many copies of $S^2$, indexed by elements of $\pi_1(P).$ 
    Therefore 
    \[ 
        \pi_1\Diff(P\sharp(S^1\times S^2))=\pi_2\BDiff(P\sharp(S^1\times S^2)) = \oplus_{i=1}^\infty \Z
    \]
    has infinite rank, but we show that, as a module over the integral group ring of the mapping class group $\pi_0\Diff(P\sharp(S^1\times S^2))$, it is generated by one element.
\end{ex}

Kalliongis and McCullough \cite{KalliongisMcCullough} showed that $\pi_1 \Diff(M)$ has infinite rank when $g+n\ge 3$ or when $g=n=1$ and $\pi_1(P)$ is infinite.
The above example precisely determines this infinite rank group in the $g=n=1$ case and shows that it is still finitely generated as a module over the integral group ring of the mapping class group.

In \cite{Hatcher1981,Hatcher1981revised} Hatcher describes $\Diff(M)$ for $M = S^1 \times S^2$ and $P_1 \sharp P_2$ (as before the $P_i$ are irreducible) by using that in these cases any two essential $2$-spheres are isotopic.
The above example deals with the natural next case: in $P \sharp (S^1 \times S^2)$ it is not true that any two essential spheres are isotopic, but any two \emph{non-separating} spheres are isotopic.

If the prime decomposition  
$M=P_1\sharp  \cdots \sharp P_n \sharp (S^1\times S^2)^{\sharp g}$
satisfies $n>0$ and $g+n\geq 2$, 
then the Lie group action in \cref{thm:fibre-sequence-intro} is free.
We use this to show that $\rmC_\sigma^\Gamma(P_1,\dots,P_n)$ is homotopy equivalent to a compact manifold with corners of dimension at most $15(g-1)+12n$. Using this upper bound in the homotopy colimit yields an upper bound for the dimension of the fibre.

\begin{restate}{Corollary}{cor:fibre-finite-and-dimension-bound}
    For $n>0$ and $n+g>1$, the fibre $\calH_g(P_1,\dots,P_n)$
    of the splitting map $\W$ is homotopy equivalent to a finite cell complex of dimension at most $17(g-1)+13n$.
\end{restate}

Combining \cref{cor:fibre-finite-and-dimension-bound} with work of Gabai \cite{Gabai01} yields the following example. 
\begin{ex}
    (See \cref{ex:hyperbolic}.)
    Let $n>0$ and $g \ge 0$ such that $g+n\ge 2$.
    For $P_1,\dots,P_n$ hyperbolic $3$-manifolds, $\BDiff^+(M)$
    admits a finite cover that is equivalent to a finite cell complex of dimension at most $17(g-1) + 13n$.
    However, $\BDiff^+(M)$ itself need not be finite-dimensional.
\end{ex}

\subsection*{Rational cohomology computations}
When studying the moduli space $\BDiff^+(M)$, an invariant of particular interest is its rational cohomology ring.
Each smooth oriented $M$-bundle $\pi\colon E \to B$ is classified by some map $f_\pi\colon B \to \BDiff^+(M)$, and hence each cohomology class $\alpha \in H^k(\BDiff^+(M); \bbQ)$, defines a \emph{characteristic class} $\alpha(\pi) := f_\pi^*\alpha$ of smooth $M$-bundles.

In the case of surfaces $M = \Sigma_g$ the Madsen--Weiss theorem \cite{MadsenWeiss} computes this cohomology ring in the ``stable range'', and shows that it is a polynomial algebra on Miller--Morita--Mumford classes, as conjectured by Mumford.
In even dimensions $d = 2n \ge 6$, work of Galatius--Randal-Williams \cite{GalatiusRandalWilliams2017} generalises the Madsen--Weiss theorem and describes the rational cohomology ring of $\BDiff^+( (S^n \times S^n)^{\sharp g} )$ in a stable range. Similar results have been obtained in high odd dimensions $d = 2n+1 \ge 7$ \cite{BotvinnikPerlmutter2017, LF-triads-25}.

While these results describe each cohomology ring up to a certain degree (depending on the genus), they do not describe the complete cohomology ring for any given genus.
For $g\le 4$, work of Looijenga and Tommasi 
\cite{Looijenga-M3-93,Tommasi-M4-2007}
uses algebro-geometric tools to compute the cohomology ring of the moduli stack $\mathcal{M}_g$ of smooth curves of genus $g$, which is equivalent to $\BDiff^+(\Sigma_g)$.

For $3$-manifolds, one can make concrete computations of the rational cohomology ring of $\BDiff^+(M)$ when $M$ is irreducible.
For example, if $M$ is hyperbolic, then $\Diff^+(M)$ is equivalent to the (finite) group of isometries and hence the rational cohomology of the moduli space is trivial.
In the case of the connected sum of two lens spaces Lelkes in \cite{Lelkes-lens-spaces} used the aforementioned theorem of Hatcher \cite{Hatcher1981} to compute the rational cohomology ring of $\BDiff^+(M)$.

For more general $3$-manifolds, and in particular the family of connected sums $U_g = (S^1 \times S^2)^{\sharp g}$, which are analogous to the manifolds considered in higher-dimensional variants of the Madsen--Weiss theorem,
little is known about the rational cohomology of $\BDiff^+(M)$.
The prime decomposition fibre sequence from \cref{thm:fibre-sequence-intro} presents an opportunity to make such computations in dimension $3$ tractable.
Concretely, there is a Bousfield--Kan spectral sequence (see \cref{cor:BK-spectral-sequence}) converging to the cohomology of the homotopy colimit:
    \begin{equation*}\label{eq:intro-BK-spectral-sequence}
        E_2^{p,q} = \operatornamewithlimits{lim^{\it p}}_{\Gamma \in \Gr_{g,n}^{\rm op}} H^q(\rmC^\Gamma_\sigma(P_1,\dots,P_n); \bbQ) \Rightarrow H^{p+q}( \calH_g(P_1,\dots, P_n) ).
    \end{equation*}
Here the $E_2$-page consists of the derived limits of the cohomology groups of $\rmC^\Gamma_\sigma(\dots)$ as functors on $\Gr_{g,n}^{\rm op}$.
In \cref{sec: computation} we use the description of $\rmC^\Gamma_\sigma(P_1,\dots,P_n)$ from \cref{thm:fibre-sequence-intro} to compute its rational cohomology ring in terms of the rational cohomology of configuration spaces of the $P_i$, see \cref{thm:DGamma Cohomology Ring}.
This description is $\Aut(\Gamma, \sigma)$ and $\prod_i \Diff^+(P_i)$-equivariant (the latter action factors through $\prod_i \pi_0\Diff^+(P_i)$), but we do not fully describe its functoriality in $\Gr_{g,n}^{\rm op}$.

In the case of $U_g$, where the base of the prime decomposition fibre sequence is trivial, we give a complete description of $H^*(\rmC^\Gamma; \bbQ)$ as a functor of $\Gamma \in \Gr_{g,0}$.
This allows us to completely determine the rational cohomology ring of the moduli space for $g=2$.

\begin{restate}{Theorem}{thm:H*BDiffU2}
The rational cohomology ring of $\BDiff^+\!\left((S^1\times S^2)^{\sharp2}\right)$ has presentation
\[
    H^*\left(\BDiff^+\!\left((S^1\times S^2)^{\sharp2}\right); \bbQ\right)\cong \bbQ[\gamma_1,\gamma_2, \varepsilon]/\langle\varepsilon^2, \gamma_1\gamma_2, \gamma_1 \varepsilon\rangle
\]
where $|\gamma_1|=|\gamma_2|=4$ and $|\varepsilon|=8$.
\end{restate}

To illustrate the geometric meaning of these cohomology classes, in \cref{subsec:circle-action} we construct smooth circle actions $\xi_1,\xi_2\colon \SO(2) \to \Diff^+((S^1\times S^2)^{\sharp 2})$ that distinguish $\gamma_1$ and $\gamma_2$ in the sense that $(B\xi_i)^*\gamma_j = e^2$ for $i\neq j$ and $(B\xi_i)^*\gamma_i = 0$, where $e$ is the Euler class in $H^*(\BSO(2); \bbQ) = \bbQ[e]$.
This in particular yields $7$-manifolds $E_1$ and $E_2$ fibred over $\CP^2$ with fibre $U_2$ such that the characteristic classes $\gamma_1$ and $\gamma_2$ evaluate as
\begin{align*}
    \gamma_1(E_2 \to \CP^2) &= \gamma_2(E_1 \to \CP^2) = e^2 &
    \gamma_1(E_1 \to \CP^2) &= \gamma_2(E_2 \to \CP^2) = 0.
\end{align*}

For higher $g$, one can still use the spectral sequence to try to compute the cohomology ring of $H^*(\BDiff(U_g); \bbQ)$, but already determining the $E_2$-page is a non-trivial task:
the $0$th row $E_2^{*,0}$ is the rational group cohomology of $\Out(F_g)$.

\subsection*{Outline}
In \cref{sec:setting up}, we record some preliminary and background results about group actions, 3-manifolds, separating systems, homotopy colimits, and categories of combinatorial graphs.
In \cref{sec: the colimit formula}, we consider dual graphs to separating systems and prove \cref{thm:hocolim-intro}, establishing a homotopy colimit formula for $\BDiff^+(M)$.
In \cref{sec:wrongway map}, we define the ``maximal prime piece'' of a separating system and use it to construct the splitting map $\W$. 
Here we also give a geometric description of $\W$ and introduce variants that fix part of the boundary.
\cref{sec:describing the fibre} describes the fibre in more geometric terms:
we first state \cref{thm:fibre-sequence-intro}, consider several applications, and use it to deduce finiteness of the fibre for $n>0$.
Then, we introduce thickened sphere systems in order to prove \cref{thm:fibre-sequence-intro} and set up a spectral sequence that describes the cohomology of $\rmC^\Gamma_\sigma(P_1,\dots,P_n)$.
In \cref{sec: computation}, we compute the rational cohomology of $\Conf_d(S^3)$ and $\Conf_d(S^3)\sslash\SO(4)$, which we then use to calculate the cohomology of $\rmC^\Gamma_\sigma(P_1,\dots,P_n)$ via the aforementioned spectral sequence, and also determine the effect of graph morphisms on several of the resulting classes.
Finally, in \cref{sec:extended example}, we apply these tools to compute the rational cohomology of $\BDiff^+((S^1\times S^2)^{\sharp2})$ and evaluate the resulting characteristic classes on two bundles obtained from circle actions.

\subsection*{Acknowledgements}

The first author was supported by EPSRC Fellowship No.~EP/V043323/1 and No.~EP/V043323/2. 
We would like to thank Oscar Randal-Williams for several useful conversations over the course of the project and for his comments on a draft of this paper.
\section{Background on separating systems, homotopy colimits, and graphs}\label{sec:setting up}
The purpose of this section is to collect some necessary background results and notation to be used throughout the paper. We will assume some familiarity with foundational results due to Cerf \cite{Cerf} and Palais \cite{Palais60} concerning embedding spaces of submanifolds. For more details, we refer the reader to Section 2 of \cite{BoydBregmanSteinebrunner-finiteness} and Section 2 of \cite{CMRW17}. After reviewing some results about homotopy fibre sequences and group actions in \ref{subsec:locally-retractile}, we recall the notion of a separating system in \ref{subsec:3-mfd}, which we introduced in \cite{BoydBregmanSteinebrunner-finiteness}. Our main technical result from \cite{BoydBregmanSteinebrunner-finiteness} established that the nerve of the poset $\sepNP(M)$ of separating systems has contractible geometric realisation, which is recorded here as \cref{thm: sepNP contractible}. In \cref{subsec:extend-D3}, we recall Smale and Hatcher's theorems concerning the homotopy type of $\Diff^+(D^3)$ and deduce some consequences for general 3-manifolds. \cref{subsec:background-on-hocolims} reviews the homotopy colimit construction and proves a sufficient condition for a homotopy colimit to be finite. Lastly, in \cref{subsec:general-graphs} we introduce the graph categories that we will work with and prove some of their basic properties.

\subsection{Fibre sequences and $G$-locally retractile spaces}
\label{subsec:locally-retractile}
We begin by recalling some of the tools we use for working with topological group actions and for constructing fibrations.

\begin{defn}\label{defn:G-locally retractile}
    A space $X$ equipped with an action of a topological group $G$ is called \emph{$G$-locally retractile} if for every $x\in X$ there exists a neighbourhood $U$ of $x$ and a map $\xi\colon U\rightarrow G$ such that  $\xi(u).x = u$ for all $u\in U$. We call $\xi$ a \emph{$G$-local retraction at $x$.}
\end{defn}

The crucial feature of $G$-locally retractile spaces is that every $G$-equivariant map into them is automatically a fibre bundle.
(This elementary, but extremely useful observation is due to Cerf, see e.g.~\cite[Lemma 2.5]{CMRW17} for a proof.)
As a special case of this we have the following.

\begin{lem}\label{lem:Retractile Covering}
    Let $X$ be a locally path connected $G$-locally retractile space and let $p\colon Y\to X$ be a $G$-equivariant map with discrete fibres.  Then $p$ is a covering map and $Y$ is $G$-locally retractile.
\end{lem}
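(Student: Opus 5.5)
The plan is to invoke Cerf's observation quoted just before the statement: a $G$-equivariant map into a $G$-locally retractile space is a locally trivial fibre bundle \cite[Lemma 2.5]{CMRW17}. I will recall how the trivialisation arises, since it also gives local retractility of $Y$. Fix $x\in X$ and a $G$-local retraction $\xi\colon U\to G$ at $x$. Shrinking $U$, I may assume it is path connected, because $X$ is locally path connected. Then I define
\[
\Phi\colon U\times p^{-1}(x)\to p^{-1}(U),\qquad \Phi(u,y)=\xi(u).y,
\]
with inverse $z\mapsto (p(z),\ \xi(p(z))^{-1}.z)$. Both maps are continuous because the action and the group operations are continuous. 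They land where claimed by equivariance: $p(\xi(u).y)=\xi(u).x=u$, and $p(\xi(p(z))^{-1}.z)=\xi(p(z))^{-1}.p(z)=x$, since $\xi(p(z)).x=p(z)$. So $p$ is trivial over $U$ with fibre $p^{-1}(x)$.

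Next, because $p^{-1}(x)$ is discrete, $U\times p^{-1}(x)$ is a disjoint union of copies of $U$ indexed by $p^{-1}(x)$. Under $\Phi$, these give open sheets $\Phi(U\times\{y\})$, each mapped homeomorphically onto $U$ by $p$. Hence $U$ is evenly covered and $p$ is a covering map. Path connectedness of $U$ is not strictly needed for even covering, but it is convenient in the next step.

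For local retractility of $Y$, take $y\in Y$ with $x=p(y)$, and let $\xi\colon U\to G$ be as above. Set $V=\Phi(U\times\{y\})$, an open neighbourhood of $y$, and define $\eta=\xi\circ p|_V\colon V\to G$. For $v=\Phi(u,y)=\xi(u).y\in V$ we have $p(v)=u$, so $\eta(v).y=\xi(u).y=v$. Thus $\eta$ is a $G$-local retraction at $y$.

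The only delicate point is the continuity of the inverse of $\Phi$ and the fact that it lands in $U\times p^{-1}(x)$, with the discrete topology on the fibre making the sheets open. Both follow directly from equivariance and continuity of the action, so I expect no real obstacle.
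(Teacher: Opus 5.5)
\textbf{Gap: the sheet $V$ need not contain $y$.} Your argument is correct except for one claim: that $V=\Phi(U\times\{y\})$ is an open neighbourhood of $y$.

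Since $p(\Phi(u,y))=u$, the only point of $V$ lying over $x$ is $\Phi(x,y)=\xi(x).y$. The definition of a $G$-local retraction only forces $\xi(x)\in\Stab_G(x)$, not $\xi(x)=1$, and the stabiliser can move points within the fibre. For example, take $X$ a point, $Y=G=\bbZ/2$ acting on itself, and $\xi$ constant at the non-trivial element $g$. Then for $y=1$ your $V$ is $\{g\}$, which does not contain $y$. Your $\eta$ still satisfies $\eta(v).y=v$, but only on a set that is not a neighbourhood of $y$.

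The fix is the normalisation the paper makes explicitly. Replace $\xi$ by $u\mapsto \xi(u)\,\xi(x)^{-1}$. This is still a $G$-local retraction at $x$, because $\xi(x)^{-1}.x=x$, and it now satisfies $\xi(x)=1$. Hence $\Phi(x,y)=y\in V$, and the rest of your argument goes through unchanged.

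\textbf{Comparison with the paper.} With that repair, your route differs from the paper's and is somewhat cleaner. The paper takes the bundle structure as a black box from \cite[Lemma 2.5]{CMRW17}, so its sheet through $y$ is an arbitrary one. It must then identify that sheet with the orbit $\{\xi(u).y\}$ by comparing two lifts of a path in a path-connected $V$ and using uniqueness of path lifting. This is exactly where local path connectedness of $X$ enters. You instead unpack Cerf's trivialisation $\Phi$, so the sheet through $y$ is by definition the $\xi$-orbit of $y$, and the identity $\eta(v).y=v$ is tautological. Your argument in fact never uses path connectedness, contrary to your remark that it is ``convenient in the next step''. It therefore shows that the local path connectedness hypothesis is unnecessary.
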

\begin{proof}
By \cite[Lemma 2.5]{CMRW17}, $p$ is a locally trivial fibre bundle, hence a covering map since the fibres of $p$ are discrete. Fix $y\in Y$ and let $x=p(y)$. We can find path-connected open sets $U\ni y$, $V\ni x$ such that $p|_U\colon U\cong V$ and there exists a $G$-local retraction $\xi\colon V\rightarrow G$ satisfying $\xi(x)=1$. Define $\tau=\xi\circ p|_U\colon U\rightarrow G$ and let $f=(p|_U)^{-1}$. We claim that $\tau$ is a $G$-local retraction at $y.$ 
Given $u\in U$, choose a path $\gamma_t\colon [0,1]\to V$ such that $\gamma_0=x$ and $\gamma_1=p(u)$. Using that $\xi(x)=1$, we obtain two paths in $U$ starting at $y$ defined by $\gamma_t'=\xi(\gamma_t).y$ and $\gamma''_t=f(\xi(\gamma_t).x)$.  Since $p$ is $G$-equivariant, both $\gamma'$ and $\gamma''$ cover $\gamma$, hence must be equal by uniqueness of path-lifting. It follows that \[u=f(\gamma_1)=\gamma_1''=\gamma_1'=\xi(\gamma_1).y=\xi(p(u)).y=\tau(u).y,\]hence $\tau$ is a $G$-local retraction at $y\in Y$, as required.
\end{proof}

\begin{defn}
    A $G$-action on a space $X$ \emph{principal} if the quotient map $X \to X/G$ is a principal $G$ bundle.
    We call a topological subgroup $H \le G$ \emph{admissible} if $H$ acts principally on $G$, \emph{i.e.}~if $G \to G/H$ is a fibre bundle.
\end{defn}

\begin{obs}\label{obs:admissible}
We have the following general constructions for principal actions and admissible subgroups.
\begin{enumerate}[(1)]
    \item\label{it:adm-stabliser} $G$ acts on $X$ in a locally retractile way, then the inclusion of the stabilizer $\Stab_x(G) \le G$ is an admissible subgroup inclusion for all $x \in X$.
    \item If $G$ acts principally on $X$, then it also acts principally on $X \times Y$ for any $G$-space $Y$.
    \item\label{it:adm-quotient}
    By \cite[Lemma 2.9]{BoydBregmanSteinebrunner-finiteness}, if $H \le G$ is admissible and $G$ acts principally on $X$, then so does $H$.
\end{enumerate}
\end{obs}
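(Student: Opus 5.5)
The statement is \cref{obs:admissible}, which has three parts. I will treat each one directly from the definitions, using \cite[Lemma 2.5]{CMRW17} (equivariant maps into locally retractile spaces are fibre bundles) for part (1). Part (3) is a citation and needs nothing further.

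\textbf{Part (1).} Fix $x \in X$ and let $H = \Stab_x(G)$. Consider the orbit map $o\colon G \to X$, $g \mapsto g.x$, which is $G$-equivariant for left multiplication on $G$. Let $\xi\colon U \to G$ be a $G$-local retraction at $x$; after replacing $\xi$ by $\xi(\cdot)\xi(x)^{-1}$ we may assume $\xi(x) = 1$.

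Over $U$, the map
\[
    o^{-1}(U) \to U \times H, \qquad g \mapsto \bigl(g.x,\ \xi(g.x)^{-1} g\bigr),
\]
is well defined, because $\xi(g.x)^{-1}g.x = x$. It is a homeomorphism with inverse $(u,h) \mapsto \xi(u)h$, and it is equivariant for right multiplication by $H$. Translating by elements of $G$ gives such trivialisations over every $gU$. Hence the map $G \to G/H$, identified with the orbit $G.x$, is a principal $H$-bundle, so $H \le G$ is admissible.

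One subtlety is that the orbit topology may differ from the quotient topology on $G/H$. The local sections $u \mapsto \xi(u)$ already trivialise $G \to G/H$ over the preimages of $U$, so I will phrase the argument directly in terms of $G/H$ and these sections.

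\textbf{Part (2).} Let $G$ act diagonally on $X \times Y$. Take a local trivialisation of $X \to X/G$ over $V$, that is, an equivariant homeomorphism $\pi^{-1}(V) \cong V \times G$. Equivalently, this is an equivariant map $s\colon \pi^{-1}(V) \to G$ satisfying $s(gx) = g\,s(x)$.

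Over the image of $\pi^{-1}(V) \times Y$ in $(X \times Y)/G$, define
\[
    (x,y) \mapsto \bigl([x,y],\ s(x)\bigr).
\]
Its inverse sends $(c, g)$ to $g$ applied to the normalised representative $(s(x)^{-1}x,\ s(x)^{-1}y)$ of $c$, which depends continuously on $c$. This gives a local trivialisation, so $X \times Y \to (X \times Y)/G$ is a principal $G$-bundle.

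\textbf{Main obstacle.} The only step that needs care is the point-set issue in (1): checking that the quotient $G/H$ carries the topology that makes the local sections continuous. This is handled by defining everything via $\xi$ rather than via the orbit topology.
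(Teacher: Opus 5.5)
Your proposal is correct. The paper gives no argument for (1) and (2), only the citation for (3). Your trivialisation $g\mapsto(g.x,\xi(g.x)^{-1}g)$ is precisely Cerf's argument from \cite[Lemma 2.5]{CMRW17} specialised to the orbit map, which is what the paper implicitly relies on, and your normalised-representative trivialisation for (2) is the standard one.

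Two continuity points you leave implicit are immediate:
\begin{itemize}
\item In (1), the induced map $G/H\to X$ is continuous, so $[g]\mapsto\xi(g.x)$ is a continuous local section over the open preimage of $U$.
\item In (2), $c\mapsto(s(x)^{-1}x,s(x)^{-1}y)$ is continuous because restricting $X\times Y\to(X\times Y)/G$ to the open saturated set $\pi^{-1}(V)\times Y$ gives a quotient map.
\end{itemize}
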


\begin{ex}\label{ex:Principal-action-examples}
    We give a few examples of principal actions and admissible subgroups.
    Let $M$ be a compact manifold and $N \subset M$ a compact submanifold.
    See \cite[\S 2.2]{BoydBregmanSteinebrunner-finiteness} for references.
\begin{enumerate}
    \item The action of $\Diff(N)$ on $\emb(N, \interior{M})$ is principal.
    Its quotient is the space $\Sub(N, \interior{M})$ of submanifolds of $\interior{M}$ that are abstractly diffeomorphic to $N$. The action of $\Diff(M)$ on $\Sub(N, \interior{M})$ is locally retractile.
    \item The subgroup $\Diff(M, N) \le \Diff(M)$, which is the stabiliser of the action of $\Diff(M)$ on $\Sub(N, \interior{M})$, is admissible. 
    \item The action of $\Diff(N)$ on $\emb(N, \bbR^\infty)$ is principal.
\end{enumerate}
\end{ex}

Throughout the paper we will use the homotopy orbit construction $X\sslash G$, which we define as $(X \times EG)/G$ for $EG$ a weakly contractible principal $G$-space.
We refer the reader to \cite[\S2.3]{BoydBregmanSteinebrunner-finiteness} for a recollection of the basic properties of this construction, but we especially highlight the following fact, which can be found in \cite[Corollary 2.14]{bonatto2023}.
\begin{lem}\label{lucis lemma}
    Consider group actions $G_i \curvearrowright X_i$ and equivariant maps between them inducing maps
    \[
        X_1 \sslash G_1 \longrightarrow X_2 \sslash G_2 \longrightarrow X_3 \sslash G_3.
    \]
    This is a homotopy fibre sequence, if the $X_i$ and the $G_i$ each form a homotopy fibre sequence and $G_2 \to G_3$ is surjective on path components.
\end{lem}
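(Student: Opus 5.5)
We will prove this by reducing to the case of a single group acting on a space and using the Borel construction as a fibration over a classifying space. The homotopy orbit $X_i \sslash G_i = (X_i \times EG_i)/G_i$ sits in a fibre sequence $X_i \to X_i\sslash G_i \to BG_i$, which is natural in the pair $(G_i, X_i)$. We arrange the three such sequences into a $3\times 3$ diagram:
\[
\begin{tikzcd}
X_1 \rar\dar & X_2 \rar\dar & X_3\dar\\
X_1\sslash G_1 \rar\dar & X_2\sslash G_2 \rar\dar & X_3\sslash G_3\dar\\
BG_1 \rar & BG_2 \rar & BG_3.
\end{tikzcd}
\]
Here each column is a homotopy fibre sequence, and so is the top row by hypothesis.

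\textbf{Step 1: the bottom row.} We must show that $BG_1 \to BG_2 \to BG_3$ is a homotopy fibre sequence. Since $G_1 \to G_2 \to G_3$ is a homotopy fibre sequence, $G_1$ is equivalent to the homotopy fibre of $G_2\to G_3$. Delooping, the homotopy fibre $F$ of $BG_2 \to BG_3$ satisfies $\Omega F \simeq \hofib(G_2\to G_3)\simeq G_1$. The hypothesis that $\pi_0 G_2 \to \pi_0 G_3$ is surjective is exactly what ensures $F$ is connected; without it, $F$ would have extra components indexed by the cokernel. It then remains to identify $F$ with $BG_1$ via the natural map, rather than merely up to a loop space equivalence. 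For this we use the model $F \simeq EG_3\times_{G_3}(G_3\sslash G_2)$, or equivalently note that $BG_2\to BG_3$ has fibre $G_3\sslash G_2 = (G_3\times EG_2)/G_2$. The map $BG_1 = EG_2/G_1 \to G_3\sslash G_2$, induced by the unit of $G_3$, is then checked on homotopy groups via the long exact sequences and the five lemma. Connectivity is handled by the $\pi_0$-surjectivity.

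\textbf{Step 2: the middle row.} Given that the rows on top and bottom and all columns are fibre sequences, we deduce that the middle row is one. Concretely, let $Q$ be the homotopy fibre of $X_2\sslash G_2 \to X_3\sslash G_3$. The map of columns induces a map from $X_1\sslash G_1$ into $Q$ lying over $BG_1 \to \hofib(BG_2\to BG_3)$. Taking homotopy fibres over $BG_3$ and using that homotopy fibres commute with homotopy fibres, we find that $Q$ fibres over $\hofib(BG_2\to BG_3)\simeq BG_1$ with fibre $\hofib(X_2\to X_3)\simeq X_1$. We then compare with the fibration $X_1\to X_1\sslash G_1\to BG_1$. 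The five lemma on homotopy groups, applied at every basepoint (with care on $\pi_0$ and $\pi_1$ using the connectedness of $BG_1$), gives $X_1\sslash G_1\simeq Q$.

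\textbf{Main obstacle.} The main difficulty is Step 1, together with the bookkeeping of basepoints and $\pi_0,\pi_1$ in the five-lemma arguments. This is where the surjectivity on path components genuinely enters, and where one must make sure the equivalences are induced by the given maps. In fact the result is recorded as \cite[Corollary 2.14]{bonatto2023}, and the plan above recovers it from there.
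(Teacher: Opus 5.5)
The paper gives no argument of its own for this lemma; it simply quotes \cite[Corollary 2.14]{bonatto2023}. Your proof via the $3\times 3$ diagram is correct and self-contained. Step 1 deloops the group fibre sequence, and Step 2 is the $3\times 3$ lemma, comparing $X_1\sslash G_1\to BG_1$ with $Q\to F=\hofib(BG_2\to BG_3)$ by the five lemma over a connected base.

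Compared with the bare citation, your argument shows exactly where $\pi_0$-surjectivity enters: it makes $F$ connected, and this is used twice.
\begin{enumerate}
\item It identifies $F$ with $BG_1$. A map of connected spaces that induces the equivalence $G_1\simeq\Omega BG_1\to\Omega F\simeq\hofib(G_2\to G_3)$ on loop spaces is itself an equivalence, so the explicit model $G_3\sslash G_2$ is optional.
\item It guarantees that the fibre of $Q\to F$ over the basepoint, namely $\hofib_{x_3}(X_2\to X_3)\simeq X_1$, is the fibre over every point of $F$.
\end{enumerate}

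Two small things to tidy.
\begin{enumerate}
\item $EG_2/G_1$ models $BG_1$ only if $G_1$ acts principally on $EG_2$ (e.g.\ when $G_1\le G_2$ is admissible). In general, use $(EG_1\times EG_2)/G_1$ instead.
\item Your comparison maps $BG_1\to F$ and $X_1\sslash G_1\to Q$ use that $G_1\to G_3$ is the trivial homomorphism and that $X_1\to X_3$ is constant at $x_3$, not merely null-homotopic. A null-homotopy of $G_1\to G_3$ as a map of spaces would not even make $BG_1\to BG_3$ null. So this strict form is how the hypothesis has to be read, and it is what holds wherever the paper applies the lemma.
\end{enumerate}
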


\subsection{Review of 3-manifolds and separating systems}\label{subsec:3-mfd}
We will always assume that our manifolds are smooth, compact, and oriented. Recall that a 3-manifold $N$ is called \emph{irreducible} every embedded $S^2\subset N$ bounds $D^3$, and \emph{reducible} otherwise.  Irreducible 3-manifolds are \emph{prime} in the sense that any decomposition $N=N_1\sharp N_2$ implies that either $N_1$ or $N_2$ is homeomorphic to $S^3$. The only prime but not irreducible orientable 3-manifold is $S^1\times S^2$. It is a classical result \cite{Kneser1929,Milnor62} that orientable 3-manifolds admit  connected sum decompositions into essentially unique irreducible factors and copies of $S^1\times S^2$:

\begin{thm}
    Any compact, oriented 3-manifold $N$ admits a minimal connected sum decomposition\[N=P_1\sharp \cdots \sharp P_n\sharp (S^1\times S^2)^{\sharp g}\sharp (D^3)^{\sharp m}\] 
    satisfying:
    \begin{enumerate}
        \item The $P_i$ are oriented, irreducible and not diffeomorphic to $D^3$ nor $S^3$.
        \item The factors are uniquely determined by $N$, up to reordering and orientation-preserving diffeomorphism. 
    \end{enumerate}
\end{thm}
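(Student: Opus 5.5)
This is the Kneser–Milnor prime decomposition theorem, which the paper cites from \cite{Kneser1929,Milnor62}. The plan is to reduce to the closed case and then follow the classical argument: existence by Kneser finiteness, uniqueness by Milnor's sphere-exchange argument.

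\emph{Reduction to the closed case.} First cap off every spherical boundary component of $N$ with a $3$-disc. This produces a manifold $N'$ with no spherical boundary and $N \cong N' \sharp (D^3)^{\sharp m}$, where $m$ is the number of spherical boundary components. Since $m$ is an invariant of $N$, it suffices to treat $N'$. If instead some $P_i$ were allowed to be $D^3$, that factor would simply be absorbed into the $D^3$ summands; excluding $S^3$ and $D^3$ from the $P_i$ is just the minimality normalisation.

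\emph{Existence.} I would argue by Kneser's finiteness lemma. Fix a triangulation of $N'$. Any system of disjoint embedded $2$-spheres can be isotoped into normal position, possibly after first compressing along discs, which only increases the number of spheres while keeping them essential. In normal position, at most a fixed number $c(T)$ of pairwise non-parallel essential spheres fit in, where $c(T)$ depends only on the triangulation. So choose a maximal system of disjoint spheres such that no complementary component is a punctured $S^3$. Cutting along this system and capping with discs gives irreducible pieces together with copies of $S^1\times S^2$; the latter arise from non-separating spheres. Irreducibility of the pieces follows from maximality together with Alexander's theorem that every sphere in $S^3$ bounds a ball. The key step, and the main obstacle, is this normal surface bound.

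\emph{Uniqueness.} I would follow Milnor's argument. Suppose $N'$ has two decompositions, realised by sphere systems $\Sigma$ and $\Sigma'$. Put $\Sigma'$ transverse to $\Sigma$ and use innermost-disc surgery to reduce $|\Sigma\cap\Sigma'|$. Each surgery replaces a sphere by one of two spheres, and the irreducibility of the pieces lets us choose one that yields an equivalent decomposition. Once the systems are disjoint, each irreducible piece of one system lies in a single piece of the other, and irreducibility forces the two pieces to coincide up to punctured balls. The number $g$ of $S^1\times S^2$ summands is pinned down separately by homology, namely the rank of $H_1$ modulo the contributions from the $P_i$, or equivalently by counting non-separating spheres; this uses $(S^1\times S^2)\sharp P \cong$ the manifold obtained by a self-sum. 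Orientation preservation is tracked throughout because every gluing uses orientation-reversing maps on the spheres.
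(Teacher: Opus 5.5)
The paper gives no proof of this statement. It quotes Kneser and Milnor, and your outline (Kneser finiteness for existence, Milnor's innermost-disc exchange for uniqueness, $g$ read off from $\operatorname{rank} H_1$) is that classical argument. However, the normalisation step in your existence part is misstated, and as written the bound does not transfer back to the systems you want to bound.

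Compressing a sphere along a disc does not keep the spheres essential, nor pairwise non-parallel. Two examples show this.
\begin{enumerate}
\item Compress the reducing sphere of $P\sharp Q$ along a disc whose interior lies in the punctured $P$. This yields one sphere bounding a ball and one sphere parallel to the original.
\item Take $P_1\sharp P_2\sharp P_3$, and let $S_i$ be a band sum of two other spheres of the system, cutting off $P_1$ and $P_3$ respectively. All three spheres are essential and pairwise non-parallel. Compressing $S_i$ along the disc dual to the band produces parallel copies of the other two.
\end{enumerate}
So a bound on pairwise non-parallel essential spheres in normal position says nothing about the system you started with.

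The invariant that is preserved is the one you use in your next sentence: no component of the complement is a punctured $S^3$. Suppose $S_i$ is surgered into $S_i'$ and $S_i''$, which together with $S_i$ bound a three-punctured sphere. Then replacing $S_i$ by a suitably chosen one of $S_i'$, $S_i''$ keeps this property. This is where Alexander's theorem is really needed, in the form that $A\sharp B\cong S^3$ forces $A\cong B\cong S^3$.

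With this choice the number of spheres is unchanged. Such systems automatically consist of essential, pairwise non-parallel spheres, and Kneser's count of non-product regions in normal position bounds them. The count must also allow complementary regions that are twisted $I$-bundles over $\mathbb{RP}^2$, i.e.\ punctured $\mathbb{RP}^3$'s bounded by a single sphere. Their number is at most $\dim H_1(N';\mathbb{Z}/2)$.

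With this repair the rest of your sketch goes through. In the uniqueness step, your ``choose one'' also works, but when $S_i$ is non-separating the choice is dictated by which side carries the loop, not by irreducibility alone. Keeping the wrong sphere can create an $S^1\times S^2$ summand inside a piece. The cleanest version keeps both $S_i'$ and $S_i''$, which preserves the property that the system reduces $N'$ to irreducible pieces.
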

\begin{rem}
    The number $g$ of $S^1\times S^2$ factors is the maximal number of disjointly embedded 2-spheres in $N$ whose union does not separate. The number $m$ of $D^3$ factors is the same as the number of $S^2$-components of $\partial N$. In particular, when $N$ has no spherical boundary components then $m=0$. We regard the case $N=S^3$ as having no factors: $n=g=m=0$. 
\end{rem}

If $\Sigma\subset \interior{N}$ is an embedded, codimension 1 submanifold, then $\interior{N}\setminus \Sigma$ is homeomorphic to the interior of a compact manifold with boundary which we denote $N\ca \Sigma$. When $\Sigma$ is a disjoint union of 2-sided submanifolds, \emph{e.g.}~when $\Sigma=\sqcup_kS^2,$ then the boundary of $N\ca \Sigma$ contains two copies of $\Sigma$, one for the left and right pushoff, which  we denote by $2\Sigma\subset \partial(N\ca\Sigma).$ There is a canonical map $N\ca \Sigma\rightarrow N$ which is a homeomorphism away from $\Sigma$ and a double cover along $\Sigma$. If $N$ has $S^2$ boundary components, the \emph{spherical closure of $N$}, denoted $\scl{N}$, is the manifold obtained by capping all $S^2$ boundary components with copies of $D^3$.

\begin{rem}\label{rem:spherical-boundary}
    All of our main theorems concern $3$-manifolds (smooth, compact, connected, oriented) \emph{without spherical boundary}.
    If $N$ is a $3$-manifold with $m$ spherical boundary components, then we can write $N = \scl{N} \setminus \amalg_m \interior{D^3}$.
    As in \cite[Lemma 6.3]{BoydBregmanSteinebrunner-finiteness} there is a homotopy fibre sequence
    \[\begin{tikzcd}
        {\frac{\Conf_m(\scl{N})}{\Sym_m}} \rar &
        {\BDiff^+(N)} \rar &
        {\BDiff^+(\scl{N})} .
    \end{tikzcd}\]
    The fibre, which is the unordered configuration space of $m$ points in $\scl{N}$, is fairly well-understood -- for example, it always admits a compactification as a compact $3m$-manifold with corners \cite{Sinha04}, hence is equivalent to a finite CW complex.
    Therefore, this fibre sequence allows us to reduce most qualitative or computational questions about $\BDiff(N)$ to the case without spherical boundary.
\end{rem}

In \cite{BoydBregmanSteinebrunner-finiteness}, we introduced the notion of a separating system, which we now recall.  

\begin{defn}
    An (unparametrised) embedded 2-sphere $S\subset N$ is \emph{reducing} if it does not bound a ball in the spherical closure $\scl{N}$.
    (Equivalently, if each component of $\smash{\scl{N\ca S}}$ has fewer prime factors than $\scl{N}$.)
    A submanifold $\Sigma\cong \sqcup_k S^2\subset N$ is called a \emph{separating system} if every component of $\Sigma$ is reducing, and if $\smash{\scl{N\ca \Sigma}}$ is irreducible. 
\end{defn}

The set of separating systems forms a topological poset $\sep(N)$ with respect to inclusion, topologised as a subspace \[\sep(N)\subset \coprod_{k=0}^\infty\umb(\sqcup_k S^2,N).\]
With this topology, a connected component of $\sep(N)$ corresponds to an isotopy class of separating system in $N$. Since $\sep(N)$ is a disjoint union of $\Diff(N)$-locally retractile spaces, namely $\amalg_{k=0}^\infty \umb(\sqcup_kS^2,N)$, it is itself $\Diff(N)$-locally retractile.  
In particular,  for any fixed $\Sigma\in \sep(N)$, the orbit map $\Diff(N)\rightarrow \sep(N)$ defined by $\varphi \mapsto \varphi(\Sigma)$ is a principal $\Diff(M,\Sigma)$-bundle.  
\begin{defn}
    Two disjointly embedded 2-spheres $S_0,S_1\subset N$ are called \emph{parallel} if they cobound a submanifold of $N$ diffeomorphic to $S^2\times [0,1]$. Let $\sepNP(N)\subset \sep(N)$ denote the subposet of all separating systems $\Sigma$ where no two components of $\Sigma$ are parallel. 
\end{defn}

It follows from results of Laudenbach \cite{Laudenbach} and Perelman's solution to the Poincar\'e conjecture \cite{Perelman:2002-1,Perelman:2003-1,Perelman:2003-2} that two disjoint 2-spheres are parallel if and only if they are (unparametrised) isotopic (see \cite[Lemma 3.14]{BoydBregmanSteinebrunner-finiteness}). Thus, an element of $\sepNP(N)$ may equivalently be defined as a separating system whose components lie in pairwise distinct isotopy classes. 

\begin{prop}\label{prop:sepNP-Properties}The poset $\sepNP(N)$ enjoys the following properties:
\begin{enumerate}
    \item As a union of $\Diff(N)$-orbits of $\sep(N)$, it is $\Diff(N)$-locally retractile.
    \item  By \cite[Lemma 6.5]{BoydBregmanSteinebrunner-finiteness}, it has finite height.
    \item By \cite[Lemma 6.6]{BoydBregmanSteinebrunner-finiteness},  it is a union of finitely many $\Diff(N)$ orbits.
\end{enumerate}
\end{prop}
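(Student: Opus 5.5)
The proposition has three parts, and two of them are already established elsewhere; only the first needs an argument here. Parts (2) and (3) are exactly \cite[Lemma 6.5]{BoydBregmanSteinebrunner-finiteness} and \cite[Lemma 6.6]{BoydBregmanSteinebrunner-finiteness}, so for those I would simply check that the objects match. Our $\sepNP(N)$ is the same subposet of $\sep(N)$ considered there, and $N$ satisfies the same standing hypotheses: compact, oriented, and with no spherical boundary components. With that confirmed, I would cite the two lemmas. For (2) the intuition is that the number of components of a separating system with pairwise non-isotopic spheres is bounded in terms of $n$ and $g$. For (3) it is that, up to diffeomorphism, there are only finitely many ways to cut $N$ along such a system.

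For part (1), the plan is as follows.

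\emph{Step 1: $\sepNP(N)$ is a union of orbits.} I would show that $\sepNP(N)$ is preserved by the action of $\Diff(N)$ on $\sep(N)$. Take $\varphi \in \Diff(N)$ and $\Sigma \in \sepNP(N)$. If two components $\varphi(S_0)$ and $\varphi(S_1)$ of $\varphi(\Sigma)$ cobounded a copy of $S^2\times[0,1]$, then applying $\varphi^{-1}$ to that copy would show that $S_0$ and $S_1$ also cobound one. This contradicts $\Sigma \in \sepNP(N)$, so $\varphi(\Sigma) \in \sepNP(N)$.

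\emph{Step 2: inherit local retractility.} Since $\sepNP(N)$ is a union of orbits, it is a union of path components of $\sep(N)$. This is because, as recalled above, path components of $\sep(N)$ are isotopy classes, and isotopy extension keeps an isotopic system inside the same orbit. Path components are open, because $\sep(N)$ is locally retractile onto the path-connected group $\Diff_0(N)$ near each point. Now take any point $\Sigma$ of $\sepNP(N)$. Restricting a $\Diff(N)$-local retraction $\xi\colon U \to \Diff(N)$ at $\Sigma$ to $U \cap \sepNP(N)$ gives a local retraction for the subspace. Local retractility passes to invariant subspaces in this way, so $\sepNP(N)$ is $\Diff(N)$-locally retractile.

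I expect no real obstacle. The one point needing care is the openness, or invariance, of the union, which only uses that the parallel relation is defined intrinsically by diffeomorphism type.
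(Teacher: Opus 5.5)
Your proposal is correct and follows the paper's reasoning. Part (1) holds because $\sepNP(N)$ is a $\Diff(N)$-invariant subspace (a union of orbits) of the $\Diff(N)$-locally retractile space $\sep(N)$, so local retractions restrict; parts (2) and (3) are direct citations of \cite[Lemmas 6.5 and 6.6]{BoydBregmanSteinebrunner-finiteness}. Your detour through path components and openness in Step 2 is harmless but unnecessary, since restricting $\xi$ to $U \cap \sepNP(N)$ works for any invariant subspace, open or not.
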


\begin{rem}\label{rem:Finite-Index-Retractile}
   Since finite index subgroups are admissible  \cref{obs:admissible}(3) implies that Properties (1) and (3) of \cref{prop:sepNP-Properties} also hold after passing to finite-index subgroups, such as $\Diff^+(N)$. 
\end{rem}

Let $\sepNP_k(N)=\{\Sigma_0\subseteq \cdots\subseteq \Sigma_k\mid \Sigma_i\in \sepNP(N)\}$, be the space of $k$-chains of $\sepNP(N)$.
This is a simplicial space, namely exactly the nerve of the topological poset $\sep(M)$. 
We let $\|\sepNP_\bt(N)\|$ denote its fat geometric realisation. 
(The fat and thin realisation of $\sepNP_\bullet(N)$ are homotopy equivalent, see e.g.~the proof of \cite[Theorem 6.2]{BoydBregmanSteinebrunner-finiteness}.)
In \cite{BoydBregmanSteinebrunner-finiteness}, we prove the following.

\begin{thm}[Theorem 3.20, \cite{BoydBregmanSteinebrunner-finiteness}]\label{thm: sepNP contractible}
    If $N\not \cong S^1\times S^2$, then $\|\sepNP_\bt(N)\|$ is contractible.
\end{thm}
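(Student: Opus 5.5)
This result is quoted from \cite{BoydBregmanSteinebrunner-finiteness}, so here we only sketch the route we would take to prove it from scratch. The goal is to show that the topological poset $\sepNP(N)$ has contractible classifying space.

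First we reduce to a discrete problem. Each component of $\sepNP(N)$ is an orbit $\Diff(N)/\Diff(N,\Sigma)$, and by \cref{prop:sepNP-Properties} the space is locally retractile. One can therefore compare $\|\sepNP_\bt(N)\|$ with a homotopy colimit over the discrete poset of isotopy classes. A cleaner route is a Quillen-type fibre criterion applied to the inclusion of $\sepNP(N)$ into the larger poset $\sep(N)$. By Laudenbach's theorem together with Perelman, parallel spheres are isotopic, and this should let us show the following. Any separating system $\Sigma$ retracts canonically, up to a contractible choice, onto a non-parallel subsystem: in each parallelism class, keep a single sphere. The space of such choices for $\Sigma$ is a product of simplices (choose a sphere, or a position in the product region $S^2\times[0,1]$). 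Hence the relevant comma fibres are contractible, and $\|\sepNP_\bt(N)\|\simeq\|\sep_\bt(N)\|$.

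Second, we show that $\|\sep_\bt(N)\|$ is contractible, by a surgery argument in the spirit of Hatcher's proof that sphere complexes are contractible. Fix a reference separating system $\Sigma_0$. For a compact family $K\to\|\sep_\bt(N)\|$, put the spheres of each $\Sigma$ in the family in general position with respect to $\Sigma_0$. We then surger them along innermost discs of $\Sigma\cap\Sigma_0$ to decrease intersections. Each surgery of a reducing sphere along a disc in $\Sigma_0$ yields two spheres, at least one of which is reducing. The union with the old system still cuts $N$ into irreducible pieces, so we obtain a zig-zag $\Sigma\supseteq\Sigma'\subseteq\Sigma''$ within $\sep(N)$. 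Eventually the family becomes disjoint from $\Sigma_0$. Then $\Sigma\cup\Sigma_0$ is again a separating system, and this gives the cone $\Sigma\subseteq\Sigma\cup\Sigma_0\supseteq\Sigma_0$ to a point.

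The main obstacle is making this surgery work continuously in families. The intersection pattern jumps across non-generic parameters, and surgery choices must be coherent. We would handle this with a parametrised version of Hatcher's argument: a "disjunction" step via Cerf--Palais locally retractile fibrations, shrinking discs in a parametrised way, using Hatcher's theorem $\Diff(D^3)\simeq\Or(3)$ to make the space of innermost-disc surgeries contractible. The case $N\cong S^1\times S^2$ must be excluded. There the unique reducing sphere class is non-separating and gives no proper refinements, so the poset is $\Sub(S^2,S^1\times S^2)$, which is not contractible. This is consistent with the hypothesis.
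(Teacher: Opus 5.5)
This theorem is only imported here from the earlier paper, so your sketch has to be judged on its own. It has two genuine gaps.

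\textbf{Step 1 is not justified as stated.} For the inclusion $i\colon \sepNP(N)\hookrightarrow \sep(N)$, the comma fibres are $\{\Sigma'\in\sepNP(N) : \Sigma'\subseteq\Sigma\}$ and $\{\Sigma'\in\sepNP(N) : \Sigma'\supseteq\Sigma\}$.
\begin{itemize}
\item Since $\Sigma'\subseteq\Sigma$ forces $\Sigma'$ to be a union of components of $\Sigma$, the first fibre is a finite discrete poset.
\item The second fibre is empty as soon as $\Sigma$ has two parallel components.
\item Neither is contractible in general. Take $N=P_1\sharp P_2$ and $\Sigma=S_0\sqcup S_1$ two parallel reducing spheres. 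The lower fibre is the two incomparable points $\{S_0\}$ and $\{S_1\}$; the empty system is not separating because $N$ is reducible.
\end{itemize}
``Keep one sphere per parallelism class'' is also not order-preserving: the choice $S_1$ for $S_0\sqcup S_1$ is incomparable with the forced choice $S_0$ for $\{S_0\}$. The ``positions in the product region'' you appeal to are spheres that are not subsystems of $\Sigma$, so they lie in no comma fibre.

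Turning your idea into a proof would need a genuinely topological comparison, with homotopy fibres and a fibrancy argument. That comparison is not a formality. Nothing in your Step 2 uses $N\not\cong S^1\times S^2$. So unless the parametrised surgery secretly fails for $S^1\times S^2$, the hypothesis must enter through this comparison, for instance via the fact that a parallelism class is linearly rather than cyclically ordered.

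\textbf{Step 2 is the entire theorem, and you only name it.} Already for $N=P_1\sharp P_2$, no two distinct elements of $\sepNP(N)$ are comparable. Hence $\|\sepNP_\bullet(N)\|\simeq\sepNP(N)$ is just the space of reducing spheres, and the statement is Hatcher's theorem that this space is contractible. That theorem rests on the Smale conjecture and a real parametrised surgery argument. Your outline supplies no mechanism for the following problems.
\begin{itemize}
\item In a family of dimension $\geq 1$ the spheres cannot be kept transverse to the reference system, since tangencies occur in codimension one. Intersection circles and innermost discs therefore appear, disappear and jump with the parameter.
\item The surgeries must be chosen coherently in the parameter.
\item They must also be compatible with every inclusion in the chains $\Upsilon_0(t)\subseteq\cdots\subseteq\Upsilon_p(t)$ that a map into the realization consists of.
\item The resulting zig-zags must assemble into an honest null-homotopy in $\|\sep_\bullet(N)\|$.
\end{itemize}
Invoking Cerf--Palais local retractility and Hatcher's theorem $\Diff(D^3)\simeq \Or(3)$ addresses none of these, so as it stands this is a plan rather than a proof.

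A minor point: your zig-zag runs in the wrong direction. Since $\Sigma\setminus S$ need not be separating, the correct move is $\Sigma\subseteq\Sigma\cup S'\cup S''\supseteq(\Sigma\setminus S)\cup S'\cup S''$, discarding whichever of $S',S''$ bounds a ball.
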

Since the $\Diff(N)$-action on $\sepNP(N)$ preserves inclusions, it acts component-wise on every level $\smash{\sepNP_k(N)}$, and hence on the fat geometric realisation $\smash{\|\sepNP_\bt(N)\|}$. By taking homotopy orbits, \cref{thm: sepNP contractible} implies the following following fundamental equivalence:
\begin{equation}
    \BDiff(N)\simeq \|\sepNP_\bt(N)\|\hq\Diff(N).
\end{equation}

\subsection{Extending diffeomorphisms over 3-discs}\label{subsec:extend-D3}
At the heart of our construction of $\W$ in \cref{thm: wrong way map} are the following two foundational results:

\begin{thm}\label{thm:Smale-Hatcher}Let $D^3$ be a closed 3-ball with boundary $\partial D^3=S^2$.
\begin{enumerate}[(i)]
    \item \emph{(Smale, \cite{Smale59})} The inclusion $\SO(3)\hookrightarrow \Diff^+(S^2)$ is a homotopy equivalence.
    \item \emph{(Hatcher, \cite{Hatcher})} The inclusion $\SO(3)\hookrightarrow \Diff^+(D^3)$ is a homotopy equivalence.
\end{enumerate}
\end{thm}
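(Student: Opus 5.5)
The statement combines two classical theorems, and the paper cites them rather than reproving them. My plan is to reduce both parts to contractibility of a diffeomorphism group relative to the boundary, and then sketch how that contractibility is obtained.

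\emph{Part (i).} I would use the evaluation fibration
\[
\Diff^+(S^2) \longrightarrow \Fr^+(S^2) \cong \SO(3),
\]
which sends $\varphi$ to the image under $d\varphi$ of a fixed oriented frame at a basepoint $x_0$. Its fibre is the group of diffeomorphisms fixing $x_0$ together with a frame there. Using an Alexander-type linearisation near $x_0$, this fibre is equivalent to $\Diff_\partial(D^2)$, the diffeomorphisms of $D^2$ fixing the boundary pointwise. The key step is Smale's theorem that $\Diff_\partial(D^2)$ is contractible. I would prove it by choosing the non-vanishing horizontal vector field $\partial_x$ on $D^2$. Applying $\varphi$ to it gives another non-vanishing field, standard near $\partial D^2$. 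The Poincar\'e--Bendixson theorem then lets one straighten the resulting foliation canonically and continuously in families. This shows that the map from $\Diff_\partial(D^2)$ to the space of such foliations is an equivalence onto a contractible space. The subgroup $\SO(3)$ acts simply transitively on frames, so it gives a section of the fibration. Together with the contractible fibre, this shows the inclusion $\SO(3)\hookrightarrow\Diff^+(S^2)$ is an equivalence.

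\emph{Part (ii).} Restriction to the boundary gives a fibration
\[
\Diff_\partial(D^3)\to\Diff^+(D^3)\to\Diff^+(S^2).
\]
It is surjective on $\pi_0$ because every element of $\Diff^+(S^2)$ extends radially. The inclusion $\SO(3)\hookrightarrow\Diff^+(D^3)$ is compatible with part (i) on the base. So by the five lemma on homotopy groups, the claim reduces to contractibility of $\Diff_\partial(D^3)$, which is the Smale conjecture. Following Hatcher, I would rephrase this as the statement that the space of smoothly embedded $2$-spheres in $\bbR^3$ (equivalently in $S^3$) deformation retracts onto the space of round spheres.

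To prove that, I would take a compact family of spheres and put it in general position with respect to the family of horizontal planes $\bbR^2\times\{t\}$, allowing only generic height-function singularities in the family. I would then perform a parametrised induction that removes the circles of intersection, innermost first, by surgery along discs. This step relies on part (i) and on the contractibility of the space of discs in a plane with fixed boundary.

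\emph{Main obstacle.} I expect the parametrised surgery to be the hard step. In a family, the circles of intersection are born and die, and several may need to be surgered at once. Making the choices coherent over the parameter space requires Hatcher's delicate combinatorial argument: an ordering of the surgeries, and compatibility of the surgery discs across the family. This is precisely the content of \cite{Hatcher}, and in the paper one simply invokes it.
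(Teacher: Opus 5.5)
Your proposal is consistent with the paper, which does not prove this theorem: it cites Smale and Hatcher and remarks that (ii) is equivalent to the contractibility of $\Diff_\partial(D^3)$, and that is the reduction you make (restriction fibration, part (i), five lemma) before invoking Hatcher. Two small slips are harmless but worth fixing: the radial (cone) extension of a general element of $\Diff^+(S^2)$ is not smooth at the centre, so $\pi_0$-surjectivity of restriction should come from (i) (every component contains a rotation, which extends linearly); and $\Fr^+(S^2)$ is only homotopy equivalent to $\SO(3)$, so the orbit map $\SO(3)\to\Fr^+(S^2)$ is a homotopy equivalence rather than a section.
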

The first part implies that every family of diffeomorphisms of $S^2$ extends over $D^3$, while the second part (which is equivalent to the Smale conjecture) says that this extension is unique up to homotopy, even when varying in families. Equivalently, the second part states that $\Diff_\partial(D^3)$ is contractible.

\begin{defn}\label{defn: equal-after-scl-embeddings}
    Given compact 3-manifolds $N,Q$, let $\emb^{\scl{=}}(N, Q)$ denote the subspace of those embeddings $i\colon N\hookrightarrow Q$ for which there exists a diffeomorphism $\scl{N}\cong Q$ restricting to $i$ on $N$. 
\end{defn}

The notation ``$\scl{=}$" is meant to indicate ``diffeomorphism after taking spherical closures." We apply the above results of Smale and Hatcher to obtain canonical fillings of $2$-sphere boundary components.

\begin{lem}\label{lem:embedding complement of discs}
    Let $N$ and $Q$ be compact $3$-manifolds, and assume that $\scl{N}$ is abstractly diffeomorphic to $Q$.
    Then for each embedding $\iota \colon N \hookrightarrow Q$ such that the complement of $\iota(N)$ is a disjoint union of discs, the map
    \[
        \Diff^+(Q) \longrightarrow \emb^{\scl{=}}(N, Q), \qquad
        \varphi \mapsto (\varphi \circ \iota).
    \]
    is a homotopy equivalence.
    Moreover, the homotopy orbit space 
    \[
        \emb^{\scl{=}}(N,Q) \hq \Diff^+(Q)
    \]
    is contractible.
\end{lem}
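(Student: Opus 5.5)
The map $\varphi \mapsto \varphi\circ\iota$ is the orbit map of the left action of $\Diff^+(Q)$ on $\emb^{\scl{=}}(N,Q)$ through the point $\iota$. I will first show that it is a fibration whose fibre over $\iota$ is contractible. The second claim then follows formally. Throughout, I restrict to orientation-preserving embeddings; by the orientation conventions the relevant component is the one containing $\iota$.

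\textbf{Transitivity and the fibration.} Write $Q\setminus\iota(\interior{N}) = D_1\sqcup\dots\sqcup D_m$ for the complementary discs. The target $\emb^{\scl{=}}(N,Q)$ is a union of components of $\emb(N,Q)$, hence $\Diff(Q)$-locally retractile by Cerf--Palais. I then check that $\Diff^+(Q)$ acts transitively. Given $j\in\emb^{\scl{=}}(N,Q)$, there is a diffeomorphism $\scl{N}\cong Q$ extending $j$, so the complement of $j(N)$ is also a union of $m$ discs. The diffeomorphism $j\circ\iota^{-1}\colon \iota(N)\to j(N)$ restricts on each boundary sphere $\partial D_k$ to a diffeomorphism onto the corresponding boundary sphere of $Q\setminus j(\interior N)$. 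By Smale (\cref{thm:Smale-Hatcher}(i)), each such map extends over the ball $D_k$, giving $\varphi\in\Diff(Q)$ with $\varphi\circ\iota=j$. Since an orbit map of a locally retractile action is a fibre bundle \cite[Lemma 2.5]{CMRW17}, the map $\Diff^+(Q)\to\emb^{\scl{=}}(N,Q)$ is a fibre bundle.

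\textbf{The fibre.} The fibre over $\iota$ is the stabiliser $\{\varphi : \varphi\circ\iota=\iota\}$, i.e.\ the diffeomorphisms that are the identity on $\iota(N)$. Such a $\varphi$ preserves each $D_k$ and fixes $\partial D_k$ pointwise, so restriction identifies the stabiliser with $\prod_{k=1}^m \Diff_\partial(D^3)$. By Hatcher (\cref{thm:Smale-Hatcher}(ii)), each factor is contractible. The one subtlety is that elements of the stabiliser fix $\iota(N)$ only pointwise, not to infinite order at the boundary. I will handle this either with the usual collar conventions for embedding spaces, or by noting that the subspace of diffeomorphisms that are the identity near $\partial D^3$ is equivalent to $\Diff_\partial(D^3)$. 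Given contractible fibres, the long exact sequence makes the map a weak equivalence, and a homotopy equivalence by the standard CW-type results for these Fréchet-manifold spaces.

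\textbf{The homotopy orbit space.} Because the action is transitive with stabiliser $H=\Stab(\iota)$, we have $\emb^{\scl{=}}(N,Q)\cong \Diff^+(Q)/H$ equivariantly, using that the bundle structure identifies the orbit with the quotient. Admissibility of $H$ comes from \cref{obs:admissible}(1). Hence
\[
\emb^{\scl{=}}(N,Q)\hq\Diff^+(Q)\simeq BH\simeq \textstyle\prod_k B\Diff_\partial(D^3),
\]
which is contractible by Hatcher. Alternatively, since $\Diff^+(Q)$ is a contractible free $\Diff^+(Q)$-space equivalent to $\emb^{\scl{=}}(N,Q)$ by the first part, its homotopy orbits are contractible.

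I expect the main obstacle to be the identification of the stabiliser with $\prod\Diff_\partial(D^3)$, and in particular the smoothness and boundary-collar issues there, rather than any of the formal steps.
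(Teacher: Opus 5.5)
Your proposal is correct and follows essentially the same route as the paper. Surjectivity of the orbit map comes from Smale's extension theorem, and there is a fibre sequence $\Diff_N(Q)\to\Diff^+(Q)\to\emb^{\scl{=}}(N,Q)$ whose fibre $\Diff_N(Q)\simeq\prod_k\Diff_\partial(D^3)$ is contractible by Hatcher. Finally, $\emb^{\scl{=}}(N,Q)\hq\Diff^+(Q)\simeq\BDiff^+_N(Q)\simeq *$, which the paper obtains from \cref{lucis lemma}.

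Two small corrections. First, a diffeomorphism of $Q$ that is the identity on $\iota(N)$ is automatically the identity to infinite order along each $\partial D_k$, since its derivatives there are limits of derivatives on $\iota(N)$; so the boundary subtlety you flag does not actually arise. Second, in your alternative last step $\Diff^+(Q)$ is not contractible; the correct statement is $\Diff^+(Q)\hq\Diff^+(Q)\cong E\Diff^+(Q)\simeq *$.
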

\begin{proof}
 By Smale's theorem, every diffeomorphism of $\partial D^3$ extends over $D^3$, so in particular $\iota$ extends to a diffeomorphism $
 \scl{\iota}\colon \widehat{N}\cong Q$ and $\iota\in \emb^{\scl{=}}(N,Q)$. Given $f\in \emb^{\scl{=}}(N,Q)$, there exists a diffeomorphism $\scl{f}\colon \widehat{N}\cong Q$ restricting to $f$ on $N$ and therefore $\varphi =\scl{f}\circ \scl{\iota~}^{-1}$ is a diffeomorphism of $Q$ such that $\varphi\circ \iota=f$. Therefore, the orbit map $\Diff^+(Q)\rightarrow \emb^{\scl{=}}(N,Q)$ is onto and by \cref{ex:Principal-action-examples}(1), there is a fibre sequence
    \begin{equation}\label{eqn:disc-stabilisers}
      \Diff_{N}(Q) \to \Diff^+(Q) \to \emb^{\scl{=}}(N, Q).  
    \end{equation}
On the other hand, $\Diff_{N}(Q)\simeq \prod_{i=1}^k\Diff_\partial(D^3)$ which is contractible by \cite{Hatcher}. 
Thus $\Diff(Q)\rightarrow \emb^{\scl{=}}(N, Q)$ is a homotopy equivalence.
Lastly, the fibre sequence in \cref{eqn:disc-stabilisers} is equivariant with respect to the exact sequence of groups $\{1\}\rightarrow \Diff^+(Q)\rightarrow \Diff^+(Q)$, hence  \cref{lucis lemma} tells us that $\emb^{\scl{=}}(N, Q)\hq \Diff^+(Q)\simeq \BDiff^+_{N}(Q) $, which is contractible. 
\end{proof}

\begin{rem}
   Although we will not need it, one can show that if $\widehat N\cong Q$, then the complement of any embedding of $N$ into $Q$ is diffeomorphic to a union of discs.
\end{rem}

\subsection{Background on homotopy colimits}\label{subsec:background-on-hocolims}
Our main theorem \cref{thm: wrong way map} describes the fibre of the splitting map as a homotopy colimit over a certain finite category of graphs.
In this subsection we recall some of the relevant definitions and constructions, and establish a criterion for showing when such homotopy colimits are homotopy finite.

\begin{defn}[Homotopy colimit indexed by $\calC$]\label{defn - hocolim}
For a reference, see \cite[Section 4]{Dugger2008}.
Let $\calC$ be  a small category. Given a diagram of topological spaces
\[
    X \colon \calC \to \Top
\]
we construct a simplicial space $E_\bullet$ by
\[
    E_n := \coprod_{C_0 \to \dots \to C_n} X(C_0)
\]
where the coproduct runs over all sequences of $n$ composable morphisms in $\calC$ and we denote the morphisms by $f_i:C_{i-1}\to C_i$ for $1\leq i \leq n$. The first face map is given by 
\[
   \coprod_{C_0 \to \dots \to C_n} X(C_0) \,\, \overset{X(f_1)}{\longrightarrow}\,\,\coprod_{C_1 \to \dots \to C_n} X(C_1) 
\]
and the remaining face maps are given by mapping the copy of~$X(C_0)$ in~$E_n$ indexed by $C_0 \to \dots \to C_n$ to the copy of~$X(C_0)$ in~$E_{n-1}$ indexed by the chain $C_0 \to \cdots \hat{C}_j\cdots \to C_n$ with~$C_j$ removed. Degeneracy maps are given by adding an identity morphism~$C_j\to C_j$ to the indexing chain.
The homotopy colimit of $X$ is now defined as the geometric realisation
\[
    \hocolim_{C \in \calC} X(C) := |E_\bullet|.
\]
\end{defn}

Recall that a category $\calI$ is an EI-category if every endomorphism in $\calI$ is invertible, \emph{i.e.}~it is an automorphism.
(Equivalently, $\calI$ admits a conservative functor to a poset, i.e.~a functor $F\colon \calI \to P$ such that a morphism in $\calI$ is an isomorphism if and only if its image in $P$ is an isomorphism.)
For such EI-categories we can compute homotopy colimits in two steps: first we take a homotopy quotient for finite group actions and then a homotopy colimit over finite posets.

\begin{defn}
    Given an EI-category $\calI$ and $n\ge 0$ we let $P_n(\calI)$ denote the groupoid of conservative functors $[n] \to \calI$: 
    its objects are $n$-tuples of composable morphisms $(f_1,\dots, f_n)$ (where none of the $f_i$ are isomorphisms) and its morphisms are natural isomorphisms
    \[\begin{tikzcd}
        i_0 \rar["{f_1}"] \dar["{\alpha_0}", "\cong"'] & i_1 \rar["{f_2}"] \dar["{\alpha_1}", "\cong"'] & \dots \rar["{f_n}"] & i_n \dar["{\alpha_n}", "\cong"'] \\
        j_0 \rar["{f_1}"] & j_1 \rar["{g_2}"] & \dots \rar["{g_n}"] & j_n .
    \end{tikzcd}\]
    We let $\pi_0 P(\calI) := \coprod_{n\ge 0} \pi_0 P_n(\calI)$ denote the poset of isomorphism classes of such tuples for all $n$, ordered by refinement.
\end{defn}

\begin{thm}[\cite{Slominska89}]\label{thm:Slominska}
    The homotopy colimit of an EI-diagram $X\colon \calI \to \Top$ can be calculated in two steps as
    \[
        \hocolim_{i \in \calI} X(i) 
        \simeq \hocolim_{[f_1,\dots,f_n] \in \pi_0P(\calI)} \left( 
        X(i_0) \sslash \Aut(i_0 \xrightarrow{f_1} \dots \xrightarrow{f_n} i_n)
        \right)
    \]
\end{thm}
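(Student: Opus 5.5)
The idea is to compare two simplicial spaces: the nerve-based model for $\hocolim_{\calI} X$ from \cref{defn - hocolim}, and a simplicial model whose simplices are indexed by non-degenerate chains up to isomorphism. First I would replace the bar construction by its \emph{non-degenerate part}. Because $\calI$ is an EI-category, every composable string $i_0\to\dots\to i_n$ factors uniquely, up to the groupoid of isomorphisms, as a string of non-isomorphisms with isomorphisms inserted. Concretely, I would use the standard fact that $\hocolim_\calI X$ is equivalent to the realisation of the simplicial space
\[
    [n] \longmapsto \hocolim_{(f_1,\dots,f_n)\in P_n(\calI)} X(i_0) \;\simeq\; \coprod_{[f_1,\dots,f_n]\in \pi_0P_n(\calI)} X(i_0)\sslash \Aut(i_0\xrightarrow{f_1}\dots\xrightarrow{f_n} i_n),
\]
with face maps given by composing or deleting morphisms in the chain, and using $X(f_1)$ for the $0$th face. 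This is a semi-simplicial object, since there are no degeneracies among conservative chains. The second equivalence holds because a groupoid-indexed homotopy colimit is the disjoint union over components of homotopy orbits by the automorphism group of a representative.

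Next I would identify the realisation of this semi-simplicial space with a homotopy colimit over the poset $\pi_0P(\calI)$. A $k$-simplex of the nerve of $\pi_0P(\calI)$ is a chain of refinements, and the functor $[f_1,\dots,f_n]\mapsto X(i_0)\sslash\Aut(\text{chain})$ is contravariant-in-refinement: a refinement gives a subgroup inclusion of automorphism groups together with a map of $X$'s. Taking the last vertex of refinement chains gives a map from the bar construction over $\pi_0P(\calI)$ to the semi-simplicial realisation above. This is the usual subdivision argument: the nerve of the face poset of a semi-simplicial set is its barycentric subdivision, and the same holds levelwise with coefficients.

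The main obstacle is making the first step rigorous. The comparison between the full bar construction and the non-degenerate/isomorphism-quotiented one must handle the isomorphisms inserted between the non-invertible morphisms. I would argue by a filtration of $\calI$ by the length of the longest non-invertible chain, using the EI property to show that each inserted string of isomorphisms contributes a contractible piece, namely the nerve of a translation groupoid $E\Aut$. In practice I would cite S\l omi\'nska's theorem (\cite{Slominska89}) for exactly this comparison, and simply verify that the functor in the statement is well-defined up to coherent homotopy on the poset $\pi_0P(\calI)$. This verification uses the fact that homotopy orbits by conjugate automorphism groups are canonically equivalent.
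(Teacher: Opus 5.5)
The paper gives no argument for \cref{thm:Slominska}; it simply quotes it from \cite{Slominska89}, so your final fallback is what the paper does. As a proof, however, your outline is only a reduction.

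Your second step, passing from the semi-simplicial space $[n]\mapsto\hocolim_{P_n(\calI)}X(i_0)$ to a homotopy colimit over $\pi_0P(\calI)$, is routine. The entire content of the theorem is your first step. You call it a standard fact and then propose to obtain it by citing the very theorem being proved. The filtration idea cannot be checked as stated, because you never specify a comparison map between the bar construction of \cref{defn - hocolim} and your semi-simplicial space.

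One way to close this gap is as follows. Let $\mathrm{sd}\,\calI$ be the Grothendieck construction of $[n]\mapsto P_n(\calI)$ over $\Delta_{\mathrm{inj}}^{\mathrm{op}}$. Its objects are conservative chains $\sigma=(\sigma_0\xrightarrow{f_1}\cdots\xrightarrow{f_n}\sigma_n)$, and a morphism $\sigma\to\tau$ is a face operator $\theta$ together with an isomorphism of chains $\theta^*\sigma\cong\tau$. By Thomason's theorem, $\hocolim_{\mathrm{sd}\,\calI}X(\sigma_0)$ is equivalent to the realisation of your semi-simplicial space. So your first step is equivalent to homotopy finality of the first-vertex functor $u\colon\mathrm{sd}\,\calI\to\calI$.

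To see finality, fix $i\in\calI$ and define a retraction of $i/u$ on objects:
\begin{itemize}
\item send $(\sigma,g\colon i\to\sigma_0)$ to $(i\xrightarrow{g}\sigma_0\to\cdots\to\sigma_n,\id_i)$ if $g$ is not invertible;
\item send it to $(i\xrightarrow{f_1g}\sigma_1\to\cdots\to\sigma_n,\id_i)$ if $g$ is invertible.
\end{itemize}
Using only that composites of non-isomorphisms are non-isomorphisms (this is where EI enters), one checks that this is a functor. It retracts $i/u$ onto the full subcategory $\mathcal{D}$ of pairs with $\sigma_0=i$ and $g=\id_i$. There is a natural transformation from the retraction to the identity, with components $d_0$, respectively the chain isomorphism $(g,\id,\dots,\id)$. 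Since $\mathcal{D}$ has the length-$0$ chain $(i)$ as terminal object, $i/u$ is contractible.

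Moreover, $\mathrm{sd}\,\calI\to\pi_0P(\calI)$ (arrows from a class to its faces) is a split opfibration whose fibres are the components of the groupoids $P_n(\calI)$. Hence $\hocolim_{\mathrm{sd}\,\calI}X(\sigma_0)$ is computed fibrewise and equals the right-hand side of the theorem. This makes the semi-simplicial detour unnecessary.

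There are also three smaller inaccuracies.
\begin{itemize}
\item For a face $\tau$ of $\sigma$, restriction $\Aut(\sigma)\to\Aut(\tau)$ is only a homomorphism, not a subgroup inclusion. For $\sigma=(a\xrightarrow{f}b)$ and $\tau=(b)$ its kernel is $\{\alpha\in\Aut(a) : f\alpha=f\}$. Even under the epimorphism hypothesis of \cref{prop:EI-finite}, injectivity only holds for faces containing the initial vertex. You never need injectivity anyway.
\item You do not need to make $[\sigma]\mapsto X(i_0)\sslash\Aut(\sigma)$ coherent via conjugation, which only yields homotopy commutativity. Take as value the homotopy colimit of $X(i_0)$ over the component of $P_n(\calI)$ containing $\sigma$. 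This gives a strict functor on the poset, objectwise equivalent to the one in the statement.
\item Identifying the category of simplices of $[n]\mapsto\pi_0P_n(\calI)$ with the refinement poset uses that the vertices of a conservative chain are pairwise non-isomorphic. Indeed, if $i_j\cong i_k$ with $j<k$, the composite $i_j\to i_k$ is a non-isomorphism by EI. Composing with $i_k\cong i_j$ would make it an invertible endomorphism, so it would itself be invertible, a contradiction. Consequently a face of $[\sigma]$ determines its face operator; without this, the indexing category would not be a poset.
\end{itemize}
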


\begin{ex}\label{ex:hocolim-one-morphism-EI}
    Suppose that $\calI$ is an EI-category with two objects $a, b \in \calI$ such that there is a morphism $f\colon a \to b$.
    Suppose further that every other morphism $g\colon a \to b$ is of the form $\beta \circ f \circ \alpha$ for $\alpha \in \Aut(a)$ and $\beta \in \Aut(b)$.
    Then S\l ominska's formula says that for every diagram $X\colon \calI \to \Top$ there is a homotopy pushout square
    \[\begin{tikzcd}
        {X(a) \sslash \Aut(f)} \rar \dar \ar[dr, very near end, phantom, "\ulcorner"] & {X(a) \sslash \Aut(a)} \dar \\
        {X(b) \sslash \Aut(b)} \rar & {\hocolim_{i \in \calI} X(i)}
    \end{tikzcd}\]
    where the top map is induced by the group homomorphism $\Aut(f) \to \Aut(a)$ and the left map by the group homomorphism $\Aut(f) \to \Aut(b)$ as well as the map $X(a) \to X(b)$.
\end{ex}

\begin{defn}
    We say that and EI-category $\calI$ is \emph{finite} if it has finitely many objects and finitely many morphisms.
    In this case, we define the depth $D\ge 0$ of $\calI$ to be the length of the longest chain of composable non-isomorphisms in $\calI$.
\end{defn}

Note that the depth is bounded by the total number of morphisms in $\calI$, and in particular it is always finite.
Indeed, suppose we had a chain of composable morphisms $(f_1,\dots,f_n)$ that is longer than the number of morphisms in $\calI$.
Then we must have $f_k = f_l$ for $k<l$, which implies that $f_l \circ \dots \circ f_{k+1}$ and $f_{l-1} \circ \dots \circ f_k$ both are endomorphism and thus invertible, which in turn implies that $f_k = f_l$ is an isomorphism.

\begin{rem}
    A finite EI-category $\calI$ is not necessarily finite as an $\infty$-category: for example, every finite group defines a finite EI-category with one object, but this is only finite as an $\infty$-category if the group is trivial.
    In particular, it is not generally true that the homotopy colimit of a diagram $\calI \to \Top$ valued in finite CW complexes must again be equivalent to a finite cell complex.
    Take for instance the diagram to be constant at the point, in which case the homotopy colimit is the classifying space of $\calI$.
\end{rem}

Nevertheless, we have the following finiteness result for homotopy colimits over certain EI-categories, if we require that the diagram is such that the homotopy colimit over each automorphism group is homotopy finite.
Recall that a morphism $f\colon i \to j$ is an epimorphism if for all $g_1,g_2\colon j \to k$ we have that $g_1 \circ f = g_2 \circ f$ implies $g_1 = g_2$ -- this will be true for all morphisms in our graph categories, see \cref{rem:Gr-epi}.

\begin{prop}\label{prop:EI-finite}
    Let $\calI$ be a finite EI-category such that every morphism in $\calI$ is an epimorphism. 
    Let $X\colon \calI \to \Top$ be a functor for which each $X(i) \hq \Aut_\calI(i)$ is homotopy finite, then $\hocolim_{i \in \calI} X(i)$ is homotopy finite.
    
    Moreover, if $D$ is the depth of $\calI$ and $N$ is such that each $X(i) \hq \Aut_\calI(i)$ is equivalent to a finite CW-complex of dimension at most $N$, then $\hocolim_{i \in \calI} X(i)$ is  homotopy equivalent to a finite CW-complex of dimension at most $N+D$.
\end{prop}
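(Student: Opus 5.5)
We will use S\l ominska's formula, \cref{thm:Slominska}, to write
\[
  \hocolim_{i \in \calI} X(i) \simeq \hocolim_{[f_1,\dots,f_n] \in \pi_0P(\calI)} \bigl( X(i_0) \sslash \Aut(i_0 \to \dots \to i_n) \bigr).
\]
The indexing poset $\pi_0P(\calI)$ is finite, since $\calI$ has finitely many morphisms. Every chain in it has length at most $D$, because objects of $\pi_0P(\calI)$ are chains of at most $D$ composable non-isomorphisms, and refinement strictly increases that length. A homotopy colimit over a finite poset of length $\le D$ is the realisation of a finite simplicial object that is degenerate above degree $D$: its nondegenerate $k$-simplices are indexed by strict chains $p_0 < \dots < p_k$ in $\pi_0P(\calI)$, and at such a chain we see the value at $p_0$. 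We can therefore build the homotopy colimit by iterated homotopy pushouts along $\partial\Delta^k \times Y \to \Delta^k\times Y$ with $k \le D$. If each value $Y_p$ is a finite CW complex of dimension $\le N$, this gives a finite CW complex of dimension $\le N+D$, after replacing the diagram by a cofibrant one with CW values and cellular maps. That reduces everything to showing that each term $X(i_0)\sslash \Aut(i_0\to\dots\to i_n)$ is homotopy finite of dimension $\le N$.

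This is where the epimorphism hypothesis enters. Let $G=\Aut(i_0 \xrightarrow{f_1}\dots\xrightarrow{f_n} i_n)$, and consider the projection $G \to \Aut(i_0)$, $(\alpha_0,\dots,\alpha_n)\mapsto \alpha_0$. We claim it is injective. Given $\alpha_0$, the naturality square gives $\alpha_1\circ f_1 = g_1\circ\alpha_0$. In $G$ the target chain is the same chain $(f_1,\dots,f_n)$, so $\alpha_1 f_1 = f_1\alpha_0$. If two automorphisms $\alpha_1,\alpha_1'$ both satisfy this, then $\alpha_1 f_1 = \alpha_1' f_1$, and since $f_1$ is an epimorphism, $\alpha_1=\alpha_1'$. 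Inducting along the chain, each $\alpha_k$ is determined by $\alpha_0$. Hence $G$ is isomorphic to a subgroup $H\le \Aut(i_0)$, and $G$ acts on $X(i_0)$ through this embedding.

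Next we show $X(i_0)\sslash H$ is homotopy finite of dimension $\le N$. There is a finite covering $X(i_0)\sslash H \to X(i_0)\sslash \Aut(i_0)$ of degree $[\Aut(i_0):H]$, because $X\sslash H \simeq (X\times \Aut(i_0)/H)\sslash \Aut(i_0)$. The base is equivalent to a finite CW complex $K$ of dimension $\le N$ by hypothesis. Pulling the covering back along this equivalence, the total space is equivalent to a finite cover of $K$, which is again a finite CW complex of the same dimension. Combining this with the first paragraph proves both statements.

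The main obstacle is the dimension bookkeeping in the first step: justifying that a homotopy colimit over a finite poset of height $D$ with values of dimension $\le N$ has a CW model of dimension $\le N+D$. We will handle this via the skeletal filtration of the (Reedy cofibrant) bar construction, where the $k$-th stage attaches $\Delta^k\times Y_{p_0}$ for $k\le D$. This requires cellularly approximating the maps $Y_p\to Y_q$ between the CW models chosen in the previous step.
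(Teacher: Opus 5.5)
Your proposal is correct and is essentially the paper's proof. S\l ominska's formula reduces to a finite poset of depth $D$. The epimorphism hypothesis makes $\Aut(i_0\to\cdots\to i_n)\to\Aut(i_0)$ injective, so each term is a finite cover of $X(i_0)\sslash\Aut(i_0)$. The poset case is handled by the bar construction, whose nondegenerate part $E^{\neq}_\bullet$ is empty above degree $D$. The paper simply treats the poset case first.

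For the dimension bookkeeping you do not need a strict diagram of CW models, and cellularly approximating the individual maps $Y_p\to Y_q$ would not produce one. It suffices to induct over the skeleta of $\|E^{\neq}_\bullet\|$: each is a homotopy pushout along the cofibration $\partial\Delta^k\times E^{\neq}_k\to\Delta^k\times E^{\neq}_k$, and you replace each attaching map into the previous skeleton, up to homotopy, by a cellular map between finite CW models.
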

\begin{proof}
    First consider the case where $\calI$ is a finite poset of depth $D$. 
    In this case, we construct a semi-simplicial space $E_\bullet^{\neq}$ that is a subspace of the simplicial space $E_\bullet$ described in \cref{defn - hocolim} by only taking the disjoint union over those tuples of morphisms where none of them is the identity.
    The simplicial space $E_\bullet$ is then obtained by freely adjoining degeneracies to $E_\bullet^{\neq}$ and as such we have a homeomorphism $|E_\bullet| \cong \|E_\bullet^{\neq}\|$.
    On the other hand, the assumption that $\calI$ is a finite poset of depth $D$ and that each $X(i)$ is homotopy finite implies that $E_n^{\neq}$ is equivalent to a finite CW-complex of dimension at most $N$ for all $n$ and is empty for $n>D$.
    Thus $\hocolim_{i \in \calI} X(i) \cong \|E_\bullet^{\neq}\|$ is equivalent to a finite CW complex of dimension at most $N+D$.

    For the general case we use S\l ominska's formula as recalled in \cref{thm:Slominska}.
    We begin by arguing that each of the homotopy quotients $X(i_0) \sslash \Aut(f_1,\dots,f_n)$ is still homotopy finite.
    There is a group homomorphism 
    \[
        \theta\colon \Aut(i_0 \xrightarrow{f_1} \dots \xrightarrow{f_n} i_n) \longrightarrow \Aut(i_0)
    \]
    defined by evaluating the automorphism on the first object.
    This homomorphism is injective: if $\alpha = (\id_{i_0}, \alpha_1, \dots, \alpha_n)$ is in the kernel, then because $f_1$ is an epimorphism the equation $\alpha_1 \circ f_1 = f_1 \circ \id_{i_0}$ implies that $\alpha_1 = \id_{i_1}$ and inductively we see that $\alpha$ is trivial.
    Comparing homotopy orbits with respect to the source and target of $\theta$ thus yields a finite covering map
    \[
        X(i_0) \sslash \Aut(i_0 \xrightarrow{f_1} \dots \xrightarrow{f_n} i_n) 
        \longrightarrow X(i_0) \sslash \Aut(i_0)
    \]
    and as we assumed that the base is homotopy finite it follows that the total space is as well, and in fact we can choose the finite model to also be $N$-dimensional.

    For each $[f_1,\dots,f_n] \in \pi_0 P(\calI)$ we have that $n \le D$ is bounded by the depth, and since we assumed that $\calI$ has only finitely many morphisms, it follows that $\pi_0 P(\calI)$ is finite.
    In fact, $\pi_0 P(\calI)$ has the same depth as $\calI$.
    We have thus written $\hocolim_{i\in \calI} F(i)$ as the homotopy colimit over a finite poset (of depth $D$) of a diagram all of whose values are homotopy equivalent to a finite CW complex of dimension at most $N$ -- this is exactly the case we dealt with at the start of the proof.
\end{proof}

\subsection{Graph categories}\label{subsec:general-graphs}
We now introduce the combinatorial categories of graphs that index the homotopy colimits we will consider later.
Inspired by \cite{Serre}, we define graphs in terms of vertices and half-edges.
In this definition, an edges are defined as the unions of two half edges that are paired via the half-edge involution~$i$.
Although it is more convenient for us to define graphs in terms of half-edges, we will also rely on standard terminology from graph theory, such as paths, cycles, trees, etcetera.
\begin{defn}\label{defn: graph}
    A \emph{graph} $\Gamma$ is a quadruple $(V, H, r, i)$ where $V$ is a finite set of vertices, $H$ is a finite set of half-edges, $i\colon H \to H$ is a fixed point free involution, called the \emph{half-edge involution}, and $r\colon H \to V$ is a map, called the \emph{root map}.
    We define the set of edges of $\Gamma$ as 
    \[
        E_\Gamma = \{ \{h, i(h)\} \;|\; h \in H\} .
    \]
    A \emph{labelled graph} is a graph~$\Gamma$ with a map $l\colon V_\Gamma \to L$ for some set of labels~$L$.
    For a vertex $v \in V$, we let $H_v := r^{-1}(v)$ denote the set of half-edges incident at $v$. The number of elements of $H_v$ is the \emph{valence} of $v$.
\end{defn}

Our notion of graph morphism is inspired by \cite[\S2.2]{Chan2021}.

\begin{defn}
    A morphism of graphs $f\colon \Gamma_1 \to \Gamma_2$ is a map 
    $f\colon V_1 \sqcup H_1 \to V_2 \sqcup H_2$ such that
    \begin{enumerate}
        \item $f$ sends vertices to vertices: $f(V_1) \subseteq V_2$,
        \item for all $h \in H_1$, either $f(h) \in H_2$ and $f(i(h)) = i(f(h))$ or $f(h) \in V_2$ and $f(h) = f(r(h))$.
        \item for each half-edge $h \in H_2$ the preimage $f^{-1}(h)$ is a single half-edge,
        \item for every vertex $w \in V_2$ the preimage $f^{-1}(w)$ forms a tree.
    \end{enumerate}
    Let $\Gr$ denote the category of finite graphs.
\end{defn}

A morphism of graphs $f\colon \Gamma_1 \to \Gamma_2$ is geometrically visualised by  collapsing a collection of subtrees of~$\Gamma_1$ separately to points followed by an isomorphism to~$\Gamma_2$.

\begin{rem}\label{rem:Gr-epi}
    A morphism of graphs $\Gamma_1 \to \Gamma_2$ is always given by a surjective map $X_1 \twoheadrightarrow X_2$ and thus every morphism in $\Gr$ is an epimorphism.
\end{rem}

We also introduce a smaller category of graphs of fixed rank, equipped with a partial marking of the vertices.

\begin{defn}\label{defn:Gr_gn}
  An object of $\Gr_{g,n}$ is a pair $(\Gamma, \sigma)$, where $\Gamma$ is a connected rank $g$ graph and  $\sigma \colon \{1,\dots,n\} \hookrightarrow V_\Gamma$ is an (injective) marking of a subset of the vertices $V_\Gamma$ such that every non-marked vertex is at least trivalent. A morphism $f\colon (\Gamma_1,\sigma_1)\rightarrow (\Gamma_2,\sigma_2)$ is a $\Gr$-morphism $f\colon \Gamma_1\rightarrow \Gamma_2$ such that  $\sigma_2(i)=f(\sigma_1(i))$ for all $i\in \{1,\ldots,n\}$.
\end{defn}

Geometrically, for a morphism $f\colon (\Gamma_1,\sigma_1)\to (\Gamma_2,\sigma_2)$, the pre-image of $\sigma_2(i)$ is a tree containing $\sigma_1(i)$ as its only marked vertex.  A graph $(\Gamma,\sigma)\in \Gr_{g,n}$ is \emph{maximal} if it is never the target of non-invertible morphism. Similarly, a graph $(\Delta,\tau)\in \Gr_{g,n}$ is \emph{minimal} if it is never the source of a non-invertible morphism. The following lemma enumerates some properties of $\Gr_{g,n}$ that we will need in the sequel.
\begin{lem}[Properties of $\Gr_{g,n}$]\label{lem:depth-of-Gr_gn}
    The category $\Gr_{g,n}$ enjoys the following properties:
    \begin{enumerate}[(i)]
        \item The number of edges is $3(g-1)+2n$ in any maximal graph and $(g-1)+n$ in any minimal graph. The longest chain of composable non-invertible morphisms in $\Gr_{g,n}$ has length at most $2(g-1)+n$.
        \item $\Gr_{g,n}$ has only finitely many isomorphism classes of objects. 
        \item $\Gr_{g,n}$ is a connected category, that is, any two objects can be joined by a zig-zag of morphisms.
    \end{enumerate}
\end{lem}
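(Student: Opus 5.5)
The plan is to reduce everything to Euler characteristic and valence counting, and then use a local "expansion/contraction" move for connectivity.

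\emph{Part (i).} Let $(\Gamma,\sigma)\in\Gr_{g,n}$ have $V$ vertices and $E$ edges. Connectedness and rank $g$ give $E-V=g-1$. Non-invertible morphisms contract edges and never change the rank, so each one strictly decreases $E$ (and $V$) by the same amount. The length of a chain is therefore at most $E_{\max}-E_{\min}$, and it suffices to pin down the edge counts of maximal and minimal graphs.

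For maximal graphs, I will show that a graph is maximal if and only if every unmarked vertex is trivalent and every marked vertex is univalent, or has valence zero when $\Gamma$ is a single vertex. The proof is by the usual vertex splitting: an unmarked vertex of valence $\ge4$ can be split into two vertices joined by a new edge, each of valence $\ge3$. A marked vertex of valence $\ge2$ can be split into a marked vertex of valence $1$ attached by a new edge to an unmarked vertex carrying the remaining half-edges. For this second move the unmarked vertex needs valence $\ge3$. If the marked vertex has valence exactly $2$, the new unmarked vertex has valence $3$, so this is fine. If the marked vertex has valence $\ge2$ in general, we still get valence $\ge 3$. Both splittings are sources of edge contractions, so a maximal graph cannot admit them. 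Conversely, in any preimage tree of a contraction onto such a vertex, the valence counts force the tree to be a point.

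The count then follows from the handshake lemma $2E=3(V-n)+n$, combined with $V=E-(g-1)$. This gives $2E=3E-3(g-1)-2n$, i.e. $E=3(g-1)+2n$.

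For minimal graphs, a graph admits no non-invertible morphism out of it exactly when no edge can be contracted. That happens precisely when every non-loop edge joins two marked vertices, since contracting a non-loop edge with at most one marked endpoint gives a valid object: valences only increase. In a minimal graph the unmarked vertices therefore carry only loops. Connectedness then forces either $\Gamma$ to be a single unmarked vertex with $g$ loops when $n=0$, or all vertices to be marked. Hence $V=n$ (or $V=1$), and $E=(g-1)+n$. The difference gives the depth bound $2(g-1)+n$.

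\emph{Part (ii).} The maximal edge bound shows that every object is obtained from a maximal graph by contractions, so $E\le 3(g-1)+2n$ and $V\le E+1$. This holds since every object maps to a minimal one and, dually, receives a map from a maximal one by iterated splitting. There are only finitely many graphs with bounded numbers of vertices and half-edges up to isomorphism, and finitely many markings on each.

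\emph{Part (iii).} Every object maps to a minimal graph by contracting edges. The main obstacle is connecting two different minimal graphs. I would first try to show that any two minimal graphs, in which all vertices are marked, plus loops, are linked by zig-zags of the form "split a marked vertex into marked plus unmarked, then contract differently". An edge between marked vertices $\sigma_i,\sigma_j$ can be slid along another edge through an intermediate unmarked trivalent vertex. Loops at $\sigma_i$ can be converted to edges elsewhere via an unmarked vertex joined to both. Alternatively, one can connect every object to a single normal form, a "star" graph: one unmarked vertex, or $\sigma_1$, carrying all loops, joined to each $\sigma_i$. These local moves generate the Whitehead-type moves, whose connectivity is the combinatorial input (as in Culler--Vogtmann).
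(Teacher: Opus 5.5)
Your parts (i) and (ii) are essentially the paper's argument: Euler characteristic, the handshake count for graphs whose unmarked vertices are trivalent and whose marked vertices are leaves, and the observation that a minimal graph has no non-loop edge with an unmarked endpoint. Your preimage-tree valence count for the converse direction is a nice addition.

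\textbf{The gap is in (iii).} You route through minimal graphs. There the only available moves go up, since edges between marked vertices can never be contracted. You then only sketch ``sliding'' moves and appeal to connectivity of Whitehead moves ``as in Culler--Vogtmann''. You do not specify the moves, check that they stay inside $\Gr_{g,n}$, or say how marked vertices fit into that framework. You name a star-shaped normal form, but give no way of reaching it, so as written this is a plan rather than a proof.

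The missing idea is a single explicit zig-zag to that normal form, and you already have the ingredient. Apply your marked-vertex splitting from (i) to every marked vertex of valence $\ge 2$. This gives $(\Gamma',\sigma')\to(\Gamma,\sigma)$ in which every marked vertex is a leaf. Deleting these leaves leaves a connected rank-$g$ subgraph $\Delta'$ all of whose vertices are unmarked. Collapsing a spanning tree of $\Delta'$ is then a legitimate morphism $(\Gamma',\sigma')\to(\Gamma_0,\sigma_0)$. Here $\Gamma_0$ is the wedge of $g$ circles and $n$ intervals at an unmarked centre of valence $2g+n$, with the free ends marked. So every object is joined to $(\Gamma_0,\sigma_0)$ by $(\Gamma,\sigma)\leftarrow(\Gamma',\sigma')\to(\Gamma_0,\sigma_0)$, which is exactly the paper's proof. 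No input from outer space is needed. The cases $(g,n)=(0,1),(0,2)$, where the centre would have valence below $3$, have only one object up to isomorphism.

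Two smaller points in (i)--(ii):
\begin{itemize}
\item Your termination of ``iterated splitting'' presupposes the edge bound it is used to derive. Instead, get $E\le 3(g-1)+2n$ for every object directly from $2E\ge 3(V-n)+n$ (valid once $V\ge 2$) together with $V=E-(g-1)$. This is what the depth bound actually needs: it bounds the top of every chain.
\item In the case $n=0$, $V=1$ that you isolate yourself, the rose has $E=g$ edges, not $(g-1)+n$. So the minimal edge count holds as stated only for $n\ge 1$, and the maximal formula similarly degenerates for $(g,n)=(0,1)$. The depth bound survives because it is only claimed as an upper bound, and the paper's own proof glosses over the same case.
\end{itemize}
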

\begin{proof}
    If $(\Gamma,\sigma)\in\Gr_{g,n}$ is maximal, then each labelled vertex must be a leaf and each non-leaf vertex must have valence exactly 3. Otherwise, we could increase the number of edges by at least 1. Since only labelled vertices are allowed to be leaves, the number of leaves is exactly $n$. The number of edges  in any maximal graph $\Gamma$ can then be determined by the Euler characteristic and the relation $3v+n=2e$, where $v$ is the number of non-leaf (and hence non-labelled) vertices. Since $\Gamma$ has rank $g$, we obtain \[1-g=\chi(\Gamma)=(v+n)-e=\frac{2}{3}e-\frac{1}{3}n+n-e=\frac{2}{3}n- \frac{1}{3}e\]
    Hence, the maximal number of edges is $3(g-1)+2n$, independent of $\Gamma$. 
    
    By contrast, a minimal graph $(\Delta,\tau)\in\Gr_{g,n}$ cannot contain any non-loop edges where at least one endpoint is unlabelled.  Therefore there are exactly $n$ vertices (all of which are labelled) and any maximal tree in $\Delta$ must  have $n-1$ edges. Since $\Delta$ has rank $g$, the remaining edges consist of $g$ loops. Thus, the minimal number of edges is $(g-1)+n$, and is also independent of $\Delta$. 

    Since the number of edges (and hence vertices) of any $(\Gamma,\sigma)\in \Gr_{g,n}$ is bounded and there are only finitely many possible labellings of the vertices of such a graph by $\{1,\ldots, n\}$, we conclude that there are only finitely many isomorphism classes of objects in $\Gr_{g,n}$. As each non-invertible morphism collapses at least one edge, the longest possible chain of composable non-invertible morphisms starts from a maximal graph and ends in a minimal graph, and each morphism only collapses one edge.  The number of morphisms in any longest chain is therefore the difference in the number of edges between a maximal and a minimal graph: $2(g-1)+n$. This proves (i) and (ii).

    For (iii), let $(\Gamma_0,\sigma_0)\in \Gr_{g,n}$ be the unique labelled graph consisting of a wedge of $g$ circles and $n$ intervals, where the wedge point is unlabelled, and the $n$ leaves are labelled by $\{1,\ldots, n\}$. We claim there is a zig-zag of morphisms from an arbitrary graph $(\Gamma,\sigma)$ to $(\Gamma_0,\sigma_0)$. Given $(\Gamma,\sigma)\in \Gr_{g,n}$,  we first insert edges where exactly one endpoint is unlabelled to obtain a graph $(\Gamma',\sigma')$ such that each labelled vertex is a leaf. In particular, every internal  vertex of $\Gamma'$ is unlabelled and no vertex labelled by $i\in\{1,\ldots, n\}$ separates. Let $\Delta'\subset \Gamma'$ be the (connected) subgraph obtained by deleting all leaves and their incident edges, and let   $T'\subset \Delta'$ be a maximal tree. Since $T'$ only contains edges connecting unlabeled vertices, we can collapse $T'$ to obtain $(\Gamma_0,\sigma_0)$ as a quotient. Thus we have constructed a zig-zag\[(\Gamma,\sigma)\leftarrow(\Gamma',\sigma')\rightarrow(\Gamma_0,\sigma_0)\] from the arbitrary graph $(\Gamma,\sigma)$ to the fixed graph $(\Gamma_0,\sigma_0)$, as desired.  
\end{proof}

\begin{defn}\label{def:Gr_gn-redundant}
    An edge in $(\Gamma,\sigma)$ is \emph{redundant} if it splits the graph into two components, one of which does not contain labelled vertices.
    Let $\Gr_{g,n}^{\rm nr} \subset \Gr_{g,n}$ denote the full subcategory on the graphs with no redundant edges.
\end{defn}

\begin{lem}\label{lem:No-Redudant-Htpy-Final}
    The inclusion $\Gr_{g,n}^{\rm nr} \hookrightarrow \Gr_{g,n}$ admits a right adjoint.
    In particular it is homotopy final, so any homotopy colimit over $\Gr_{g,n}$ is equivalent to the homotopy colimit over the smaller category.
\end{lem}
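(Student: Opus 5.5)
The plan is to write down the adjoint explicitly as ``collapse all redundant edges'', and then read off homotopy finality from Quillen's Theorem A. Redundant edges can only occur as bridges cutting off an unlabelled part of the graph. By the trivalence condition that part contains a cycle, so it is never a tree hanging off the graph.

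\textbf{Step 1: the collapsing set.} For $(\Gamma,\sigma)\in\Gr_{g,n}$, let $F\subseteq E_\Gamma$ be the set of redundant edges. I will show that $F$, viewed as a subgraph, is a forest. Every redundant edge is a bridge, and a set of bridges spans no cycle. Hence $q_\Gamma\colon\Gamma\to\Gamma/F$ is a $\Gr$-morphism, with preimages of vertices the components of $F$. Next I check that $(\Gamma/F,\sigma)$ lies in $\Gr^{\rm nr}_{g,n}$:
\begin{enumerate}[(a)]
\item The rank is unchanged, since only tree edges are collapsed.
\item The marking stays injective. Two marked vertices are never joined by a path of redundant edges: each redundant edge has all labels on one side, so it cannot separate two labels.
\item Unmarked vertices stay at least trivalent, because collapsing edges never lowers valence.
\item No new redundant edges appear. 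Bridges of $\Gamma/F$ are exactly images of bridges of $\Gamma$ not in $F$, and the two sides correspond with the same labels.
\end{enumerate}

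\textbf{Step 2: the key lemma.} The key lemma is that morphisms respect redundancy. Let $f\colon(\Gamma_1,\sigma_1)\to(\Gamma_2,\sigma_2)$ and let $e$ be an edge of $\Gamma_1$ that is not collapsed by $f$. Then $e$ is redundant in $\Gamma_1$ if and only if $f(e)$ is redundant in $\Gamma_2$. The proof uses that $f$ collapses trees. Therefore $\Gamma_1\setminus e$ and $\Gamma_2\setminus f(e)$ have corresponding components, and labels are transported by $f$ since $\sigma_2=f\circ\sigma_1$. From this, any morphism $\Gamma\to\Delta$ with $\Delta$ free of redundant edges must collapse every redundant edge of $\Gamma$. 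Hence it factors uniquely through $q_\Gamma$, using that $q_\Gamma$ is an epimorphism (\cref{rem:Gr-epi}). Conversely, a morphism out of a graph without redundant edges lands in a graph without redundant edges.

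\textbf{Step 3: assembling the adjunction.} Set $R(\Gamma,\sigma):=(\Gamma/F,\sigma)$. For a morphism $f$, define $R(f)$ as the map induced by the lemma, and take $q$ as the structure map. The universal property of Step~2 gives a natural bijection between morphisms $\Gamma\to\iota\Delta$ and morphisms $R\Gamma\to\Delta$. This identifies $R$ and $\iota$ as an adjoint pair, with $R\iota=\id$. The paper's hocolim convention has $d_0$ applying $X(f_1)$, and the diagrams are indexed with graph morphisms reversing inclusion of sphere systems. In that orientation, finality amounts to the following. For every $\Gamma$, the comma category of objects $\Delta\in\Gr^{\rm nr}_{g,n}$ equipped with a morphism relating $\Gamma$ and $\Delta$ must be contractible. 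By Step~2 this comma category has the initial/terminal object $q_\Gamma$, so it is contractible. Quillen's Theorem~A, equivalently the standard fact that an inclusion admitting such an adjoint is homotopy final, then gives the hocolim statement.

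\textbf{Main obstacle.} The step I expect to be delicate is the variance bookkeeping. I need to match the direction of the adjunction produced in Step~3 with the side required in the statement (a right adjoint) and with the cofinality criterion for the paper's hocolim convention. I would first fix the convention by checking the one-edge example: a rank-one loop vertex attached by a bridge to a single marked vertex, for $g=n=1$. There I would verify directly which comma category is the relevant one and that it has a terminal object.
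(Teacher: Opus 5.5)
Your proposal is correct and follows the paper's argument: collapse the forest of redundant edges, use that a non-collapsed edge is redundant if and only if its image is, and deduce finality because the relevant comma categories have an initial object. One small fix: in (c), collapsing a leaf edge does lower valence in general, so the right reason is that an unmarked vertex of $\Gamma/F$ comes from a tree of unmarked, hence at least trivalent, vertices.

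Your variance worry resolves as follows. Your bijection $\mathrm{Hom}((\Gamma,\sigma),\iota\Delta)\cong\mathrm{Hom}(R(\Gamma,\sigma),\Delta)$ makes $R$ a \emph{left} adjoint of $\iota$, with unit $q_\Gamma$ and counit the identity. This is exactly the paper's $R$ and $\nu$, so the statement's ``right adjoint'' should be read as ``$\iota$ is a right adjoint''. For the paper's covariant hocolim convention, the comma categories to check are $(\Gamma,\sigma)/\iota$, whose objects are morphisms out of $(\Gamma,\sigma)$, and your Step~2 shows that $q_\Gamma$ is initial there.
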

\begin{proof}
    Let $\rm J\colon \Gr_{g,n}^{\rm nr} \hookrightarrow \Gr_{g,n}$ be the inclusion.
    Given $(\Gamma,\sigma)\in \Gr_{g,n}$, let $T^{\rm red}(\Gamma)\subset\Gamma$ be the set of all redundant edges. By definition,  $T^{\rm red}(\Gamma)$ is a collapsible forest. Moreover,  if $f\colon (\Gamma,\sigma)\rightarrow (\Gamma',\sigma')$ is a morphism, then $f(T^{\rm red}(\Gamma))=T^{\rm red}(\Gamma')$ since if $\Gamma'\setminus e'=C_1'\sqcup C_2'$ where $C_2'$ only contains   unlabelled vertices, then $\Gamma\setminus f^{-1}(e)=f^{-1}(C_1')\sqcup f^{-1}(C_2')$ and $f^{-1}(C_2')$ also only contains unlabelled vertices. 
    Thus the map $\Gamma\mapsto \Gamma/T^{\rm red}(\Gamma)$ collapsing each component of $T^{\rm red}(\Gamma)$ separately to a point defines a functor $\rm R\colon \Gr_{g,n}\rightarrow\Gr_{g,n}^{\rm nr}$ along with a morphism $\nu_{(\Gamma,\sigma)} \colon (\Gamma,\sigma)\rightarrow \rm J R(\Gamma,\sigma)$ for every $(\Gamma,\sigma)\in \Gr_{g,n}$.
    This defines a right adjoint to $\rm J$ with unit $\nu$ and counit the identity.
    In particular, for every $(\Gamma,\sigma)$ the category $(\Gamma,\sigma)/\rm J$ has an initial object, namely $\rm R(\Gamma,\sigma)$, and hence is contractible, which proves that the inclusion is homotopy final.
\end{proof}

\section{The colimit formula for moduli spaces of \texorpdfstring{$3$}{3}-manifolds}\label{sec: the colimit formula}
Let $M$ be a compact, connected, orientable, non-prime $3$-manifold without $S^2$-boundary. The goal of this section is to write $\BDiff^+(M)$ as a homotopy colimit over a (finite) category of dual graphs $\Gr(M)$.

\subsection{Dual graphs to separating systems}\label{sec: dual graphs}
In this section we introduce the dual graph of a separating system.
These dual graphs will be labelled by oriented diffeomorphism classes of irreducible $3$-manifolds, that is, they are objects in the following category.
\begin{defn}\label{defn:Gr-mfd}
    We let $\Gr^{\rm mfd}$ denote the category of $3$-manifold graphs.
    Its objects are graphs $\Gamma$ with a labelling $l$ that assigns to each vertex an oriented diffeomorphism class of irreducible $3$-manifolds such that if a vertex $v$ is labelled by $l(v) = [S^3]$, then $v$ has valence at least $3$.
    A morphism $(\Gamma,l) \to (\Gamma',l')$ in $\Gr^{\rm mfd}$ is a morphism of graphs $f\colon \Gamma \to \Gamma'$ such that for every vertex $v \in V_{\Gamma'}$ its label is 
    \[
        l'(v) = \sharp_{w \in V_\Gamma \cap f^{-1}(v)} l(w) ,
    \]
    \emph{i.e.}~the connected sum of the labels of the vertices that get collapsed to $v$ (recall that $f^{-1}(v)$ is a tree).
\end{defn}

\begin{rem}
    Determining a morphism~$f\colon\Gamma_1 \to \Gamma_2$ in~$\Gr^{\rm mfd}$ is equivalent to choosing a forest $F\subset \Gamma_1$ such that each subtree of $F$ has at most one vertex without an~$S^3$ label, and a label-preserving graph isomorphism from the graph~$\Gamma_1/F\to \Gamma_2$ where~$\Gamma_1/F$ is obtained by collapsing each tree in~$F\subset\Gamma_1$ and labelling the resultant vertex with the unique non-$S^3$ label, or with~$S^3$ otherwise.
    The condition that each tree of $F$ has at most one non-$S^3$-label is necessary to ensure that $\Gamma_2$ is still labelled by irreducible manifolds.
\end{rem}

\begin{rem}\label{remark: oreinted diffeomorphism classes} Let $-N$ denote the manifold $N$ endowed with the reverse orientation. Then if $N$ admits an orientation reversing diffeomorphism it follows that its oriented diffeomorphism class $[N]$ satisfies $[N]=[-N]$. If $N$ does not admit an orientation reversing diffeomorphism ($N$ is said to be \emph{chiral}) then $[N]$ and $[-N]$ are distinct labels.
\end{rem}

    Let~$\Gamma \in \Gr^{\rm mfd}$ be a $3$-manifold graph.
    Pick a manifold $M_v$ in the diffeomorphism class $l(v)$ for each vertex $v$, and consider the $3$-manifold obtained by taking the disjoint union $\coprod_{v \in V} M_v$ and then attaching a $1$-handle for each edge in the graph. 
    The resulting manifold $M_\Gamma$ has prime decomposition:
    \[
        M_\Gamma \cong (\sharp_{v \in V} M_v) \sharp (S^1 \times S^2)^{\sharp b_1}
    \]
    where $b_1$ is the first Betti number of $\Gamma$.
    If $\Gamma \to \Lambda$ is a morphism in $\Gr^{\rm mfd}$ then $M_\Gamma$ and $N_\Gamma$ must have the same prime decomposition and hence they are (oriented) diffeomorphic.
\begin{defn}
    Let~$\Gr(M) \subset \Gr^{\rm mfd}$ be the full subcategory of those $3$-manifold graphs $(\Gamma, l)$ such that $M_\Gamma$ is (oriented) diffeomorphic to $M$.
    We refer to $\Gamma\in\Gr(M)$ as an \emph{$M$-graph}.
\end{defn}
    We therefore have a disjoint decomposition
    \[
        \Gr^{\rm mfd} = \coprod_{[M]} \Gr(M)
    \]
   where the coproduct runs over the oriented diffeomorphism classes of $3$-manifolds.

\begin{defn} \label{defn: dual graph}
    If $\Sigma \subset M$ is a separating system with no parallel spheres ($\Sigma\in \sepNP(M)$), then its \emph{dual graph}~$G_{\Sigma\subset M}$ is the $3$-manifold graph defined as follows:
    \begin{itemize}
        \item $V_\Sigma$ is the set of connected components of the compact manifold~$M\ca\Sigma$. Each vertex $K\in \pi_0(M\ca \Sigma)$ is labelled by the oriented diffeomorphism class $[\scl{K}]$, \emph{i.e.}~the oriented irreducible manifold corresponding to the spherical closure of $K$, up to orientation-preserving diffeomorphism. (Recall there are no spheres in $\partial M$ so taking the spherical closure is equivalent to filling in spheres of $2\Sigma$.) 
        \item $H_\Sigma$ is the set of spheres in~$\Sigma$ with a choice of coorientation. This means there are two half edges for each sphere~$S\subset \Sigma$, and we pair these using the half edge involution~$i$.
        \item Each half edge $h \in H_\Sigma$ has as root $r(h)$ the component of $M \ca \Sigma$ with boundary the sphere associated to~$h$, such that the coorientation is inward-pointing with respect to the connected component.
        An example is shown in \cref{fig:dual graphs}.
    \end{itemize}
\end{defn}

\begin{rem}
When the ambient manifold $M$ containing $\Sigma$ is clear from the context, we will sometimes just write $G_\Sigma$ to denote the dual graph.
\end{rem}

\begin{figure}[h!]
    \centering
    \resizebox{.9\linewidth}{!}{\import{}{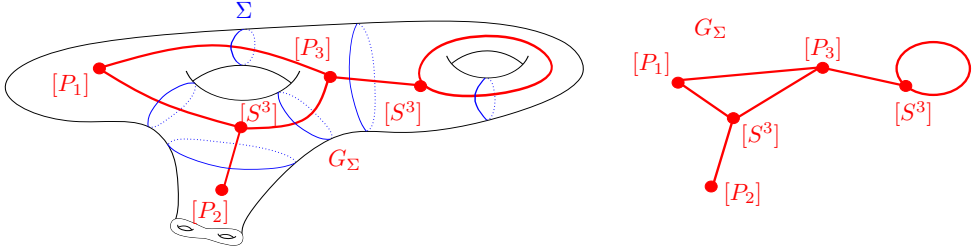}}
    \caption{A separating system $\Sigma \in \sepNP(M)$ and the corresponding dual graph.}
    \label{fig:dual graphs}
\end{figure}

\begin{lem}\label{lem:G-essentially-surjective}
    A $3$-manifold graph $\Gamma \in \Gr^{\rm mfd}$ appears as the dual graph of a separating system in $M$ if and only if it is an $M$-graph.
\end{lem}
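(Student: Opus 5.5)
We prove the two directions separately. For the forward direction, let $\Sigma \in \sepNP(M)$ and $\Gamma = G_{\Sigma\subset M}$. We first check that $\Gamma$ is an object of $\Gr^{\rm mfd}$. Each vertex label $[\scl{K}]$ is irreducible because $\Sigma$ is a separating system. If some $\scl{K}\cong S^3$, then $K$ is a punctured sphere. We then rule out valence $\le 2$. Valence $0$ cannot occur, since $M$ is not $S^3$ and is connected. Valence $1$ would mean the corresponding sphere bounds a ball, contradicting that it is reducing. Valence $2$ gives two cases. If the two half-edges belong to distinct spheres, $K \cong S^2\times[0,1]$ and those two spheres are parallel, which is excluded in $\sepNP$. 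If they belong to the same sphere, $K$ would be all of $M$ minus that sphere, so $M\cong S^1\times S^2$; this is also excluded, as that sphere would be non-parallel to itself only in that degenerate situation. So $\Gamma$ lies in $\Gr^{\rm mfd}$.

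To see that $M_\Gamma\cong M$, we reassemble $M$ from $M\ca\Sigma$. Regluing along one sphere that joins two distinct components is a connected sum of the capped pieces. Regluing along a sphere whose two sides lie in the same component amounts to attaching a $1$-handle to the capped manifold, i.e.\ taking connected sum with $S^1\times S^2$. Inducting over the spheres, we get
\[
M\cong \big(\sharp_{v}\scl{K_v}\big)\sharp (S^1\times S^2)^{\sharp b_1(\Gamma)} ,
\]
which by construction is $M_\Gamma$. Hence $\Gamma$ is an $M$-graph.

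For the converse, let $\Gamma$ be an $M$-graph. Realise $M_\Gamma$ concretely: choose $M_v$ in the class $l(v)$ and remove $|H_v|$ disjoint open balls from each, indexing the resulting boundary spheres by $H_v$. Then glue the sphere indexed by $h$ to the sphere indexed by $i(h)$ by an orientation-reversing diffeomorphism. The image $\Sigma_\Gamma$ of these spheres is a sphere system in $M_\Gamma$ with $M_\Gamma\ca\Sigma_\Gamma=\coprod_v (M_v\setminus\sqcup\interior{D^3})$, and its dual graph is $\Gamma$ by construction. Since $M_\Gamma\cong M$ by assumption, transporting $\Sigma_\Gamma$ along an orientation-preserving diffeomorphism gives a sphere system in $M$ with dual graph $\Gamma$; orientation preservation is what makes the vertex labels agree as oriented classes.

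It remains to show $\Sigma_\Gamma\in\sepNP(M)$, and this is the main obstacle. Irreducibility of the capped pieces holds because the labels are irreducible. Each sphere must be reducing. If a sphere $S$ bounded a ball in $M$, one side of $S$ would be a ball containing some vertices, and that ball would be built from those $M_v$ with trivial total connected sum. This forces all of those $M_v$ to be $S^3$ and the subgraph on that side to be a tree, by counting $S^1\times S^2$ factors via uniqueness of prime decomposition. A finite tree all of whose vertices are $S^3$-labelled and at least trivalent, apart from the single edge to $S$, must have a leaf, which is a contradiction. Finally, non-parallelism: two distinct spheres cobounding $S^2\times[0,1]$ would similarly force a piece that is a punctured $S^3$ of valence $2$, again a contradiction. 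To make these ``forcing'' steps precise, the plan is to use Kneser--Milnor uniqueness together with the facts that a side of a sphere in $M$ is a union of pieces and that connected sums behave additively on prime factors.
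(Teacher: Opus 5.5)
Your proof is correct and follows the paper's argument for the substantive direction: realise $M_\Gamma$ by $1$-handle attachments, take the belt spheres $\Sigma_\Gamma$, and transport them to $M$ along an oriented diffeomorphism $M_\Gamma\cong M$. You additionally spell out the forward direction and check that $\Sigma_\Gamma$ really lies in $\sepNP$ (reducing, pairwise non-parallel spheres, via Kneser--Milnor plus leaf counting), which the paper leaves implicit; in the parallel case the forced $S^3$-vertex may have valence $1$ or $2$ rather than exactly $2$, but either contradicts trivalence.
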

\begin{proof}
    The manifold $M_\Gamma$ was constructed by $1$-handle attachments.
    If we let $\Sigma \subset M_\Gamma$ be the collection of belt spheres for each $1$-handle, then the dual graph of $\Sigma$ is canonically isomorphic to $\Gamma$.
    By assumption we have $\varphi\colon M_\Gamma \cong M$, so $\Gamma$ is isomorphic to the dual graph $G_{\varphi(\Sigma) \subset M}$.
\end{proof}

\begin{defn}\label{defn:SepMfd}
     Let $\SepMfd$ denote the following category.
     Objects are tuples $(M,\Sigma)$ of a connected, compact, oriented $3$-manifold $M$ without spherical boundary, not diffeomorphic to $S^1 \times S^2$, and a separating system $\Sigma \subset M$ in $\sepNP(M)$.
     Morphisms $(M,\Sigma) \to (N,\Upsilon)$ are orientation-preserving diffeomorphisms $\varphi\colon M \cong N$ satisfying $\varphi(\Sigma) \supseteq \Upsilon$.
\end{defn}

\begin{defn}\label{defn:dual-graph-functor}
    Every morphism $\varphi\colon (M, \Sigma) \to (N, \Upsilon)$ in $\SepMfd$ yields a graph map $G(\varphi)\colon G_{\Sigma \subset M} \to G_{\Upsilon \subset N}$ as follows.
    (An example is shown in \cref{fig:M-graphs}.)
    A vertex in $G_{\Sigma \subset M}$ is a connected component $U \subset M \ca \Sigma$ and $G(\varphi)$ sends it to the unique connected component $U' \subset N \ca \Upsilon$ such that $\varphi(\interior{U}) \subset \interior{U'}$ as submanifolds of $M$.
    A half-edge in $G_{\Sigma \subset M}$ is a connected component $S \subset \Sigma$ together with a coorientation.
    There are two cases:
    if $\varphi(S) \subset \Upsilon$, then the graph map $G(\varphi)$ sends $S$ to the half edge $\varphi(S)$ (with its induced coorientation) in $G_{\Upsilon \subset N}$,
    otherwise it sends it to the unique vertex $U' \subset N \ca \Upsilon$ such that $\varphi(S) \subset U'$.
\end{defn}

\begin{lem}
    The association $(M,\Sigma)\mapsto G_{\Sigma\subset M}$ yields a well-defined functor
    \[
        G\colon \SepMfd \longrightarrow \Gr^{\rm mfd}.
    \] 
\end{lem}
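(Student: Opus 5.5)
We have to check three things: that $G(\varphi)$ is a morphism in $\Gr$, that it respects labels so it is a morphism in $\Gr^{\rm mfd}$, and that $G$ preserves identities and composition.

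\textbf{Step 1: $G(\varphi)$ is a graph morphism.} Write $\Upsilon' := \varphi^{-1}(\Upsilon) \subseteq \Sigma$. Since $\varphi$ induces a diffeomorphism $M\ca\Upsilon' \cong N\ca\Upsilon$ compatible with the dual graphs, $G(\varphi)$ is the composite of two maps:
\begin{itemize}
\item a canonical graph isomorphism $G_{\Upsilon'\subset M}\cong G_{\Upsilon\subset N}$;
\item the ``forgetting spheres'' map $G_{\Sigma\subset M}\to G_{\Upsilon'\subset M}$ defined in \cref{defn:dual-graph-functor}.
\end{itemize}
So it suffices to treat the case $\varphi=\id$ and $\Upsilon\subseteq\Sigma$.

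Here vertices go to vertices: each component of $M\ca\Sigma$ lies in a unique component of $M\ca\Upsilon$. A cooriented sphere $S\subseteq\Upsilon$ goes to itself, which is compatible with the involution $i$ (reversing coorientation). A sphere $S\subseteq\Sigma\setminus\Upsilon$ is sent, with either coorientation, to the component $U'$ containing it. This agrees with the image of its root, since the root component is adjacent to $S$ and so lies in the same $U'$. Preimages of half-edges are singletons by construction.

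The substantive condition is that each fibre $f^{-1}(U')$ is a tree. This fibre is exactly the dual graph of the spheres $\Sigma\cap\interior{U'}$ inside $U'$, with vertices the components of $U'\ca(\Sigma\cap U')$. It is connected because $U'$ is connected.

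Acyclicity is the main obstacle. I would argue it as follows:
\begin{itemize}
\item A cycle in this subgraph gives a sphere $S\subseteq\Sigma\cap U'$ that is non-separating in $U'$.
\item Then $\scl{U'}$ has an $S^1\times S^2$ summand. This follows either from the Betti-number count $M_\Gamma\cong(\sharp M_v)\sharp(S^1\times S^2)^{\sharp b_1}$ applied to $\scl{U'}$, or directly because a non-separating sphere is reducing.
\item But $\Upsilon$ is a separating system, so $\scl{U'}$ must be irreducible.
\end{itemize}
This contradiction shows the fibre is a tree.

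\textbf{Step 2: labels.} Apply the same dual-graph count to the tree $T=f^{-1}(U')$. Because $T$ is the dual graph of $\Sigma\cap U'$ in $U'$, we get
\[
\scl{U'}\cong\sharp_{v\in T}\scl{K_v}\sharp(S^1\times S^2)^{\sharp b_1(T)}=\sharp_{v\in T}\scl{K_v},
\]
using $b_1(T)=0$. Concretely, gluing the pieces along a tree-pattern of spheres is an iterated connected sum. This is exactly the label condition $l'(U')=\sharp\, l(w)$. The valence condition on $S^3$-labelled vertices holds automatically for objects in the image: $\Sigma\in\sepNP$ has no parallel spheres and no sphere bounds a ball, so a punctured $S^3$ component has at least three boundary spheres. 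Hence $G_\Sigma$ is an object of $\Gr^{\rm mfd}$, which is also needed for $G$ to be defined on objects.

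\textbf{Step 3: functoriality.} For the identity morphism, $G(\id)=\id$ is clear. For composites $(M,\Sigma)\xrightarrow{\varphi}(N,\Upsilon)\xrightarrow{\psi}(L,\Lambda)$, compare $G(\psi\varphi)$ and $G(\psi)G(\varphi)$:
\begin{itemize}
\item On vertices, both send a component $U$ to the unique component of $L\ca\Lambda$ containing $\psi\varphi(\interior U)$.
\item On a half-edge $S$, both send it to the half-edge $\psi\varphi(S)$ if $\psi\varphi(S)\subseteq\Lambda$.
\item Otherwise both send $S$ to the component of $L\ca\Lambda$ containing $\psi\varphi(S)$. If $\varphi(S)$ already went to a vertex $U'$ of $N\ca\Upsilon$, then $G(\psi)(U')$ is the component containing $\psi(U')\supseteq\psi\varphi(S)$, so the two descriptions agree.
\end{itemize}
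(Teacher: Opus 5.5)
Your proof is correct and follows essentially the paper's route: verify the graph-morphism axioms directly, obtain both the tree condition on fibres and the label identity from irreducibility of $\scl{U'}$, and check functoriality by inspection. For acyclicity the paper uses only the count $[\scl{U'}]=(\sharp_i[\scl{U_i}])\sharp[S^1\times S^2]^{\sharp b_1}$, and your direct non-separating-sphere argument is a slightly cleaner alternative.

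You also verify the valence condition on $S^3$-labelled vertices, which the paper's proof skips. Your stated reasons there (no parallel spheres, no sphere bounding a ball) are not quite enough: you also need the hypothesis $M\not\cong S^1\times S^2$ to exclude a single non-separating sphere cutting $M$ into $S^2\times[0,1]$. Moreover, the claim genuinely fails for the degenerate object $(S^3,\emptyset)$, whose dual graph is a valence-$0$ $S^3$-vertex; the paper's definitions overlook this edge case as well.
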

\begin{proof}
    First we show that $G(\varphi)$ is compatible with the root map and half-edge involution.
    Let $S \subset \Sigma$ be some component with a coorientation pointing into $\interior{U}$ for some component $U \subset M \ca \Sigma$, \emph{i.e.}~$S$ is a half-edge incident at the vertex $U$ in $G_{\Sigma \subset M}$.
    If $\varphi(S) \not\subset \Upsilon$ then both $S$ and $U$ are contained in the same connected component of $N\setminus \Upsilon$ and hence the half edge $S$ and the vertex $U$ are sent to the same vertex of $G_{\Upsilon \subset N}$.
    If, on the other hand, $\varphi(S) \subset \Upsilon$, then $S$ is mapped to the half-edge $\varphi(S)$, whose coorientation points into some component $U' \subset N \ca \Sigma$ and we will have $\varphi(U) \subseteq \interior{U'}$. 

    For $G(\varphi)$ to be a well-defined map in $\Gr^{\rm mfd}$ we have to show that the preimage of each vertex is a tree and that it is compatible with the labelling by diffeomorphism classes.
    If $v \in G_{\Upsilon \subset N}$ is a vertex corresponding to a connected component $U' \subset N \ca \Upsilon$, then $G(f)^{-1}(v)$ contains exactly those vertices corresponding to components $U_i \subset M \ca \Sigma$ with $\varphi(\interior{U_i}) \subseteq \interior{U'}$.
    Therefore, $U'$ can be obtained by gluing the $U_i$ along boundary spheres and we must have $[\widehat{U'}] = (\sharp_i [\widehat{U_i}]) \sharp [S^1 \times S^2]^{\sharp b_1}$ for $b_1$ the first Betti number of the graph $G(f)^{-1}(v)$.
Since we know that $\smash{\widehat{U'}}$ is irreducible, we must have $b_1=0$ and $[\widehat{U_i}]=[S^3]$ for all but at most one $i$.
    Therefore, $G(f)$ only collapses forests and moreover the labelling satisfies $l'(v) = \sharp_{w \in G(f)^{-1}(v)} l(w)$, as required.

    Finally, inspecting the definition shows that $G$ is functorial, \emph{i.e.}~that $G(f_2 \circ f_1) = G(f_2) \circ G(f_1)$.
\end{proof}

\begin{figure}[h!]
    \centering
     \resizebox{\linewidth}{!}{\import{}{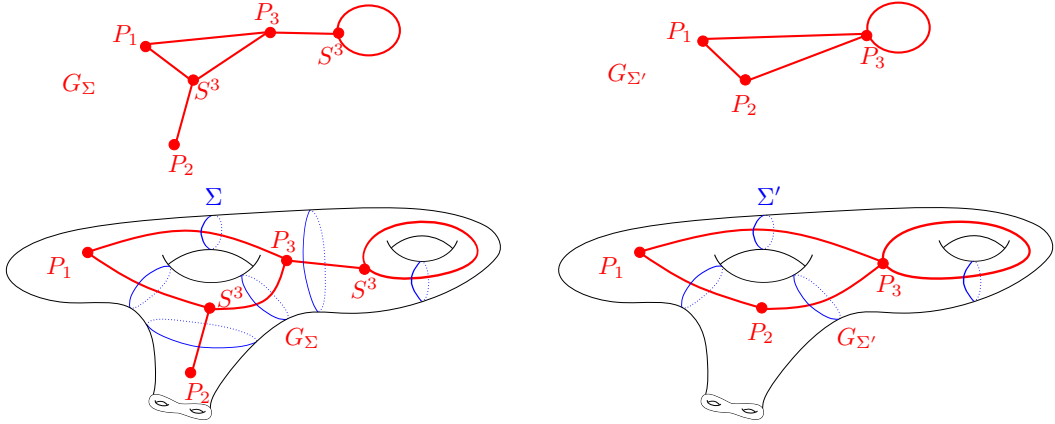}}
    \caption{The morphism $G_{\Sigma}\to G_{\Sigma'}$ induced by $\Sigma'\subset \Sigma \in \sepNP(M)$.}
    \label{fig:M-graphs}
\end{figure}

\begin{lem}\label{lem:graph-functor-is-full}
    The functor $G\colon \SepMfd \to \Gr^{\rm mfd}$ is full:
    for all $(M, \Sigma)$ and $(N, \Upsilon)$ in $\SepMfd$ and every graph morphism $f\colon G_{\Sigma \subset M} \to G_{\Upsilon \subset N}$ there is an orientation-preserving diffeomorphism $\varphi\colon M \cong N$ such that $\varphi(\Sigma) \supseteq \Upsilon$ and such that $G(\varphi) = f$.
    Moreover, $G$ is essentially surjective.
\end{lem}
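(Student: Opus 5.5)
Essential surjectivity is \cref{lem:G-essentially-surjective}: every $\Gamma \in \Gr^{\rm mfd}$ lies in $\Gr(M_\Gamma)$. Its belt spheres $\Sigma \subset M_\Gamma$ have dual graph $\Gamma$. We only need $\Sigma \in \sepNP(M_\Gamma)$ and $M_\Gamma \not\cong S^1\times S^2$. The first holds because the pieces are punctured irreducible manifolds, and the valence condition on $S^3$-vertices rules out parallel spheres and inessential spheres. For the second, we exclude (or treat separately) the single graph consisting of one $S^3$-vertex with a loop; it is not allowed anyway, since that vertex would have valence $2$.

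For fullness, write $f\colon G_{\Sigma\subset M} \to G_{\Upsilon\subset N}$ as the collapse of a forest $F \subset G_{\Sigma}$ followed by a label-preserving isomorphism $\bar f\colon G_\Sigma/F \cong G_\Upsilon$. Let $\Sigma_F \subset \Sigma$ be the spheres corresponding to edges not in $F$. Then $\Sigma_F$ is again a separating system with no parallel spheres, and the inclusion $\id\colon (M,\Sigma) \to (M,\Sigma_F)$ is a morphism of $\SepMfd$. Its image $G(\id)$ is exactly the collapse $G_\Sigma \to G_\Sigma/F$, directly from \cref{defn:dual-graph-functor}.

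To see that $\Sigma_F$ is a separating system, note that each component of $M\ca\Sigma_F$ is obtained by gluing the pieces in one tree of $F$. This tree has at most one non-$S^3$ label, so its spherical closure is the connected sum of the labels, which is irreducible. It remains to check that the spheres of $\Sigma_F$ are reducing and non-parallel. Non-parallelism holds because $G_\Sigma/F \cong G_\Upsilon$ satisfies the valence condition. Indeed, two parallel spheres would cobound an $S^2\times I$, which is a punctured $S^3$ with two boundary spheres and so gives a bivalent $S^3$-vertex; a non-reducing sphere would give a univalent $S^3$-vertex.

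So we have reduced to the case where $f$ is an isomorphism of labelled graphs $G_{\Sigma\subset M} \cong G_{\Upsilon\subset N}$. We must build $\varphi\colon M\cong N$ with $\varphi(\Sigma)=\Upsilon$ and $G(\varphi) = f$. For each vertex $U \subset M\ca\Sigma$ with image $U' = f(U)$, the labels agree, so $\scl{U} \cong \scl{U'}$ orientation-preservingly. The half-edges at $U$ are the boundary spheres of $U$ coming from $2\Sigma$, and $f$ specifies a bijection between these and the corresponding spheres of $U'$.

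The main step is to choose a diffeomorphism $U \cong U'$ that realises this bijection on boundary spheres and restricts to prescribed maps on them. We pick any $\psi\colon \scl{U}\cong\scl{U'}$ and then compose with a diffeomorphism of $\scl{U'}$ that carries the discs $\psi(\text{caps})$ to the caps of $U'$ in the prescribed bijection. Such a diffeomorphism exists because $\Diff^+$ of a connected $3$-manifold acts transitively on ordered configurations of disjoint oriented discs in its interior (disc theorem / \cref{lem:embedding complement of discs}). Restricting gives an orientation-preserving diffeomorphism $U\cong U'$ matching the boundary spheres as dictated by $f$ on half-edges.

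To glue these pieces, the two maps on each sphere $S \subset \Sigma$ (one from each side) must agree. Because $f$ respects the involution, both sides land on the same sphere of $\Upsilon$. By Smale, $\Diff^+(S^2)$ is connected, so we can isotope one side's map near the boundary collar to match the other. Gluing yields $\varphi$ with $\varphi(\Sigma)=\Upsilon$ and $G(\varphi)=f$, and orientations are consistent since all pieces are orientation-preserving.

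The step I expect to be the main obstacle is the one involving coorientations. When a loop edge or a non-trivial automorphism swaps the two coorientations of a sphere, $\varphi$ must reverse that sphere's coorientation as prescribed by $f$. The disc-transitivity argument handles this, since we prescribe which boundary sphere of $U$ goes to which boundary sphere of $U'$ half-edge by half-edge, including the two sides of a loop. Composing $\varphi$ with $\id\colon(M,\Sigma)\to(M,\Sigma_F)$ then realises a general $f$.
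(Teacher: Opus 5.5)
Your proposal is correct and is essentially the paper's proof: discard the spheres of collapsed edges to reduce to an isomorphism $f_0$ (the paper does not check $\Sigma_0\in\sepNP(M)$ separately, since this follows once $\varphi(\Sigma_0)=\Upsilon$), then build $\varphi$ piece by piece using connectedness of $\emb^+(\sqcup_k D^3,-)$, and glue. The only difference is the order of the steps: the paper first fixes an orientation-preserving diffeomorphism $\psi^\varepsilon$ between collars of $\Sigma_0$ and $\Upsilon$ that realises $f_0$ on cooriented spheres, and then isotopes each $\eta\colon \scl{U}\cong\scl{V}$ to agree with $\psi^\varepsilon$ on the capped collars; this makes the glued map smooth automatically and absorbs your separate Smale-based matching step on each sphere.
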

\begin{proof}
    Let $\Sigma_0 \subseteq \Sigma$ be the union of those spheres for which the corresponding edge of $G_{\Sigma \subset M}$ is not collapsed by $f$.
    The dual graph $G_{\Sigma_0 \subset M}$ is hence obtained from $G_{\Sigma \subset M}$ by collapsing exactly those edges that are collapsed by $f$, and $f$ induces an isomorphism of labelled graphs $f_0\colon G_{\Sigma_0 \subset M} \cong G_{\Upsilon \subset N}$.
    It will suffice to show that there is a diffeomorphism $\varphi\colon M \cong N$ with $\varphi(\Sigma_0) = \Upsilon$ and $G(\varphi)\colon (M,\Sigma_0)\to (N,\Upsilon)) = f_0$.
    (Indeed, such a $\varphi$ also satisfies $\varphi(\Sigma) \supseteq \varphi(\Sigma_0) = \Upsilon$ and $G(\varphi\colon (M,\Sigma)\to (N,\Upsilon)) = f$.)

    The graph isomorphism $f_0\colon G_{\Sigma_0 \subset M} \cong G_{\Upsilon \subset N}$ induces a bijection between the sets of edges $\pi_0(\Sigma_0) \cong \pi_0(\Upsilon)$, and we can find a diffeomorphism $\psi\colon \Sigma_0 \cong \Upsilon$ such that $\pi_0(\psi)$ is exactly that bijection.
    Choose collars of $\Sigma_0 \subset \Sigma_0^\varepsilon$ in $M$ and $\Upsilon \subset \Upsilon^\varepsilon$ in $N$. 
    (That is, we have $\Sigma_0^\varepsilon \cong \Sigma_0 \times [-1,1]$ and $\Upsilon^\varepsilon \cong \Upsilon \times [-1,1]$.)
    Then we can extend $\psi$ to an orientation-preserving diffeomorphism $\psi^\varepsilon\colon \Sigma_0^\varepsilon \cong \Upsilon^\varepsilon$.
    Up to composing with reflections where necessary, we may assume that the map $\psi^\varepsilon$ induces on the set of spheres with coorientation is exactly the bijection that $f_0$ induces on the set of half-edges.

    We need to show that $\psi^\varepsilon$ extends to a diffeomorphism $\varphi\colon M \cong N$. (If it does, $\varphi$ will automatically induce the map $f_0$ on dual graphs because $f_0$ is determined by what it does on half-edges.)
    For each connected component $U \subset M\ca \Sigma_0$ corresponding to some vertex $v_U$ in $G_{\Sigma_0 \subset M}$, let $V \subset N \ca \Upsilon$ be the component  corresponding to the vertex $f_0(v_U)$ in $G_{\Upsilon \subset N}$.
    Because $f_0$ is an isomorphism in $\Gr^{\rm mfd}$, it preserves the labelling by oriented diffeomorphism classes (of spherical closures) and in particular $[\scl{U}]$ must agree with $[\scl{V}]$, \emph{i.e.}~there exists an orientation preserving diffeomorphism $\eta\colon \scl{U} \cong \scl{V}$.

    The preimage of $\Sigma_0^\varepsilon$ under the map $U \to M$ (which on the interior is the inclusion) defines a collection of annuli $A \subset U$ that collar the spherical boundaries of $U$.
    There is one such annulus for each half-edge incident at $v_U$.
    Similarly, the preimage of $\Upsilon^\varepsilon$ under the map $V \to N$ defines a collection of annuli $A'\subset V$ collaring the spherical boundaries.
    Because $f_0$ is a graph map it preserves the incidence of half-edges at vertices and thus $\psi^\varepsilon$ yields a diffeomorphism $A \cong A'$.
    In the spherical closure, we can complete the annuli to discs and $\psi^\varepsilon$ to a diffeomorphisms between these collections of discs.
    Now isotope $\eta \colon \scl{U} \cong \scl{V}$ in such a way that it restricts to this prescribed diffeomorphism on discs.
    (This follows because $\emb^+(\sqcup_k D^3,M)$ is connected for all $k$.)
    In particular, after the isotopy $\eta$ will restrict to a diffeomorphism $\varphi_U\colon U \cong V$ that agrees with $\psi^\varepsilon$ on overlaps.
    Performing this construction for all $U \in \pi_0(M \ca \Sigma_0)$ constructs the desired diffeomorphism $\varphi$.

    Essential surjectivity follows from \cref{lem:G-essentially-surjective}.
\end{proof}

By restricting the functor $G$ to the automorphisms of an object $(M,\Sigma)\in \SepMfd$ we obtain:
\begin{cor}\label{cor:SepMfd-Grmfd-automorphisms}
The functor $G\colon \SepMfd\to\Gr^{\rm mfd}$ induces a surjective group homomorphism
\begin{equation}\label{eqn:Aut-GSigma Gr mfd}
    \gamma_\Sigma\colon\Diff^+(M, \Sigma)\to\Aut_{\Gr^{\rm mfd}}(G_\Sigma).
\end{equation} 
\end{cor}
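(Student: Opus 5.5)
The argument is a direct specialisation of \cref{lem:graph-functor-is-full} to endomorphisms of a single object. First we check that $\gamma_\Sigma$ is well defined as a group homomorphism.

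An element $\varphi \in \Diff^+(M,\Sigma)$ is an orientation-preserving diffeomorphism $M \cong M$ with $\varphi(\Sigma) = \Sigma$, so in particular $\varphi(\Sigma) \supseteq \Sigma$. It is therefore a morphism $(M,\Sigma) \to (M,\Sigma)$ in $\SepMfd$. Conversely, an endomorphism of $(M,\Sigma)$ in $\SepMfd$ is a diffeomorphism with $\varphi(\Sigma)\supseteq \Sigma$. Since $\varphi$ is a bijection on the finitely many components of $\Sigma$ and sends spheres to spheres, this forces $\varphi(\Sigma)=\Sigma$. Thus $\Diff^+(M,\Sigma) = \mathrm{End}_{\SepMfd}(M,\Sigma)$, and every such endomorphism is invertible in $\SepMfd$.

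Applying the functor $G$ gives a map $\Diff^+(M,\Sigma) \to \mathrm{End}_{\Gr^{\rm mfd}}(G_\Sigma)$. Functoriality ($G(\varphi\circ\psi)=G(\varphi)\circ G(\psi)$ and $G(\id)=\id$) shows that it is a monoid homomorphism. Because each $\varphi$ is invertible with inverse in $\Diff^+(M,\Sigma)$, each $G(\varphi)$ is invertible. Hence the map lands in $\Aut_{\Gr^{\rm mfd}}(G_\Sigma)$ and is a group homomorphism $\gamma_\Sigma$.

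For surjectivity, take $f \in \Aut_{\Gr^{\rm mfd}}(G_\Sigma)$. By fullness (\cref{lem:graph-functor-is-full}) with $(N,\Upsilon)=(M,\Sigma)$, there is an orientation-preserving $\varphi\colon M\cong M$ with $\varphi(\Sigma)\supseteq\Sigma$ and $G(\varphi)=f$. By the argument above, $\varphi(\Sigma)=\Sigma$, so $\varphi\in\Diff^+(M,\Sigma)$ and $\gamma_\Sigma(\varphi)=f$.

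The only step needing care is the equality $\varphi(\Sigma)=\Sigma$. The direct route is the bijection on components. Alternatively, one can note that an automorphism $f$ collapses no edges, so in the proof of \cref{lem:graph-functor-is-full} we have $\Sigma_0=\Sigma$ and the constructed $\varphi$ satisfies $\varphi(\Sigma_0)=\Upsilon=\Sigma$ directly. Either argument suffices.
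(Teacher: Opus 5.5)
Your proposal is correct and takes the same approach as the paper, which obtains the corollary by restricting the full functor $G$ of \cref{lem:graph-functor-is-full} to automorphisms of $(M,\Sigma)$. Your extra check that $\varphi(\Sigma)\supseteq\Sigma$ forces $\varphi(\Sigma)=\Sigma$ fills in a detail the paper leaves implicit: both sides have the same finite number of sphere components, and each component of $\Sigma$ must be an entire component of $\varphi(\Sigma)$.
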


\subsection{Separating systems indexed by graphs}
In light of \cref{lem:graph-functor-is-full}, one can regard $\Gr(M)$ as encoding, at least combinatorially, all of the ways to build $M$ by attaching 1-handles to the irreducible factors of $M$. In order to get a model for $\BDiff^+(M)$, we will need to define a functor from $\Gr(M)$ to a collection of spaces equipped with a $\Diff^+(M)$-action.

The natural target for $\Gamma\in \Gr(M)$ will be a space parametrising all  separating systems whose dual graph is identified with $\Gamma$. To see how $\Diff^+(M)$ acts, observe that for any separating system $\Sigma$ and $\varphi\in \Diff^+(M)$, $\varphi\colon (M, \Sigma) \to (M,\varphi(\Sigma)=\Upsilon$) is a morphism in $\mathrm{SepMfd}$, so under the functor $G$ of \cref{defn:dual-graph-functor} we obtain an induced isomorphism of dual graphs $\varphi_G\colon G_\Sigma\cong G_\Upsilon$. 
In particular, if $G_\Sigma\cong \Gamma$ then $G_\Upsilon\cong \Gamma$. 

\begin{defn}\label{defn:Gamma-separating-system}
    For $\Gamma \in \Gr(M)$ we define a \emph{$\Gamma$-separating system} to be a separating system $\Sigma \in \sep^\Gamma(M)$ whose dual graph is (identified with) $\Gamma$.
    To be precise, we define the set of $\Gamma$-separating systems as
    \[
        \sep^\Gamma(M) := \{ (\Sigma, \alpha) \;|\; \Sigma \in \sepNP(M) \text{ and } \alpha\colon G_\Sigma \cong \Gamma\} 
    \]
    where $\Sigma$ is a separating system and $\alpha$ an isomorphism of labelled graphs.
    The group $\Diff^+(M)$ acts on $\sep^\Gamma(M)$ by
    $\varphi.(\Sigma, \alpha) = (\varphi(\Sigma), \alpha \circ \varphi_G^{-1})$
    where $\varphi_G \colon G_{\Sigma} \cong G_{\varphi(\Sigma)}$ is the isomorphism that $\varphi$ induces on dual graphs.
    (This action is transitive by \cref{lem:graph-functor-is-full}.)
    We topologise $\sep^\Gamma(M)$ in the unique way such that the orbit map,
    \[
        \Diff^+(M) \twoheadrightarrow \sep^\Gamma(M),
    \]
    defined by acting on some (and hence any) point, is a quotient map. 
\end{defn}

\begin{defn}\label{defn:Sep-functor} 
    For a morphism $f\colon \Gamma \to \Lambda$ in $\Gr(M)$ we define
    \[
        f_* \colon \sep^\Gamma(M) \to \sep^\Lambda(M), \qquad
        (\Sigma, \alpha) \mapsto (\Sigma', \alpha')
    \]
    where $\Sigma' \subseteq \Sigma$ contains all those spheres corresponding to edges in $G_\Sigma \overset{\alpha}{\cong} \Gamma$ that are not collapsed by $f$
    and $\alpha'\colon G_{\Sigma'} \cong \Lambda$ is the unique isomorphism such that the following square commutes
\[\begin{tikzcd}
	{G_\Sigma} & \Gamma \\
	{G_{\Sigma'}} & \Lambda
	\arrow[from=1-1, to=2-1]
	\arrow["\alpha", from=1-1, to=1-2]
	\arrow["f", from=1-2, to=2-2]
	\arrow["{\alpha'}", from=2-1, to=2-2]
\end{tikzcd}\]
    where the left hand map is the image of the morphism $\id\colon(M,\Sigma)\to (M, \Sigma') \in \SepMfd$ under the functor $G$ from \cref{defn:dual-graph-functor}.
    This defines a functor $\sep^{(-)}(M) \colon \Gr(M) \to \Diff^+(M)\text{-}\Top$ from $M$-graphs to topological spaces with a continuous $\Diff^+(M)$-action.
\end{defn}

\begin{rem}\label{rem:straightening}
    From a (higher) category theory perspective, we can think of $\sep^\Gamma(M)$ as the fibre of the restricted functor
    \[
        G_{|\sep(M)}\colon \sep(M) \longrightarrow \Gr(M) \subset \Gr^{\rm mfd}.
    \]
    This functor is (equivalent to) a left fibration and hence sending an object $\Gamma \in \Gr(M)$ to its fibre $G_{|\sep(M)}^{-1}(\Gamma)$ defines a functor from $\Gr(M)$ to the category of spaces.
    (To make this precise, we would have to turn our topological categories into $\infty$-categories.)
    Moreover, ``straightening'' a left fibration in this way always yields a diagram whose colimit is exactly the weak homotopy type of the total space \cite[Corollary 3.3.4.6]{HTT}.
    This analogy hence lines up with \cref{prop:sep-contractible} where we will see that the homotopy colimit of $\sep^{(-)}(M)$ is equivalent to $\|\sep(M)\|$.
\end{rem}

The next lemma highlights some topological properties of $\sep^\Gamma(M)$ that we will need.

\begin{lem} \label{lem: Sep-Gamma retractile}For any $\Gamma\in \Gr(M)$: 
    \begin{enumerate}
        \item The projection map $p_\Gamma\colon \sep^\Gamma(M)\to \sepNP(M)$ sending $(\Sigma,\alpha)\mapsto \Sigma$ is a covering whose fibre at a separating system $\Sigma$ is the set of graph isomorphisms $\Gamma \cong G_\Sigma$.
        (In particular the fibre is empty when $G_\Sigma$ is not isomorphic to $\Gamma$.)
        \item $\sep^\Gamma(M)$ is $\Diff^+(M)$-locally retractile.
    \end{enumerate}
\end{lem}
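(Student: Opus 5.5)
The plan is to deduce both statements from \cref{lem:Retractile Covering}, applied to the equivariant projection $p_\Gamma$. The setup is:
\begin{itemize}
\item The target of $p_\Gamma$ is the $\Diff^+(M)$-orbit in $\sepNP(M)$ of a separating system $\Sigma_0$ with $G_{\Sigma_0}\cong\Gamma$.
\item By \cref{prop:sepNP-Properties} and \cref{rem:Finite-Index-Retractile}, $\sepNP(M)$ is $\Diff^+(M)$-locally retractile.
\item An orbit is a union of path components, since the orbit map is a fibre bundle and $\sepNP(M)$ has finitely many orbits. Hence the orbit is open and inherits local retractility.
\item Spaces of submanifolds are locally contractible, so the orbit is locally path connected.
\end{itemize}

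First I check that $p_\Gamma$ is continuous and equivariant with the claimed fibres. The orbit map $\Diff^+(M)\to\sep^\Gamma(M)$ composed with $p_\Gamma$ is the orbit map of $\Sigma_0$ in $\sepNP(M)$, which is continuous. Since $\sep^\Gamma(M)$ carries the quotient topology, $p_\Gamma$ is continuous. Equivariance holds by the definition $\varphi.(\Sigma,\alpha)=(\varphi(\Sigma),\alpha\circ\varphi_G^{-1})$. The fibre over $\Sigma$ is by definition the set of labelled isomorphisms $G_\Sigma\cong\Gamma$, which is finite.

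The one genuine point is discreteness of the fibres in the quotient topology, which is the main obstacle. I would identify $\sep^\Gamma(M)$ with $\Diff^+(M)/K$, where $K$ is the stabiliser of $(\Sigma_0,\alpha_0)$. This stabiliser is the kernel of $\gamma_{\Sigma_0}\colon\Diff^+(M,\Sigma_0)\to\Aut(G_{\Sigma_0})$ from \cref{cor:SepMfd-Grmfd-automorphisms}.

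Continuity of $\gamma_{\Sigma_0}$ is the key input. The map $\varphi\mapsto\pi_0(\varphi|_{\Sigma_0})$ together with the coorientation data is locally constant on $\Diff^+(M,\Sigma_0)$, so $\gamma_{\Sigma_0}$ is locally constant and $K$ is open and closed of finite index in $\Diff^+(M,\Sigma_0)$. Then $p_\Gamma$ is identified with the map
\[
\Diff^+(M)/K\to\Diff^+(M)/\Diff^+(M,\Sigma_0),
\]
whose fibres are $\Diff^+(M,\Sigma_0)/K$. This set is finite, and it is discrete because $K$ is open.

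To be careful that the quotient topology on the fibre matches the subspace topology, I would argue locally. Take a local section $s\colon U\to\Diff^+(M)$ of the orbit map near $\Sigma_0$. Then $p_\Gamma^{-1}(U)\cong U\times\Diff^+(M,\Sigma_0)/K$ via $(u,[k])\mapsto s(u)k\cdot(\Sigma_0,\alpha_0)$, and this is a homeomorphism by the quotient property. This already exhibits $p_\Gamma$ as a covering.

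Finally, \cref{lem:Retractile Covering} then gives (1), and also (2): $\sep^\Gamma(M)$ is $\Diff^+(M)$-locally retractile.
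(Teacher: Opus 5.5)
Your proposal is correct and follows essentially the same route as the paper. The paper likewise deduces both parts from \cref{lem:Retractile Covering}, using that $\sepNP(M)$ is $\Diff^+(M)$-locally retractile, that $p_\Gamma$ is equivariant, and that its fibres are the finite sets of labelled graph isomorphisms. You are more careful than the paper in three places: you justify that these finite fibres are actually discrete in the quotient topology (local constancy of $\gamma_{\Sigma_0}$ makes $\Diff^+_{\pi_0(2\Sigma_0)}(M,\Sigma_0)$ open in $\Diff^+(M,\Sigma_0)$), you check the local path connectedness hypothesis, and your explicit local trivialisation proves (1) directly.
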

\begin{proof}
    We know that $\sepNP(M)$ is $\Diff^+(M)$-locally retractile by \cref{prop:sepNP-Properties}(1) and \cref{rem:Finite-Index-Retractile}. Since $p_\Gamma$ is $\Diff^+(M)$-equivariant, it will suffice to verify the fibres of $p_\Gamma$ are discrete by Lemma \ref{lem:Retractile Covering}. By definition, the fibre of $p_\Gamma$ is the set of graph isomorphisms $\Gamma\cong G_\Sigma$ in $\Gr(M)$, which is either empty or finite and discrete.
\end{proof}

\begin{defn}
    For a separating system $\Sigma \subset M$ we define the subgroup $\smash{\Diff_{\pi_0(2\Sigma)}^+(M, \Sigma)}$ to be the subgroup consisting of $\smash{\varphi \in \Diff^+(M,\Sigma)}$ such that for each sphere $S\subset \Sigma$, we have $\varphi(S)=S$ and $\varphi$ preserves the coorientation of $S$.
\end{defn}
    
    This subgroup is the kernel of the homomorphism $\gamma_\Sigma\colon\Diff^+(M,\Sigma)\to \Aut_{\Gr(M)}(G_\Sigma)$ defined in \cref{defn:dual-graph-functor}. 
    Since $\gamma_\Sigma$ is surjective, $\smash{\Diff_{\pi_0(2\Sigma)}^+(M, \Sigma)}$ fits into the short exact sequence
    \begin{equation}\label{eqn:Aut_Gr(M)-sequence}
        1\rightarrow \Diff_{\pi_0(2\Sigma)}^+(M, \Sigma)\rightarrow \Diff^+(M,\Sigma)\twoheadrightarrow \Aut_{\Gr(M)}(G_\Sigma)\rightarrow 1.
    \end{equation}
    In terms of the group action, $\smash{\Diff_{\pi_0(2\Sigma)}^+(M, \Sigma)}$ is exactly the $\Diff^+(M)$-stabiliser of $(\Sigma,\alpha)\in \sep^\Gamma(M)$ for any isomorphism $\alpha\colon G_\Sigma\cong \Gamma$.
    In particular, since $\Aut_{\Gr(M)}(G_\Sigma)$ is finite, $\smash{\Diff_{\pi_0(2\Sigma)}^+(M, \Sigma)}$ is a finite index subgroup of $\smash{\Diff^+(M, \Sigma)}$.

\begin{rem}
    The subgroup
    $\smash{\Diff_{\pi_0(2\Sigma)}^+(M, \Sigma)}$ can be identified as those orientation-preserving diffeomorphisms $\varphi\colon M \to M$ that are isotopic (in $\Diff^+(M , \Sigma)$) to a diffeomorphism that is the identity in a neighbourhood of $\Sigma$.
    However, $\Diff_{\pi_0(2\Sigma)}^+(M, \Sigma)$ is not equivalent to the subgroup of diffeomorphisms which fix a neighbourhood of $\Sigma$ because we still allow the spheres to rotate.
\end{rem}

\begin{lem}\label{lem:Diff(M)->sep(M) principal}
    Let $\Sigma \subset M$ be a separating system with dual graph $\Gamma = G_\Sigma$. 
    \begin{enumerate}
        \item There is a homeomorphism
        \[
            \sep^\Gamma(M) \cong \Diff^+(M)/\Diff_{\pi_0(2\Sigma)}^+(M, \Sigma).
        \]
        \item The quotient map $\Diff^+(M) \to \sep^\Gamma(M)$ is a  principal $\Diff_{\pi_0(2\Sigma)}^+(M, \Sigma)$-bundle, and in particular, $\Diff_{\pi_0(2\Sigma)}^+(M, \Sigma)$ is an admissible subgroup of $\Diff^+(M)$.
    \end{enumerate}
     
\end{lem}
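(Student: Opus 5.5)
Both parts follow from local retractility of $\sep^\Gamma(M)$ (\cref{lem: Sep-Gamma retractile}(2)) combined with the stabiliser computation made just before the statement. Fix $\alpha = \id\colon G_\Sigma \cong \Gamma$, so that $x := (\Sigma,\id)\in\sep^\Gamma(M)$. The orbit map $\pi\colon \Diff^+(M)\to \sep^\Gamma(M)$, $\varphi\mapsto \varphi.x$, is surjective because the action is transitive (\cref{lem:graph-functor-is-full}). Its point-stabiliser is exactly $\Diff^+_{\pi_0(2\Sigma)}(M,\Sigma)$. Indeed, $\varphi.x = x$ means $\varphi(\Sigma)=\Sigma$ and $\varphi_G = \id$ on $G_\Sigma$, i.e.\ $\varphi$ lies in the kernel of $\gamma_\Sigma$ from \eqref{eqn:Aut_Gr(M)-sequence}. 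Hence $\pi$ factors through a continuous bijection
\[
\bar\pi\colon \Diff^+(M)/\Diff^+_{\pi_0(2\Sigma)}(M,\Sigma)\to \sep^\Gamma(M).
\]
By \cref{defn:Gamma-separating-system}, $\sep^\Gamma(M)$ carries the quotient topology along $\pi$. A continuous bijection out of a quotient that is induced by a quotient map is a homeomorphism, so $\bar\pi$ is a homeomorphism, which proves (1).

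For (2), I would use local retractility at the point $x$. This needs the topology from \cref{defn:Gamma-separating-system} to agree with the one used in \cref{lem: Sep-Gamma retractile}, namely the covering-space topology over $\sepNP(M)$. I would check this compatibility first. The map $p_\Gamma\circ\pi$ is the orbit map to $\sepNP(M)$, which is continuous, and $p_\Gamma$ is a covering with finite fibres. So $\pi$ lifts continuously into the covering topology, and openness of $\pi$ there, which follows from local retractility of the covering, identifies the two topologies.

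With this in hand, take a $\Diff^+(M)$-local retraction $\xi\colon U\to\Diff^+(M)$ at $x$, so that $\xi(u).x=u$. Write $H := \Diff^+_{\pi_0(2\Sigma)}(M,\Sigma)$ and define
\[
U\times H\to \pi^{-1}(U),\qquad (u,h)\mapsto \xi(u)h .
\]
Its inverse is $\varphi\mapsto (\pi(\varphi),\ \xi(\pi(\varphi))^{-1}\varphi)$, which lands in $H$ because $\xi(\pi(\varphi))^{-1}\varphi$ fixes $x$. Both maps are continuous and $H$-equivariant, so this gives a local trivialisation near $x$.

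Translating by group elements produces trivialisations near every point, since $\varphi\mapsto g\varphi$ covers $u\mapsto g.u$. Hence $\pi$ is a principal $H$-bundle, and $H$ is admissible in the sense of the definition. Alternatively, admissibility follows directly from \cref{obs:admissible}\ref{it:adm-stabliser}, since $H$ is the stabiliser of a locally retractile action.

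The only genuine subtlety is the topology comparison above. Everything else consists of formal consequences of transitivity and local retractility.
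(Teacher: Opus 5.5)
Your proof is correct and follows the paper's argument: transitivity together with the identification of the stabiliser of $(\Sigma,\id)$ as $\Diff^+_{\pi_0(2\Sigma)}(M,\Sigma)$ gives (1). Local retractility from Lemma~\ref{lem: Sep-Gamma retractile}(2) then gives the principal bundle in (2), which you make explicit via the standard trivialisation $(u,h)\mapsto\xi(u)h$. The topology comparison you flag is harmless but not needed, since the paper equips $\sep^\Gamma(M)$ with only one topology, the quotient topology of Definition~\ref{defn:Gamma-separating-system}, and Lemma~\ref{lem: Sep-Gamma retractile} is stated for it.
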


\begin{proof}
    Since the action of $\Diff^+(M)$ on $\sep^\Gamma(M)$ is transitive, we can identify $\sep^\Gamma(M)$  with the coset space of the stabiliser of any  separating system $\Sigma$ whose dual graph is isomorphic to $\Gamma$. This stabiliser is $\Diff_{\pi_0(2\Sigma)}^+(M, \Sigma)$, showing (1). Now by Lemma \ref{lem: Sep-Gamma retractile}(2) $\sep^\Gamma(M)$ is $\Diff^+(M)$-locally retractile, which, together with (1), implies (2).
\end{proof}

Since $\sep^{(-)}(M)$ (\cref{defn:Sep-functor}) defines a functor to topological spaces equipped with a $\Diff^+(M)$-action, we obtain a new functor from $\Gr(M)$ to $\Top$ by taking a homotopy quotient with respect to the $\Diff^+(M)$-action.

\begin{defn}\label{defn:D-Gamma-functor}
    We define a functor $\rmD \colon \Gr(M) \to \Top$ by
    \[
        \rmD(\Gamma) := 
        \sep^\Gamma(M) \hq \Diff^+(M) =
         \big( \emb(M, \bbR^\infty) \times \sep^\Gamma(M) \big)/ \Diff^+(M).
    \]
\end{defn}

A point in $\rmD(\Gamma)$ can be described as a triple $(W, \Sigma, \alpha)$ where $W \subset \bbR^\infty$ is an oriented $3$-manifold, $\Sigma \subset W$ is a separating system, and $\alpha \colon G_\Sigma \cong \Gamma$ an isomorphism of labelled graphs. 
This implies that $W$ is abstractly diffeomorphic to $M$, but there is no preferred such diffeomorphism.

\begin{cor}\label{cor:rmD-classifying-space}Let $\Gamma\in \Gr(M)$, then for every $\Sigma$ such that $G_\Sigma\cong \Gamma$, there is an equivalence
    \[
        \rmD(\Gamma) \simeq \BDiff_{\pi_0(2\Sigma)}^+(M, \Sigma).
    \] 
\end{cor}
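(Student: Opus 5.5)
The plan is to identify $\sep^\Gamma(M)$ with a homogeneous space and then apply the standard principle that the homotopy quotient of a coset space recovers the classifying space of the subgroup. Fix $\Sigma$ with an isomorphism $\alpha\colon G_\Sigma \cong \Gamma$; such an $\alpha$ exists by hypothesis. By \cref{lem:Diff(M)->sep(M) principal}(1), the orbit map at $(\Sigma,\alpha)$ gives a $\Diff^+(M)$-equivariant homeomorphism
\[
\sep^\Gamma(M) \cong \Diff^+(M)/H, \qquad H := \Diff_{\pi_0(2\Sigma)}^+(M,\Sigma).
\]
By \cref{lem:Diff(M)->sep(M) principal}(2), $H$ is an admissible subgroup of $\Diff^+(M)$. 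It therefore suffices to show $(G/H)\hq G \simeq BH$ for an admissible subgroup $H \le G$, with $G=\Diff^+(M)$.

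For this step I would write
\[
(G/H)\hq G = \big(\emb(M,\bbR^\infty)\times G/H\big)/G \cong \emb(M,\bbR^\infty)/H .
\]
The homeomorphism sends the class of $(e,[g])$ to the class of $e\circ g$, with the sign and inverse conventions matched to the action. I would check it is a homeomorphism by writing down the inverse $[e]\mapsto [(e,[1])]$ and observing that both maps are continuous because the quotient maps involved are open.

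Next I need $H$ to act principally on $\emb(M,\bbR^\infty)$. By \cref{ex:Principal-action-examples}(3), $\Diff(M)$ acts principally on $\emb(M,\bbR^\infty)$. Since $\Diff^+(M)\le\Diff(M)$ has finite index, it is admissible, so $\Diff^+(M)$ also acts principally by \cref{obs:admissible}(3). Applying \cref{obs:admissible}(3) once more to the admissible subgroup $H\le\Diff^+(M)$ shows that $H$ acts principally. Because $\emb(M,\bbR^\infty)$ is weakly contractible, the quotient $\emb(M,\bbR^\infty)/H$ is a model for $BH$. Alternatively, I could apply \cref{lucis lemma} to the fibre sequence of groups $H\to G\to \{1\}$, exactly as in the proof of \cref{lem:embedding complement of discs}.

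The only delicate point is the point-set identification of the two quotients together with the principality of the $H$-action. Everything else is formal. The independence of the choice of $\Sigma$ is automatic, since different choices give conjugate subgroups.
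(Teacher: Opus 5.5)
Your proposal is correct and follows the paper's proof essentially verbatim. You identify $\sep^\Gamma(M)$ with $\Diff^+(M)/\Diff^+_{\pi_0(2\Sigma)}(M,\Sigma)$ via \cref{lem:Diff(M)->sep(M) principal}(1), cancel the $\Diff^+(M)$-quotient to obtain $\emb(M,\bbR^\infty)/\Diff^+_{\pi_0(2\Sigma)}(M,\Sigma)$, and use admissibility together with \cref{obs:admissible}(3) to see that this is a model for the classifying space; the only addition is that you spell out the point-set homeomorphism and the principality of the $\Diff^+(M)$-action on $\emb(M,\bbR^\infty)$, both of which the paper leaves implicit.
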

\begin{proof}
It follows from Lemma \ref{lem:Diff(M)->sep(M) principal}(1) that 
    \begin{align*}
        \rmD(\Gamma) 
         &=\big( \emb(M, \bbR^\infty) \times \sep^\Gamma(M) \big)/ \Diff^+(M)\\
        &\cong \left(\emb(M, \bbR^\infty) \times \big(\Diff^+(M)/\Diff_{\pi_0(2\Sigma)}^+(M, \Sigma)\big)\right)/ {\Diff^+(M)}\\
         &\cong \emb(M, \bbR^\infty)/\Diff_{\pi_0(2\Sigma)}^+(M, \Sigma),
    \end{align*}
    which is a model for $\BDiff_{\pi_0(2\Sigma)}^+(M)$ by Lemma \ref{lem:Diff(M)->sep(M) principal}(2) and \cref{obs:admissible}(3).
\end{proof}

\subsection{The homotopy colimit}
Recall the definition of homotopy colimit from \cref{subsec:background-on-hocolims}. We will now show that the contractibility of $|\sepNP_\bullet(M)|$ implies that the homotopy colimit of $\sep^\Gamma(M)$  indexed over $\Gr(M)$ is  contractible.
After taking homotopy orbits with respect to the $\Diff^+(M)$ action on $\sep^\Gamma(M)$, we will then obtain a description of  $\BDiff^+(M)$ as a homotopy colimit of $\rmD(\Gamma)$. Let us first recall a technical result that we will need to deduce contractibility.

\begin{lem}[Base-change lemma, {\cite[Theorem 3.25]{Ste21}}]\label{lem - base change}
    Let $f \colon X_\bullet \to Y_\bullet$ be a map of simplicial spaces such that the square
    \[
        \begin{tikzcd}
            X_n \ar[r, "f_n"] \ar[d, "\partial_n"] & Y_n \ar[d, "\partial_n"] \\
            (X_0)^{n+1} \ar[r, "(f_0)^{n+1}"] & Y_0^{n+1}
        \end{tikzcd}
    \]
    is a homotopy pullback square and such that $f_0\colon X_0 \to Y_0$ is surjective on connected components.
    (Here the vertical map sends an $n$-simplex to its $n+1$ vertices.)
    Then the induced map on fat geometric realisations is an equivalence:
    \[
        \|f\| \colon \|X_\bullet\| \xrightarrow{\simeq} \|Y_\bullet\|.
    \]
\end{lem}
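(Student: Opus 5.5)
The plan is to resolve the map $f$ by a bisimplicial space built from the \v{C}ech nerve of $f_0$, and then to realise that bisimplicial space in the two possible orders. Throughout I work with fat realisations and homotopy pullbacks, using that in spaces geometric realisation commutes with homotopy pullbacks along a fixed base (colimits are universal) and with finite products (sifted colimits). Let $\check C_\bullet(f_0)$ denote the \v{C}ech nerve of $f_0$, with $\check C_m = X_0\times^h_{Y_0}\cdots\times^h_{Y_0}X_0$ ($m+1$ factors). Since $f_0$ is surjective on $\pi_0$, it is an effective epimorphism of spaces, and hence $\|\check C_\bullet(f_0)\|\simeq Y_0$. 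Applying this to each factor shows that $\|\check C_\bullet(f_0)^{n+1}\|\simeq Y_0^{n+1}$.

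Next I define the bisimplicial space
\[
  W_{n,m} := Y_n\times^h_{Y_0^{n+1}}\big(\check C_m\big)^{n+1},
\]
with $n$-direction structure maps coming from $Y_\bullet$ and the vertex maps, and $m$-direction structure maps coming from the \v{C}ech nerve. The hypothesis identifies $X_n \simeq Y_n\times^h_{Y_0^{n+1}}X_0^{n+1}$. Expanding gives $W_{n,m}\simeq X_n\times^h_{Y_n}\cdots\times^h_{Y_n}X_n$, so for fixed $n$ the simplicial space $W_{n,\bullet}$ is the \v{C}ech nerve of $f_n$. Equivalently, realising in $m$ commutes with the pullback over $Y_0^{n+1}$, so $\|W_{n,\bullet}\|\simeq Y_n\times^h_{Y_0^{n+1}}Y_0^{n+1}=Y_n$. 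Realising the $n$-direction afterwards yields $\|Y_\bullet\|$.

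Realising in the other order, I need $\|W_{\bullet,m}\|\simeq\|X_\bullet\|$ for every $m$, compatibly with the \v{C}ech structure maps. The $m$-direction of $W$ then only contributes constant data, and $\|W\|\simeq\|X_\bullet\|$, with the composite being $\|f\|$. For $m=0$ we have $W_{\bullet,0}\simeq X_\bullet$. In general $W_{\bullet,m}\to X_\bullet$ (projection to the first \v{C}ech factor) is the levelwise pullback of $F^{n+1}\to X_0^{n+1}$, where $F=\check C_m\to X_0$ has a section given by the diagonal. The point is that the \v{C}ech nerve of a map $X_0 \to Y_0$, pulled back along the projection from itself, acquires an extra degeneracy in the $m$-direction. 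So after pulling back the whole bisimplicial object along $X_\bullet\to Y_\bullet$, each row $m\mapsto W_{n,m}\times_{Y_n}X_n$ is augmented and split, and hence realises to $X_n$. I will arrange the argument this way: compare $W$ with $W\times^h_{Y_\bullet}X_\bullet$, which is equivalent to $X_\bullet$ in each row by the extra degeneracy. Then use the first computation, with the roles of the two directions swapped, to identify both realisations.

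The main obstacle is exactly this last step: making sure the extra-degeneracy trick genuinely gives $\|W_{\bullet,m}\|\simeq\|X_\bullet\|$, rather than just a levelwise statement about fibres. If the bookkeeping fails, my fallback is a fibrewise argument. By the hypothesis, the homotopy fibre of $f_n$ over $y\in Y_n$ is $\prod_{i}\hofib_{y_i}(f_0)$ over the vertices $y_i$ of $y$. This exhibits $X_\bullet\to Y_\bullet$ as a ``vertex-local system'', and I would compute $\|X_\bullet\|$ as the realisation of the simplicial space of simplices of $Y_\bullet$ decorated by points of the vertex fibres. I would then show that these decorations form a contractible choice in the realisation because they are realisations of \v{C}ech nerves of the $\pi_0$-surjections $f_0$. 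This last point is precisely where the $\pi_0$-surjectivity hypothesis enters.
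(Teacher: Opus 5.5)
The paper does not prove this lemma; it quotes it from the cited source. So I am judging your argument on its own terms.

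Your first realisation order is fine. $W_{n,\bullet}$ is the \v{C}ech nerve of $f_n$, and $f_n$ is $\pi_0$-surjective as a base change of $f_0^{n+1}$. Hence $\|W_{n,\bullet}\|\simeq Y_n$ and $\|W\|\simeq\|Y_\bullet\|$.

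\textbf{The gap.} The gap is the other realisation order, and that is where the entire content of the lemma sits. Write $\mathrm{cosk}_0(A)_n=A^{n+1}$. Your own observation that $\check C_m\to Y_0$ factors through $X_0$ gives
\[
W_{\bullet,m}\simeq X_\bullet\times^h_{\mathrm{cosk}_0(X_0)}\mathrm{cosk}_0(\check C_m).
\]
So the claim $\|W_{\bullet,m}\|\simeq\|X_\bullet\|$ is literally an instance of the lemma you are proving, with the projection $\check C_m\to X_0$ in place of $f_0$.

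The extra degeneracy does not address this. It lives in the $m$-direction and only gives the levelwise statement $\|(W\times^h_{Y_\bullet}X_\bullet)_{n,\bullet}\|\simeq X_n$. To compare $W$ with $W\times^h_{Y_\bullet}X_\bullet$ you would then need $\|(W\times^h_{Y_\bullet}X_\bullet)_{\bullet,m}\|\simeq\|W_{\bullet,m}\|$. That is again an instance of the lemma, now for $\check C_m\times^h_{Y_0}X_0\to\check C_m$. Since $W$ is not symmetric in $n$ and $m$, ``swapping the roles of the directions'' does not supply it, and the argument as written is circular.

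The fallback does not close the gap either. The face maps act on the fibres $\prod_i\hofib_{y_i}(f_0)$ by projections, which are not equivalences, so no realisation-fibration criterion applies. The claim that ``the decorations form a contractible choice'' is precisely what needs proof.

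\textbf{The missing idea.} The special case where the vertex map admits a \emph{section} is easy, but it is proved by a simplicial homotopy in the $n$-direction, not by an extra degeneracy.

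Let $p\colon U\to X_0$ have a section $s$, and put $X'_\bullet:=X_\bullet\times^h_{\mathrm{cosk}_0(X_0)}\mathrm{cosk}_0(U)$, so that $X'_0\simeq U$. Let $(X'\otimes\Delta^1)_n=X'_n\times\mathrm{Hom}([n],[1])$. Maps into $\mathrm{cosk}_0(U)$ are determined by their effect on $0$-simplices. Hence a map $X'_\bullet\otimes\Delta^1\to X'_\bullet$ whose $X_\bullet$-component is the projection amounts to a map $U\times\{0,1\}\to U$ over $X_0$.

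Take $\mathrm{id}_U$ on $U\times\{0\}$ and $sp$ on $U\times\{1\}$; this is allowed because $psp=p$. This gives a simplicial homotopy from $\mathrm{id}_{X'}$ to the composite $X'_\bullet\to X_\bullet\xrightarrow{s_*}X'_\bullet$. Since $\|X'\otimes\Delta^1\|\simeq\|X'\|\times|\Delta^1|$, it follows that $\|s_*\|\colon\|X_\bullet\|\to\|X'_\bullet\|$ is an equivalence.

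With this in hand your bisimplicial argument closes. Use the diagonal $\mathrm{diag}\colon X_0\to\check C_m$ rather than the first projection, because the projection is not compatible with $d_0$ in the $m$-direction. Then
\[
x\mapsto \bigl(f_n(x),\mathrm{diag}(v_0x),\dots,\mathrm{diag}(v_nx)\bigr)
\]
is a map of bisimplicial spaces from $X_\bullet$ (constant in $m$) to $W$. For each fixed $m$ it is a realisation equivalence by the section case. Its composite with $\|W\|\simeq\|Y_\bullet\|$ is $\|f\|$, so $\|f\|$ is an equivalence.
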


\begin{prop}\label{prop:sep-contractible}
    The homotopy colimit
    $
        \hocolim_{\Gamma \in \Gr(M)} \sep^\Gamma(M)
    $
    is contractible.
\end{prop}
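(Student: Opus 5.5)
We compare the simplicial space $E_\bullet$ computing $\hocolim_{\Gamma\in\Gr(M)}\sep^\Gamma(M)$ (\cref{defn - hocolim}) with the nerve $\sepNP_\bullet(M)$, whose fat realisation is contractible by \cref{thm: sepNP contractible}. We use \cref{lem - base change} to show that the comparison map is an equivalence on fat realisations. Since $E_\bullet$ is a simplicial space, its thin realisation (used in the definition of the homotopy colimit) agrees with the fat one up to equivalence once we check the degeneracies are cofibrations, which holds because they are inclusions of components. We therefore work with fat realisations throughout.

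The map $\Phi_\bullet\colon E_\bullet\to\sepNP_\bullet(M)$ is defined as follows. An $n$-simplex of $E_\bullet$ is a chain $\Gamma_0\xrightarrow{f_1}\cdots\xrightarrow{f_n}\Gamma_n$ in $\Gr(M)$ together with a point $(\Sigma,\alpha)\in\sep^{\Gamma_0}(M)$. Send it to the chain of separating systems
\[
\Sigma=\Sigma_0\supseteq\Sigma_1\supseteq\cdots\supseteq\Sigma_n, \qquad (\Sigma_k,\alpha_k) := (f_k)_*\cdots(f_1)_*(\Sigma,\alpha),
\]
using \cref{defn:Sep-functor}. This lands in the nerve of $\sepNP(M)$ ordered by reverse inclusion, which has the same realisation as the nerve for inclusion. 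Functoriality of $\sep^{(-)}(M)$ makes $\Phi_\bullet$ compatible with the face maps, including the first face map, which applies $(f_1)_*$. Continuity holds componentwise because each $f_*$ is $\Diff^+(M)$-equivariant between spaces with quotient topologies from $\Diff^+(M)$.

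Next we verify the hypotheses of \cref{lem - base change}. On $0$-simplices, $\Phi_0\colon\coprod_{\Gamma}\sep^\Gamma(M)\to\sepNP(M)$ is surjective, since every $\Sigma$ has dual graph $G_\Sigma\in\Gr(M)$. Note that $\Gr(M)$ is not small as defined, so we first replace it by a skeleton, that is, a set of representatives of isomorphism classes; this does not change the homotopy colimit. For the pullback condition, take a chain $\Sigma_0\supseteq\cdots\supseteq\Sigma_n$. The fibre of $\Phi_n$ over it consists of a choice of $\Gamma_0$ with an identification $\alpha\colon G_{\Sigma_0}\cong\Gamma_0$, together with morphisms $f_k$ in the skeleton. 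These $f_k$ are then forced to be $\alpha_k\circ G(\id)\circ\alpha_{k-1}^{-1}$, where $\alpha_k\colon G_{\Sigma_k}\cong\Gamma_k$ is the unique isomorphism to the representative of the isomorphism class of $G_{\Sigma_k}$ determined inductively. Consequently the fibre of $\Phi_n$ is the set of isomorphisms $G_{\Sigma_0}\cong\Gamma_0$. The fibre of $\Phi_0^{n+1}$ over the vertices, by \cref{lem: Sep-Gamma retractile}(1), is the product $\prod_k\{\text{isomorphisms } G_{\Sigma_k}\cong\Gamma_k\}$, and these two do not match. So the base-change lemma in this naive form fails, and the plan needs adjusting.

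The adjustment is to factor through a second comparison. Let $\sep_\bullet'$ be the simplicial space whose $n$-simplices are chains $\Sigma_0\supseteq\cdots\supseteq\Sigma_n$ together with isomorphisms $\alpha_k\colon G_{\Sigma_k}\cong\Gamma_k$ for each $k$, with $\Gamma_k$ ranging over the skeleton. Then:
\begin{itemize}
\item The projection $\sep'_\bullet\to\sepNP_\bullet(M)$ is levelwise a covering. Its fibres satisfy the base-change square on the nose, since they are products of the vertex fibres, and it is surjective on $0$-simplices. By \cref{lem - base change} it is an equivalence, so $\|\sep'_\bullet\|$ is contractible. The homotopy pullback condition holds because the squares are strict pullbacks of coverings, hence homotopy pullbacks.
\item The map $E_\bullet\to\sep'_\bullet$ records all the $\alpha_k$. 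Here we use that morphisms in $\Gr(M)$ are determined by where they send the $\alpha_{k}$. It is injective, and its image consists of the simplices for which $\alpha_k\circ G(\id)\circ\alpha_{k-1}^{-1}$ is a $\Gr(M)$-morphism. This always holds, so the map is a homeomorphism if we index $E_\bullet$ by all composable chains of the skeleton.
\end{itemize}
The main obstacle is this bookkeeping. One must check that for fixed consecutive $\alpha_{k-1},\alpha_k$ there is exactly one morphism $f_k$ making the square in \cref{defn:Sep-functor} commute. This follows since $f_*$ recovers $\Sigma_k$ as the set of uncollapsed edges and $f$ is determined on vertices and half-edges by $\alpha_k^{-1}f\alpha_{k-1}=G(\id)$. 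Given that, $\|E_\bullet\|\cong\|\sep'_\bullet\|\simeq\|\sepNP_\bullet(M)\|\simeq\ast$.
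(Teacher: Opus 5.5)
Your final argument is correct and is essentially the paper's proof. Your $\sep'_n$ is just the strict pullback $\sepNP_n(M)\times_{\sepNP_0(M)^{n+1}}E_0^{n+1}$. So your second bullet ($E_n\cong\sep'_n$) is exactly the paper's check that the base-change square for $\Phi_\bullet$ is a pullback: first of sets, then of spaces because the vertex maps are subspace inclusions, and finally a homotopy pullback because $\Phi_0$ is a covering.

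The detour is unnecessary, and your claim that the naive base-change fails is a miscount that contradicts that same bullet. Even in a skeleton, the isomorphism $\alpha_k\colon G_{\Sigma_k}\cong\Gamma_k$ is not unique: there are $|\Aut(\Gamma_k)|$ of them. For fixed $\alpha_{k-1}$, the admissible morphisms $f_k$ correspond bijectively to the choices of $\alpha_k$ via $f_k=\alpha_k\circ G(\id)\circ\alpha_{k-1}^{-1}$. Hence the fibre of $\Phi_n$ is already $\prod_k\mathrm{Iso}(G_{\Sigma_k},\Gamma_k)$, which matches the vertex fibres.
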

\begin{proof}
    From Definition~\ref{defn - hocolim} the homotopy colimit is given by 
    $|E_\bullet|$, where $E_\bullet$ is the simplicial space with 
    \[
        E_n := \coprod_{\Gamma_0 \to \dots \to \Gamma_n} \sep^{\Gamma_0}(M)
    \]
    such that the coproduct runs over all sequences of $n$ composable morphisms in $\Gr(M)$.
    Because the inclusion of the degenerate simplices is a cofibration (in fact, it is a disjoint summand) we can equivalently work with the fat geometric realisation.
    Thus a point~$p\in E_n$ has the form
    \[
        p=(\Gamma_0 \overset{f_1}{\to} \cdots \overset{f_n}{\to} \Gamma_n, (\Sigma_0, \alpha_0)) 
    \]
    where the first entry is the indexing chain, and $(\Sigma_0, \alpha_0)\in \sep^{\Gamma_0}(M)$. 
    We can now define a map from $E_\bullet$ to  $\sepNP_\bullet(M)$,
    which is given levelwise by 
    \begin{eqnarray*}
        \phi_n \colon E_n &\longrightarrow &\sepNP_n(M)\\
        (\Gamma_0 \overset{f_1}{\to} \cdots \overset{f_n}{\to} \Gamma_n, (\Sigma_0, \alpha_0)) &\mapsto& (\Sigma_n\subseteq \cdots \subseteq \Sigma_{1} \subseteq \Sigma_0)
    \end{eqnarray*}
    where $\Sigma_i \subseteq \Sigma_0$ is the subsystem containing exactly those spheres of $\Sigma_0$ that correspond to edges in $\Gamma_0$ that are not collapsed by the composite $(f_i \circ \dots \circ f_1)\colon \Gamma_0 \to \Gamma_i$.
    Equivalently, we can use the functoriality of $\sep^\Gamma(M)$ to define $\Sigma_i$ to be the image of $\Sigma_0$ under the map $\sep^{\Gamma_0}(M) \to \sep^{\Gamma_i}(M)$ induced by $(f_i \circ \dots \circ f_1)$.

    We claim that $\phi_\bullet$ satisfies the conditions of the base-change lemma (Lemma~\ref{lem - base change}) and hence induces an equivalence:
    \[
        \|\phi_\bullet\|\colon \|E_\bullet\| \xrightarrow{\simeq} \|\sepNP_\bullet(M)\|
    \]
    This will conclude the proof as we know from Theorem~\ref{thm: sepNP contractible} that the right hand side is contractible.
    
    It remains to prove that $\phi_\bullet$ satisfies the conditions of the base-change lemma, \emph{i.e.}~we want to show that the square
    \[
        \begin{tikzcd}
            E_n \ar[r, "\phi_n"] \ar[d, "\partial_n"] & \sepNP_n \ar[d, "\partial_n"] \\
            (E_0)^{n+1} \ar[r, "(\phi_0)^{n+1}"] & (\sepNP_0)^{n+1}
        \end{tikzcd}
    \]
    is a homotopy pullback square and $\phi_0\colon E_0 \to \sepNP_0$ is surjective on connected components. 
    To show this, we first argue that the map~$\phi_0$ is a surjective covering map (Claim 1) from which it follows that the square is a homotopy pullback if it is a pullback of spaces. We then show that the vertical maps are subspace inclusions (Claim 2), and 
    hence being a pullback of spaces is equivalent to being a pullback of sets.

    {\bf Claim 1}: $
    \phi_0\colon E_0 \to \sepNP_0$ is a surjective covering map.
    
    \textit{Proof of Claim 1:} 
    For each $\Gamma$,  $\sep^{\Gamma}(M) \to \sepNP(M)$ is a covering map by Lemma \ref{lem: Sep-Gamma retractile}.
    Its image is the subspace of all those separating systems whose dual graph is isomorphic to $\Gamma$.
    By taking the disjoint union over all $\Gamma \in \Gr(M)$ we obtain a surjective covering map
    \[
        E_0 = \coprod_{\Gamma \in \Gr(M)} \sep^\Gamma(M) \twoheadrightarrow \sepNP(M).\qed
    \]
    
    {\bf Claim 2:} The vertical maps $E_n\to (E_0)^{n+1}$ and $\sepNP_n \to (\sepNP_0)^{n+1}$ are both subspace inclusions.
    
    \textit{Proof of Claim 2:} 
    For $\sepNP$ this follows from the definition, as we defined $\sepNP_n(M)$ as a subspace of $\sep_n(M)$, which we defined as a subspace of $\sep_0(M)^{n+1}$.

    In the case of $E_n$ the map in question is:
    \[
        E_n = \coprod_{\Gamma_0 \to \dots \to \Gamma_n} \sep^{\Gamma_0}(M)
        \longrightarrow
        \coprod_{\Gamma_0, \dots, \Gamma_n} \sep^{\Gamma_0}(M) \times \dots  \times \sep^{\Gamma_n}(M) \subset (E_0)^{n+1}.
    \]
    This is indeed a subspace inclusion, as the information of the map $\Gamma_i \to \Gamma_{i+1}$ can be recovered from the inclusion of separating systems $\Sigma_{i+1} \subseteq \Sigma_i$ and the graph isomorphisms $\alpha_i \colon G_{\Sigma_i} \cong \Gamma_i$.\qed

    Now to show that the square is a homotopy pullback, by Claims 1 and 2 we need only show it is a pullback of sets, which follows from rewriting points in~$E_n$ as tuples $\{ (\Sigma_i, \alpha_i)\}_{i=0}^n$.
    The remaining condition of the base-change lemma is that $\phi_0$ is surjective on connected components, which we have already established as part of Claim $1$.
\end{proof}

\begin{thm}\label{thm:hocolim-intro}
    For every $M \ncong S^1 \times S^2$ there is a functor $\rmD\colon \Gr(M) \to \Top$ such that 
    \[
        \BDiff^+(M)\simeq \hocolim_{\Gamma \in \Gr(M)} \rmD(\Gamma).
    \]
    If $\Gamma = G_\Sigma$ is the dual graph of a sphere system $\Sigma \subset M$, then $\rmD(\Gamma)$ is equivalent to the classifying space of the group of those orientation-preserving diffeomorphisms $\varphi\in \Diff^+(M)$ such that for all spheres $S \subseteq \Sigma$ we have $\varphi(S) = S$ and $\varphi_{|S}$ is orientation-preserving.
\end{thm}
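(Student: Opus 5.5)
We take $\rmD$ to be the functor of \cref{defn:D-Gamma-functor}, $\rmD(\Gamma) = \sep^\Gamma(M)\hq\Diff^+(M)$. The second claim is then \cref{cor:rmD-classifying-space}. Indeed, $\Diff^+_{\pi_0(2\Sigma)}(M,\Sigma)$ consists of those $\varphi$ that fix each sphere $S\subseteq\Sigma$ and preserve its coorientation. Since $\varphi$ preserves the orientation of $M$, preserving the coorientation of $S$ is equivalent to $\varphi_{|S}$ being orientation-preserving. So this is exactly the group described in the statement.

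For the main equivalence, we commute the homotopy colimit with the homotopy orbits. Homotopy orbits are computed as $(EG\times -)/G$ with $G=\Diff^+(M)$, and $EG=\emb(M,\bbR^\infty)$ (a principal, weakly contractible $G$-space by \cref{ex:Principal-action-examples}). Taking the simplicial model of \cref{defn - hocolim}, level $n$ of the homotopy colimit of $\rmD$ is
\[
\coprod_{\Gamma_0\to\dots\to\Gamma_n} \big(EG\times\sep^{\Gamma_0}(M)\big)/G \;\cong\; \Big(EG\times \coprod_{\Gamma_0\to\dots\to\Gamma_n}\sep^{\Gamma_0}(M)\Big)\Big/G = (EG\times E_n)/G.
\]
Here $E_\bullet$ is the simplicial space from the proof of \cref{prop:sep-contractible}. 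The face and degeneracy maps are $G$-equivariant, because the maps $f_*$ of \cref{defn:Sep-functor} are. Geometric realisation commutes with taking quotients by $G$ and with products with $EG$ (levelwise, in compactly generated spaces). Hence
\[
\hocolim_{\Gamma\in\Gr(M)}\rmD(\Gamma)\cong\big(EG\times|E_\bullet|\big)/G = |E_\bullet|\hq G .
\]

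By \cref{prop:sep-contractible}, $|E_\bullet|$ is contractible; this uses $M\ncong S^1\times S^2$ through \cref{thm: sepNP contractible}. The $G$-equivariant map $|E_\bullet|\to *$ is then a weak equivalence. Because $G$ acts principally on $EG$, and hence on $EG\times X$ by \cref{obs:admissible}, homotopy orbits preserve weak equivalences: both sides are total spaces of bundles with fibre $EG\times X$ over $EG/G$. We conclude that $|E_\bullet|\hq G\simeq *\hq G = \BDiff^+(M)$.

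The main obstacle is the point-set step: justifying that quotienting by the non-locally-compact group $\Diff^+(M)$ commutes with geometric realisation, and that fat and thin realisations agree. If the direct homeomorphism is awkward, the fallback is to compare levelwise. The map $(EG\times E_\bullet)/G\to EG/G$ is levelwise a fibre bundle with fibre $E_n$, since $E_n$ is a disjoint union of copies of $\sep^{\Gamma_0}(M)$. We would then apply the realisation-of-fibrations lemma (levelwise fibrations over a constant base), which identifies the homotopy fibre of the realised map with $|E_\bullet|\simeq *$.
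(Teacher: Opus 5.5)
Your proposal is correct and follows the paper's proof: take $\rmD(\Gamma)=\sep^\Gamma(M)\hq\Diff^+(M)$, read off $\rmD(G_\Sigma)$ from \cref{cor:rmD-classifying-space}, commute the homotopy colimit with the homotopy orbits, and conclude with the contractibility in \cref{prop:sep-contractible}. The only differences are that you spell out the commutation step, which the paper just cites from earlier work, and there is one small slip: the bundle $(EG\times X)/G\to EG/G$ has fibre $X$, not $EG\times X$, and it is the fibre $X$ that your weak-equivalence argument actually needs.
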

\begin{proof}
    Let $\calS$ be the functor from \cref{defn:D-Gamma-functor}.
    By \cref{cor:rmD-classifying-space} this satisfies that $\calS(G_\Sigma)$ is the classifying space of $\Diff^+_{\pi_0(2\Sigma)}(M, \Sigma)$, which is exactly the group described in the statement of the theorem.

    To compute the homotopy colimit of $\calS$, we can commute homotopy colimits with homotopy quotients.
    (The homotopy colimit is defined as geometric realisation of disjoint unions and homotopy quotients commute with both of those constructions \cite[Lemma 2.12(1) and (5)]{BoydBregmanSteinebrunner-finiteness}.)
    Hence, we compute
    \begin{eqnarray*}
        \hocolim_{\Gamma \in \Gr(M)} \rmD(\Gamma) 
        &\simeq& \hocolim_{\Gamma \in \Gr(M)} \left(\sep^\Gamma(M) \hq \Diff^+(M)\right)\\
        &\simeq& \left(\hocolim_{\Gamma \in \Gr(M)} \sep^\Gamma(M)\right)\hq\Diff^+(M) \\
        &\simeq& \BDiff^+(M)
    \end{eqnarray*}
    using that $\hocolim \sep^\Gamma(M) \simeq *$ by \cref{prop:sep-contractible}.
\end{proof}

\section{The splitting map}\label{sec:wrongway map}

For any oriented 3-manifold $M$ with prime decomposition $M=P_1\sharp \cdots \sharp P_n\sharp (S^1\times S^2)^{\sharp g}$,
we set \[P_M=\amalg_{i=1}^nP_i,\] where $P_M$ inherits its orientation from $M$. In this section we define a functor $\calF^{-}(P_M)\colon \Gr(M) \to \Diff^+(P_M)\text{-}\Top$, which refines the functor $\rmD(-)$ in the sense that $\calF^\Gamma(P_M) \hq \Diff^+(P_M) \simeq \rmD(\Gamma)$.
This will allow us to construct a ``splitting map" $\W$ that fits into a homotopy fibre sequence:
\[
    \hocolim_{\Gamma \in \Gr(M)} \calF^\Gamma(P_M) \longrightarrow \BDiff^+(M) \xrightarrow[\qquad]{\W} \BDiff^+(P_M).
\]
In general, the functor $\calF^\Gamma(P_M)$ takes disconnected values.
By restricting to one connected component we are able to rewrite the homotopy colimit as the homotopy colimit of another functor $\rmC$ on the category $\Gr_{g,n}$, as stated in the introduction.
We denote the fibre of $\W$ by $\calH_g(P_1,\dots,P_n)$ and think of it as a ``space of handle-attachments to $P_M$''-- this perspective will be further substantiated by the descriptions in \cref{sec:describing the fibre}.
Finally, we also show that $\W$ commutes with restriction to the boundary and use this to obtain variants $\W_A$ for moduli spaces where some submanifold $A \subset \partial M$ is fixed.

\subsection{Overview of the strategy}Given a separating system $\Sigma\subset M$, let $(M\ca\Sigma)^{{\rm prime}}$ denote the union of the components of~$M \ca \Sigma$  not diffeomorphic to~$S^3\setminus \sqcup_k \interior{D}^3$  for some $k\geq 0$. Our na\"ive approach towards constructing $\W$ is to define a ``prime marking" to be an embedding $(M\ca\Sigma)^{\rm prime}\hookrightarrow P_M $. This leads to consider a space of all such ``prime markings" up to reparametrisations:
\[\calF_0(\Gamma) := \emb^{\scl=}((M\ca\Sigma)^{\rm prime},P_M)\hq \Diff^+_{\pi_0(2\Sigma)}(M, \Sigma).\]
(Recall the superscript $\scl{=}$ from \cref{defn: equal-after-scl-embeddings} indicates ``diffeomorphism after taking spherical closures".) This admits an action of $\Diff^+(P_M)$ by post-composition and we can use Lemma \ref{lem:embedding complement of discs} to argue that the homotopy quotient recovers $\rmD(\Gamma)$.
In particular, we obtain a homotopy fibre sequence 
\[
    \calF_0(\Gamma) \longrightarrow \calF_0(\Gamma)\hq \Diff^+(P_M) \simeq \rmD(\Gamma) \longrightarrow \BDiff^+(P_M).
\]
If we could now take homotopy colimits of the left and middle term over $\Gamma \in \Gr(M)$, this would result in the desired prime decomposition fibre sequence:
\[
    ``\hocolim_{\Gamma \in \Gr(M)}\calF_0(\Gamma)" \longrightarrow
    \BDiff^+(M) \simeq \hocolim_{\Gamma \in \Gr(M)} \rmD(\Gamma) \longrightarrow
    \BDiff^+(P_M).
\]

Unfortunately, this na\"ive approach suffers from a lack of functoriality with respect to morphisms in $\Gr(M)$, which prevents us from taking the homotopy colimit.
Concretely, given a graph $\Gamma_0$ and a separating system $\Sigma_0$ with $\Gamma_0\cong G_{\Sigma_0}$, an edge collapse $\Gamma_0\to \Gamma_1$ in~$\Gr(M)$ corresponds to forgetting a sphere in~$\Sigma_0$, yielding $\Sigma_0\supset \Sigma_1$. Consequently  there is an (in general, strict) inclusion $(M \ca \Sigma_0)^{\rm prime}\subset (M \ca \Sigma_1)^{\rm prime}$. Starting with an embedding $(M \ca \Sigma_0)^{\rm prime} \hookrightarrow P_M$, one thus has no functorial way of extending it to $(M \ca \Sigma_1)^{\rm prime}$.
This motivates us to define prime markings in terms of what we call the \emph{maximal prime piece} $P(\Sigma \subset M)$.
Crucially, this modification has the advantage that passing to a sub-sphere system $\Sigma_0 \supset \Sigma_1$ yields a smaller maximal prime piece, hence functoriality will follow from restriction of embeddings.

\subsection{The maximal prime piece}

Let $(M\setminus \Sigma)^{\rm prime}$ be defined in an analogous way
to $(M\ca\Sigma)^{\rm prime}$ above. The components of $M\setminus \Sigma$ that are diffeomorphic to punctured 3-spheres will be referred to as \emph{spherical components} of $M\setminus \Sigma$, while components of $(M\setminus \Sigma)^{\rm prime}$ will be called \emph{prime components}.

\begin{defn}
    A path $\gamma\colon [0,1]\to M$ is called \emph{$\Sigma$-primary} if $\gamma(0)\in (M\setminus \Sigma)^{\rm prime}$, and $\gamma^{-1}(C)$ is connected or empty for each component $C \subset M \setminus \Sigma$, and moreover $\gamma$ only intersects one component of $(M \setminus \Sigma)^{\rm prime}$.
\end{defn}

For a $\Sigma$-primary path $\gamma$, the pre-image  $\gamma^{-1}((M\setminus \Sigma)^{\rm prime})$ is either $[0,1]$ or $[0,\varepsilon)$ for some $\varepsilon>0$. In other words, $\gamma$ starts in a prime component and either remains in that component for the entire path, or leaves and never re-enters that component nor any other prime component.

\begin{defn}\label{defn - maximal prime piece}
    For a separating system $\Sigma \subset M$, define the \emph{maximal prime piece} $P(\Sigma \subset M)$ as 
    \[
        P(\Sigma \subset M) = \left\{ (p, [\gamma]) \, \Bigg| \begin{array}{l} p \in M,~
        \gamma\colon [0,1] \to M,~ \gamma(0)\in (M\setminus \Sigma)^{\rm prime},~ \gamma(1) = p,\\
        
        [\gamma] \text{ contains a $\Sigma$-primary representative}
         \end{array} \right\}
   \]
    where $[\gamma]$ is the equivalence class of $\gamma$ up to homotopies that fix $\gamma(1)$ and preserve the condition that $\gamma(0)\in (M\setminus \Sigma)^{\rm prime}$.
\end{defn}

\begin{figure}[ht]
    \centering
    \def\svgwidth{\linewidth}
    \import{}{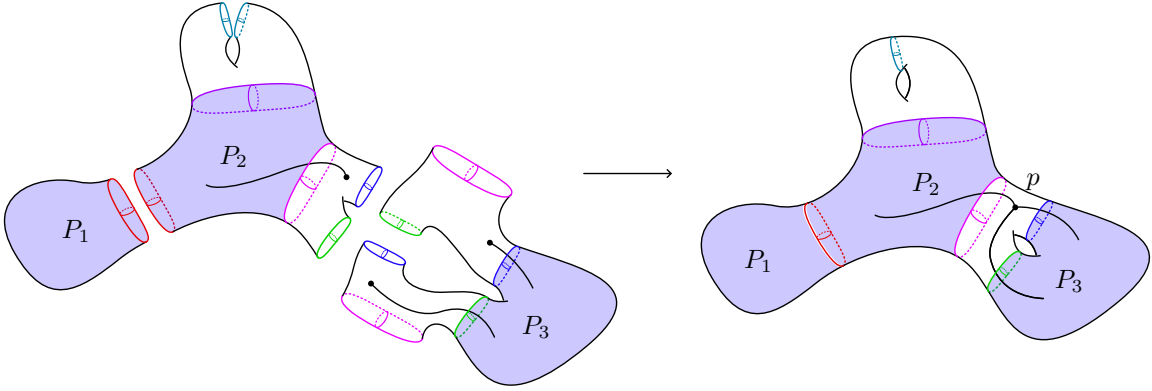}
    \caption{The maximal prime piece $P(\Sigma \subset M)$ of a separating system $\Sigma \subset M$, and the map $P(\Sigma \subset M) \to M$.
    The shaded region is the ordinary prime piece $(M \setminus \Sigma)^{\rm prime}$ on which the map restricts to a homeomorphism.
    Note that the point $p \in M$ has three preimages, depending on which primary path we equip it with.}
    \label{fig:maximal-prime-piece}
\end{figure}

The definition of $P(\Sigma\subset M)$ is reminiscent of the construction of the universal cover of $M$ as a set of based paths up to homotopy. 
We will show that $P(\Sigma\subset M)$ is a 3-manifold with boundary and that the projection map $\pi\colon P(\Sigma\subset M)\rightarrow M$ defined by $(p,[\gamma])\mapsto p$ is a local homeomorphism map away from the spherical boundary components. 
 
For this, it it will be useful to consider the variant
\[
    Q_\Sigma := \left\{ (p, [\gamma]) \;\big|\;  p \in M,~
        \gamma\colon [0,1] \to M,~ \gamma(0)\in (M\setminus \Sigma)^{\rm prime},~ \gamma(1) = p\right\}
\]
where we do not require $[\gamma]$ to contain a $\Sigma$-primary representative.
The projection map $\xi\colon Q_\Sigma \to M$ sending $(p,[\gamma]) \to p$ is a covering map and hence $Q_\Sigma$ is a smooth oriented $3$-manifold, though it will typically not be compact nor connected.
By definition, the maximal prime piece $P(\Sigma \subset M)$ is the subspace of those $(p,[\gamma])$ where the equivalence class $[\gamma]$ contains a $\Sigma$-primary representative.

To understand the connected components of $P(\Sigma \subset M)$,
note that there is a natural embedding $(M \setminus \Sigma)^{\rm prime} \hookrightarrow P(\Sigma \subset M)$ defined by sending $p$ to $(p, [\mathrm{cst}_p])$ where $\mathrm{cst}_p$ is the constant path at $p$.
This induces a bijection
    \[
        \pi_0((M \setminus \Sigma)^{\rm prime}) \cong \pi_0(P(\Sigma \subset M)) .
    \]
To see that it is bijective, we construct an inverse by sending the path component of $(p,[\gamma])$ in $P(\Sigma \subset M)$ to $[\gamma(0)]$, which is well-defined since $\gamma(0)$ must always remain in the same prime component under a homotopy. 
This is indeed an inverse as $(p,[\gamma])$ and $(\gamma(0), [\mathrm{cst}_{\gamma(0)}])$ are connected by a path defined by $\gamma$, if $\gamma$ is $\Sigma$-primary.
By the same reasoning, the inclusion into $Q_\Sigma$ also induces a bijection on path-components.

\begin{prop}\label{prop - max prime piece is diffeo to P minus some discs}
    For any sphere system $\Sigma\subset M$, the maximal prime piece is abstractly homeomorphic to a 3-manifold $P_M'$ obtained from $P_M$ by removing the interiors of finitely many discs. 
\end{prop}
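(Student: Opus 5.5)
Fix a prime component $C$ of $M\setminus\Sigma$ and study the component $P_C$ of $P(\Sigma\subset M)$ containing $C$. I would show that $P_C$ is built by gluing copies of the spherical pieces onto $C$ along a tree, and that the result is $\scl{C}$ with finitely many open balls removed. Since $\scl{C}$ is one of the $P_i$, and distinct prime components give distinct $P_i$ (a separating system has exactly one prime component per irreducible factor), taking the disjoint union over $C$ gives $P(\Sigma\subset M)\cong P_M'$.

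\textbf{Step 1 (local structure).} View $P(\Sigma\subset M)$ inside the covering $Q_\Sigma$ and work with the closed pieces $M\ca\Sigma$. The key point is that a $\Sigma$-primary path is determined up to homotopy by the sequence of components of $M\ca\Sigma$ it visits and the spheres it crosses. Primary paths never revisit a component, and each spherical piece $S^3\setminus\sqcup\interior{D}^3$ is simply connected, so the homotopy within each piece carries no extra data. Consequently $P_C$ decomposes as a union of lifts: the lift of $\overline C$, together with one lift of the closure of each spherical component $K$ for every simple path in the dual graph $G_\Sigma$ that starts at the vertex of $C$, ends at $K$, and passes only through spherical vertices. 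These lifts are glued along lifts of spheres exactly according to the prefix relation on such paths. This gives a finite tree $T_C$: the universal cover of $G_\Sigma$ truncated at non-spherical vertices, which is finite because only simple paths are allowed.

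\textbf{Step 2 (boundary).} The boundary spheres of $P_C$ come from sphere lifts at which primarity stops. This happens in two cases: the next piece is prime (either $C$ itself again or another prime component), or the next piece is a spherical component already visited along that path. There, $\pi$ fails to be a local homeomorphism. This matches the claim that $\pi$ is a local homeomorphism away from the spherical boundary.

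\textbf{Step 3 (identifying the manifold).} Glue the pieces inductively along the tree $T_C$. Each gluing of a punctured $S^3$ to the current manifold along a single $S^2$ is a connected sum with $S^3$ minus balls, so it leaves the spherical closure unchanged. Since $T_C$ is a tree, every gluing is along exactly one sphere, and no $S^1\times S^2$ summands are created. By induction, the spherical closure of $P_C$ is $\scl{C}$, so $P_C\cong\scl C\setminus\sqcup\interior{D}^3$, with finitely many discs because $T_C$ is finite.

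The main obstacle is Step 1: proving rigorously that the homotopy classes $[\gamma]$ with a primary representative correspond exactly to simple paths in the dual graph, and that $P(\Sigma\subset M)$ with the subspace topology from $Q_\Sigma$ really is the glued manifold. This includes showing that the defining homotopy relation (which allows $\gamma(0)$ to move within $C$) identifies no two distinct tree paths. I would attack this with the van Kampen / tree-of-spaces description of $Q_\Sigma$ pulled back over the decomposition of $M$ along $\Sigma$, where $\pi_1$ of each spherical piece is trivial.
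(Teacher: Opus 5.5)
Your proposal is correct and is essentially the paper's argument: both realise $P(\Sigma\subset M)$ inside the covering $Q_\Sigma$ as the closure of the lifted prime piece plus finitely many lifted punctured $3$-spheres glued along a tree of lifted spheres, and then attach these punctured balls one sphere at a time, which does not change the spherical closure $P_i$. The difference is the step you flag as the main obstacle, where the paper uses a short separation argument instead of van Kampen/Bass--Serre bookkeeping. Let $X$ be the lift of $(M\setminus\Sigma)^{\rm prime}$ by constant paths and $Q_{\Sigma,i}$ the corresponding component of $Q_\Sigma$. Since $\pi_1(X\cap Q_{\Sigma,i})\to\pi_1(Q_{\Sigma,i})$ is onto, a non-separating component of $\xi^{-1}(\Sigma)\cap Q_{\Sigma,i}$ would, via intersection number, give a surjection $\pi_1(Q_{\Sigma,i})\twoheadrightarrow\bbZ$ killing $\pi_1(X\cap Q_{\Sigma,i})$, which is impossible; so every lifted sphere separates and the gluing pattern is a tree.
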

\begin{proof}
    As noted above, $P(\Sigma \subset M)$ is a subspace of $Q_\Sigma$ and $\xi\colon Q_\Sigma \to M$ is a covering map.
    Let $X \subset P(\Sigma \subset M)$ be the homeomorphic image of $(M \setminus \Sigma)^{\rm prime}$ obtained by equipping points with the constant path.
    Let $\widetilde{\Sigma} := \xi^{-1}(\Sigma)$ be the collection of spheres obtained as the lifts of the separating system in $M$.
    
    The connected components of $P(\Sigma \subset M)$ and of $Q_\Sigma$ are in bijection with the components of $X \cong (M\setminus \Sigma)^{\rm prime}$, which in turn correspond to the prime factors $P_M = \amalg_{i=1}^n P_i$.
    The inclusion $X \to Q_\Sigma$ is an isomorphism on the fundamental group for each connected component.%
    \footnote{
        In fact, $X \to Q_\Sigma \to M$ is exactly the first Moore--Postnikov factorization of $X \to M$. 
    }
    Indeed, if we decompose $Q_\Sigma = \amalg_{i=1}^n Q_{\Sigma,i}$, then each $Q_{\Sigma,i} \to M$ is the covering space corresponding exactly to the inclusion $\pi_1(P_i) \hookrightarrow \pi_1(M)$.
    Form this it follows that each component of $\widetilde{\Sigma} \cap Q_{\Sigma,i}$ separates $Q_{\Sigma,i}$: 
    otherwise there would be a surjection $\pi_1(Q_{\Sigma,i}) \twoheadrightarrow \bbZ$ that kills $\pi_1(X \cap Q_{\Sigma,i})$, but this group is isomorphic to $\pi_1(Q_{\Sigma,i})$.

    The restriction on $P(\Sigma\subset M)$ that $[\gamma]$ contain a $\Sigma$-primary representative equivalently means that it consists of all points in $Q_\Sigma$ that can be connected to $X$ by a path that passes through at most one elevation of each component of $M\setminus \Sigma$, none of which are in $(M\setminus\Sigma)^{\rm prime}$ except $X$ itself. 
    Since $M\ca \Sigma$ only has finitely many components, $P(\Sigma \subset M)$ is a union of the closure of $X$ and a finite number of copies of the simply connected components of $M\ca\Sigma$, where any two such components meet along a separating 2-sphere in $\smash{\widetilde{\Sigma}}$.
    In particular, $P(\Sigma \subset M)$ is a compact submanifold of $Q_\Sigma$ and can be obtained from $(M\ca\Sigma)^{\rm prime}$ by successively attaching punctured 3-balls along boundary 2-spheres, so it is homeomorphic to $\amalg_{i=1}^n (P_i \setminus \amalg_{j_i} \interior{D^3})$.
\end{proof}

As alluded to above, the crucial property of the maximal prime piece is that refining the sphere system makes the maximal prime piece is larger. This is based on the following observation.

\begin{lem}\label{lem:Sigma-Upsilon-primary}
    Let $\Upsilon \subset \Sigma \subset M$ be two separating systems in $M$.
    Then every $\Upsilon$-primary path $\gamma$ can be homotoped to a $\Sigma$-primary path while fixing the endpoint $\gamma(1)$.
\end{lem}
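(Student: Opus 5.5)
The idea is to use the fact that $\Sigma$ refines $\Upsilon$, and to straighten $\gamma$ one component of $M\setminus\Upsilon$ at a time. Since $\Upsilon\subset\Sigma$, every component $C'$ of $M\setminus\Upsilon$ is cut by the spheres of $\Sigma\setminus\Upsilon$ into components of $M\setminus\Sigma$. By the proof that $G$ is a functor, applied to $\id\colon(M,\Sigma)\to(M,\Upsilon)$, these pieces are glued in the pattern of a tree $T_{C'}$, namely the preimage of the vertex $C'$ under $G_\Sigma\to G_\Upsilon$. Moreover, at most one piece of $C'$ is prime, and exactly one is prime if $C'$ is. So if $C'$ is spherical, all its $\Sigma$-pieces are punctured $3$-spheres. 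If $C'=C$ is the prime component containing $\gamma(0)$, it contains a unique prime $\Sigma$-piece $P_C$, and all other pieces are punctured $3$-spheres attached along a tree of spheres. Van Kampen then gives that $\pi_1(P_C)\to\pi_1(C)$ is an isomorphism.

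First I put $\gamma$ in general position with respect to $\Sigma$, by a small homotopy fixing $\gamma(1)$ and keeping $\gamma$ $\Upsilon$-primary. Because $\gamma$ is $\Upsilon$-primary, $[0,1]$ splits into consecutive intervals $I_0=[0,t_1],I_1,\dots,I_k$, each mapping into the closure of a single component of $M\setminus\Upsilon$. Here $I_0$ maps into $C$, the intervals $I_j$ for $j\ge1$ map into pairwise distinct spherical components, and the interior endpoints map to spheres of $\Upsilon$. Next I slide the starting point: I precompose $\gamma$ with a path in $C$ from a point of $P_C$ to $\gamma(0)$. This is a homotopy that fixes $\gamma(1)$ and keeps the start in $(M\setminus\Upsilon)^{\rm prime}$. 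After this, $\gamma(0)\in P_C\subset(M\setminus\Sigma)^{\rm prime}$.

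Then I homotope each piece $\gamma|_{I_j}$ relative to its endpoints, inside the closure of its $\Upsilon$-component. For spherical $C'$ the closure is simply connected, so $\gamma|_{I_j}$ is homotopic rel endpoints to a path that follows the unique geodesic in the tree $T_{C'}$. Such a path enters each $\Sigma$-piece at most once, in a connected interval, and crosses each sphere of $\Sigma\setminus\Upsilon$ at most once. For the piece $I_0$ in $C$, I use the isomorphism $\pi_1(P_C)\cong\pi_1(C)$. Any path in $C$ from a point of $P_C$ to the exit point is homotopic rel endpoints to a path that first runs inside $P_C$, absorbing the whole $\pi_1$-class, and then follows the tree geodesic from $P_C$ to the piece containing the exit point. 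That geodesic never returns to $P_C$. Concatenating the modified pieces gives a path homotopic to $\gamma$ with $\gamma(1)$ fixed.

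It remains to check that the concatenation is $\Sigma$-primary. Distinct $I_j$ lie in distinct components of $M\setminus\Upsilon$, hence meet disjoint sets of $\Sigma$-pieces. Within each $I_j$ every $\Sigma$-piece is visited in one connected interval. So every preimage $\gamma^{-1}(D)$ of a component $D$ of $M\setminus\Sigma$ is connected or empty. The only prime $\Sigma$-piece the path meets is $P_C$, because every other piece it meets is spherical.

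The main obstacle I expect is the prime component $C$. There the homotopy has to absorb all the $\pi_1$ information into $P_C$, so the van Kampen step and the tree structure of $T_C$ must be set up carefully. One also has to handle the boundary bookkeeping: crossings of $\Upsilon$-spheres at the interval endpoints, and making sure the modified pieces still glue to a path that crosses each $\Upsilon$-sphere only once.
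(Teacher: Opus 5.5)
Your argument is correct in substance. Its first step, sliding $\gamma(0)$ into the unique prime $\Sigma$-piece of $C$, is exactly the paper's. After that the routes differ. The paper inducts on the number of spheres in $\Sigma\setminus\Upsilon$: a single new sphere $S$ splits the $\Upsilon$-component containing it as $C_0\sqcup S\sqcup C_1$, with at least one side simply connected, and $\gamma|_{\gamma^{-1}(C)}$ is straightened so that it meets each side in one interval. You treat all of $\Sigma\setminus\Upsilon$ at once, using the tree $T_{C'}$ and van Kampen over it, and replace each piece by a tree geodesic. The induction only needs the two-piece case of your $\pi_1$-statement, plus the harmless fact that $\Upsilon\sqcup S$ is again a separating system. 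Your version avoids intermediate systems and gives an explicit normal form for the straightened path, at the price of the global tree bookkeeping.

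One justification needs repair. The closure in $M$ of a spherical component $C'$ need not be simply connected. If $C'$ carries a loop of $G_\Upsilon$, for instance the unmarked vertex of $(\Lambda,\rho)\in\Gr_{1,1}$, its closure is $(S^1\times S^2)\setminus\interior{D^3}$. Likewise the closure of $C$ can have larger fundamental group than $C$ itself; for $(\Delta,\tau)$ it is all of $M$.

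What rescues the step is $\Upsilon$-primarity. The path $\gamma$ can meet a sphere of $\Upsilon$ that has the same component on both sides only at the very end of the path, since otherwise it would re-enter that component. Hence each $\gamma|_{I_j}$ lifts to the corresponding compact component of $M\ca\Upsilon$, which is a punctured $S^3$ or a punctured $P_i$. Do the homotopies there and push them down along $M\ca\Upsilon\to M$. This also makes \emph{the piece containing the exit point} well defined when an endpoint lies on such a sphere.

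Finally, the general-position step is unnecessary, and the claim that a small perturbation keeps $\gamma$ $\Upsilon$-primary would itself need proof. Primarity alone forces $[0,1]\setminus\gamma^{-1}(\Upsilon)$ to consist of finitely many intervals, one per component visited. That gives your decomposition $I_0,\dots,I_k$ directly, and each piece is replaced wholesale anyway.
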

\begin{proof}
    The inclusion $(M \setminus \Sigma)^{\rm prime} \subseteq (M \setminus \Upsilon)^{\rm prime}$ is a bijection on $\pi_0$.
    Therefore, we can homotope the starting point $\gamma(0)$ within $(M \setminus \Upsilon)^{\rm prime}$ such that it lies in $(M \setminus \Sigma)^{\rm prime}$ and $\gamma$ is still $\Upsilon$-primary.
    
    Working inductively, we may assume that $\Sigma = \Upsilon \sqcup S$ for a single $2$-sphere $S \subset M \setminus \Upsilon$.
    Let $C \subset N \setminus \Upsilon$ be the component containing $S$, so as a set $C = C_0 \sqcup S \sqcup C_1$ for two components $C_0, C_1 \subset N \setminus \varphi(\Sigma)$.
    There are two cases: either $C_0$ and $C_1$ both are a punctured spheres, or one of them is a punctured sphere and the other is a punctured $P_i$.
    Because at least one of the $C_i$ is simply connected and $S$ is connected, it is possible to homotope $\gamma_{|\gamma^{-1}(C)}$ (within $C$ and relative to its endpoints) to $\gamma'$ with $(\gamma')^{-1}(C_i)$ connected or empty.
    Extending this by the constant homotopy outside of $\gamma^{-1}(C)$ we obtain $\gamma \sim \gamma'$ such that $\gamma'$ is $\Sigma$-primary. 
\end{proof}

From the above lemma we can deduce a \emph{contravariant} functoriality of the maximal prime piece.

\begin{prop}\label{prop: restricting maximal prime piece}
    Every morphism $\varphi\colon (M, \Sigma) \to (N, \Upsilon)$ in $\SepMfd$, \emph{i.e.}~every diffeomorphism $\varphi\colon M \cong N$ with $\varphi(\Sigma) \supseteq \Upsilon$,
    induces an embedding of maximal prime pieces $P(\varphi)\colon P(\Upsilon \subset N) \hookrightarrow P(\Sigma \subset M)$.
    This embedding is uniquely characterised by the property that it makes the diagram
    \[\begin{tikzcd}[column sep = large]
        {(M \setminus \Sigma)^{\rm prime}} \rar[hook] \dar["{\varphi_{|(M \setminus \Sigma)^{\rm prime}}}"', hook]& 
        {P(\Sigma \subset M)}  \rar & 
        M \dar["{\cong}"', "\varphi"] \\
        {(N \setminus \Upsilon)^{\rm prime}} \rar[hook] & 
        {P(\Upsilon \subset N)}  \rar \uar["{P(\varphi)}", hook] & 
        N
    \end{tikzcd}\]
    commute.
    This promotes maximal prime piece construction into a continuous functor
    \[
        P\colon \SepMfd^{\rm op} \longrightarrow \mathrm{Mfd}_3
    \]
    into the category of $3$-manifolds and smooth embeddings.
\end{prop}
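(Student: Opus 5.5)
We define $P(\varphi)$ directly on the path description of \cref{defn - maximal prime piece}. Given $(q,[\delta]) \in P(\Upsilon \subset N)$, choose an $\Upsilon$-primary representative $\delta$. Then $\varphi^{-1}\circ\delta$ is a path in $M$ that is $\varphi^{-1}(\Upsilon)$-primary, and $\varphi^{-1}(\Upsilon) \subseteq \Sigma$. By \cref{lem:Sigma-Upsilon-primary}, applied to the pair $\varphi^{-1}(\Upsilon)\subseteq\Sigma$ in $M$, we may homotope $\varphi^{-1}\circ\delta$, fixing its endpoint, to a $\Sigma$-primary path $\gamma$. We set
\[
P(\varphi)(q,[\delta]) := (\varphi^{-1}(q),[\gamma]).
\]
The first point is well-definedness. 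The class $[\gamma]$ is taken up to homotopies fixing the endpoint and keeping the start point in $(M\setminus\Sigma)^{\rm prime}$. Homotopies of $\delta$ keep $\delta(0)$ in $(N\setminus\Upsilon)^{\rm prime}$, and after pulling back by $\varphi$ they keep the start point in $(M\setminus\varphi^{-1}\Upsilon)^{\rm prime}$. This region deformation retracts onto $(M\setminus\Sigma)^{\rm prime}$ componentwise, and the two agree on $\pi_0$ (first step of the proof of \cref{lem:Sigma-Upsilon-primary}). Using this, and after reparametrising, we can move the start point within a component of the larger prime region into the smaller one. So homotopy classes with start in the larger region correspond bijectively to classes with start in the smaller region. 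Concretely, this says that the natural map $Q_{\varphi^{-1}\Upsilon}\to Q_\Sigma$ over $M$ is a homeomorphism of coverings. Under this identification, $P(\varphi)$ is $\varphi^{-1}$ lifted to the coverings, restricted to $P(\Upsilon\subset N)\subseteq Q_\Upsilon$. It lands in $P(\Sigma\subset M)$ by \cref{lem:Sigma-Upsilon-primary}.

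The remaining properties follow from this description.

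\emph{Embedding.} Via the identification with $Q$, the map is a restriction of a covering isomorphism $Q_\Upsilon \cong Q_{\varphi^{-1}\Upsilon}\cong Q_\Sigma$ covering the diffeomorphism $\varphi^{-1}$. It is therefore a smooth injective local diffeomorphism. Since $P(\Upsilon\subset N)$ is a compact submanifold (\cref{prop - max prime piece is diffeo to P minus some discs}), it is a smooth embedding.

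\emph{Commutativity and uniqueness.} On constant paths the formula gives $(\varphi^{-1}(q),[\mathrm{cst}])$, which is the left square read upward (indeed $\varphi$ maps $(M\setminus\Sigma)^{\rm prime}$ into $(N\setminus\Upsilon)^{\rm prime}$). Projecting to $M$ gives $\varphi^{-1}$, which is the right square. For uniqueness, any map making the right square commute is a lift of $\varphi^{-1}\circ\pi$ along the covering $\xi\colon Q_\Sigma\to M$. Each component of $P(\Upsilon\subset N)$ is connected and meets the image of $(N\setminus\Upsilon)^{\rm prime}$, where the left square pins the value down. Uniqueness of lifts then determines the map on all of $P(\Upsilon\subset N)$.

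\emph{Functoriality.} For a composite $(M,\Sigma)\to(N,\Upsilon)\to(L,\Lambda)$, both $P(\psi\varphi)$ and $P(\varphi)\circ P(\psi)$ satisfy the characterising diagram, so they agree by uniqueness. Identities are handled the same way.

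\emph{Continuity.} This is the step I expect to be the main obstacle: making sense of the functor as a continuous functor of topological categories, i.e.\ making the maximal prime pieces vary continuously as $\varphi$ and $\Sigma$ vary. I would handle it through the covering description. The map $Q_\Sigma\to M$ is pulled back along $\varphi$, and a family of $\varphi$'s with locally constant combinatorics yields a family of covering isomorphisms depending continuously on $\varphi$ in the $C^\infty$ topology. I would phrase this by showing that the relevant family of maximal prime pieces forms a fibre bundle over the parameter space, using $\Diff$-local retractility (\cref{lem: Sep-Gamma retractile}) to trivialise locally.
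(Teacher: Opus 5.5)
Your overall route is the paper's. There too, $P(\varphi)$ is the restriction of a covering isomorphism between $Q_\Sigma$ and $Q_\Upsilon$ lifting $\varphi^{\pm 1}$, \cref{lem:Sigma-Upsilon-primary} shows that $P(\Upsilon\subset N)$ lies in the image of $P(\Sigma\subset M)$, and uniqueness, functoriality and continuity follow from unique lifting. The paper uses $\psi\colon Q_\Sigma\to Q_\Upsilon$, $(p,[\gamma])\mapsto(\varphi(p),[\varphi\circ\gamma])$, which is well defined on the nose because $\varphi$ maps $(M\setminus\Sigma)^{\rm prime}$ into $(N\setminus\Upsilon)^{\rm prime}$, and then inverts it.

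However, the step you use to compare start regions is false as stated: $(M\setminus\varphi^{-1}\Upsilon)^{\rm prime}$ does not in general deformation retract onto $(M\setminus\Sigma)^{\rm prime}$. A component $C'$ of the former is obtained from the corresponding component $C$ of the latter by gluing on a tree of punctured $3$-spheres, each with at least three boundary spheres. So if $C'\neq C$, then $C'$ has strictly more boundary spheres than $C$. Removing a further ball from a punctured $3$-manifold raises $b_2$ by one, so $C$ and $C'$ are then not even homotopy equivalent. For example, you get $P_i$ minus one ball versus $P_i$ minus two balls when the prime vertex is a leaf next to a trivalent spherical vertex and the sphere between them is forgotten.

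What your well-definedness argument really needs is that the natural map $Q_\Sigma\to Q_{\varphi^{-1}\Upsilon}$ is injective. That is, a homotopy whose start point wanders in $C'$ must be replaceable by one whose start point stays in $C$. For this it suffices that $\pi_1(C)\to\pi_1(C')$ is surjective. This does hold by van Kampen, since each punctured sphere is simply connected and is attached along a single $2$-sphere. Hence on the $i$-th component both $Q_\Sigma$ and $Q_{\varphi^{-1}\Upsilon}$ are the covering of $M$ associated to $\pi_1(P_i)$, as in the proof of \cref{prop - max prime piece is diffeo to P minus some discs}. This is also the implicit input that makes the paper's $\psi$ bijective. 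With this replacement for the deformation retraction, the rest of your argument goes through.

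Two smaller points. First, in the uniqueness step the left square only pins down $P(\varphi)$ on $\varphi((M\setminus\Sigma)^{\rm prime})$, not on all of $(N\setminus\Upsilon)^{\rm prime}$. This suffices, because that set meets every component of $P(\Upsilon\subset N)$. Second, continuity is less of an obstacle than you expect. The statement only asks that $\varphi\mapsto P(\varphi)$ be continuous on each morphism space, with $\Sigma$ and $\Upsilon$ fixed. This holds because, locally in sheets of the coverings, $P(\varphi)$ is just $\varphi^{-1}$. No fibre bundle or local retractility argument over varying separating systems is required.
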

\begin{proof}
    By \cref{prop - max prime piece is diffeo to P minus some discs} the maximal prime piece is a compact submanifold with boundary $P(\Sigma \subset M) \subset Q_\Sigma$.
    There is a unique diffeomorphism $\psi$ making the square
    \[\begin{tikzcd}[column sep = large]
        {(M \setminus \Sigma)^{\rm prime}} \rar[hook] \dar["{\varphi_{|(M \setminus \Sigma)^{\rm prime}}}"', hook]& 
        {Q_\Sigma}  \rar \dar["\cong"', "\psi"] & 
        M \dar["{\cong}"', "\varphi"] \\
        {(N \setminus \Upsilon)^{\rm prime}} \rar[hook] & 
        {Q_\Upsilon}  \rar & 
        N
    \end{tikzcd}\]
    commute and it is given by $\psi(p, [\gamma]) = (\varphi(p), [\varphi \circ \gamma])$.

    From \cref{lem:Sigma-Upsilon-primary} it follows that $\psi(P(\Sigma \subset M)) \supseteq P(\Upsilon \subset N)$ as submanifolds of $Q_\Upsilon$.
    Therefore we may set $P(\varphi) := (\psi^{-1})_{|P(\Upsilon \subset N)}$.
    This is unique because every point in $P(\Upsilon \subset N)$ can be connected by a path to a point in $\varphi((M\setminus \Sigma)^{\rm prime})$, and every path in $M$ that does lift to $P(\Sigma \subset M)$ lifts uniquely as that map is a local homeomorphism.

    It follows from the uniqueness that for two composable morphisms $\varphi_1$ and $\varphi_2$ in $\SepMfd$ we must have $P(\varphi_1) \circ P(\varphi_2) = P(\varphi_2 \circ \varphi_1)$, and hence $P(-)$ is functorial.
    Moreover, $P(\varphi)$ depends continuously (in the $C^\infty$-topology) on $\varphi$, as it must cover $\varphi^{-1}$.
\end{proof}

\subsection{The space of prime markings}
Recall the definition of the embedding space $\emb^{\scl{=}}(N,Q)$ in \cref{defn: equal-after-scl-embeddings}. As a consequence of \cref{prop - max prime piece is diffeo to P minus some discs}, the spherical closure of $P(\Sigma\subset M)$ is diffeomorphic to $P_M$ and hence the embedding space $\emb^{\scl =}(P(\Sigma\subset M),P_M)$ is non-empty. 

\begin{defn}\label{defn:prime-marking}
    A \emph{prime marking} of $\Sigma\subset M$ is an embedding $\iota\in \emb^{\scl=}(P(\Sigma\subset M),P_M)$.
\end{defn}

As a direct application of \cref{lem:embedding complement of discs}, we obtain the following corollary, stating that the space of prime markings is (in a suitable homotopical sense) a torsor over the diffeomorphism group of $P_M$.
\begin{cor}\label{cor:homotopy-orbits-contractible}
    Let $\Sigma \subset M$ be a sphere system with maximal prime piece $P(\Sigma \subset M)$ and fix an embedding $\iota \colon P(\Sigma \subset M) \hookrightarrow P_M$ such that $P_M\setminus \im(\iota)=\sqcup_k\interior{D}^3$.
    Then the map 
    \begin{align*}
        \Diff^+(P_M) &\to \emb^{\scl{=}}(P(\Sigma \subset M), P_M) \\
        \varphi &\mapsto \varphi \circ \iota.
    \end{align*}
    is a homotopy equivalence.
    Moreover, the homotopy orbit space
    \[
        \emb^{\scl{=}}(P(\Sigma \subset M), P_M) \hq \Diff^+(P_M)
    \]
    is contractible.
\end{cor}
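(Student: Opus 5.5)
This is a direct application of \cref{lem:embedding complement of discs}, taking $N := P(\Sigma \subset M)$ and $Q := P_M$. So the plan is simply to check that lemma's hypotheses and then read off both conclusions.

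\textbf{Step 1: the hypotheses hold.} The lemma asks for two things.
\begin{enumerate}
\item $\scl{N}$ is abstractly diffeomorphic to $Q$. By \cref{prop - max prime piece is diffeo to P minus some discs}, $P(\Sigma\subset M)$ is homeomorphic to $\amalg_i (P_i \setminus \amalg_{j_i}\interior{D^3})$, so its spherical closure is homeomorphic to $P_M$. In dimension $3$ this upgrades to an orientation-preserving diffeomorphism, since homeomorphic $3$-manifolds are diffeomorphic. We may also argue directly: $P(\Sigma\subset M)$ is a compact smooth submanifold of the covering $Q_\Sigma$, built from $(M\ca\Sigma)^{\rm prime}$ by gluing punctured $3$-balls along boundary spheres, and capping those balls recovers each $P_i$ smoothly. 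Orientations agree because everything inherits its orientation from $M$ through the covering $\xi\colon Q_\Sigma\to M$.
\item The complement of $\iota(N)$ is a disjoint union of discs. This is exactly the hypothesis on $\iota$ in the statement, namely $P_M\setminus\im(\iota)=\sqcup_k\interior{D}^3$.
\end{enumerate}

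\textbf{Step 2: read off the conclusions.} \cref{lem:embedding complement of discs} now gives directly that
\[
\Diff^+(P_M)\to\emb^{\scl{=}}(P(\Sigma\subset M),P_M),\qquad \varphi\mapsto\varphi\circ\iota,
\]
is a homotopy equivalence, and that $\emb^{\scl{=}}(P(\Sigma\subset M),P_M)\hq\Diff^+(P_M)$ is contractible. It is worth recalling how the lemma gets there. Orbit-stabiliser yields the fibre sequence $\Diff_N(Q)\to\Diff^+(Q)\to\emb^{\scl{=}}(N,Q)$, and the fibre is a product of copies of $\Diff_\partial(D^3)$, which is contractible by Hatcher's theorem. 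Taking homotopy orbits via \cref{lucis lemma} then identifies the homotopy quotient with $\BDiff_N^+(Q)\simeq *$.

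\textbf{Main obstacle.} The only point needing care is Step 1(i): \cref{prop - max prime piece is diffeo to P minus some discs} is stated only up to homeomorphism, while \cref{lem:embedding complement of discs} is a smooth statement. I would handle this with the smooth construction sketched above, or by invoking the uniqueness of smooth structures on $3$-manifolds.

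A minor further point is that $P_M$ may be disconnected. The lemma's proof is insensitive to this: $\Diff_N(Q)$ is still a product of $\Diff_\partial(D^3)$ factors, one for each complementary disc.
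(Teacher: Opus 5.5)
Your proposal is correct and follows the paper's own argument: the paper obtains this corollary as a direct application of \cref{lem:embedding complement of discs} with $N = P(\Sigma\subset M)$ and $Q = P_M$, where \cref{prop - max prime piece is diffeo to P minus some discs} supplies the hypothesis $\scl{N}\cong Q$. Your remark about upgrading the homeomorphism in that proposition to a diffeomorphism is a harmless extra point of care that the paper does not spell out.
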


 Applying \cref{lem:embedding complement of discs} to $(M\ca\Sigma)^{\rm{prime}}$, we similarly see that the map defined by acting with $\Diff^+(P_M)$ on any point of $\emb^{\scl{=}}((M\ca \Sigma)^{\rm prime}, P_M)$ is an equivalence.
 From this it follows that we also have an equivalence between mapping spaces.
 
\begin{cor}\label{cor:maximal-prime=prime}
    Let $\Sigma \subset M$ be a sphere system, then the restriction map
    \[
        \emb^{\scl{=}}(P(\Sigma \subset M), P_M) \to 
        \emb^{\scl{=}}((M\ca \Sigma)^{\rm prime}, P_M) 
    \]
    is a $\Diff^+(P_M)$-equivariant homotopy equivalence.
\end{cor}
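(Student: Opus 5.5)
The plan is a 2-out-of-3 argument using the orbit maps of $\Diff^+(P_M)$. First, the restriction map is well defined. The components of $(M\ca\Sigma)^{\rm prime}$ are punctured copies of the $P_i$. By \cref{prop - max prime piece is diffeo to P minus some discs}, $P(\Sigma\subset M)$ is obtained from $(M\ca\Sigma)^{\rm prime}$ by attaching finitely many punctured $3$-balls along boundary spheres. Hence, if an embedding $\iota$ of $P(\Sigma\subset M)$ extends to a diffeomorphism of spherical closures $\scl{P(\Sigma\subset M)}\cong P_M$, the same extension shows that $\iota|_{(M\ca\Sigma)^{\rm prime}}$ lies in $\emb^{\scl{=}}((M\ca\Sigma)^{\rm prime},P_M)$. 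This uses $\scl{(M\ca\Sigma)^{\rm prime}}=\scl{P(\Sigma\subset M)}$, since both are obtained by capping off all spherical boundary.

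Here $(M\ca\Sigma)^{\rm prime}$ should be read as its image inside $P(\Sigma\subset M)$. This image is the closure of the copy $X$ of $(M\setminus\Sigma)^{\rm prime}$, viewed as a compact submanifold of $P(\Sigma\subset M)$. Equivariance is immediate, because $\Diff^+(P_M)$ acts on both spaces by post-composition and restriction commutes with post-composition.

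Next, fix a prime marking $\iota$. Its complement $P_M\setminus\im(\iota)$ is a disjoint union of open discs: the complement of $\iota(P(\Sigma\subset M))$ in $P_M$ consists of the capping balls, because $\iota$ extends to a diffeomorphism of the spherical closure. Similarly, the complement of $\iota((M\ca\Sigma)^{\rm prime})$ is a union of discs, namely the images of the punctured balls in $P(\Sigma\subset M)$ together with the original capping discs. These glue to genuine $3$-balls in $P_M$, since a punctured ball with its punctures filled is a ball.

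The orbit maps then form a commutative triangle:
\[
\Diff^+(P_M)\xrightarrow{\ \varphi\mapsto\varphi\circ\iota\ }\emb^{\scl{=}}(P(\Sigma\subset M),P_M)\xrightarrow{\ \mathrm{res}\ }\emb^{\scl{=}}((M\ca\Sigma)^{\rm prime},P_M),
\]
whose composite is $\varphi\mapsto\varphi\circ\iota|_{(M\ca\Sigma)^{\rm prime}}$. By \cref{cor:homotopy-orbits-contractible} the first map is a homotopy equivalence. By \cref{lem:embedding complement of discs} applied with $N=(M\ca\Sigma)^{\rm prime}$, $Q=P_M$ and the embedding $\iota|$, the composite is also a homotopy equivalence. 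By 2-out-of-3, the restriction map induces isomorphisms on homotopy groups at every basepoint in the image of $\iota$. Since $\iota$ was arbitrary, this covers all basepoints of the source.

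It remains to see that restriction is surjective on $\pi_0$. Any point of the target lies in the $\Diff^+(P_M)$-orbit of $\iota|$, because the orbit map from \cref{lem:embedding complement of discs} is onto, and the restriction map is equivariant. So restriction is a weak equivalence.

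Both spaces are open subsets of embedding spaces, which have the homotopy type of CW complexes. Alternatively, one can stay with weak equivalences, as the paper does elsewhere. Either way, the restriction map is a homotopy equivalence.

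The main obstacle is the verification in the second paragraph that the complements are unions of discs, which is what allows \cref{lem:embedding complement of discs} to be applied. This rests on the explicit structure of $P(\Sigma\subset M)$ given in the proof of \cref{prop - max prime piece is diffeo to P minus some discs}: the attached pieces are simply connected components of $M\ca\Sigma$ glued in tree-like fashion, so together with the capping discs they fill in to balls.
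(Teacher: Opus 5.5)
Your proposal is correct and follows the paper's own argument: both $\Diff^+(P_M)$-orbit maps are homotopy equivalences, by \cref{cor:homotopy-orbits-contractible} for $P(\Sigma\subset M)$ and by \cref{lem:embedding complement of discs} applied to $(M\ca\Sigma)^{\rm prime}$, so the restriction map is an equivalence by 2-out-of-3. Since 2-out-of-3 for homotopy equivalences holds on the nose (if $f$ and $g\circ f$ are homotopy equivalences, then so is $g$), your extra discussion of basepoints, $\pi_0$-surjectivity and CW homotopy types is unnecessary, though harmless.
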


We are now ready to define the space $\sep^\Gamma(M; P_M)$ of $\Gamma$-separating systems with a prime marking. 
Recall that for any separating system $\Sigma$, a diffeomorphism $\varphi\colon M\rightarrow M$ sends $\Sigma$ to another separating system $\Upsilon=\varphi(\Sigma)$ and induces an isomorphism $\varphi_G\colon G_\Sigma\rightarrow G_\Upsilon$. 
Moreover, $\varphi$ induces a diffeomorphism 
\[
\varphi_P\colon P(\Sigma\subset M) \longrightarrow P(\Upsilon\subset M),~(p,[\gamma])\mapsto (\varphi(p),[\varphi(\gamma)]),
\] 
which is the inverse of the diffeomorphism $P(\varphi)$ induces by the functoriality of the maximal prime piece (\cref{prop: restricting maximal prime piece}).
For $\Upsilon = \Sigma$ this map gives a group homomorphism $\Diff(M, \Sigma) \to \Diff(P(\Sigma \subset M))$.

\begin{defn}\label{defn: Sep-PM}
    For a graph $\Gamma \in \Gr(M)$ we define
    \[
        \sep^\Gamma(M; P_M)
        := \{ (\Sigma, \alpha, \iota) \;|\; (\Sigma,\alpha) \in \sep^\Gamma(M),~  \iota \in \emb^{\scl{=}}(P(\Sigma \subset M), P_M) \}.
    \]
    As $\sep^\Gamma(M)$ can be identified with the quotient of $\Diff^+(M)$ by the free action of $\smash{\Diff_{\pi_0(2\Sigma)}^+(M, \Sigma)}$, we topologise $\smash{\sep^\Gamma(M; P_M)}$ via the bijection
    \begin{align*}
        \frac{\Diff^+(M) \times \emb^{\scl{=}}( P(\Sigma_0\subset M), P_M)}{\Diff^+_{\pi_0(2\Sigma_0)}(M, \Sigma_0)}
        &\cong \sep^\Gamma(M; P_M)  \\
        [\varphi, \iota\colon P(\Sigma_0\subset M) \hookrightarrow P_M] 
        & \longmapsto (\varphi(\Sigma_0), \alpha_0 \circ \varphi_G^{-1}, \iota \circ \varphi_P^{-1})
    \end{align*}
     for any choice of $(\Sigma_0, \alpha_0) \in \sep^\Gamma(M)$. 
     There is an action of $\Diff^+(M)\times\Diff^+(P_M)$  on $\sep^\Gamma(M; P_M)$ via the formula \[(\varphi,\psi)\colon(\Sigma,\alpha,\iota)\mapsto (\varphi(\Sigma),\alpha\circ\varphi_G^{-1},\psi\circ \iota\circ \varphi_P^{-1}),\]  for $(\varphi,\psi)\in \Diff^+(M)\times\Diff^+(P_M)$.
\end{defn}
Functoriality of $\sep^\Gamma(M; P_M)$ now follows from \cref{prop: restricting maximal prime piece}.
\begin{defn}\label{defn: Sep-PM-functor}
    For a map of graphs $f\colon \Gamma \to \Gamma'$ we obtain an induced map
    \begin{align*}
        f_*\colon \sep^{\Gamma}(M; P_M) &\longrightarrow \sep^{\Gamma'}(M; P_M) \\
        (\Sigma, \alpha, \iota) & \longmapsto (\Sigma',\alpha',\iota \circ j),
    \end{align*}
    where $\Sigma' \subseteq \Sigma$ and $\alpha'$ are defined as for $\sep^\Gamma(M)$ in \cref{defn:Sep-functor} and $j\colon P(\Sigma' \subset M) \hookrightarrow P(\Sigma \subset M)$ is the embedding induced by the functoriality of the maximal prime piece (\cref{prop: restricting maximal prime piece}).
\end{defn}

\begin{rem}
    The action of  $\Diff^+(M) \times \Diff^+(P_M)$ on $\sep^\Gamma(M;P_M)$ by pre- and post-composition is compatible with the functoriality.
\end{rem}

\subsection{The splitting map} Since $\sep^\Gamma(M;P_M)$ admits commuting actions by $\Diff^+(M)$ and $\Diff^+(P_M)$, we will obtain a space with maps to both $\BDiff^+(M)$ and $\BDiff^+(P_M)$ after taking homotopy quotients. This will be the bridge that allows us to define the splitting map $\W\colon \BDiff^+(M)\rightarrow \BDiff^+(P_M)$. We first show that the marking by the maximal prime piece in $\sep^\Gamma(M; P_M)$ can be cancelled by quotienting out the $\Diff^+(P_M)$-action.
\begin{lem}\label{lem:sep(M,P)/Diff(P)}
    There is a functorial equivalence
    \[
        \sep^\Gamma(M; P_M)\hq \Diff^+(P_M) \simeq \sep^\Gamma(M)
    \]
\end{lem}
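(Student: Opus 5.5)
The plan is to use the forgetful map $q\colon \sep^\Gamma(M;P_M)\to \sep^\Gamma(M)$, $(\Sigma,\alpha,\iota)\mapsto(\Sigma,\alpha)$. This map is $\Diff^+(P_M)$-invariant, so it factors through the homotopy quotient:
\[
\sep^\Gamma(M;P_M)\hq\Diff^+(P_M)\longrightarrow \sep^\Gamma(M)\hq\{1\}\simeq\sep^\Gamma(M).
\]
It is also natural in $\Gamma$. For a morphism $f\colon\Gamma\to\Gamma'$, the map $f_*$ of \cref{defn: Sep-PM-functor} covers the map $f_*$ of \cref{defn:Sep-functor}, and it only changes $\iota$ by precomposition, which commutes with the post-composition action of $\Diff^+(P_M)$. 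So functoriality is automatic, and it remains to show this map is an equivalence for each fixed $\Gamma$.

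To do this, I identify the fibres of $q$ using the topology of \cref{defn: Sep-PM}. Fix $(\Sigma_0,\alpha_0)$ and write $K=\Diff^+_{\pi_0(2\Sigma_0)}(M,\Sigma_0)$ and $E=\emb^{\scl{=}}(P(\Sigma_0\subset M),P_M)$. Then $q$ becomes the map
\[
(\Diff^+(M)\times E)/K\longrightarrow \Diff^+(M)/K.
\]
By \cref{lem:Diff(M)->sep(M) principal}(2), $\Diff^+(M)\to\Diff^+(M)/K$ is a principal $K$-bundle. Hence $q$ is the associated fibre bundle with fibre $E$, and it is locally trivial. The $\Diff^+(P_M)$-action is fibrewise, acting on $E$ by post-composition.

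Taking homotopy orbits fibrewise then gives a fibre bundle
\[
\sep^\Gamma(M;P_M)\hq\Diff^+(P_M)\to\sep^\Gamma(M)
\]
with fibre $E\hq\Diff^+(P_M)$. Concretely, this is $(\Diff^+(M)\times(E\times E\Diff^+(P_M))/\Diff^+(P_M))/K$, since the actions of $K$ and $\Diff^+(P_M)$ commute. Equivalently, I can apply \cref{lucis lemma} to the sequence of groups $1\to\Diff^+(P_M)\to\Diff^+(P_M)$ acting on the fibre sequence $E\to\sep^\Gamma(M;P_M)\to\sep^\Gamma(M)$. By \cref{cor:homotopy-orbits-contractible}, the fibre $E\hq\Diff^+(P_M)$ is contractible. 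A fibration with contractible fibres is a weak equivalence, which proves the lemma.

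The main obstacle is point-set care. I need to check that $q$ really is a fibre bundle in the quotient topology, and that homotopy orbits of the total space are computed fibrewise. Both follow once the associated-bundle description above is in place, because $(X\times E\Diff^+(P_M))/\Diff^+(P_M)$ commutes with the quotient by the commuting free $K$-action.
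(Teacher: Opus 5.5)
Your proposal is correct and follows the paper's proof. Both forget the prime marking and use \cref{lem:Diff(M)->sep(M) principal} to write $\sep^\Gamma(M;P_M)\hq\Diff^+(P_M)$ as the bundle associated to the principal $\Diff^+_{\pi_0(2\Sigma)}(M,\Sigma)$-bundle $\Diff^+(M)\to\sep^\Gamma(M)$. Its fibre $\emb^{\scl{=}}(P(\Sigma\subset M),P_M)\hq\Diff^+(P_M)$ is contractible by \cref{cor:homotopy-orbits-contractible}, which gives the equivalence.
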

\begin{proof}
    The projection $\sep^\Gamma(M;P_M) \to \sep^\Gamma(M)$ that forgets the prime marking is functorial and invariant under the action of $\Diff^+(P_M)$, as this only acts on the marking.
    Therefore we obtain a functorial map:
    \[
       \rho\colon \sep^\Gamma(M; P_M) \times_{\Diff^+(P_M)} E\Diff^+(P_M) \to \sep^\Gamma(M).
    \]
    Writing out the left term in full, this map becomes
    \[
        \rho\colon\Diff^+(M) \times_{\Diff^+_{\pi_0(2\Sigma)}(M,\Sigma)} \emb^{\scl =}(P(\Sigma \subset M), P_M) \times_{\Diff^+(P_M)} E\Diff(P_M) \to \sep^\Gamma(M).
    \]
    By \cref{lem:Diff(M)->sep(M) principal}, $\Diff^+(M)\rightarrow \sep^\Gamma(M)=\Diff^+(M) \times_{\Diff^+_{\pi_0(2\Sigma)}(M,\Sigma)}\{*\}$ is a principal $\Diff^+_{\pi_0(2\Sigma)}(M,\Sigma)$-bundle. Therefore $\rho$ is a locally trivial fibre bundle with fibre
    \[
        \emb^{\scl =}(P(\Sigma \subset M), P_M) \times_{\Diff^+(P_M)} E\Diff^+(P_M),
    \]
    which is contractible by \cref{cor:homotopy-orbits-contractible}. Hence $\rho$ is an equivalence.
\end{proof}

We now construct a variant $\calF^\Gamma(P_M)$ of $\rmD(\Gamma)$ that incorporates a prime marking.
\begin{defn}\label{defn:C-Gamma}
    Define the functor $\calF^{(-)}(P_M)\colon \Gr(M)\to \Diff^+(P_M)\text{-}\Top$ by
    \[
        \calF^\Gamma(P_M) := \sep^\Gamma(M; P_M) \hq \Diff^+(M).
    \]
\end{defn}

This definition allows us to connect $\calF^\Gamma(P_M)$ with the simpler, but non-functorial, definition $\calF_0(\Gamma)$ that we considered at the start of the section.
\begin{lem}\label{lem:rmC-simpler}
    For any graph $\Gamma$ there are $\Diff^+(P_M)$-equivariant equivalences
    \begin{align*}
        \calF^\Gamma(P_M) &\simeq \emb^{\scl{=}}(P(\Sigma \subset M), P_M) \hq \Diff^+_{\pi_0(2\Sigma)}(M, \Sigma) \\
        &\simeq \emb^{\scl{=}}((M\ca \Sigma)^{\rm prime}, P_M) \hq \Diff^+_{\pi_0(2\Sigma)}(M, \Sigma)
    \end{align*}
    where the action arises via the homomorphism
    $ \Diff^+_{\pi_0(2\Sigma)}(M, \Sigma) \to \Diff^+( (M \ca \Sigma)^{\rm prime})$.
\end{lem}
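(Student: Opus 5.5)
The first equivalence comes from the explicit model of $\sep^\Gamma(M;P_M)$ in \cref{defn: Sep-PM}. Fix $(\Sigma,\alpha)\in\sep^\Gamma(M)$ and write $H:=\Diff^+_{\pi_0(2\Sigma)}(M,\Sigma)$. Then $\sep^\Gamma(M;P_M)\cong \Diff^+(M)\times_H \emb^{\scl=}(P(\Sigma\subset M),P_M)$. Here $\Diff^+(M)$ acts by left multiplication on the first factor. $\Diff^+(P_M)$ acts by post-composition on the second factor, and this commutes with everything else. The group $H$ acts on the embedding space through the homomorphism $H\to \Diff(P(\Sigma\subset M))$, $\varphi\mapsto\varphi_P$.

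Taking homotopy orbits, we get
\[
\calF^\Gamma(P_M)=\big(\emb(M,\bbR^\infty)\times \Diff^+(M)\times_H \emb^{\scl=}(P(\Sigma\subset M),P_M)\big)/\Diff^+(M).
\]
Cancelling the free $\Diff^+(M)$ factor, as in the proof of \cref{cor:rmD-classifying-space}, identifies this with $\big(\emb(M,\bbR^\infty)\times \emb^{\scl=}(P(\Sigma\subset M),P_M)\big)/H$. This is a model for the homotopy quotient by $H$, because $H$ acts principally on $\emb(M,\bbR^\infty)$: by \cref{lem:Diff(M)->sep(M) principal}(2) it is admissible in $\Diff^+(M)$, and \cref{obs:admissible}(3) then applies. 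The identification is a homeomorphism commuting with post-composition by $\Diff^+(P_M)$, so it is equivariant.

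For the second equivalence, use \cref{cor:maximal-prime=prime}. The restriction map $\emb^{\scl=}(P(\Sigma\subset M),P_M)\to\emb^{\scl=}((M\ca\Sigma)^{\rm prime},P_M)$ is a $\Diff^+(P_M)$-equivariant homotopy equivalence. It is also $H$-equivariant, where $H$ acts on the target through $H\to\Diff^+((M\ca\Sigma)^{\rm prime})$. This is the compatibility to check: by \cref{prop: restricting maximal prime piece}, $\varphi_P$ restricted to the copy of $(M\setminus\Sigma)^{\rm prime}$ inside $P(\Sigma\subset M)$ is just $\varphi$. Elements of $H$ fix each sphere and its coorientation, so they preserve $(M\ca\Sigma)^{\rm prime}$, and the inclusion $(M\ca\Sigma)^{\rm prime}\hookrightarrow P(\Sigma\subset M)$ extends the one on interiors. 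An $H$-equivariant weak equivalence induces an equivalence on homotopy orbits, for instance by comparing the fibre sequences over $BH$. So the second equivalence follows, again $\Diff^+(P_M)$-equivariantly.

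The main obstacle is the \emph{equivariant} homotopy-equivalence claim in \cref{cor:maximal-prime=prime}. Both spaces are identified with $\Diff^+(P_M)$ via orbit maps at compatible basepoints using \cref{lem:embedding complement of discs}, and these identifications commute with restriction. For this to work, the closure $\overline{(M\ca\Sigma)^{\rm prime}}$ must be embedded in $P(\Sigma\subset M)$ with complement in $P_M$ a union of discs. This holds by the description in \cref{prop - max prime piece is diffeo to P minus some discs}: $P(\Sigma\subset M)$ is obtained from that closure by attaching punctured balls along spheres, so a prime marking $\iota$ restricts to an element of $\emb^{\scl=}((M\ca\Sigma)^{\rm prime},P_M)$.
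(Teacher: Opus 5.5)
Your proposal is correct and follows the paper's proof. You write $\sep^\Gamma(M;P_M)$ as $\Diff^+(M)\times_{H}\emb^{\scl{=}}(P(\Sigma\subset M),P_M)$, cancel the free $\Diff^+(M)$ factor to obtain the homotopy quotient by $H=\Diff^+_{\pi_0(2\Sigma)}(M,\Sigma)$, and then invoke \cref{cor:maximal-prime=prime} for the second equivalence; your checks of the $H$-equivariance of the restriction map and of the principality of the $H$-action on $\emb(M,\bbR^\infty)$ fill in details the paper leaves implicit.
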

\begin{proof}
    By definition $\calF^\Gamma(P_M)$ is
    \[
         \left(\Diff^+(M) \times_{\Diff^+_{\pi_0(2\Sigma)}(M)} \emb^{\scl{=}}( P(\Sigma\subset M), P_M) \right) \hq \Diff^+(M).
    \]
    Spelling out the homotopy orbit construction and commuting out the quotient by $\Diff^+_{\pi_0(2\Sigma)}(M)$ we get
    \[
         \left(E\Diff^+(M) \times \Diff^+(M) \times \emb^{\scl{=}}( P(\Sigma\subset M), P_M) \right) \big/ (\Diff^+(M) \times \Diff^+_{\pi_0(2\Sigma)}(M))
    \]
    The two copies of $\Diff^+(M)$ cancel and we get
    \[
         \emb^{\scl{=}}( P(\Sigma_0\subset M), P_M) \hq \Diff^+_{\pi_0(2\Sigma)}(M),
    \]
    proving the first equivalence. 
    The second equivalence now follows from Corollary \ref{cor:maximal-prime=prime}.
\end{proof}

Since $\calF^\Gamma(P_M)$ retains its $\Diff^+(P_M)$-action, we can further quotient by $\Diff^+(P_M)$ to obtain $\rmD(\Gamma)$.

\begin{lem}\label{lem: C/Diff=D}
    There is a functorial equivalence $\calF^\Gamma(P_M) \hq \Diff^+(P_M)  \simeq  \rmD(\Gamma)$.
\end{lem}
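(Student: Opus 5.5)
The plan is to commute the two homotopy quotients and then apply \cref{lem:sep(M,P)/Diff(P)}. Since the actions of $\Diff^+(M)$ and $\Diff^+(P_M)$ on $\sep^\Gamma(M;P_M)$ commute, we have
\[
    \calF^\Gamma(P_M)\hq\Diff^+(P_M) = \big(\sep^\Gamma(M;P_M)\hq\Diff^+(M)\big)\hq\Diff^+(P_M).
\]
Concretely, both iterated quotients are models for the homotopy quotient by the product group $\Diff^+(M)\times\Diff^+(P_M)$. We therefore get a natural homeomorphism of the left-hand side with
\[
    \big(E\Diff^+(M)\times E\Diff^+(P_M)\times\sep^\Gamma(M;P_M)\big)/(\Diff^+(M)\times\Diff^+(P_M)),
\]
and hence with $\big(\sep^\Gamma(M;P_M)\hq\Diff^+(P_M)\big)\hq\Diff^+(M)$. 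This identification uses only the point-set definition of the homotopy orbit construction. By the remark following \cref{defn: Sep-PM-functor}, both actions are compatible with the maps $f_*$, so the homeomorphism is natural in $\Gamma\in\Gr(M)$.

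Next I would apply \cref{lem:sep(M,P)/Diff(P)}. Its proof produces a map
\[
    \rho\colon\sep^\Gamma(M;P_M)\hq\Diff^+(P_M)\to\sep^\Gamma(M)
\]
that forgets the prime marking and the $E\Diff^+(P_M)$-coordinate. This map is natural in $\Gamma$ and is an equivalence. I need to check that $\rho$ is also $\Diff^+(M)$-equivariant. This holds because $\Diff^+(M)$ acts on $(\Sigma,\alpha)$ by the same formula in both spaces, and forgetting $\iota$ commutes with precomposition by $\varphi_P^{-1}$.

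Finally, taking homotopy orbits by $\Diff^+(M)$ preserves $\Diff^+(M)$-equivariant maps that are weak equivalences on underlying spaces. To see this, compare the two fibre bundles over $B\Diff^+(M)$ with fibres the two spaces; the induced map is a fibrewise equivalence, hence an equivalence, for instance by the long exact sequences in homotopy or by \cref{lucis lemma}. Applying this to $\rho$ gives
\[
    \rho\hq\Diff^+(M)\colon\big(\sep^\Gamma(M;P_M)\hq\Diff^+(P_M)\big)\hq\Diff^+(M)\xrightarrow{\simeq}\sep^\Gamma(M)\hq\Diff^+(M)=\rmD(\Gamma).
\]
This map is natural in $\Gamma$. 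Composing it with the homeomorphism from the first step yields the functorial equivalence claimed in the lemma.

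The main point needing care is strict functoriality, as opposed to functoriality only up to homotopy. I would handle it by keeping every map a strict point-set map: the commuting-quotients homeomorphism and $\rho$ are both given by explicit formulas that strictly commute with the $f_*$. As a result, no higher coherence data is required.
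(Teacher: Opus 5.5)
Your proposal is correct and follows the paper's own argument. The paper's proof is a single line: apply $(-)\hq\Diff^+(M)$ to the equivalence of \cref{lem:sep(M,P)/Diff(P)}. It leaves implicit exactly the points you spell out: commuting the two homotopy quotients, the $\Diff^+(M)$-equivariance and naturality of $\rho$, and the fact that homotopy orbits preserve equivariant equivalences.
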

\begin{proof}
    Recall from \cref{defn:D-Gamma-functor} that $\rmD(\Gamma)= \sep^\Gamma(M) \hq \Diff^+(M)$. Now applying $(-)\hq \Diff^+(M)$ to Lemma \ref{lem:sep(M,P)/Diff(P)} gives the desired result.
\end{proof}
Theorem~\ref{thm: wrong way map} follows almost immediately from \cref{lem: C/Diff=D}.

\begin{thm}\label{thm: wrong way map}
    Let~$M$ be a 3-manifold that is not diffeomorphic to~$S^1\times S^2$, and let~$P_M$ be the disjoint union of the irreducible factors of~$M$. 
    Then there is a homotopy fibre sequence
    \[
    \hocolim_{\Gamma \in \Gr(M)} \calF^\Gamma(P_M) \longrightarrow \BDiff^+(M) \xrightarrow[\qquad]{\W} \BDiff^+(P_M).
    \]
\end{thm}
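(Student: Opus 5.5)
We take $X := \hocolim_{\Gamma \in \Gr(M)} \calF^\Gamma(P_M)$. Since $\calF^{(-)}(P_M)$ is a functor $\Gr(M) \to \Diff^+(P_M)\text{-}\Top$ (\cref{defn:C-Gamma}), the homotopy colimit $X$ inherits a $\Diff^+(P_M)$-action. The homotopy colimit is built from disjoint unions and geometric realisation, and the action is levelwise on the simplicial space $E_\bullet$ of \cref{defn - hocolim}. There is then a tautological homotopy fibre sequence
\[
    X \longrightarrow X \hq \Diff^+(P_M) \longrightarrow \BDiff^+(P_M),
\]
namely the Borel construction. This is a fibre bundle with fibre $X$, since $(X \times E\Diff^+(P_M))/\Diff^+(P_M) \to E\Diff^+(P_M)/\Diff^+(P_M)$ is associated to the principal bundle $E\Diff^+(P_M) \to \BDiff^+(P_M)$.

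The remaining task is to identify the middle term with $\BDiff^+(M)$. As in the proof of \cref{thm:hocolim-intro}, homotopy quotients commute with homotopy colimits \cite[Lemma 2.12]{BoydBregmanSteinebrunner-finiteness}. This gives
\[
    X \hq \Diff^+(P_M) \simeq \hocolim_{\Gamma \in \Gr(M)} \big(\calF^\Gamma(P_M) \hq \Diff^+(P_M)\big).
\]
By \cref{lem: C/Diff=D} there is a functorial equivalence $\calF^\Gamma(P_M) \hq \Diff^+(P_M) \simeq \rmD(\Gamma)$. A natural transformation that is an objectwise weak equivalence induces an equivalence on homotopy colimits, because the simplicial spaces in \cref{defn - hocolim} are levelwise equivalent and both are good or split. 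Hence the middle term is $\hocolim_\Gamma \rmD(\Gamma)$, which is $\simeq \BDiff^+(M)$ by \cref{thm:hocolim-intro}. This uses the hypothesis $M \ncong S^1 \times S^2$.

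We then define $\W$ as the composite of an inverse of this equivalence with the projection $X \hq \Diff^+(P_M) \to \BDiff^+(P_M)$. Transporting the fibre sequence along the equivalence yields the claimed homotopy fibre sequence.

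The main obstacle is to make sure that the equivalence in \cref{lem: C/Diff=D} is genuinely natural in $\Gamma$, as a zig-zag of natural maps, and not merely objectwise. The map $\rho$ in \cref{lem:sep(M,P)/Diff(P)} forgets the prime marking. It is defined directly on $\sep^\Gamma(M;P_M)$ and commutes strictly with the maps $f_*$ of \cref{defn: Sep-PM-functor} and \cref{defn:Sep-functor}, since $f_*$ on markings is just restriction along the embedding $j$. Applying $(-)\hq\Diff^+(M)$ and using that the two actions commute gives a strict natural transformation
\[
    \calF^\Gamma(P_M)\hq\Diff^+(P_M) \to \rmD(\Gamma),
\]
which is an objectwise equivalence. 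That is exactly what is needed, so the argument is mostly bookkeeping, with the commutation of the $\Diff^+(M)$- and $\Diff^+(P_M)$-quotients being the point to verify carefully.
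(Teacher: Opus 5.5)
Your proposal is correct and is essentially the paper's proof. You take the Borel fibre sequence $X \to X\hq\Diff^+(P_M) \to \BDiff^+(P_M)$ for the induced action on the homotopy colimit, commute the homotopy colimit with the homotopy quotient, and then apply \cref{lem: C/Diff=D} and \cref{thm:hocolim-intro}. Your extra remarks — that the forgetful map $\rho$ is strictly natural in $\Gamma$, and that the $\Diff^+(M)$- and $\Diff^+(P_M)$-quotients commute — only make explicit what the paper leaves implicit in the proof of \cref{lem: C/Diff=D}.
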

\begin{proof}
    For any $G$-space $X$ there is a fibre sequence:
    \[
        X \longrightarrow X\hq G \longrightarrow *\hq G = BG.
    \]
    Setting $X = \hocolim_{\Gamma \in \Gr(M)} \calF^{\Gamma}(P_M)$ and $G = \Diff^+(P_M)$, all that remains is to identify $X\hq G$ with $\BDiff^+(M)$.
    For this we compute:
    \begin{align*}
        \left(\hocolim_{\Gamma \in \Gr(M)} \calF^{\Gamma}(P_M)\right) \hq \Diff^+(P_M) 
        &\simeq \hocolim_{\Gamma \in \Gr(M)} \left(\calF^{\Gamma}(P_M)\hq \Diff^+(P_M)\right) \\
        &\simeq \hocolim_{\Gamma \in \Gr(M)} \rmD(\Gamma) \\
        & \simeq \BDiff^+(M).
    \end{align*}
    Here the first step commutes the homotopy colimit with the homotopy quotient as in the proof of \cref{thm:hocolim-intro} (using \cite[Lemma 2.12(1) and (5)]{BoydBregmanSteinebrunner-finiteness}).
    The second step is \cref{lem: C/Diff=D}
    and the third step is \cref{thm:hocolim-intro}.
\end{proof}

\subsection{A simplified colimit}
We now rewrite the homotopy colimit that describes the fibre of $\W$ in slightly different terms.
The key idea is that instead of $\Gr(M)$ we want to use a different, more combinatorial category $\Gr_{g,n}$ of rank $g$ graphs with $n$ marked vertices.
This has two advantages: 
\begin{itemize}
    \item 
    The category $\Gr_{g,n}$ only depends on the number of $(S^1\times S^2)$-prime factors and the number of irreducible prime factors -- it does not depend on which prime factors appear nor their multiplicity.
    \item     
    The new functor $\rmC^{(-)}_{(-)}(P_1,\dots,P_n)\colon \Gr_{g,n} \to \Top$ over which we take the homotopy colimit takes values in connected spaces, whereas $\calF^\Gamma(P_M)$ typically has several connected components, all of which are homeomorphic.
\end{itemize}

\begin{notation}
    Throughout this section, fix integers $g,n\geq 0$ and compact oriented irreducible $3$-manifolds $P_1,\dots,P_n$  not diffeomorphic to $D^3$ or $S^3$.
    Write $M = P_1 \sharp \dots \sharp P_n \sharp (S^1 \times S^2)^{\sharp g}$ and $P_M := P_1 \sqcup \dots \sqcup P_n$.
\end{notation}

Recall from \cref{defn:Gr_gn} that an element $(\Gamma, \sigma) \in \Gr_{g,n}$ is a connected rank $g$ graph $\Gamma$ with a marking $\sigma \colon \{1,\dots,n\} \hookrightarrow V_\Gamma$ such that every non-marked vertex is at least trivalent. 
We let $V_\Gamma^{\rm sph} \subset V_\Gamma$ denote the set of unmarked vertices.
Given a choice of $P_1,\dots,P_n$, there is a functor
\[
    \lambda\colon \Gr_{g,n} \longrightarrow \Gr(M) 
\]
which sends $(\Gamma,\sigma)$ to the graph $\Gamma$ with the $3$-manifold labelling that assigns $[P_i]$ to the vertex $\sigma(i)$ and $[S^3]$ to all unmarked vertices.

\begin{defn}\label{defn:C-Gamma-sigma}
    For $(\Gamma, \sigma) \in \Gr_{g,n}$ we let 
    \[ 
        \sep^\Gamma_\sigma(M; P_1,\dots, P_n) \subseteq \sep^\Gamma(M; P_1 \sqcup \dots \sqcup P_n)
    \]
    denote the subspace of those $(\Sigma \subset M, \alpha\colon G_{\Sigma \subset M} \cong \Gamma, \iota \colon P(\Sigma \subset M) \hookrightarrow P_M)$ where $\iota$ sends the component of $P(\Sigma \subset M)$ that corresponds to $\alpha^{-1}(\sigma(i))$ to $P_i$.
    We further let
    \[
        \rmC^\Gamma_\sigma(P_1,\dots,P_n) := \sep^\Gamma_\sigma(M; P_1,\dots, P_n) \sslash \Diff^+(M) \subseteq \calF^\Gamma(P_1\sqcup\dots \sqcup P_n)
    \]
    denote the corresponding subspace.
\end{defn}

Both $\sep^\Gamma$ and $\calF^\Gamma$ were functors in $\Gamma \in \Gr(M)$ and these subspaces $\sep^\Gamma_\sigma$ and $\rmC^\Gamma_\sigma$ are still functorial in the more restrictive category $\Gr_{g,n}$.
The canonical inclusions of the subspaces yield a natural transformation.
\[\begin{tikzcd}
	{\Gr(M)} &&& {\;}\\
	{\Gr_{g,n}} &&& \Top
	\arrow[""{name=0, anchor=center, inner sep=0}, "{\calF^{-}(P_M)}", from=1-1, to=2-4]
	\arrow["\lambda", from=2-1, to=1-1]
	\arrow[""{name=1, anchor=center, inner sep=0}, "{\rmC^{-}_{-}(P_1,\dots, P_n)}"', from=2-1, to=2-4]
	\arrow["\alpha \Uparrow", phantom, near start, from=2-1, to=1-4]
\end{tikzcd}\]

Now suppose we fix $\Gamma \in \Gr(M)$, a graph labelled by diffeomorphism classes of irreducible manifolds.
Then we call an injection $\sigma \colon \{1,\dots, n\} \hookrightarrow V_\Gamma$ \emph{compatible} if it sends $i$ to a vertex labelled by the diffeomorphism class $[P_i]$.
By construction of $\rmC^\Gamma_\sigma$, we can recover $\calF^\Gamma$ by taking the disjoint union over all choices of compatible $\sigma$.
That is, the combined map
\[
    \alpha^\Gamma = \amalg_\sigma \alpha^\Gamma_\sigma \colon 
    \coprod_{\substack{\sigma\colon \{1,\dots,n\} \hookrightarrow V_\Gamma \\ \text{compatible}}} 
    \rmC^\Gamma_\sigma(P_1,\dots, P_n) \cong
    \calF^{\Gamma}(P_1 \sqcup \dots \sqcup P_n)
\]
is a homeomorphism.
This implies that our new functor still has the same homotopy colimit.
\begin{lem}\label{lem:hocolim-on-Gr_gn}
    The natural transformation $\alpha$ induces a homeomorphism
    \[
        \hocolim_{(\Gamma,\sigma) \in \Gr_{g,n}} \rmC^\Gamma_\sigma(P_1,\dots,P_n) 
        \cong
        \hocolim_{\Lambda \in \Gr(M)} \calF^\Lambda(P_1\sqcup \dots \sqcup P_n).
    \]
\end{lem}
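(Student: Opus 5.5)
The plan is to compare the two simplicial spaces from \cref{defn - hocolim} directly, level by level, and show that the map induced by $\alpha$ is a homeomorphism in each simplicial degree, compatible with faces and degeneracies. Write $E_\bullet$ for the simplicial space computing the left-hand side and $E'_\bullet$ for the one computing the right-hand side. Then
\[
    E_k = \coprod_{(\Gamma_0,\sigma_0)\to\dots\to(\Gamma_k,\sigma_k)} \rmC^{\Gamma_0}_{\sigma_0}(P_1,\dots,P_n),
    \qquad
    E'_k = \coprod_{\Lambda_0\to\dots\to\Lambda_k} \calF^{\Lambda_0}(P_M),
\]
and $\alpha$ sends the summand indexed by a chain in $\Gr_{g,n}$ to the summand indexed by its image under $\lambda$. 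Since $\alpha$ is a natural transformation $\rmC \Rightarrow \calF\circ\lambda$, this gives a map of simplicial spaces, and it suffices to show that each $E_k\to E'_k$ is a homeomorphism. Geometric realisation then yields the claimed homeomorphism.

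The key combinatorial step describes the fibres of $\lambda$ on chains. Fix a chain $\Lambda_0\xrightarrow{f_1}\dots\xrightarrow{f_k}\Lambda_k$ in $\Gr(M)$. I claim that lifts of this chain to $\Gr_{g,n}$, with underlying graphs and maps literally equal, correspond bijectively to compatible markings $\sigma_0$ of $\Lambda_0$. In one direction, take $\sigma_i := f_i\circ\dots\circ f_1\circ\sigma_0$; this is again injective, because each tree $f^{-1}(v)$ contains at most one non-$S^3$ label. It is compatible, because labels are connected sums and so the unique non-$S^3$ label in the preimage tree is carried to the image vertex. Every unmarked vertex is at least trivalent, because unmarked vertices are exactly those labelled $[S^3]$, which are trivalent by \cref{defn:Gr-mfd}. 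The morphisms respect markings by construction. In the other direction, a lift is determined by $\sigma_0$, since morphisms in $\Gr_{g,n}$ must satisfy $\sigma_i = f_i\circ\sigma_{i-1}$. One also has to check that $\lambda$ is injective on objects once the marking is remembered. This holds because every vertex of $\Lambda$ labelled by an irreducible $[P_i]\neq[S^3]$ must be hit by exactly one $i$: the multiset of non-$S^3$ labels of any $M$-graph equals $\{[P_1],\dots,[P_n]\}$ by uniqueness of prime decomposition. It follows that compatible markings are bijections onto the non-spherical vertices, and the relabelling by $\lambda$ loses no information.

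With this in hand, $E_k\to E'_k$ restricted to the summands over a fixed chain in $\Gr(M)$ is precisely the map $\alpha^{\Lambda_0}=\amalg_\sigma\alpha^{\Lambda_0}_\sigma$, which is the homeomorphism noted just before the statement. Summing over all chains shows that $E_k\to E'_k$ is a homeomorphism. Compatibility with faces and degeneracies is automatic from the naturality of $\alpha$.

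The main obstacle I expect is bookkeeping rather than topology. Objects of $\Gr(M)$ are labelled graphs, whereas objects of $\Gr_{g,n}$ carry markings, and the hocolim formula indexes over chains of objects on the nose rather than up to isomorphism. I will need to verify carefully that "compatible marking" in the definition of $\alpha^\Gamma$ is exactly the data of a lift of $\Lambda_0$ along $\lambda$. I also need to confirm that different $\sigma$ on the same $\Lambda_0$ give genuinely distinct objects of $\Gr_{g,n}$, so the disjoint union matches with no double counting. Both points follow from the vertex-count argument above.
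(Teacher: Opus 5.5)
Your proposal is correct and follows essentially the same route as the paper. Like the paper, you compare the simplicial spaces levelwise, observe that a chain in $\Gr_{g,n}$ is just a chain in $\Gr(M)$ together with a compatible marking $\sigma_0$ (the later markings being forced by $\sigma_i = f_i\circ\sigma_{i-1}$), and then apply the homeomorphism $\alpha^{\Lambda_0}=\amalg_\sigma\alpha^{\Lambda_0}_\sigma$ in each simplicial degree. Your extra check that compatible markings are bijections onto the non-spherical vertices, via uniqueness of prime decomposition, spells out the paper's one-line remark that $\Gr_{g,n}$ is, up to isomorphism, the category of $M$-graphs with a compatible marking; connectedness and rank $g$ of the underlying graph follow the same way from $M_{\Lambda_0}\cong M$.
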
   
\begin{proof}
    For the purpose of this proof, it will be useful to think of an object of $\Gr_{g,n}$ as a graph $\Gamma \in \Gr(M)$ together with an compatible map $\sigma \colon \{1,\dots,n\} \to V_\Gamma$.
    This only changes the category up to isomorphism, as the $3$-manifold marking on $\Gamma$ is uniquely determined by $\sigma$.
    
    Let $E_\bullet \to E_\bullet'$ be the map (induced by $\alpha$) between simplicial spaces that compute the two homotopy colimits as in \cref{defn - hocolim}.
    In simplicial degree $k$ this map is 
    \[
        \coprod_{(\Gamma_0,\sigma_0) \to \dots \to (\Gamma_k, \sigma_k)} \rmC^{\Gamma_0}_{\sigma_0}(P_1,\dots,P_n) 
        \longrightarrow
        \coprod_{\Gamma_0\to \dots \to \Gamma_k} \calF^{\Gamma_0}(P_1 \sqcup \dots \sqcup P_n).
    \]
    In the chain of maps $(\Gamma_0,\sigma_0) \xrightarrow{f_1} \dots \xrightarrow{f_k} (\Gamma_k, \sigma_k)$ the markings $\sigma_1, \dots, \sigma_k$ are completely determined by $\sigma_0$ as we must have $\sigma_i = f_i \circ \sigma_{i-1}$.
    Hence we can decompose the left term as
    \[
        \coprod_{\Gamma_0 \to \dots \to \Gamma_k} 
        \coprod_{\substack{\sigma\colon \{1,\dots,n\} \hookrightarrow V_\Gamma \\ \text{compatible}}} 
        \rmC^{\Gamma_0}_{\sigma_0}(P_1,\dots,P_n) 
        \longrightarrow
        \coprod_{\Gamma_0\to \dots \to \Gamma_k} \calF^{\Gamma_0}(P_1 \sqcup \dots \sqcup P_n),
    \]
    which is a homeomorphism by the display above the lemma.
    This shows that $E_\bullet \cong E_\bullet'$ and thus the homotopy colimits are homeomorphic.
\end{proof}

\begin{rem}
    From a categorical perspective, the above lemma follows from the fact that $\Gr_{g,n} \to \Gr(M)$ is (equivalent to) a left fibration whose fibre at $\Gamma$ is the finite set of compatible marking of $\Gamma$.
\end{rem}

\begin{defn}\label{defn:calH_g}
    For $P_1,\dots,P_n$ as before and $g\ge 0$, we define the space of \emph{genus $g$ $1$-handle attachments} to $P_1,\dots,P_n$ to be the homotopy colimit
    \[
        \calH_g(P_1,\dots,P_n) := \hocolim_{(\Gamma, \sigma) \in \Gr_{g,n}} \calH^\Gamma_\sigma(P_1,\dots,P_n).
    \]
\end{defn}

\begin{cor}\label{cor:fibre-sequence-H_g}
    For every compact, connected, oriented 3-manifold $M$ that is not diffeomorphic to~$S^1\times S^2$, there is a homotopy fibre sequence
    \[
        \calH_g(P_1,\dots,P_n) \longrightarrow \BDiff^+(M) \xrightarrow[\qquad]{\W} \BDiff^+(P_1 \sqcup \dots \sqcup P_n).
    \]
\end{cor}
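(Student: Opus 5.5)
This is immediate from the two results just established, so the plan is a two-step reduction. First we invoke \cref{thm: wrong way map}, which gives a homotopy fibre sequence
\[
\hocolim_{\Lambda \in \Gr(M)} \calF^\Lambda(P_M) \longrightarrow \BDiff^+(M) \xrightarrow{\;\W\;} \BDiff^+(P_M)
\]
for any $M \ncong S^1\times S^2$. Its fibre inclusion is the map $X \to X\hq G$ with $X$ the homotopy colimit and $G = \Diff^+(P_M)$. It is composed with the equivalence $X \hq \Diff^+(P_M) \simeq \BDiff^+(M)$ obtained by commuting $\hocolim$ with homotopy orbits, then applying \cref{lem: C/Diff=D} and \cref{thm:hocolim-intro}.

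Second, we replace the fibre using \cref{lem:hocolim-on-Gr_gn}. The natural transformation $\alpha$ induces a homeomorphism between $\hocolim_{(\Gamma,\sigma)\in\Gr_{g,n}} \rmC^\Gamma_\sigma(P_1,\dots,P_n)$ and $\hocolim_{\Lambda\in\Gr(M)}\calF^\Lambda(P_M)$. By \cref{defn:calH_g}, the former is $\calH_g(P_1,\dots,P_n)$. Precomposing the fibre inclusion with this homeomorphism yields the claimed sequence.

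The one point needing care is that the homeomorphism of \cref{lem:hocolim-on-Gr_gn} is compatible with the $\Diff^+(P_M)$-actions, so that the composite map to $\BDiff^+(M)$ is still the fibre inclusion of $\W$. I would argue this directly. Each $\rmC^\Gamma_\sigma$ is a $\Diff^+(P_M)$-invariant subspace of $\calF^\Gamma(P_M)$ whenever $\Diff^+(P_M)$ preserves each factor $P_i$. In general, $\Diff^+(P_M)$ may permute diffeomorphic factors, and hence permute the summands indexed by compatible markings $\sigma$; the disjoint union over $\sigma$ is nonetheless preserved. Since the levelwise homeomorphisms $E_\bullet \cong E'_\bullet$ are equivariant for this permutation action, the homeomorphism of realisations is equivariant.

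Even without equivariance the statement survives. A homotopy fibre sequence is determined up to equivalence by the map $\W$, so any space equivalent to the homotopy fibre may be placed in the fibre position, with the map to $\BDiff^+(M)$ taken to be the composite. I do not expect a genuine obstacle here. The hypotheses ($M$ compact, connected, oriented, no spherical boundary, $M\ncong S^1\times S^2$) are exactly those under which \cref{thm: wrong way map} applies.
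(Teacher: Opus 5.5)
Your proposal is correct and matches the paper's (implicit) argument. The corollary is exactly \cref{thm: wrong way map} with the fibre replaced, via the homeomorphism of \cref{lem:hocolim-on-Gr_gn}, by $\calH_g(P_1,\dots,P_n)$ as defined in \cref{defn:calH_g}. As you observe in your last paragraph, the equivariance discussion is unnecessary: precomposing the fibre inclusion with a homeomorphism already exhibits $\calH_g(P_1,\dots,P_n)$ as the homotopy fibre of $\W$.
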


\subsection{Geometric description of \texorpdfstring{$\W$}{W} and \texorpdfstring{$\pi_1$}{pi-1}-surjectivity} Since the existence of $\W$ in \cref{thm: wrong way map} was derived from the homotopy colimit, its construction may seem somewhat opaque.  The purpose of this section is to provide a more geometric interpretation of $\W$, and give some applications to mapping class groups. 

Our aim will be to explicitly describe the restriction of $\W$ to $\Diff^+(M,\Sigma)$. Consider the chain of homomorphisms \[\Diff^+(M,\Sigma)\to \Diff^+(M\ca\Sigma)\rightarrow \Diff^+((M\ca\Sigma)^{\rm prime})\]
obtained by cutting along $\Sigma$ and then restricting to the prime components. Smale's theorem \cite{Smale59} tells us we can extend any fixed element of $\Diff^+((M\ca\Sigma)^{\rm prime})$, or indeed any $S^d$-family of such diffeomorphisms over $\sqcup_kD^3$ to obtain a family of diffeomorphisms of $P_M$. Thus, we should expect to obtain a map on the level of classifying spaces: $\BDiff^+(M,\Sigma)\to \BDiff^+(P_M)$. To make this discussion more concrete, we introduce the following subgroup of $\Diff^+(M,\Sigma)$.

\begin{defn}\label{defn:O(3)-restriction-subgroup}
    Fix a collar $C_\Sigma\cong [-1,1]\times \Sigma$ of $\Sigma$, where $\Sigma$ is identified with $\Sigma\times\{0\}$.  If we equip $C_\Sigma$ with the product of the standard Euclidean metric on $[-1,1]$ and the round metric on each sphere $S\in \Sigma$, its  orientation-preserving isometry group can be written as 
    \begin{equation}\label{eqn:Collar-Isom-Sigma-Isom}
        \Isom^+(C_\Sigma)=\left(\prod_{S\in\pi_0(\Sigma)}\Or(3)\right)\rtimes \Sym_{\pi_0(\Sigma)}=\Isom(\Sigma)
    \end{equation} where  $\Sym_{\pi_0(\Sigma)}$ is the group of permutations of $\pi_0(\Sigma)$, and each orientation-reversing element of $\Or(3)$ acts in the usual way on $S\in\pi_0(\Sigma)$ but as $-1$ on $[-1,1]$.  Define $\Diff^{\Or(3)}(M, \Sigma) \subset \Diff(M,\Sigma)$ to be the subgroup of diffeomorphisms which agree with an element of $\Isom^+(C_\Sigma)$ on $C_\Sigma$.
\end{defn}

\begin{lem}\label{lem:O(3)-subgroup-equivalence}
    The inclusion $\Diff^{\Or(3)}(M,\Sigma)\hookrightarrow \Diff^+(M,\Sigma)$ is a homotopy equivalence.
\end{lem}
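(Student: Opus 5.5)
The plan is to compare both groups with their images in the group of germs (or restrictions) along the collar, and use Smale's theorem to show that the restriction data on the two sides are homotopy equivalent. Concretely, let $\Emb$ denote the space of embeddings of the collar $C_\Sigma$ into $M$ that carry $\Sigma$ to $\Sigma$ (i.e.\ tubular neighbourhoods of $\Sigma$ compatible with the identification $\Sigma=\Sigma\times\{0\}$ up to reparametrisation). Restriction to the collar defines maps $\Diff^+(M,\Sigma)\to \emb^+(C_\Sigma, M)_\Sigma$ and $\Diff^{\Or(3)}(M,\Sigma)\to \Isom^+(C_\Sigma)$, where $\emb^+(C_\Sigma,M)_\Sigma$ is the space of orientation-preserving embeddings sending $\Sigma\times\{0\}$ onto $\Sigma$. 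By the Cerf--Palais results recalled in \cref{ex:Principal-action-examples}, the first is a locally trivial fibration onto the orbit of the inclusion, with fibre $\Diff_{C_\Sigma}(M)$, the diffeomorphisms fixing the collar pointwise. The second map is also surjective with the same fibre $\Diff_{C_\Sigma}(M)$: any isometry of the collar extends over $M$ since the subgroup is nonempty over each component, and the kernel consists precisely of diffeomorphisms that are the identity on $C_\Sigma$.

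So I will build a map of fibre sequences
\[
\Diff_{C_\Sigma}(M)\to \Diff^{\Or(3)}(M,\Sigma)\to \Isom^+(C_\Sigma)
\quad\longrightarrow\quad
\Diff_{C_\Sigma}(M)\to \Diff^+(M,\Sigma)\to \emb^+(C_\Sigma,M)_\Sigma^{\rm orb},
\]
which is the identity on fibres, and reduce to showing $\Isom^+(C_\Sigma)\to \emb^+(C_\Sigma,M)^{\rm orb}_\Sigma$ is a weak equivalence (then the five lemma on homotopy groups, for all basepoints since $\pi_0$ is handled by the same comparison, concludes). By uniqueness of tubular neighbourhoods, the space of collars of $\Sigma$ with prescribed core deformation retracts onto the "linear" ones, i.e.\ $\emb^+(C_\Sigma,M)_\Sigma\simeq \Diff(\Sigma)\times_{\text{or}}(\text{choice of normal orientation})$; precisely it is equivalent to the group of diffeomorphisms of $\Sigma$ together with, for each sphere, a sign recording whether the normal direction is flipped, subject to total orientation preservation. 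This is $\big(\prod_{\pi_0\Sigma}\Diff(S^2)\big)\rtimes\Sym_{\pi_0\Sigma}$ where orientation-reversing diffeomorphisms of a sphere are paired with flipping $[-1,1]$, exactly mirroring \eqref{eqn:Collar-Isom-Sigma-Isom}. Smale's theorem \cref{thm:Smale-Hatcher}(i), applied also to the orientation-reversing component ($\Or(3)\simeq\Diff(S^2)$), then gives that $\Isom^+(C_\Sigma)=\Isom(\Sigma)\hookrightarrow \Diff(\Sigma)$ is an equivalence, finishing the argument. Also need: the orbit of the inclusion in $\emb^+(C_\Sigma,M)_\Sigma$ is hit on all components by isometries, which follows since every element of $\Isom(\Sigma)$ is realised and the equivalence is onto $\pi_0$.

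The main obstacle I expect is the middle step: carefully identifying the image of $\Diff^+(M,\Sigma)$ in the collar-embedding space and the deformation retraction of collars onto linear ones, in a way compatible with the semidirect product structure and permutations of spheres (only permutations realised by diffeomorphisms of $M$ occur on both sides, so one should restrict to the same finite-index subgroups of $\Sym_{\pi_0\Sigma}$ and $\{\pm1\}$-flips). I would handle it by first passing to the finite-index subgroups $\Diff^+_{\pi_0(2\Sigma)}$ (fixing each sphere and coorientation), where the comparison is just $\prod\SO(3)\to\prod\Diff^+(S^2)$ plus contractible collar choices, and then recovering the full statement by comparing the finite quotients, which are identical on both sides by construction.
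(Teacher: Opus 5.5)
\textbf{Route.} Your argument is correct in outline. It uses the same two ingredients as the paper's proof, Smale's theorem and Cerf's contractibility of the space of collars, but distributes them differently.

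The paper restricts to $\Sigma$ itself. The bases $\Isom(\Sigma)\subset\Diff(\Sigma)$ are then compared directly by Smale. The fibres $\Diff_{C_\Sigma}(M)\subset\Diff^+_\Sigma(M)$ differ, and are compared via the fibre sequence $\Diff_{C_\Sigma}(M)\to\Diff^+_\Sigma(M)\to\Coll_\Sigma$.

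You restrict to the whole collar, so the fibres agree on the nose. In fact $\Diff^{\Or(3)}(M,\Sigma)$ is literally the preimage of $\Isom^+(C_\Sigma)$ under the fibration $\Diff^+(M,\Sigma)\to\emb^+(C_\Sigma,M)_\Sigma$, so your bottom row is the pullback of the top row. Both ingredients are then spent on the base, through $\Isom(\Sigma)\to\emb^+(C_\Sigma,M)_\Sigma\to\Diff(\Sigma)$, where the second map (restriction to the core) is a fibration with contractible fibres by Cerf. Your version avoids a separate fibre comparison; the paper's keeps the familiar base pair $\Isom(\Sigma)\subset\Diff(\Sigma)$.

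\textbf{A false step.} As written, one step is false: $\Diff^{\Or(3)}(M,\Sigma)\to\Isom^+(C_\Sigma)$ is not surjective, and it is not true that every element of $\Isom(\Sigma)$ is realised. Only permutations and flips compatible with $\Aut(G_\Sigma)$ extend. For example, take $M=P_1\sharp P_2$ with $P_1\not\cong P_2$ and $\Sigma$ a single sphere: no orientation-reversing element of $\Or(3)$ extends, since it would swap the two sides. Your last paragraph half-retracts this, but ``identical on both sides by construction'' is not an argument.

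\textbf{The fix.} What you actually need is the following. Let $B\subseteq\Isom^+(C_\Sigma)$ and $B'\subseteq\emb^+(C_\Sigma,M)_\Sigma$ be the images; both are unions of path components. You must show that $\pi_0B\to\pi_0B'$ is a bijection. Injectivity follows from Smale together with the $\pi_0$-bijectivity of core restriction. For surjectivity, take $\phi\in\Diff^+(M,\Sigma)$.
\begin{enumerate}
\item By Smale, $\phi_{|\Sigma}$ is isotopic to an isometry $g$. Let $\tilde g\in\Isom^+(C_\Sigma)$ be the isometry corresponding to $g$.
\item Core restriction $\emb^+(C_\Sigma,M)_\Sigma\to\Diff(\Sigma)$ has connected fibres, so it is a $\pi_0$-bijection. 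Hence $\tilde g$ lies in the same path component as $\phi_{|C_\Sigma}$, and therefore in $B'$.
\item So $\tilde g=\phi'_{|C_\Sigma}$ for some $\phi'\in\Diff^+(M,\Sigma)$. Such a $\phi'$ lies in $\Diff^{\Or(3)}(M,\Sigma)$ by definition, so $\tilde g\in B$.
\end{enumerate}
With this, $B\to B'$ is an equivalence and your pullback argument concludes. This is exactly the paper's remark that both restriction maps ``surject onto exactly the same components''.
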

\begin{proof}
    Consider the following commutative diagram of fibre sequences:
    \[\begin{tikzcd}
    {\Diff^+_\Sigma(M)} \ar[r]& {\Diff^+(M,\Sigma)}\ar[r]& {\Diff(\Sigma)}\\
    {\Diff_{C_\Sigma}(M)}\ar[r]\ar[u, "\simeq"]& {\Diff^{\Or(3)}(M,\Sigma)}\ar[r]\ar[u]& {\Isom(\Sigma)}\ar[u, "\simeq"]
    \end{tikzcd}\]
    Both the top and bottom horizontal sequences come from restriction to $\Sigma$, while the vertical maps are all inclusions. In the bottom sequence, we have identified the fibre with those diffeomorphisms that act trivially on $C_\Sigma$ and the image with $\Isom(\Sigma)$ via the isomorphism in \cref{eqn:Collar-Isom-Sigma-Isom}.

    By Smale's theorem, the right hand vertical map is a homotopy equivalence.  In general, neither $\Diff^+(M,\Sigma)\to\Diff(\Sigma)$ nor $\Diff^{\Or(3)}(M,\Sigma)\to\Isom(\Sigma)$ will be surjective since only those permutations induced by $\Aut(G_\Sigma)$ can be realised.  Nevertheless, they both surject onto exactly the same components, and hence the middle vertical map will be a homotopy equivalence as long as the left hand vertical map is a homotopy equivalence.

    To see this, let $\Coll_\Sigma := \emb^+_{\Sigma \times \{0\}}(\Sigma \times [-1,1], M)$ be the space of collars of $\Sigma$, \emph{i.e.}~the space of those orientation-preserving embeddings $\Sigma\times[-1,1]\hookrightarrow M$ that are the identity on $\Sigma \cong \Sigma \times \{0\}$.
    This space is contractible by a result of Cerf \cite[5.2.1, Corollaire 1]{Cerf}.
    The action of $\Diff_\Sigma^+(M)$ on $\Coll_\Sigma$ is locally retractile and transitive, hence 
    by taking the stabiliser at some collar $C_\Sigma \in \Coll_\Sigma$ we obtain a fibre sequence
    \[\Diff_{C_\Sigma}(M)\hookrightarrow \Diff_\Sigma^+(M)\rightarrow \Coll_{C_\Sigma}(\Sigma).\]
    where the base is contractible and hence the fibre inclusion is an equivalence, as desired.
\end{proof}

We now define a homomorphism $\Phi\colon\Diff^{\Or(3)}(M,\Sigma)\to \Diff^+(P_M)$ by first cutting to get $M\ca \Sigma$, capping each $S^2$ component of the boundary with a round $D^3$ to get $\smash{\widehat{M\ca\Sigma}}$, then forgetting $S^3$ components to get $P_M$. The fact that each element of $\Diff^{\Or(3)}(M,\Sigma)$ agrees with an element of $\Or(3)$ in a neighbourhood of $\Sigma$ guarantees that this extension is smooth. 
See \cref{fig:cutting-and-capping} in the introduction.

\begin{defn}
    We define the \emph{cut-and-cap map} $\kappa\colon \BDiff^+(M,\Sigma)\to \BDiff^+(P_M)$ to be the composition of the equivalence  $\BDiff^+(M,\Sigma)\simeq \BDiff^{\Or(3)}(M,\Sigma)$ from \cref{lem:O(3)-subgroup-equivalence} with the induced map $B\Phi\colon \BDiff^{\Or(3)}(M,\Sigma)\rightarrow \BDiff^+(P_M)$. 
\end{defn}
The next proposition gives a geometric interpretation to the restriction of the splitting map $\W$ to $\BDiff^+(M,\Sigma)$ in terms of the capping map $\kappa$:

\begin{prop}\label{prop:geometric-wrong-way-map}
    The following diagram commutes up to homotopy
    \[
        \begin{tikzcd}
        \BDiff^+(M,\Sigma)\ar[dr, "\kappa"] \ar[d]&\\\BDiff^+(M)\ar[r, "\W"] &\BDiff^+(P_M).
        \end{tikzcd}
     \]
\end{prop}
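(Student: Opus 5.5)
The plan is to pass through one space that maps to both $\BDiff^+(M,\Sigma)$ and $\BDiff^+(P_M)$. Set $\Gamma = G_\Sigma$ and $H := \Diff^{\Or(3)}(M,\Sigma)\cap \Diff^+_{\pi_0(2\Sigma)}(M,\Sigma)$. Since $\Diff^+_{\pi_0(2\Sigma)}(M,\Sigma)$ has finite index in $\Diff^+(M,\Sigma)$, we first reduce to it. Precomposing with the finite cover $\BDiff^+_{\pi_0(2\Sigma)}(M,\Sigma)\to \BDiff^+(M,\Sigma)$ is injective on homotopy classes of maps only up to subtleties. So instead we work directly with the subspace $\sep^{\Gamma}(M;P_M)$ and the full stabiliser. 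Concretely, we note that the $\Diff^+(M)$-orbit of $\Sigma$ in $\sepNP(M)$ gives a map $\BDiff^+(M,\Sigma)\simeq \sepNP(M)_{[\Sigma]}\hq\Diff^+(M) \to \|\sepNP_\bullet(M)\|\hq\Diff^+(M)\simeq \BDiff^+(M)$, which is the vertical map (as a vertex inclusion). By \cref{prop:sep-contractible} this is compatible with the inclusion of $\rmD(\Gamma)\hq\Aut(\Gamma)$ into the homotopy colimit of \cref{thm:hocolim-intro}.

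Next we lift to prime markings. Let $Y := (\coprod_{\alpha}\sep^\Gamma(M;P_M))$ restricted to the orbit of $\Sigma$, with forgetful map to the orbit of $\Sigma$. By \cref{lem:sep(M,P)/Diff(P)}, $Y\hq(\Diff^+(M)\times\Diff^+(P_M))\simeq \BDiff^+(M,\Sigma)$, and projecting to $B\Diff^+(P_M)$ gives, by the construction in \cref{thm: wrong way map}, precisely the restriction of $\W$. Equivalently, by the argument of \cref{lem:rmC-simpler}, this is the map
\[
\emb^{\scl{=}}(P(\Sigma\subset M),P_M)\hq\big(\Diff^+(M,\Sigma)\times \Diff^+(P_M)\big)\longrightarrow \BDiff^+(P_M),
\]
where the source is equivalent to $\BDiff^+(M,\Sigma)$ via \cref{cor:homotopy-orbits-contractible}.

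To compare with $\kappa$, we replace $\Diff^+(M,\Sigma)$ by $\Diff^{\Or(3)}(M,\Sigma)$ using \cref{lem:O(3)-subgroup-equivalence}. We then use \cref{cor:maximal-prime=prime} to replace $P(\Sigma\subset M)$ by $(M\ca\Sigma)^{\rm prime}$; here $\Diff^{\Or(3)}$ acts through cutting. Fix the embedding $\iota_0\colon (M\ca\Sigma)^{\rm prime}\hookrightarrow P_M$ obtained from capping with round discs. The homomorphism $(\mathrm{id},\Phi)\colon \Diff^{\Or(3)}(M,\Sigma)\to \Diff^{\Or(3)}(M,\Sigma)\times\Diff^+(P_M)$ fixes $\iota_0$, because $\Phi(\varphi)\circ\iota_0=\iota_0\circ\varphi|$ by construction of $\Phi$. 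This gives a map
\[
\BDiff^{\Or(3)}(M,\Sigma)=\{\iota_0\}\hq \Diff^{\Or(3)} \to \emb^{\scl{=}}((M\ca\Sigma)^{\rm prime},P_M)\hq(\Diff^{\Or(3)}\times\Diff^+(P_M)),
\]
which is an equivalence. Indeed, the target fibres over $\BDiff^{\Or(3)}$ with fibre $\emb^{\scl{=}}\hq\Diff^+(P_M)\simeq *$ by \cref{lem:embedding complement of discs}, and the map is a section of that projection. Its composite with the projection to $\BDiff^+(P_M)$ is $B\Phi$, while its composite with the projection to $\BDiff^{\Or(3)}$ is the identity. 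So both the vertical map followed by $\W$ and $\kappa$ factor through this common space, and the triangle commutes up to homotopy.

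The main obstacle is the first step. We need to verify honestly that the restriction of $\W$ to the vertex $\BDiff^+(M,\Sigma)$ of the homotopy colimit is modelled by the displayed double homotopy quotient. This requires checking that the equivalences in \cref{thm:hocolim-intro} and \cref{thm: wrong way map}, which commute $\hocolim$ with $\hq$, are compatible with inclusion of a single $0$-simplex summand $\sep^\Gamma(M;P_M)$. It also requires dealing with $\Aut(\Gamma)$, which we handle by using the full orbit rather than a single $\alpha$. Since everything is natural in the simplicial level-$0$ inclusion, we expect this to be routine bookkeeping.
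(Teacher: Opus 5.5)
Your proposal is correct and follows essentially the same route as the paper. Both arguments pass through the double homotopy quotient $\emb^{\scl{=}}((M\ca\Sigma)^{\rm prime},P_M)\hq(\text{stabiliser}\times\Diff^+(P_M))$, identified with the relevant vertex of the homotopy colimit via \cref{lem:rmC-simpler} and \cref{cor:maximal-prime=prime}, and map $\BDiff^{\Or(3)}(M,\Sigma)$ into it using the $(\mathrm{id},\Phi)$-fixed embedding $\iota_0$; your treatment is in fact more careful than the paper's, which works with $\rmD(\Gamma)\simeq\BDiff^+_{\pi_0(2\Sigma)}(M,\Sigma)$ and leaves implicit both the passage to the full stabiliser and the section argument showing that this map into the double quotient is an equivalence.
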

\begin{proof}
To define the splitting map, we introduced the intermediate space $\sep^\Gamma(M;P_M)$ which has an action of $\Diff^+(M)\times\Diff^+(P_M)$. From the construction of the homomorphism $\Phi$, we have a fixed $\Phi$-equivariant embedding $\iota_0\colon(M\ca\Sigma)^{\rm prime}\rightarrow P_M$.  By \cref{lem:rmC-simpler}, there are equivalences \[\rmD(\Gamma)\simeq\calF^\Gamma(P_M)\hq\Diff^+(P_M)\simeq \emb^{\scl=}((M\ca\Sigma)^{\rm prime},P_M)\hq(\Diff^+_{\pi_0(2\Sigma)}(M,\Sigma)\times\Diff^+(P_M)). \]It follows that we get a homotopy commutative diagram:
\[\begin{tikzcd}
{\BDiff^{\Or(3)}(M,\Sigma)\simeq\ast\hq\Diff^{\Or(3)}(M,\Sigma)} \ar[dr, "{*\hq\Phi}"] \ar[d, "{\iota_0\hq(\rm{Id}\times\Phi)}"'] & \\
{\emb^{\scl=}((M\ca\Sigma)^{\rm prime},P_M)\hq(\Diff^+_{\pi_0(2\Sigma)}(M,\Sigma)\times\Diff^+(P_M))} \ar[r] & 
{\ast\hq\Diff^+(P_M)=\BDiff(P_M)}\\
{\rmD(\Gamma)} \ar[u, "{\text{\cref{lem:rmC-simpler}}~\simeq}"] \ar[ur]\ar[d] & \\
{\BDiff^+(M)\simeq\displaystyle \hocolim_{\Gamma \in \Gr(M)} \rmD(\Gamma)} \ar[uur, "\W"] & 
\end{tikzcd}\]
By definition, the equivalence $\BDiff^+(M,\Sigma)\simeq\BDiff^{\Or(3)}(M,\Sigma)$ followed by $B\Phi=\ast\hq\Phi$ in the diagram is $\kappa$, while going the other way around the diagram is the map $\BDiff^+(M,\Sigma)\rightarrow \BDiff^+(M)$ induced by inclusion followed by $\W.$
\end{proof}

Our main application of the previous proposition will be to mapping class groups.
Recall that for $N$ an oriented manifold, the \emph{mapping class group} of $N$ is the group of components $\pi_0(\Diff^+(N))$.

Because $P_M$ is a disjoint union, we can decompose $\Diff^+(P_M)$, and hence the mapping class group of $P_M$, as follows. Partition $P_M=(\sqcup^{n_1}Q_1)\sqcup\cdots\sqcup(\sqcup^{n_k} Q_k)$ where for $i\neq j$, $Q_i$ is not orientation-preserving diffeomorphic to $Q_j$. This yields a product decomposition \begin{equation}\label{eqn:Diff-product-decomp}
    \Diff^+(P_M)\cong \prod_{i=1}^k\left(\left(\prod^{n_1}\Diff^+(Q_i)\right)\rtimes \Sym_{n_i}\right),
\end{equation}
and hence a corresponding decomposition of mapping class groups.

Since $\pi_0(\Diff^+(M))\cong \pi_1(\BDiff^+(M))$, Proposition \ref{prop:geometric-wrong-way-map} will imply the following surjectivity result for mapping class groups.

\begin{cor}\label{cor:W-pi1-surjective}
    The splitting map induces a surjection
    \[
        \pi_1\W\colon \pi_0(\Diff^+(M)) \twoheadrightarrow \pi_0(\Diff^+(P_M))
    \]
    from the mapping class group of $M$ onto the mapping class group of $P_M$.
\end{cor}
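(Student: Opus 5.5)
The approach is to factor $\pi_1\W$ through the mapping class group of the stabiliser of a separating system, using \cref{prop:geometric-wrong-way-map}. Fix a separating system $\Sigma \subset M$ in $\sepNP(M)$. On $\pi_1$, the diagram of \cref{prop:geometric-wrong-way-map} gives
\[
  \pi_0\Diff^+(M,\Sigma) \longrightarrow \pi_0\Diff^+(M) \xrightarrow{\pi_1\W} \pi_0\Diff^+(P_M),
\]
and this composite equals $\pi_1\kappa = \pi_0\Phi$. Here we use \cref{lem:O(3)-subgroup-equivalence} to replace $\Diff^+(M,\Sigma)$ by $\Diff^{\Or(3)}(M,\Sigma)$. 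It therefore suffices to show that $\pi_0\Phi\colon \pi_0\Diff^{\Or(3)}(M,\Sigma)\to\pi_0\Diff^+(P_M)$ is surjective. To make this as easy as possible, I would choose $\Sigma$ of a convenient shape: the belt spheres of the handle structure for the graph $\lambda(\Gamma_0,\sigma_0)$ from \cref{lem:depth-of-Gr_gn}(iii). Concretely, $M\ca\Sigma$ consists of one $S^3$ with $2g+n$ punctures, joined by $n$ spheres to the once-punctured $P_i$ and by $g$ non-separating spheres to itself.

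Using the decomposition \eqref{eqn:Diff-product-decomp}, the group $\pi_0\Diff^+(P_M)$ is generated by two kinds of elements: products of mapping classes of the individual $P_i$, and permutations of diffeomorphic factors $P_i\cong P_j$. I would lift each kind separately.

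\emph{Lifting a mapping class of a single factor.} Given $f\in\Diff^+(P_i)$, isotope it to fix a small round disc $D\subset P_i$ pointwise (or to act on $D$ by an element of $\SO(3)$). This is possible because $\emb^+(D^3,P_i)$ is connected. Then extend $f$ by the identity over the rest of $M$, where $P_i\setminus \interior D$ is identified with the prime component of $M\ca\Sigma$ corresponding to $\sigma_0(i)$. The result lies in $\Diff^{\Or(3)}(M,\Sigma)$ and $\Phi$ sends it to $f$.

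\emph{Lifting a permutation of diffeomorphic factors.} If $P_i\cong P_j$ orientation-preservingly, the graph automorphism of $\Gamma_0$ swapping the two marked leaves $\sigma_0(i)$ and $\sigma_0(j)$ is a morphism in $\Gr(M)$, since it preserves the labels. By \cref{lem:graph-functor-is-full} it is realised by some $\varphi\in\Diff^+(M,\Sigma)$, which may be taken in $\Diff^{\Or(3)}(M,\Sigma)$ by \cref{lem:O(3)-subgroup-equivalence}. Then $\Phi(\varphi)$ swaps $P_i$ and $P_j$ via some diffeomorphism. Composing with lifts of the first kind corrects the identification to any prescribed one, so every element of $\Diff^+(P_M)$ is hit up to isotopy.

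The main obstacle is the bookkeeping in the first kind of lift: checking that an isotopy of $f$ to fix a disc can be made compatible with the capping convention used to define $\Phi$. The point is that the capping disc must be the round $D^3$ glued along the $\Or(3)$-collar. I would handle this by arranging that $f$ is the identity on a neighbourhood of the capping disc and of the collar, so $\Phi$ returns exactly $f$ on $P_i$.
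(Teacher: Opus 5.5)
Your proposal is correct and is essentially the paper's proof. The shared ingredients are: the same separating system (one non-separating sphere per $S^1\times S^2$ summand plus the spheres $\partial D_i$); the same reduction via \cref{prop:geometric-wrong-way-map} to $\pi_0$-surjectivity of $\Phi$; and the same lifts, namely isotope to fix a disc together with its collar, extend by the identity, and realise permutations of diffeomorphic factors. The paper merely packages these lifts as a subgroup $\widehat H\le\Diff^{\Or(3)}(M,\Sigma)$ mapping onto a $\pi_0$-surjective subgroup $H\le\Diff^+(P_M)$, instead of lifting generators.

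One small point, which you share with the paper: for $g=0$, $n=2$ the two spheres $\partial D_1,\partial D_2$ are parallel. So $\Gamma_0$ is not an object of $\Gr_{0,2}$ and $\Sigma\notin\sepNP(M)$. In that case you should take a single reducing sphere instead, and the swap becomes the flip of the unique edge.
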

\begin{proof}
    Let $P_M=(\sqcup^{n_1}Q_1)\sqcup\cdots\sqcup(\sqcup^{n_k} Q_k)$ be as above. In each $Q_i$, choose a disc $D_i\cong D^3$ as well as a collar $  C_i\cong D^3$ containing $D_i$. Consider the subgroup $H\leq \Diff^+(P_M)$ defined in terms of the decomposition in \cref{eqn:Diff-product-decomp} by  \[H= \prod_{i=1}^k\left(\left(\prod^{n_1}\Diff_{C_i}^+(Q_i)\right)\rtimes \Sym_{n_i}\right).\]
    Since every diffeomorphism of $Q_i$ can be isotoped to fix $C_i$ pointwise, the inclusion of $H$ into $\Diff^+(P_M)$ is $\pi_0$-surjective. 
    
    Let $n=\sum_{i=1}^kn_i$ and let $M_0=(S^1\times S^2)^{\sharp^{g}}\setminus (\sqcup_n\interior{D}^3).$ We can build $M$ by attaching $n_i$ copies of $Q_i\setminus \interior{D}_i$ along boundary spheres of $M_0$ for $1\leq i\leq k$. Let $\Sigma$ be the separating system formed by a choice sphere in each $(S^1\times S^2)$-factor together with the $n$ spheres coming from the $\partial D_i$. In particular, $(M\ca\Sigma)^{\rm prime}=\sqcup_{i=1}^k(\sqcup_{n_i}(Q_i\setminus \interior{D}_i))$. 
    
    Now choose a collar $C_\Sigma$ of $\Sigma$ that agrees with $C_i\setminus \interior{D}_i$ on each prime component. With respect to $C_\Sigma$, we obtain a homomorphism $\Phi\colon \Diff^{\Or(3)}(M,\Sigma)\rightarrow \Diff^+(P_M)$. Let $\widehat{H}\leq\Diff^{\Or(3)}(M,\Sigma)$ consist of those diffeomophisms that restrict to  $C_\Sigma$ as  a permutation in $S_{\pi_0(\Sigma)}\leq \Isom^+(C_\Sigma)$. Since every such permutation can be achieved, $\Phi$ maps $\widehat{H}$ surjectively onto $H$, and hence $\pi_0(\widehat H)$ surjects onto $\pi_0(\Diff^+(P_M))$. On the other hand, by \cref{prop:geometric-wrong-way-map}, the restriction of $\W$ to $B\widehat{H}$ agrees with $B\Phi$, so $\pi_1\W$ is surjective.
\end{proof}

\subsection{The splitting map and boundary}
In this subsection we construct variants of the splitting map that fix parts of the boundary of $M$.
All of the boundary of $M$ comes from its prime pieces, so we have an identification $\partial M = \partial P_M$.
For every submanifold $A \subset \partial M = \partial P_M$ we will construct a variant of the splitting map 
\[
    \W_A \colon \BDiff_A(M) \longrightarrow \BDiff_A(P_M)
\]
and show that its homotopy fibre is equivalent to that of $\W$ and can hence be described by the same homotopy colimit.
The key observation that allows us to do this is that $\W$ commutes with restriction to the boundary in the following sense.

\begin{prop}\label{prop:W-on-boundary}
    There is a homotopy commutative square 
    \[\begin{tikzcd}
        {\BDiff^+(M)} \rar["{\W}"] \dar["{\partial}"] & {\BDiff^+(P_M)} \dar["{\partial}"] \\
        {\BDiff^+(\partial M)} \rar[equal] & {\BDiff^+(\partial P_M)} 
    \end{tikzcd}\] 
    where the vertical arrows restrict to the boundary and the bottom equality comes from the identification $\partial M = \partial P_M$. 
\end{prop}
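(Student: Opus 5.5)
The plan is to check the square at the level of the homotopy colimit models and the $G$-spaces that define them, rather than on $\BDiff^+(M)$ directly. Since $\Sigma$ lies in the interior of $M$, the prime pieces contain the whole boundary: $\partial M \subset (M\setminus\Sigma)^{\rm prime} \subset P(\Sigma\subset M)$. A prime marking $\iota\in\emb^{\scl=}(P(\Sigma\subset M),P_M)$ therefore restricts to a diffeomorphism $\partial\iota\colon \partial M \cong \partial P_M$. Here $\iota$ sends non-spherical boundary to non-spherical boundary, because $\iota$ extends to a diffeomorphism of spherical closures.

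\textbf{Step 1: a zigzag through $\sep^\Gamma(M;P_M)$.} Let $\Diff^+(M)\times\Diff^+(P_M)$ act on $\Diff^+(\partial M,\partial P_M)$ (orientation-preserving diffeomorphisms $\partial M\to\partial P_M$) by $(\varphi,\psi)\cdot f = \psi|_\partial\circ f\circ \varphi|_\partial^{-1}$. Consider the map
\[
    r\colon \sep^\Gamma(M;P_M) \longrightarrow \Diff^+(\partial M,\partial P_M), \qquad (\Sigma,\alpha,\iota)\mapsto \iota|_{\partial M}.
\]
It is equivariant for this action, because $\varphi_P$ restricts to $\varphi$ on $\partial M$. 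It is also natural in $\Gamma$, because the embeddings $j$ of \cref{prop: restricting maximal prime piece} restrict to the identity on $(M\setminus\Sigma)^{\rm prime}$ and hence on $\partial M$. Taking $\hocolim_\Gamma$ and then homotopy orbits by $\Diff^+(M)\times\Diff^+(P_M)$ yields a map
\[
    X \longrightarrow \Diff^+(\partial M,\partial P_M)\hq(\Diff^+(M)\times\Diff^+(P_M)),
\]
where $X$ denotes the source. By \cref{prop:sep-contractible} and \cref{lem:sep(M,P)/Diff(P)}, $X\simeq\BDiff^+(M)$. The projection of $X$ to $\BDiff^+(P_M)$ is $\W$ by construction. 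The target is equivalent to
\[
    B\Diff^+(M)\times_{B\Diff^+(\partial)}B\Diff^+(P_M)
\]
(homotopy fibre product), because $\Diff^+(\partial M,\partial P_M)$ is a $\Diff^+(\partial)$-torsor, with $\Diff^+(\partial)$ denoting the diffeomorphism group of $\partial M\cong\partial P_M$.

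\textbf{Step 2: reading off the square.} The target of the map in Step 1 is exactly the datum of a homotopy between the two composites
\[
    \BDiff^+(M)\xrightarrow{\partial}\BDiff^+(\partial M), \qquad \BDiff^+(P_M)\xrightarrow{\partial}\BDiff^+(\partial P_M),
\]
where the two boundary classifying spaces are identified via the chosen boundary identification. Two points need care here.
\begin{enumerate}
    \item The identification $\partial M=\partial P_M$ in the statement should agree up to isotopy with $\iota|_\partial$ for some point of $\sep^\Gamma(M;P_M)$. Restriction to the boundary is locally constant in $\iota$ up to isotopy, since $\emb^{\scl=}$ is connected up to the $\Diff^+(P_M)$-action by \cref{cor:homotopy-orbits-contractible}. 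This lets us fix the identification once.
    \item The composite $X\to\BDiff^+(M)$ must be the equivalence used in the definition of $\W$. This holds by \cref{thm: wrong way map}, whose proof identifies $X\hq\Diff^+(P_M)$ with $\BDiff^+(M)$ via the projection that forgets markings.
\end{enumerate}

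An alternative, more hands-on route uses \cref{prop:geometric-wrong-way-map}. On $\BDiff^{\Or(3)}(M,\Sigma)$, the cut-and-cap homomorphism $\Phi$ visibly commutes strictly with restriction to $\partial M$. However, gluing these strict commutativities over the homotopy colimit requires coherence, so I prefer Step 1.

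\textbf{Main obstacle.} The hardest part is making the equivariance and naturality of $r$ precise enough that the homotopy colimit and homotopy orbits commute, as in \cite[Lemma 2.12]{BoydBregmanSteinebrunner-finiteness}, and checking that the resulting homotopy is compatible with the chosen identification $\partial M=\partial P_M$. I would handle this by working throughout with the explicit $(\emb(M,\bbR^\infty)\times\emb(P_M,\bbR^\infty)\times -)/G$ models, in which all maps are strict.
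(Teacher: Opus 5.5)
Your proposal is correct and takes essentially the same route as the paper. The paper also uses the $\Diff^+(M)\times\Diff^+(P_M)$-equivariant map $\sep^\Gamma(M;P_M)\to\calD$, natural in $\Gamma$, that restricts the prime marking to $\partial M$ (here $\calD$ is the space of orientation-preserving diffeomorphisms $\partial M\to\partial P_M$), then passes to homotopy orbits and the homotopy colimit; the only cosmetic difference is that it maps to $\calD\hq(\Diff^+(\partial M)\times\Diff^+(\partial P_M))$ and reads the square off a zig-zag of equivalences, rather than identifying the target as a homotopy fibre product, and your care point (1) is unnecessary because any two elements of $\calD$ differ by an element of $\Diff^+(\partial P_M)$ and hence induce freely homotopic equivalences $\BDiff^+(\partial M)\simeq\BDiff^+(\partial P_M)$.
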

\begin{proof}
    Let $\calD$ be the space of orientation-preserving diffeomorphisms from $\partial M$ to $\partial P_M$.
    Given our canonical identification $\partial M = \partial P_M$ we can identify $\calD$ with either $\Diff^+(\partial M)$ or $\Diff^+(\partial P_M)$.
    However, it will be convenient to think of it not as a group but rather as a space with commuting actions of $\Diff^+(\partial M)$ and $\Diff^+(\partial P_M)$.
    For every $\Gamma \in \Gr(M)$ we have a $\Diff^+(M) \times \Diff^+(P_M)$-equivariant map
    \[
        \delta \colon \sep^\Gamma(M; P_M) \longrightarrow \calD 
    \]
    defined by restricting the maximal prime marking $\iota\colon P(\Sigma \subset M) \hookrightarrow P_M$ to the diffeomorphism it induces between $\partial M \subset \partial P(\Sigma \subset M)$ and $\partial P_M$. 
    Taking homotopy orbits by $\Diff^+(M) \times \Diff^+(P_M)$ on the source and by $\Diff^+(\partial M) \times \Diff^+(\partial P_M)$ on the target, we obtain a map (natural in $\Gamma$)
    \[
        \delta'\colon \calF^\Gamma(P_M) \sslash \Diff^+(P_M) = \sep^\Gamma(M; P_M) \sslash (\Diff^+(M) \times \Diff^+(P_M)) 
        \longrightarrow \calD\sslash (\Diff^+(\partial M) \times \Diff^+(\partial P_M)).
    \]
    Note that the target of this map is equivalent both to $*\sslash \Diff^+(\partial M)$ and to $*\sslash\Diff^+(\partial P_M)$ by sending $\calD \to *$ and projecting to one of the factors.
    This $\delta'$ fits into a commutative square
    \[\begin{tikzcd}
        \rmD(\Gamma) \dar["{\partial}"'] & {\calF^\Gamma(P_M) \sslash \Diff^+(P_M)} \lar["\simeq"'] \rar \dar["{\delta'}"'] & {*\sslash \Diff^+(P_M)} \dar["{\partial}"] \\
        {* \sslash \Diff^+(\partial M)} & 
        {\calD \sslash (\Diff^+(\partial M) \times \Diff^+(\partial P_M))} \rar["{\simeq}"] \lar["{\simeq}"'] & {* \sslash \Diff^+(\partial P_M)} .
    \end{tikzcd}\] 
    Taking homotopy colimits over $\Gamma \in \Gr(M)$ the top row recovers the zig-zag that defines $\W$ while the bottom row recovers the equivalence between $\BDiff^+(\partial M)$ and $\BDiff^+(\partial P_M)$.
\end{proof}

We let $\Diff^{+}(\partial M, [M]) \le \Diff^+(\partial M)$ denote the subgroup of those diffeomorphisms that can be extended to the interior. 
This is always a union of path components, as any isotopy on $\partial M$ can be pushed to a collar of the boundary in $M$.
The advantage of this subgroup is that we have a homotopy fibre sequence
\[
    \BDiff^+_\partial(M) \longrightarrow 
    \BDiff^+(M) \longrightarrow 
    \BDiff^{+}(\partial M, [M]) .
\]
Note that a diffeomorphism of $\partial M = \partial P_M$ extends to $M$ if and only if it extends to $P_M$:
this is the claim that the vertical maps in \cref{prop:W-on-boundary} have the same image in $\pi_1$, which is indeed true as $\W$ is $\pi_1$-surjective by \cref{cor:W-pi1-surjective}.
Therefore, the square from \cref{prop:W-on-boundary} lifts to a square
    \[\begin{tikzcd}
        {\BDiff^+(M)} \rar["{\W}"] \dar["{\partial}"] & {\BDiff^+(P_M)} \dar["{\partial}"] \\
        {\BDiff^{+}(\partial M, [M])} \rar[equal] & {\BDiff^{+}(\partial P_M, [P_M]).} 
    \end{tikzcd}\] 
We can now use this modified square to obtain the variant $\W_A$ of the splitting map where we fix a submanifold of the boundary.

\begin{cor}\label{cor:splitting-fixing-A}
    For any submanifold $A \subset \partial M$ of the boundary, there is a variant $\W_A$ of the splitting map for diffeomorphisms that fix $A$, and there is a map of homotopy fibre sequences
    \[
    \begin{tikzcd}[column sep = large]
        {\mathrm{hofib}(\W_A)} \rar \dar["\simeq"] &
        {\BDiff_A^+(M)} \rar["{\W_A}"] \dar &
        {\BDiff_A^+(P_M)} \dar \\
        {\calH_g(P_1,\dots,P_n)} \rar &
        {\BDiff^+(M)} \rar["{\W}"] &
        {\BDiff^+(P_M)}
    \end{tikzcd}
    \]
    such that the map on fibres is an equivalence.
\end{cor}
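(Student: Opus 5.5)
The plan is to define $\W_A$ by pulling back $\W$ along the boundary-restriction fibre sequences, and then to compare fibres.

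\textbf{Step 1: the two boundary fibre sequences.} Write $B_M := \BDiff^{+}(\partial M, [M])$ and $B_P := \BDiff^{+}(\partial P_M, [P_M])$; these are identified by the lifted square above. For $A \subset \partial M$, consider the group $\Diff^+(\partial M, [M])$ acting on the space of embeddings of $A$ into $\partial M$. The orbit of the inclusion is locally retractile, with stabiliser $\Diff^+_A(\partial M,[M])$. Hence
\[
    \BDiff^+_A(M) \simeq \BDiff^+(M) \times^h_{B_M} B\Diff^+_A(\partial M,[M]),
\]
and the analogous statement holds for $P_M$. I would check this by noting that restriction to $\partial M$ has homotopy fibre $\BDiff^+_\partial(M)$ in both cases, using the fibre sequence displayed before the corollary. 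The subgroup $[M]$ is chosen so that the restriction map is $\pi_0$-surjective. This is exactly why the equality $\Diff^+(\partial M,[M]) = \Diff^+(\partial P_M,[P_M])$, which follows from \cref{cor:W-pi1-surjective}, is needed: it makes the two pullbacks along $B\Diff^+_A(\partial M,[M]) \to B_M$ use the same map.

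\textbf{Step 2: define $\W_A$.} Take the homotopy commutative square from \cref{prop:W-on-boundary}, in the lifted form with bottom row $B_M = B_P$. Define $\W_A$ as the induced map on homotopy pullbacks
\[
    \W_A := \W \times^h_{B_M} \id \colon
    \BDiff^+(M) \times^h_{B_M} B\Diff^+_A(\partial M,[M]) \longrightarrow \BDiff^+(P_M)\times^h_{B_P} B\Diff^+_A(\partial P_M,[P_M]).
\]
The homotopy making the square commute is part of the data. By Step 1 the source and target are $\BDiff^+_A(M)$ and $\BDiff^+_A(P_M)$. The right-hand square in the claimed diagram then commutes by construction, via the projections to the first factor.

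\textbf{Step 3: compare fibres.} This is formal. Both $\BDiff^+_A(M) \to \BDiff^+(M)$ and $\BDiff^+_A(P_M) \to \BDiff^+(P_M)$ are base changes of the same map $B\Diff^+_A(\partial M,[M]) \to B_M$. So the square formed by $\W_A$, $\W$ and the two vertical maps is a homotopy pullback (pasting of pullbacks). In a homotopy pullback square, the induced map on horizontal homotopy fibres is an equivalence. By \cref{cor:fibre-sequence-H_g}, $\hofib(\W) \simeq \calH_g(P_1,\dots,P_n)$, which gives the left vertical equivalence.

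\textbf{Main obstacle.} I expect the hardest part to be making sure the homotopy in \cref{prop:W-on-boundary} lifts coherently to the $[M]$-subgroups. This reduces to a $\pi_0$ statement, because $B\Diff^+(\partial M,[M]) \to B\Diff^+(\partial M)$ is a covering onto a union of components, so lifting maps and homotopies is automatic once the images on $\pi_1$ agree. A second check is that Step 1 genuinely needs the $[M]$-version: without it the restriction map is not surjective on components, and the pullback would not recover $\BDiff_A^+(M)$ as a connected space.
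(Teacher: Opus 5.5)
Your proposal is correct and follows essentially the same route as the paper. Like you, the paper identifies $\BDiff^+_A(M)$ and $\BDiff^+_A(P_M)$ as homotopy pullbacks over $\BDiff^+_A(\partial M,[M]) \to \BDiff^+(\partial M,[M])$ by comparing horizontal fibres ($\BDiff^+_\partial(M)$, respectively $\BDiff^+_\partial(P_M)$), and defines $\W_A$ as the induced map on these pullbacks; where you invoke the pasting lemma, the paper instead notes that the vertical homotopy fibres of the resulting cube form a homotopy pullback square whose right-hand fibres are contractible, which is the same formal argument.
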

\begin{proof}
Let $\Diff^+_A(\partial M, [M]) \le \Diff^+(\partial M)$ denote the subgroup of those diffeomorphisms that fix $A$ pointwise and also can be extended to the interior of $M$, and similarly for $P_M$.
Consider the cube
\[\begin{tikzcd}
	{\BDiff^+_A(M)} && {\BDiff_A^+(\partial M, [M])} & \\
	& {\BDiff^+(M)} && {\BDiff^+(\partial M, [M])} \\
	{\BDiff^+_A(P_M)} && {\BDiff^+_A(\partial P_M, [P_M])} \\
	& {\BDiff^+(P_M)} && {\BDiff^+(\partial P_M, [P_M])}
	\arrow[from=1-1, to=1-3]
	\arrow[from=1-1, to=2-2]
	\arrow["{\W_A}"', dashed, from=1-1, to=3-1]
	\arrow[from=1-3, to=2-4]
	\arrow[equals, from=1-3, to=3-3]
	\arrow[equals, from=2-4, to=4-4]
	\arrow[from=3-1, to=3-3]
	\arrow[from=3-1, to=4-2]
	\arrow[from=3-3, to=4-4]
	\arrow[from=4-2, to=4-4]
	\arrow["\W"'{pos=0.3}, from=2-2, to=4-2, crossing over]
	\arrow[from=2-2, to=2-4, crossing over]
\end{tikzcd}\]
where the front face is obtained from \cref{prop:W-on-boundary} and the top and bottom faces are given by restrictions and subgroup inclusions.
    Both the top face and the bottom face are homotopy pullback squares, as can be seen by comparing their horizontal homotopy fibers:
    In the case of the top face both horizontal homotopy fibers are $\BDiff_\partial^+(M)$ and for the bottom face both are $\BDiff_\partial^+(P_M)$.
    We therefore obtain the dashed map $\W_A$ as the induced map on the homotopy pullbacks.

    Finally, consider all the vertical homotopy fibres of the cube: these also must form a homotopy pullback square.
    The homotopy fibres of the two vertical maps on the right are contractible, so it follows that the homotopy fibres of $\W_A$ and $\W$ are equivalent, as claimed.
\end{proof}

\section{Describing the fibre of the splitting map}\label{sec:describing the fibre}
The purpose of this section is to describe $\rmC^\Gamma_\sigma$, and thus the fibre of the splitting map, in more detail.
We will give a concrete formula for $\rmC^\Gamma_\sigma(P_1,\dots,P_n)$ as the homotopy quotient of an action of a compact Lie group on a product of framed configuration spaces.
For $n>0$ we will see that this action is free and argue that the homotopy quotient is equivalent to the strict quotient, which admits a compactification as a manifold (with corners).
This will allow us to conclude that for $n>0$ the fibre of the splitting map is homotopy finite.
For general $n$, we will also use the formula for $\rmC^\Gamma_\sigma$ to exhibit it as a homotopy pullback and thus set up an explicit spectral sequence converging to its cohomology.

Given $(\Gamma,\sigma)\in \Gr_{g,n}$ and oriented irreducible 3-manifolds $P_1,\ldots, P_n$, our goal will be to parametrise the minimal data necessary to prescribe a gluing of the $P_i$ and copies of $S^3$ in a pattern described by $(\Gamma,\sigma)$. We think of each vertex $v\in V_\Gamma$ as being labelled by some $P_i$ or $S^3$ according to $\sigma$. Then for each edge $e\in E_\Gamma$, a gluing is specified by a choice of two points in the manifold corresponding to the endpoints of $e$, together with an orientation-reversing isomorphism between the tangent spaces at these chosen points. 

We are thus led to define a space of framed configurations (\cref{defn:action-on-Conf}):
\[
    \Conf^{\fr+}_{(\Gamma,\sigma)}(P_1,\ldots,P_n):=\prod_{i=1}^n \Conf_{H_{\sigma(i)}}^{\fr+}(P_i) \times \prod_{v \in V_\Gamma^{\rm sph}} \Conf_{H_v}^{\fr+}(S^3),
\]
where $\Conf^{\fr+}_A(M)$ denotes the space of positively framed configurations of a finite set $A$ in $M$, and for $v\in V_\Gamma$, $H_v$ is the set of half-edges at $v$. This configuration space comes naturally equipped with an action of a group 
\[
    \left(\Aut(\Gamma,\sigma)\times\prod_{i=1}^n\Diff^+(P_i)\right) \ltimes\SO(\Gamma,\sigma),
\]
where  $\SO(\Gamma,\sigma)$ is the compact Lie group $ \smash{\prod_{E_\Gamma}\SO(3)\times\prod_{V_{\Gamma}^{\rm sph}}\SO(4)}$.
(More generally, this extends to an action of $\prod_{E_\Gamma} \GL_3^+(\bbR) \times \prod_{V_\Gamma^{\rm sph}} \Diff^+(S^3)$, but we act with the smaller group so that later we can restrict the action to orthogonal frames.)
The finite group $\Aut(\Gamma,\sigma)$ acts by permuting the framed points via their corresponding half-edges, the $\Diff^+(P_i)$ and $\SO(4)$-factors act on the framed points in each $P_i$ and copy of $S^3$, respectively, and the $\SO(3)$-factors act by simultaneously rotating the framings at either end of an edge. Acting by $\SO(3)\simeq \Diff^+(S^2)$ identifies equivalent gluings along each edge, while acting by $\SO(4)\simeq \Diff^+(S^3)$ accounts for equivalent configurations in the $S^3$-factors. After modding out by the action of $\SO(\Gamma,\sigma)$, we obtain:

\begin{prop}\label{prop:C-Gamma configuration model}
    For all $(\Gamma,\sigma)$ and $P_1,\dots, P_n$ as above, 
    there is an $\Aut(\Gamma,\sigma) \times \prod_{i=1}^n \Diff^+(P_i)$ equivariant zig-zag of homotopy equivalences
    \[
        \rmC^{\Gamma}_\sigma(P_1,\dots,P_n) 
        \simeq \dots \simeq \Conf^{\fr+}_{(\Gamma,\sigma)}(P_1,\ldots,P_n)\hq\SO(\Gamma,\sigma)
    \]
\end{prop}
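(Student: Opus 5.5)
We prove the statement by reversing the point of view: instead of cutting $M$ along $\Sigma$, we glue $M$ together from $P_1,\dots,P_n$ and copies of $S^3$ using the framed points. By \cref{lem:rmC-simpler}, restricted to the summand picked out by $\sigma$, there are equivariant equivalences
\[
\rmC^\Gamma_\sigma(P_1,\dots,P_n)\simeq \emb^{\scl=}_\sigma\big((M\ca\Sigma)^{\rm prime},P_M\big)\hq \Diff^+_{\pi_0(2\Sigma)}(M,\Sigma)
\]
for a fixed $\Sigma$ with $\alpha\colon G_\Sigma\cong\Gamma$. We will work with $M\ca\Sigma$ rather than only its prime part. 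Write $M\ca\Sigma=\coprod_{v\in V_\Gamma} U_v$, where each $U_v$ is a punctured $P_i$ or a punctured $S^3$, and its spherical boundary components are indexed by $H_v$.

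The first step replaces $\Diff^+_{\pi_0(2\Sigma)}(M,\Sigma)$ by a group adapted to the collar. Using \cref{lem:O(3)-subgroup-equivalence}, restricted to the kernel of $\gamma_\Sigma$, it is equivalent to the subgroup $G_\Sigma$ of diffeomorphisms that act on the collar $C_\Sigma$ by an element of $\prod_{E_\Gamma}\SO(3)$. Cutting along $\Sigma$ then identifies $G_\Sigma$ with the group of tuples $(\varphi_v)\in\prod_v\Diff^+(U_v)$ that are rotations on the boundary collars and agree across each edge. This gives a fibre sequence
\[
\prod_v \Diff_\partial(U_v)\longrightarrow G_\Sigma\longrightarrow \prod_{E_\Gamma}\SO(3).
\]

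The second step is the gluing picture. Capping the boundary spheres of each $U_v$ with round discs, the centres and framings of these discs give a point of $\Conf^{\fr+}_{H_v}(\widehat{U_v})$. Consider the space $X$ of tuples consisting of an identification $\widehat{U_v}\cong P_i$ (for marked $v$) or $\widehat{U_v}\cong S^3$ (for unmarked $v$), together with the induced framed configurations. I expect the following chain:
\[
\rmC^\Gamma_\sigma\simeq\Big(\prod_{i}\emb^{\scl=}(U_{\sigma(i)},P_i)\times\prod_{v\in V^{\rm sph}}\emb^{\scl=}(U_v,S^3)\Big)\hq G_\Sigma .
\]
This holds because the sphere factors contribute $\emb^{\scl=}(U_v,S^3)\hq\Diff^+(S^3)\simeq *$ by \cref{lem:embedding complement of discs}, so adjoining them changes nothing after a further quotient by $\prod_{V^{\rm sph}}\Diff^+(S^3)$. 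Next, the map recording the framed centres of the capping discs,
\[
\emb^{\scl=}(U,Q)\to\Conf^{\fr+}_{H}(Q)
\]
(restricting to the caps as round discs), is an equivalence after quotienting by $\Diff_\partial(U)$. Its fibre is the space of extensions, which is contractible by Hatcher's theorem (\cref{thm:Smale-Hatcher}) and the Cerf--Palais fibration. Via the fibre sequence for $G_\Sigma$ and \cref{lucis lemma}, this reduces the quotient by $G_\Sigma$ to a quotient of the configuration space by $\prod_E\SO(3)$, acting by simultaneous rotation of the framings at both ends of each edge. Finally, $\Diff^+(S^3)\simeq\SO(4)$ (again Hatcher) lets us replace the sphere-factor groups by $\SO(4)$, since the action on framed configurations in $S^3$ extends along $\SO(4)\hookrightarrow\Diff^+(S^3)$. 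Comparing the homotopy quotients is \cref{lucis lemma} once more.

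Equivariance under $\prod_i\Diff^+(P_i)$ is automatic, because this group acts only by post-composition throughout. For $\Aut(\Gamma,\sigma)$, the steps must be made with $\Sigma$ varying over $\sep^\Gamma_\sigma(M;P_M)$ rather than fixed. This means working with $\Diff^+(M)$-equivariant versions of each space, such as framed capping data over $\sep^\Gamma_\sigma$, so that $\Aut(\Gamma,\sigma)$ acts through relabelling of $\alpha$. I expect the main obstacle to be exactly this. The maps in the chain go in both directions: disc centres determine $M$ only up to contractible choices. So the zig-zag has to be built through intermediate spaces of thickened sphere systems $\Sigma\times[-1,1]$ with round parametrisations, so that every map in it is strictly equivariant. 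Keeping the orientation conventions consistent, meaning that the framings at the two ends of each edge are glued by an orientation-reversing map, also needs care.
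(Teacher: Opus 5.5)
Your fixed-$\Sigma$ argument is correct as a non-equivariant statement, and its core is the paper's. Your sequence $\prod_v\Diff_\partial(U_v)\to G_\Sigma\to\prod_{E_\Gamma}\SO(3)$ is the fixed-$\Sigma$ form of $\Diff_{\Sigma^\varepsilon}(M)\to\Diff^{\cyl}(M,\Sigma^\varepsilon)\to\Diff^{\cyl}(\Sigma^\varepsilon)$ in the proof of \cref{lem:Cthick-to-modified-Conf}. Dividing out the free $\prod_v\Diff_\partial(U_v)$-action to reach $\emb^{\rm D}$ and then applying \cref{lem:embplus} is also what the paper does. (Your displayed chain should end with $\hq\,(G_\Sigma\times\prod_{V^{\rm sph}_\Gamma}\Diff^+(S^3))$; with $G_\Sigma$ alone the extra factor $\prod_v\emb^{\scl=}(U_v,S^3)\simeq\prod_v\SO(4)$ changes the homotopy type.)

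The gap is the $\Aut(\Gamma,\sigma)$-equivariance. It is part of the statement and is used later, e.g.\ to identify $\rmC^\Gamma_\sigma\hq\Aut(\Gamma,\sigma)$ with a strict quotient in \cref{cor:fibre-finite-and-dimension-bound}. You name it as the main obstacle but leave it open. With $(\Sigma,\alpha)$ fixed, automorphisms are realised only by diffeomorphisms in $\Diff^+(M,\Sigma)$ outside $\Diff^+_{\pi_0(2\Sigma)}(M,\Sigma)$. So you would have to pass to the larger collar-isometric group over $\Aut(\Gamma,\sigma)$, choose models of the homotopy quotients with a residual strict action, and check every map.

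The decisive point is the final comparison with configurations. An edge reversal acts on the collar by $(t,x)\mapsto(-t,gx)$ with $g\in\Or(3)\setminus\SO(3)$. In \cref{defn:action-on-Conf} such an automorphism is a pure permutation of framed points, and $\SO(3)_e$ rotates both framings by the same element. To match this, the two sides of each sphere must be parametrised by $\psi_h$ and $\psi_{h^\dagger}=\psi_h\circ(-\id_{S^2})$. This works because $-\id_{S^2}$ is a central orientation-reversing involution in $\Or(3)$. Hence $\Map^{C_2}(H_\Gamma,\Diff^+(S^2))$ is defined without orienting edges, is normalised by $\Aut(\Gamma,\sigma)$, and contains $\Map^{C_2}(H_\Gamma,\SO(3))\cong\prod_{E_\Gamma}\SO(3)$ acting diagonally. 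By contrast, a fixed collar orients every edge, and correcting one end by a reflection $R$ gives the twisted edge action $(\gamma,R\gamma R)$ instead.

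The paper never fixes $\Sigma$. It replaces $\rmC^\Gamma_\sigma$ by the space $\Cthick^\Gamma_\sigma$ of thickened separating systems in $\bbR^\infty$ (\cref{cor:Cthick-to-C}, via \cref{lem:cyl==2Sigma}), on which $\Aut(\Gamma,\sigma)$ acts strictly by post-composition on $\alpha$. It then defines $\Theta$ (\cref{lem:Cthick-to-modified-Conf}) by cutting out $\Sigma^\varepsilon$ and recording:
\begin{itemize}
\item for each unmarked vertex, the piece $(M\setminus\Sigma^\varepsilon_\circ)_{\alpha(v)}$ with boundary parametrised by $\amalg_h\psi_h$, as a point of $\BDiff^{\rm sph}_{H_v}(S^3)$;
\item for each $i$, the spheres $\iota\circ\amalg_h\psi_h$, as a point of $\emb^{\rm D}(H_{\sigma(i)}\times S^2,P_i)$;
\end{itemize}
all modulo $\Map^{C_2}(H_\Gamma,\Diff^+(S^2))$. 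Since the choice of the $\psi_h$ is quotiented out, $\Theta$ is strictly $\Aut(\Gamma,\sigma)\times\prod_i\Diff^+(P_i)$-equivariant. The proof that it is an equivalence is then exactly your fibre computation. \cref{lem:embplus} and \cref{lem:punctured-sphere-config} are equivariant by inspection and complete the zig-zag. Your closing suggestion to use thickened sphere systems points the right way, but the antipodal convention is what actually delivers the equivariance, and it is missing from your proposal.
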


Together with \cref{cor:fibre-sequence-H_g} this implies our main theorem.
\begin{thm}\label{thm:fibre-sequence-intro}
    For every compact, connected, oriented 3-manifold $M$ that is not diffeomorphic to~$S^1\times S^2$, there is a homotopy fibre sequence
    \[
    \hocolim_{(\Gamma, \sigma) \in \Gr_{g,n}} \rmC_\sigma^\Gamma(P_1, \dots, P_n) \longrightarrow \BDiff^+(M) \xrightarrow[\qquad]{\W} \BDiff^+(P_1 \sqcup \dots \sqcup P_n)
    \]
    where for each $(\Gamma, \sigma) \in \Gr_{g,n}$ there is a homotopy equivalence
    \[
        \rmC_\sigma^\Gamma(P_1,\dots,P_n) \simeq \Conf^{\fr+}_{(\Gamma,\sigma)}(P_1,\dots,P_n) \sslash 
        \prod_{E_\Gamma}\SO(3)\times\prod_{V_{\Gamma}^{\rm sph}}\SO(4).
    \]
\end{thm}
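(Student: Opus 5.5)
The theorem combines two results already in place, so the proof is short. The fibre sequence is exactly \cref{cor:fibre-sequence-H_g}. That corollary follows from \cref{thm: wrong way map} together with the homeomorphism of homotopy colimits in \cref{lem:hocolim-on-Gr_gn}, which replaces the indexing category $\Gr(M)$ by $\Gr_{g,n}$. By \cref{defn:calH_g}, its fibre $\calH_g(P_1,\dots,P_n)$ is by definition $\hocolim_{(\Gamma,\sigma)\in\Gr_{g,n}}\rmC^\Gamma_\sigma(P_1,\dots,P_n)$. The pointwise identification of each $\rmC^\Gamma_\sigma$ is \cref{prop:C-Gamma configuration model}: the equivariant zig-zag of equivalences yields in particular a homotopy equivalence of the underlying spaces, since $\SO(\Gamma,\sigma)=\prod_{E_\Gamma}\SO(3)\times\prod_{V_\Gamma^{\rm sph}}\SO(4)$. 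So the plan is to cite these results and assemble them. All the substance lies in \cref{prop:C-Gamma configuration model}, so I outline how I would prove it.

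First, by \cref{lem:rmC-simpler}, restricted to the summand with marking $\sigma$, we have
\[
\rmC^\Gamma_\sigma \simeq \emb^{\scl=}_\sigma((M\ca\Sigma)^{\rm prime},P_M)\hq\Diff^+_{\pi_0(2\Sigma)}(M,\Sigma).
\]
Next, replace $\Diff^+_{\pi_0(2\Sigma)}(M,\Sigma)$ by the corresponding subgroup of $\Diff^{\Or(3)}(M,\Sigma)$ from \cref{lem:O(3)-subgroup-equivalence}, whose collar restriction lands in $\prod_{E_\Gamma}\SO(3)$. Cutting along $\Sigma$ gives a fibre sequence whose fibre is $\Diff_{C_\Sigma}(M)$ and whose base is $\prod_E\SO(3)$. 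The fibre in turn splits as a product over the components of $M\ca\Sigma$ of diffeomorphism groups fixing the boundary collars.

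For each spherical vertex $v$, the group of diffeomorphisms of $S^3\setminus\sqcup_{H_v}\interior{D^3}$ fixing collars is, by Hatcher's theorem (\cref{thm:Smale-Hatcher}), equivalent to the stabiliser in $\Diff^+(S^3)\simeq\SO(4)$ of a framed configuration. It therefore deloops to $\Conf^{\fr+}_{H_v}(S^3)\hq\SO(4)$.

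For the prime vertices, the homotopy quotient of embeddings into $P_i$ by diffeomorphisms of the prime piece fixing collars becomes, after capping with discs via \cref{lem:embedding complement of discs}, the space of framed embeddings of the discs $\sqcup_{H_{\sigma(i)}}D^3\hookrightarrow P_i$. This is $\Conf^{\fr+}_{H_{\sigma(i)}}(P_i)$, since embeddings of discs deformation retract onto framed configurations.

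Finally, reassemble the pieces using \cref{lucis lemma}: the residual $\prod_E\SO(3)$ acts by simultaneously rotating the framings at both ends of each edge. Keeping everything equivariant for $\Aut(\Gamma,\sigma)\times\prod_i\Diff^+(P_i)$ forces a zig-zag rather than a direct map. To make this clean I would introduce \emph{thickened} sphere systems, so that disc collars are part of the data, with comparison maps forgetting the thickenings.

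I expect the main obstacle to be organising this reassembly so that the $\SO(3)$ edge actions, together with the reflections that swap coorientations, are handled correctly and equivariantly. This is where the thickened-system model is needed. The rest is standard Cerf–Palais fibration arguments combined with the Smale conjecture.
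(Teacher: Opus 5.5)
Your proposal is correct and matches the paper. The theorem is obtained exactly by combining \cref{cor:fibre-sequence-H_g} (itself \cref{thm: wrong way map} plus \cref{lem:hocolim-on-Gr_gn}) with \cref{prop:C-Gamma configuration model}. Your sketch of the proposition also follows the paper's route: thickened separating systems, cutting into punctured spheres and punctured primes, the Smale conjecture to pass to framed configurations, and \cref{lucis lemma} to reassemble with the edgewise $\SO(3)$-action. The paper makes that last step precise using $\BDiff^{\rm sph}_{H_v}(S^3)$ and the antipodally twisted group $\Map^{C_2}(H_\Gamma,\Diff^+(S^2))$.
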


We will prove \ref{prop:C-Gamma configuration model} in subsection \ref{subsec:proof-of-config}, but first we record some consequences.

\begin{cor}\label{cor:C-connected}
    The space $\rmC^\Gamma_\sigma(P_1,\dots,P_n)$ is connected.
\end{cor}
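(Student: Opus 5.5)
We deduce connectedness from \cref{prop:C-Gamma configuration model}. Since $\rmC^\Gamma_\sigma(P_1,\dots,P_n)$ is homotopy equivalent to $\Conf^{\fr+}_{(\Gamma,\sigma)}(P_1,\ldots,P_n)\hq\SO(\Gamma,\sigma)$, it suffices to show that this homotopy quotient is connected. A homotopy quotient $X\hq G$ is a fibre bundle over $BG$ with fibre $X$, and $BG$ is connected. It therefore suffices to show that $X=\Conf^{\fr+}_{(\Gamma,\sigma)}(P_1,\ldots,P_n)$ is itself connected. Alternatively, it suffices that $\pi_0(G)$ acts transitively on $\pi_0(X)$; here $G=\SO(\Gamma,\sigma)$ is a product of the connected groups $\SO(3)$ and $\SO(4)$, so in fact we need $X$ connected.

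The space $X$ is a finite product of spaces $\Conf^{\fr+}_A(N)$ with $N$ a connected oriented $3$-manifold, namely one of the $P_i$ or $S^3$. It is therefore enough to show that each $\Conf^{\fr+}_A(N)$ is connected. This space fibres over the ordered configuration space $\Conf_A(N)$, with fibre $\prod_A \GL_3^+(\bbR)$, or $\prod_A\SO(3)$ if we use orthonormal positive frames. The fibre is connected, since we only allow positively oriented frames. The base $\Conf_A(N)$ is connected because $N$ is a connected manifold of dimension $3\ge 2$: removing finitely many points from a connected manifold of dimension at least $2$ leaves it connected. Hence, by induction on $|A|$ using the fibrations $\Conf_{A}(N)\to\Conf_{A\setminus a}(N)$, whose fibres are $N$ minus finitely many points and hence connected, the base is connected. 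Points may lie in the interior, or on the boundary if the convention permits; in either case the argument is the same using the interior. The long exact sequence on $\pi_0$ of the fibre bundle then gives that $\Conf^{\fr+}_A(N)$ is connected.

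Combining these, $X$ is connected, hence so is $X\hq\SO(\Gamma,\sigma)$, and thus $\rmC^\Gamma_\sigma(P_1,\dots,P_n)$ is connected. The only point requiring care is the orientation convention in $\Conf^{\fr+}$. If one used all frames, the fibre $\GL_3(\bbR)$ would have two components. The ``$+$'' ensures positive frames, which is essential, so I would check this against \cref{defn:action-on-Conf}. Everything else is standard.
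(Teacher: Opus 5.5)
Your proposal is correct and follows the same route as the paper: the paper also deduces connectedness directly from the configuration-space model in \cref{prop:C-Gamma configuration model}, using that positively framed configuration spaces of connected oriented $3$-manifolds are connected. Your additional steps (the fibre bundle over $B\SO(\Gamma,\sigma)$, the connectedness of $\Conf_A(N)$, and the connected fibres $\GL_3^+(\bbR)$) spell out what the paper leaves implicit.
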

\begin{proof}
    The space of \emph{positively} framed configurations on a connected oriented $3$-manifold is always connected and hence so is the homotopy quotient in \cref{prop:C-Gamma configuration model}.
\end{proof}

This yields another proof of \cref{cor:W-pi1-surjective}. 
\begin{cor}
    The fibre of the splitting map $\W$ is connected, and thus $\pi_1\W$ is surjective.
\end{cor}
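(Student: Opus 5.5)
The fibre $\calH_g(P_1,\dots,P_n)$ is by \cref{defn:calH_g} the homotopy colimit over $\Gr_{g,n}$ of the functor $\rmC^{(-)}_{(-)}(P_1,\dots,P_n)$. By \cref{cor:C-connected} every value $\rmC^\Gamma_\sigma(P_1,\dots,P_n)$ is connected. The plan is therefore to reduce connectivity of the homotopy colimit to connectivity of the indexing category.

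The general principle is standard: $\pi_0$ of a homotopy colimit is the colimit (in sets) of $\pi_0$ of the diagram. Concretely, with $E_\bullet$ as in \cref{defn - hocolim}, $\pi_0|E_\bullet|$ is the coequaliser of $\pi_0E_1 \rightrightarrows \pi_0E_0$. Here $\pi_0 E_0 = \coprod_{(\Gamma,\sigma)} \pi_0\rmC^\Gamma_\sigma$, which has exactly one point per object, since each value is connected. For each morphism $f$, the face maps $d_0,d_1$ identify the point for the source of $f$ with the point for its target. Hence $\pi_0\calH_g$ is the set of objects of $\Gr_{g,n}$ modulo the equivalence relation generated by morphisms, that is, $\pi_0$ of the category.

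By \cref{lem:depth-of-Gr_gn}(iii), $\Gr_{g,n}$ is connected: every object is joined by a zig-zag of morphisms to the fixed graph $(\Gamma_0,\sigma_0)$. Thus $\pi_0\calH_g$ is a single point and the fibre is connected. The category is also nonempty, since it contains $(\Gamma_0,\sigma_0)$.

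For the second claim, apply the long exact sequence of the homotopy fibre sequence of \cref{cor:fibre-sequence-H_g}:
\[
\pi_1\BDiff^+(M)\xrightarrow{\pi_1\W}\pi_1\BDiff^+(P_M)\to\pi_0\calH_g(P_1,\dots,P_n)=*.
\]
Exactness at $\pi_1\BDiff^+(P_M)$, taken as an exact sequence of pointed sets, shows that every element lies in the image of $\pi_1\W$.

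The only point needing care is identifying $\pi_0$ of the geometric realisation with the coequaliser of $\pi_0$ of its degree-0 and degree-1 terms. This is routine: the realisation has the $0$-skeleton $E_0$, and its $1$-cells glue via $d_0,d_1$, while higher cells do not affect $\pi_0$.
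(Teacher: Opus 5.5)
Your proposal is correct and follows the paper's proof essentially step for step. Both arguments combine connectedness of each $\rmC^\Gamma_\sigma(P_1,\dots,P_n)$ (\cref{cor:C-connected}), the fact that $\pi_0$ of a homotopy colimit is the colimit of the $\pi_0$'s, and connectedness of $\Gr_{g,n}$ (\cref{lem:depth-of-Gr_gn}(iii)), followed by the long exact sequence of the fibre sequence. Your identification of $\pi_0|E_\bullet|$ with the coequaliser of $\pi_0 E_1 \rightrightarrows \pi_0 E_0$ just spells out the step the paper cites as standard.
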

\begin{proof}
    By \cref{thm: wrong way map} and \cref{lem:hocolim-on-Gr_gn} the fibre of $\W$ is a homotopy colimit, over $\Gr_{g,n}$, of the spaces $\rmC^\Gamma_\sigma$, which are connected by \cref{cor:C-connected}.
    Since $\pi_0$ of a homotopy colimit is the colimit of the $\pi_0$'s, the connectedness of the fibre follows from the connectedness of $\Gr_{g,n}$ in \cref{lem:depth-of-Gr_gn}(iii).
    Surjectivity of $\pi_1\W$ then results from the long exact sequence of homotopy groups.
\end{proof}

It turns out that when $M$ has at least one prime factor that is not $S^1 \times S^2$, \emph{i.e.}~when there is at least one $P_i$, the configuration space model is particularly well-behaved. More precisely, we will show that the fibre of  $\W$ is homotopy finite in this case. This applies to the majority of 3-manifolds; indeed, the only cases not covered occur when $M = U_g = (S^1 \times S^2)^{\sharp g}$ and hence $P_M=\emptyset$. However, in these cases the target of $\W$ is a point, whence the fibre of the splitting map is just all of $\BDiff^+(U_g)$. At the other extreme, when $g=0$ and $n=1$, $M=P_M$ is irreducible and the fibre of $\W$ is a point, hence finite. For all remaining cases, we have:
\begin{prop}\label{prop:Conf-strict-quotient}
    For $n>0$ and $n+g>1$, the action in \cref{defn:action-on-Conf} is free and principal, and hence its strict quotient is equivalent to the homotopy quotient.
    This strict quotient is homotopy equivalent to a compact manifold with corners of dimension at most $15(g-1)+12n$  and thus to a finite cell complex.
\end{prop}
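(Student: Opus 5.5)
The plan is to prove freeness by propagating along the graph $\Gamma$ from a marked vertex. Then we deduce principality from general facts about free actions of compact Lie groups. Finally we replace the configuration space by an equivariant compact model and count dimensions.

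\emph{Freeness.} Let $(r_e)_{e\in E_\Gamma}\times(g_v)_{v\in V^{\rm sph}_\Gamma}\in \SO(\Gamma,\sigma)$ fix a framed configuration. Two facts about the action drive the argument:
\begin{itemize}
\item The element $r_e$ acts on the frame at each endpoint of $e$ by precomposition, and $\SO(3)$ acts freely on positive frames of a fixed tangent space.
\item $\SO(4)$ acts freely on the positive frame bundle of $S^3$, which is an $\SO(4)$-torsor.
\end{itemize}
We now propagate:
\begin{enumerate}
\item Take a marked vertex $\sigma(i)$; it exists since $n>0$. Nothing acts on $P_i$, so every edge $e$ incident to $\sigma(i)$ must have $r_e=1$. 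This includes loops at $\sigma(i)$.
\item Let $v$ be an unmarked vertex with an incident edge $e$ satisfying $r_e=1$. Then $g_v$ fixes the framed point of $e$ in $S^3$, so $g_v=1$.
\item Once $g_v=1$, every other edge $e'$ at $v$ has its frame at $v$ fixed by $r_{e'}$ alone, so $r_{e'}=1$.
\end{enumerate}
Since $\Gamma$ is connected, induction along paths from $\sigma(i)$ kills every component, so the stabiliser is trivial. The stabilisers are trivial and $\SO(\Gamma,\sigma)$ is a compact Lie group acting on a (Hausdorff, completely regular) manifold. By Gleason's slice theorem the quotient map is therefore a principal bundle. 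Hence $X\sslash G\to X/G$ has contractible fibres $EG$, and the strict quotient is equivalent to the homotopy quotient.

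\emph{Compact model.} Equip each $S^3$ with the round metric, on which $\SO(4)$ acts isometrically, and fix metrics on the $P_i$. Fix a small $\varepsilon>0$ and replace $\Conf_{H}(N)$ by the subspace of configurations with pairwise distances $\ge\varepsilon$ and distance to $\partial N$ at least $\varepsilon$. After smoothing/Fulton--MacPherson-type arguments this is a compact manifold with corners, and its inclusion is a homotopy equivalence (shrinking $\varepsilon$ gives a deformation). On $S^3$ this model and the deformation can be chosen $\SO(4)$-invariantly, since distances are preserved. Framings are taken in the positive frame bundle, whose fibre is $\GL_3^+$. We retract these onto positive orthonormal frames, $\SO(3)$-equivariantly, and this retraction is compatible with the $\SO(3)$-action along edges. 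The resulting subspace is a compact $\SO(\Gamma,\sigma)$-invariant manifold with corners, equivariantly homotopy equivalent to $\Conf^{\fr+}_{(\Gamma,\sigma)}$. The free action of a compact Lie group on a compact manifold with corners has a compact manifold-with-corners quotient, which admits a finite cell structure.

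I expect this step to be the main obstacle: making the compactification genuinely equivariant and keeping the deformation retraction equivariant near the corners. I would handle it by working with the $\varepsilon$-separated isometry-invariant model rather than with the Fulton--MacPherson compactification itself.

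\emph{Dimension.} The framed model has dimension $(3+3)\cdot|H_\Gamma|=12|E_\Gamma|$. The quotient by the free action therefore has dimension
\[
12|E_\Gamma|-3|E_\Gamma|-6|V^{\rm sph}_\Gamma|.
\]
Euler characteristic gives $|V_\Gamma|=|E_\Gamma|+1-g$, so $|V^{\rm sph}_\Gamma|=|E_\Gamma|+1-g-n$. The dimension thus equals $3|E_\Gamma|+6(g-1)+6n$. By \cref{lem:depth-of-Gr_gn}(i) we have $|E_\Gamma|\le 3(g-1)+2n$, so the dimension is at most
\[
9(g-1)+6n+6(g-1)+6n=15(g-1)+12n.
\]
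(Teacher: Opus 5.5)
Your propagation argument for freeness of $\SO(\Gamma,\sigma)$, the appeal to Gleason for principality, and the dimension count $3|E_\Gamma|+6(g-1)+6n\le 15(g-1)+12n$ are fine and agree with the paper. The genuine gap is the compact model, which is the step you flag yourself.

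The set of configurations with all $d(x_a,x_b)\ge\varepsilon$ is a hard-sphere configuration space. You give no argument that it is a manifold with corners, and the obvious reason to expect one fails in general: transversality of the hypersurfaces $\{d(x_a,x_b)=\varepsilon\}$. For example, in flat $\mathbb{R}^3$ the $13$-point face-centred cubic cluster (a centre plus the twelve vertices of a cuboctahedron) has $36$ contacts. The differentials of the distance functions vanish on the $6$-dimensional space of infinitesimal isometries, so they span at most $3\cdot 13-6=33$ dimensions. The number of points at a vertex is also unbounded as $g$ grows.

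Likewise, ``shrinking $\varepsilon$ gives a deformation'' does not by itself show that the inclusion into $\Conf_H(N)$ is an equivalence. That needs the minimum-distance function to have no critical configurations, in the min-type sense, at small values. So ``smoothing'' is exactly the missing argument.

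The paper sidesteps this with the Fulton--MacPherson compactification $\Conf_A[M]$. This is a genuine compact manifold with corners whose interior is $\Conf_A(M)$, so the inclusion is automatically an equivalence, and $\Diff(M)$ and $\Aut(A)$ act on it smoothly. The paper then pulls back the orthonormal frame bundle along $\rho\colon\Conf_A[M]\to\prod_A M$ (\cref{lem:compactified-action}). The price is that freeness on the added boundary is no longer inherited from the interior, as it was in your $\varepsilon$-model. The paper obtains it from \cref{thm:compact-Lie-action}: the stabiliser of a corner point acts linearly in a corner chart, hence fixes the inward barycentric ray, which consists of interior points. So the stabiliser is trivial, and the quotient is a compact manifold with corners.

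Second, the action in \cref{defn:action-on-Conf} is that of $\Aut(\Gamma,\sigma)\ltimes\SO(\Gamma,\sigma)$, and you only prove freeness for $\SO(\Gamma,\sigma)$. The finite part is not cosmetic: \cref{cor:fibre-finite-and-dimension-bound} needs $\rmC^\Gamma_\sigma(P_1,\dots,P_n)\hq\Aut(\Gamma,\sigma)$ to be finite-dimensional, which fails if $\Aut(\Gamma,\sigma)$ has fixed points. Your induction extends directly, as in \cref{lem:free-action-on-Conf-interior}:
\begin{itemize}
\item At $\sigma(i)$ the automorphism $\varphi$ fixes the vertex. Distinctness of the points $p_h$ forces it to fix every half-edge there, after which $r_e=1$ as you argue.
\item At an unmarked vertex $v$ reached through a fixed half-edge $h_0$ with $r_{[h_0]}=1$, the element $g_v$ fixes $(p_{h_0},\theta_{h_0})$, so $g_v=1$. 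Distinctness then forces $\varphi$ to fix all half-edges at $v$.
\end{itemize}

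A small correction: the positive ($\GL_3^+$-)frame bundle of $S^3$ is not an $\SO(4)$-torsor; only the orthonormal one is. Freeness still holds, because an isometry fixing a point and its derivative there is the identity.
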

As a consequence, we obtain the following finiteness result:
\begin{cor}\label{cor:fibre-finite-and-dimension-bound}
    For $n>0$ and $n+g>1$, 
    the fibre $\calH_g(P_1,\dots,P_n)$ of the splitting map $\W$ is homotopy equivalent to a finite cell complex of dimension at most $17(g-1)+13n$.
\end{cor}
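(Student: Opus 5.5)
We will derive this from \cref{prop:EI-finite}, applied to the diagram $(\Gamma,\sigma)\mapsto \rmC^\Gamma_\sigma(P_1,\dots,P_n)$ on $\Gr_{g,n}$, whose homotopy colimit is by definition $\calH_g(P_1,\dots,P_n)$. The first issue is that \cref{prop:EI-finite} requires a finite EI-category in which every morphism is an epimorphism. By \cref{rem:Gr-epi} every morphism is an epimorphism. Endomorphisms in $\Gr_{g,n}$ are invertible: a non-invertible morphism collapses at least one edge, so it strictly decreases the number of edges. Hence $\Gr_{g,n}$ is EI. It is not literally finite, but \cref{lem:depth-of-Gr_gn}(ii) shows it has finitely many isomorphism classes of objects, and each hom-set between finite graphs is finite. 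We therefore replace $\Gr_{g,n}$ by an equivalent skeleton, which does not change the homotopy colimit. The skeleton is a finite EI-category, and by \cref{lem:depth-of-Gr_gn}(i) its depth is $D\le 2(g-1)+n$.

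Next we bound the homotopy quotients $\rmC^\Gamma_\sigma\hq\Aut(\Gamma,\sigma)$. By \cref{prop:C-Gamma configuration model} there is an $\Aut(\Gamma,\sigma)$-equivariant zig-zag of equivalences with $\Conf^{\fr+}_{(\Gamma,\sigma)}\hq\SO(\Gamma,\sigma)$. By \cref{prop:Conf-strict-quotient}, for $n>0$ and $n+g>1$ the action is free and principal, so this is the strict quotient, homotopy equivalent to a compact manifold with corners of dimension at most $15(g-1)+12n$. We then want $\Aut(\Gamma,\sigma)$ to act freely on this strict quotient too. The plan is to show that the combined group $\Aut(\Gamma,\sigma)\ltimes\SO(\Gamma,\sigma)$ acts freely and principally on $\Conf^{\fr+}_{(\Gamma,\sigma)}$, following the argument of \cref{prop:Conf-strict-quotient}. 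A nontrivial graph automorphism either moves a half-edge, which then permutes configuration points nontrivially, or it flips edges and fixes half-edges in a way compensated by rotations; in each case a stabiliser element would have to fix distinct labelled points. If freeness of the finite group fails, we argue instead as in the proof of \cref{prop:EI-finite}: a finite cover of a finite complex is finite, and in the homotopy colimit only the automorphism groups of chains matter. The simplest route is to observe that the proof of \cref{prop:EI-finite} only needs $X(i_0)\hq\Aut(\text{chain})$ to be finite of bounded dimension. Then $\rmC^\Gamma_\sigma\hq\Aut(\Gamma,\sigma)$ is a quotient of a compact manifold with corners by a free finite group action, hence again homotopy equivalent to a compact manifold with corners of the same dimension. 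It therefore admits a finite CW model of dimension at most $N=15(g-1)+12n$.

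Finally, \cref{prop:EI-finite} gives a finite complex of dimension at most $N+D\le 15(g-1)+12n+2(g-1)+n=17(g-1)+13n$. The main obstacle is the equivariance and freeness of the finite automorphism group action: the zig-zag in \cref{prop:C-Gamma configuration model} is only equivariant up to equivalence. We therefore need the homotopy quotient by $\Aut(\Gamma,\sigma)$ to be computed on the configuration model, where freeness of $\Aut(\Gamma,\sigma)\ltimes\SO(\Gamma,\sigma)$ on $\Conf^{\fr+}_{(\Gamma,\sigma)}$ is checked directly.
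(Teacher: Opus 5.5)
Your proposal is correct and follows the paper's proof. You write $\calH_g(P_1,\dots,P_n)$ as the homotopy colimit over $\Gr_{g,n}$ and bound its depth by $2(g-1)+n$ via \cref{lem:depth-of-Gr_gn}. You then bound $\rmC^\Gamma_\sigma\hq\Aut(\Gamma,\sigma)$ by $15(g-1)+12n$ using \cref{prop:C-Gamma configuration model} and \cref{prop:Conf-strict-quotient}, and finish with \cref{prop:EI-finite}. Passing to a skeleton is a welcome bit of care that the paper omits.

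Your hedging about freeness of the finite group is unnecessary, and the fallback would not work anyway. \cref{lem:free-action-on-Conf-interior} already shows, using $n>0$, that the whole group $\Aut(\Gamma,\sigma)\ltimes\SO(\Gamma,\sigma)$ acts freely. \cref{prop:Conf-strict-quotient} then computes the dimension of the quotient by that full group. Without freeness, homotopy orbits by a finite group need not be homotopy finite.
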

\begin{proof}
    According to \cref{thm: wrong way map} and \cref{lem:hocolim-on-Gr_gn}, the fibre of the $\W$ can be written as a homotopy colimit of spaces $\rmC_{\sigma}^\Gamma(P_1,\ldots,P_n)$ over the finite EI-category $\Gr_{g,n}$. The depth of $\Gr_{g,n}$ is $2(g-1)+n$ by \cref{lem:depth-of-Gr_gn}(i), while the dimension of $\rmC^\Gamma_\sigma(P_1,\ldots,P_n)\hq\Aut(\Gamma,\sigma)$ is at most $15(g-1)+12n$ by \cref{prop:C-Gamma configuration model} and  \cref{prop:Conf-strict-quotient}. Combined with the estimate in \cref{prop:EI-finite}, this yields a bound of $17(g-1)+13n$ for the dimension of the fibre. 
\end{proof} 

To illustrate this finiteness, consider the case where all of the $P_i$ are hyperbolic manifolds.
\begin{ex}\label{ex:hyperbolic}
    If $P_1,\dots,P_n$ are hyperbolic manifolds, then $\Diff^+(P_M)$ is equivalent to the finite groups of isometries $\Isom(P_M)$ by work of Gabai \cite{Gabai01}.
    Therefore the ``fibre inclusion'' map from $\calH_g(P_1,\dots,P_n)$ to $\BDiff(M)$ is equivalent to a finite cover with fibre $\Isom(P_M)$.
    Together with \cref{cor:fibre-finite-and-dimension-bound}, this shows that $\BDiff(M)$ admits a finite cover that is equivalent to a finite CW complex of dimension at most $17(g-1)+13n$.
\end{ex}

\subsection{Connected sums of two prime manifolds}
To underscore the applicability of \cref{prop:C-Gamma configuration model}, we explicitly describe the homotopy type of $\rmC_g(P_1,\ldots, P_n)$ in the two simplest situations where $n>0$, \emph{i.e.}~when $n=2,~g=0$ and $n=1,~g=1$. 
Applying \cref{prop:C-Gamma configuration model}, in both of these cases the homotopy colimit is taken over one labelled graph, hence $\rmC_{g}(P_1,\ldots,P_n)$ is equivalent to one of the compact manifolds given in \cref{prop:Conf-strict-quotient}. 

Previously, Hatcher \cite{Hatcher1981,Hatcher1981revised} proved that when $M=P_1\sharp P_2$, the space $\sepNP(M)$ is contractible, which can be leveraged to get information about the homotopy type and compute cohomology of $\BDiff^+(M)$ in this case; for instance, see work of Lelkes \cite{Lelkes-lens-spaces}. In contrast,  our description for $M=P\sharp (S^1\times S^2)$ seems to be wholly new. We will also get a complete description of $\BDiff_0(M)$ (and hence, after looping, $\Diff_0(M)$), provided that the irreducible factors are aspherical and not homeomorphic to $S^1\times D^2$. In this setting, we will extend some results of Kalliongis--McCullough \cite{KalliongisMcCullough} on $\pi_2(\BDiff_0(M))$ in the case where $M$ has two prime factors. For the $\BDiff_0(M)$ computation, we will need the following lemma:
\begin{lem}\label{lem:aspherical-quotient}
    If $P$ is aspherical and not diffeomorphic to $S^1\times D^2$, then $P\hq \Diff_0(P)$ is aspherical 
    and its fundamental group is the quotient of $\pi_1(P)$ by its centre.
\end{lem}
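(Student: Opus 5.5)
We will use the fibre sequence obtained from the evaluation action. Fix a basepoint $p\in \interior{P}$. The group $\Diff_0(P)$ acts on $P$ in a locally retractile and transitive way (on the interior; the boundary can be handled by a collar, since $P\hq\Diff_0(P)$ is homotopy equivalent to $\interior{P}\hq\Diff_0(P)$). We therefore get a homotopy fibre sequence
\[
    P \longrightarrow P\hq\Diff_0(P) \longrightarrow \BDiff_0(P),
\]
and equivalently $P\hq \Diff_0(P)\simeq \BDiff_{0}(P,p)$, where $\Diff_0(P,p)$ denotes the stabiliser of $p$ in $\Diff_0(P)$.

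The key input is the homotopy type of $\Diff_0(P)$ for aspherical irreducible $P$. By the work of Hatcher and Ivanov (Haken case), combined with Gabai and the geometrisation-based results for the remaining Seifert-fibred and hyperbolic cases, $\Diff_0(P)$ is homotopy equivalent to a torus $T^k$. Here $k$ is the rank of the centre $Z(\pi_1 P)$ (which is $0$, $1$, or $3$ when $P=T^3$). Moreover, the evaluation map $\Diff_0(P)\to P$ induces on $\pi_1$ an isomorphism of $\pi_1\Diff_0(P)$ onto $Z(\pi_1(P,p))$. The hypothesis $P\ncong S^1\times D^2$ is needed exactly here, since for the solid torus $\Diff_0$ is not of this form relative to the centre. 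Using this, $\BDiff_0(P)\simeq BT^k = K(\bbZ^k,2)$.

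We now run the long exact sequence of homotopy groups of the fibre sequence. For $i\ge 3$ both $\pi_i(P)=0$ and $\pi_i(BT^k)=0$, so $\pi_i(P\hq\Diff_0(P))=0$. In degree $2$ the sequence reads
\[
    0=\pi_2P\to \pi_2(P\hq\Diff_0 P)\to \pi_2\BDiff_0(P)\cong\pi_1\Diff_0(P)\xrightarrow{\partial}\pi_1(P)\to \pi_1(P\hq\Diff_0P)\to \pi_1\BDiff_0(P)=0 .
\]
The boundary map $\partial$ is the map induced by evaluation at $p$, which is injective with image $Z(\pi_1P)$. Hence $\pi_2$ vanishes and $\pi_1(P\hq\Diff_0(P))\cong \pi_1(P)/Z(\pi_1P)$. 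This proves both asphericity and the identification of the fundamental group.

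The main obstacle is the input on $\Diff_0(P)$. Asphericity of $P$ together with irreducibility gives the cases: Haken manifolds (Hatcher), non-Haken Seifert fibred manifolds, and hyperbolic manifolds (Gabai, where $\Diff_0$ is contractible and the centre is trivial). Among these, the solid torus is the sole exception where the statement fails. The point to check carefully is that the evaluation map realises the isomorphism $\pi_1\Diff_0(P)\cong Z(\pi_1P)$; this is standard, since the connecting map of the evaluation fibration is evaluation on loops. Once that is granted, the rest is formal.
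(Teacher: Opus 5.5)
Your proof is correct, but it is organised differently from the paper's.

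\textbf{Your route.} You run the long exact sequence of the Borel fibration $P\to P\hq\Diff_0(P)\to\BDiff_0(P)$. Everything then reduces to two inputs: $\Diff_0(P)\simeq T^k$, and the statement that the orbit map $\Diff_0(P)\to P$ is injective on $\pi_1$ with image $Z(\pi_1P)$.

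\textbf{The paper's route.} The paper realises $\Diff_0(P)$ by an explicit $\SO(2)^r$-action and lifts it to a free proper $\bbR^r$-action on $\widetilde P$. This lifted action commutes with the deck group $\pi$ and satisfies $\pi\cap\bbR^r=Z$. The paper then computes $P\hq\SO(2)^r\simeq\widetilde P\hq(\bbR^r\cdot\pi)\simeq\widetilde F\hq(\pi/Z)$, with $\widetilde F=\widetilde P/\bbR^r$ contractible.

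\textbf{Common input.} The condition $\pi\cap\bbR^r=Z$ is exactly your orbit-map statement. The kernel of $\bbR^r\to\SO(2)^r$ acts on $\widetilde P$ by deck transformations, and that is what the orbit map records on $\pi_1$. So both proofs rest on the same geometric fact.

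\textbf{Your justification of that fact.} Your one-line justification is not the right one. Identifying the connecting map with evaluation says nothing about it being an isomorphism onto the centre. That has to be taken from the cited classification results, or checked on the explicit circle actions, as the paper does case by case.

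\textbf{What each route buys.}
\begin{itemize}
\item Your argument is purely formal and never needs the circle action on $P$ itself to be free. Freeness fails as soon as there are exceptional fibres, e.g.\ for $\Kx$. The paper's chain of equivalences in fact only uses freeness of the lifted $\bbR^r$-action.
\item The paper's argument produces an explicit contractible model $\widetilde F$ on which $\pi/Z$ acts properly; in the Seifert-fibred case $\widetilde F$ is the universal orbifold cover of the base.
\end{itemize}

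\textbf{Minor slip.} Your list of possible ranks omits $k=2$, which occurs for $T^2\times I$. This does not affect the argument.
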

\begin{proof}
    Since $P$ is aspherical it is irreducible. By results of Hatcher \cite{Hatcher76} and Ivanov \cite{Ivanov76} in the Haken case, and  Gabai \cite{Gabai01} and McCullough--Soma \cite{McCulloughSoma13} in the non-Haken case,  $\Diff_0(P)\simeq \SO(2)^r$ where $0\leq r\leq 3 $ is equal to the rank of the centre of $\pi_1(P)$ (except when $P=S^1\times D^2$, which is in part why this is excluded).

    Let $\pi=\pi_1(P)$ and let $Z\cong \Z^r$ denote the centre of $\pi$.  If $r=0$, then $P\hq \Diff_0(P)\simeq P$, which is aspherical by assumption. If $r>0$, then $P$ is Seifert-fibred and there is a free $\SO(2)^r$-action on $P$.  When $r=1$, apart from two cases, there is a unique fibring up to isotopy and $\SO(2)$ acts by a full rotation of the generic fibre, which represents $Z\leq \pi$. The two exceptional cases occur when $P=\Kx$, the twisted $I$-bundle over the Klein bottle, and its double $D(\Kx)$. For $\Kx$, the $\SO(2)$-action extends the $\SO(2)$-action on $K$, and represents a full rotation along the central element in $\pi_1(K)$. Similarly, for $D(\Kx)$, the $\SO(2)$-actions on each $\Kx$ match along the boundary, hence glue up to an $\SO(2)$-action on the double which rotates along the common central elements (which have been identified) on both sides.
    When $r=2$, the only possibility is $T^2\times I$ and for $r=3$, the only possibility is $T^3$, with $\SO(2)^r$ acting in the obvious way. 
    
    In each case, the $\SO(2)^r$-action is covered by a free and proper $\bbR^r$-action on the universal cover $\smash{\widetilde{P}}$, which commutes with the action of $\pi$ and satisfies $\pi\cap \bbR^r=Z.$  In particular, as $\widetilde{P}$ is contractible, so is the quotient $\widetilde{F}:=\widetilde{P}/\bbR^r$, which inherits a properly discontinuous action by $\pi':=\pi/Z$.   Therefore we obtain:
    \[P\hq \Diff_0(P)\simeq P\hq \SO(2)^r\simeq\widetilde{P}\hq(\bbR^r\cdot\pi)\simeq\widetilde{F}\hq \pi'\simeq *\hq \pi'=K(\pi',1).\qedhere\]
\end{proof}

\begin{ex}[$n=2$, $g=0$]\label{ex:n=2-g=0} Up to labelled graph isomorphism, $\Gr_{2,0}$ has exactly one graph $(\Gamma,\sigma)$, consisting of a single edge with both endpoints labelled.  Thus, $\Gr_{2,0}$ has no non-trivial morphisms, and in particular, $\Aut(\Gamma,\sigma)=\{1\}$. If $M=P_1\sharp P_2$, then, after choosing a framing on the $P_i$,
\[\Conf^{\fr+}_{(\Gamma,\sigma)}(P_1,P_2) \simeq\Conf_1^{\fr+}(P_1)\times\Conf_1^{\fr+}(P_2)\simeq (P_1\times \SO(3))\times (P_2\times \SO(3)),\] while $\SO(\Gamma,\sigma)\cong \SO(3)$, which acts diagonally on the two framings on the right. 
Consequently, after choosing a framing on the $P_i$, we have: 
\[\rmC_0(P_1,P_2)=\rmC^\Gamma_\sigma(P_1,P_2)\simeq\frac{\Conf_1^{\fr+}(P_1)\times\Conf_1^{\fr+}(P_2)}{\SO(3)}\\
\simeq P_1\times P_2\times \SO(3),\]
where the resulting $\SO(3)$-factor records the difference of the two framings. The prime decomposition fibre sequence is: \[P_1\times P_2\times \SO(3)\hookrightarrow\BDiff^+(P_1\sharp P_2)\xrightarrow[]{\W}\BDiff^+(P_1\sqcup P_2),\]
which has fibre a compact 9-manifold.

Recall now that  $\BDiff_0(M)$ is the universal cover of $\BDiff^+(M)$. By \cref{prop:C-Gamma configuration model}, we can identify  the intermediate cover of $\BDiff^+(M)$ corresponding to the surjection $\pi_1\W$ with the homotopy quotient $(P_1\times P_2\times \SO(3))\hq \Diff_0(P_1\sqcup P_2)$. The projection onto the first two factors is equivariant with respect to the action of $\Diff_0(P_1\sqcup P_2)\cong \Diff_0(P_1)\times\Diff_0(P_2)$, which yields a fibre sequence: 
\[\SO(3)\hookrightarrow \frac{\underline{P_1\times P_2\times \SO(3)}}{ \Diff_0(P_1\sqcup P_2)}\rightarrow \frac{P_1}{\overline{ \Diff_0(P_1)}}\times \frac{P_2}{\overline{ \Diff_0(P_2)}}. \]
When the $P_i$ are aspherical, and neither is diffeomorphic to $S^1\times D^2$, \cref{lem:aspherical-quotient} implies the base of this fibration is aspherical. Passing to universal covers, we see that $\BDiff_0(P_1\sharp P_2)$ is homotopy equivalent to the universal cover of $\SO(3) \cong \mathbb{RP}^3$, namely $S^3$. 
In particular, $\Diff_0(P_1\sharp P_2)\simeq \Omega S^3$.
\end{ex}

\begin{figure}[hbt!]

    \begin{subfigure}{0.4\textwidth}
    \centering
        \begin{tikzpicture}
            \draw[red,thick] (0,0)--(1,0);
            \draw[red,thick] (1.5,0) circle (.5);
            \filldraw[red] (0,0) circle (.1);
            \filldraw[red] (1,0) circle (.1);
            \node[anchor=east] at (-.05,0) {$1$};
            \node at (1,-1){$(\Lambda,\rho) $};
        \end{tikzpicture}
    \label{fig:subim1}
    \end{subfigure}
   \begin{subfigure}{0.4\textwidth}
   \centering
        \begin{tikzpicture}
            \draw[red,thick] (1.5,0) circle (.5);
            \filldraw[red] (1,0) circle (.1);
            \node[anchor=east] at (.95,0) {1};
            \node at (1.5,-1){$(\Delta,\tau) $};
        \end{tikzpicture}
    \label{fig:subim2}
    \end{subfigure}
    \caption{The two objects of $\Gr_{1,1}$ are pictured. The graph $(\Lambda,\rho)$ at the left has a redundant edge and admits an edge collapse onto the graph $(\Delta,\tau)$ at the right, which has no redundant edges.  }
    \label{fig:Gr11-graphs}
\end{figure}
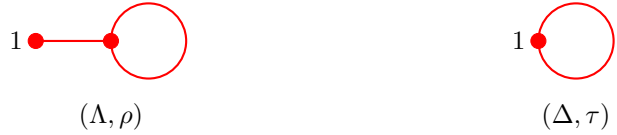

\begin{ex}[$n=1$, $g=1$]\label{ex:n=g=1}
There are only two labelled graphs $(\Lambda,\rho), (\Delta,\tau)$ in $\Gr_{1,1}$ (as pictured in Figure \ref{fig:Gr11-graphs}), but only $(\Delta,\tau)$ is in $\Gr_{1,1}^{\rm nr}$. In this case $\Aut(\Delta,\tau)\cong \Z/2$, which acts by inverting the loop. If $M=P\sharp (S^1\times S^2)$, then: \begin{align*}\Conf^{\fr+}_{(\Delta,\tau)}(P)=\Conf_2^{\fr+}(P)&\simeq \Conf_2(P)\times \SO(3)\times\SO(3)\\&=\{((p_1,p_2),\theta_1,\theta_2)\mid p_1\neq p_2\in P,~\theta_i\in \SO(3)\},\end{align*} where $\SO(\Delta,\tau)\cong \SO(3)$ again acts diagonally on the framings on the right. After quotienting by the action of $\SO(\Delta,\tau)$, we can write:
\[\Conf^{\fr+}_2(P)/\SO(\Delta,\tau)\simeq \Conf_2(P)\times \SO(3)=\{((p_1,p_2,\eta)\in \Conf_2(P)\times\SO(3)\}, \] where $\eta= \theta_1\circ \theta_2^{-1}$ measures the difference between the two framings, and is invariant under the action of $\SO(\Delta,\tau)$ by construction. The automorphism group $\Z/2$ acts freely on $\Conf_2^{\fr+}(P)$ by  exchanging points $p_1\leftrightarrow p_2$ and frames $\theta_1\leftrightarrow\theta_2$, hence it acts in the same way (freely) on $\Conf_2(P)$ but by $\eta\mapsto \eta^{-1}$ on the $\SO(3)$-factor. Thus the prime decomposition fibre sequence is: \[\frac{\Conf_2(P)\times\SO(3)}{\Z/2}\hookrightarrow \BDiff^+(P\sharp (S^1\times S^2))\rightarrow \BDiff^+(P).\]
This fibre is noncompact, so we now describe the construction of the compact manifold arising in \cref{prop:Conf-strict-quotient} that is equivalent to the fibre. The configuration space $\Conf_2(P)$ is the interior of a smooth 6-manifold with boundary, denoted by $\Conf_2[P]$, which can be obtained blowing up the diagonal of $P\times P$ to a copy of the unit tangent bundle of $P$ (identified with the unit normal bundle to the diagonal). The action of $\Z/2$ extends to the blown up manifold by taking its derivative along the diagonal, which is the antipodal map on each sphere of  the unit normal bundle. This action is clearly free on $\Conf_2(P)$ as well as on the boundary of $\Conf_2[P]$, since the antipodal map acts freely on $S^2$. Thus the action of $\Z/2$ is free on $\Conf_2[P]\times \SO(3)$ and the quotient is a compact 9-manifold with boundary, homotopy equivalent to the fibre $\rmC_1(P)$.

For the computation of $\BDiff_0(M)$, we follow the same argument as in \cref{ex:n=2-g=0}. Now the cover of $\BDiff^+(M)$ corresponding to $\pi_1\W$ is the homotopy quotient of $\Conf_2(P)\times\SO(3)$ by $\Z/2\times\Diff_0(P).$ 
We can pass to the further 2-fold cover which just the quotient $(\Conf_2(P)\times \SO(3))\hq\Diff_0(P).$ 
Projecting onto the first of the two configuration points in $\Conf_2(P)$ is $\Diff_0(P)$-equivariant, hence yields a fibre sequence: \begin{equation}\label{eqn:punctured-P-sequence}(P\setminus *)\times \SO(3)\hookrightarrow \frac{\underline{\Conf_2(P)\times \SO(3)}}{\Diff_0(P)}\rightarrow \frac{P}{\overline{\Diff_0(P)}}.\end{equation}
Setting $\pi=\pi_1(P)$,  the universal cover of $(P\setminus *)$ is $\widetilde{P}\setminus \pi$, where $\pi$ is identified with the orbit of a base-point in the interior of $\smash{\widetilde{P}}$ under the deck group action. Assuming that $P$ is aspherical and not $S^1\times D^2$, the base of the above fibre sequence is aspherical by \cref{lem:aspherical-quotient}, hence passing to universal covers we obtain: 
\[\BDiff_0(P\sharp (S^1\times S^2))\simeq \left(\widetilde{P}\setminus \pi\right)\times S^3\simeq \left(~\bigvee_{i=1}^\infty S^2\right)\times S^3, \] 
where the second equivalence comes from the fact that $\pi$ is infinite and the interior of $\widetilde{P}$ is homeomorphic to $\bbR^3$. 
It follows that $\pi_2\BDiff^+(M)=\pi_2\BDiff_0(M)=H_2(\widetilde{P}\setminus \pi)\cong \Z\pi$, regarded as a module over the integral group ring of $\pi=\pi_1(P\setminus *)$.
This recovers the result of \cite{KalliongisMcCullough} that $\pi_2$ is infinitely generated in this case.
Note that, by comparison, we are able to calculate $\pi_2$ exactly, and since $\pi=\pi_1(P\setminus *)$ injects into  the mapping class group $\pi_1\BDiff^+(M)$ by \cref{eqn:punctured-P-sequence}, our computation additionally shows that $\pi_2\BDiff^+(M)$ is finitely generated as a $\Z\pi_1\BDiff^+(M)$-module.
In fact, it is a cyclic $\Z \pi_1\BDiff^+(M)$-module.
\end{ex}

\subsection{\texorpdfstring{$\rmC^\Gamma_\sigma$}{C-Gamma} in terms of configuration spaces}

Fix $(\Gamma, \sigma) \in \Gr_{g,n}$ and $P_1,\dots,P_n$ irreducible manifolds that are neither $D^3$ nor $S^3$. Via the labelling $\sigma$, we associate the oriented diffeomorphism class $[P_i]$ to the vertex $\sigma(i)$. For each half-edge $h\in H_\Gamma$ we set $P_{r(h)}=P_i$ if $r(h)=\sigma(i)$ and $P_{r(h)}=S^3$ otherwise. 

For an oriented 3-manifold $M$, let $\Fr^+(M)$ denote the space of positively oriented frames on $M$. Thus, $\Fr^+(M)$ is the total space of a principal $\GL_3^+(\bbR)$-bundle $\pi\colon \Fr^+(M)\rightarrow M$. For  a finite set $A$ we define the configuration space of $A$ in $M$ to be \[\Conf_A(M):=\left\{(p_a)_{a\in A}\in \prod_A M\;\bigg|\; a \neq b \Rightarrow p_a \neq p_b\right\}.\]
We define the \emph{space of positively framed configurations of the set $A$ in $M$} via the pullback square:\[\begin{tikzcd}
    \Conf_A^{\fr^+}(M)\ar[r,hook]\ar[d]&\prod_A\Fr^+(M)\ar[d]\\
\Conf_A(M)\ar[r,hook]&\prod_A M
\end{tikzcd}\]
where $\prod_A\Fr^+(M)\rightarrow \prod_A M$ is the product bundle. Setwise, $\Conf_A^{\fr^+}(M)$ may be described as follows:
\[
    \Conf_A^{\fr+}(M) = \left\{(p_a,\theta_a)_{a \in A} \in \prod_A \Fr(M) \;\bigg|\; a \neq b \Rightarrow p_a \neq p_b \text{ and } \theta_a \text{ is positive} \right\}.
\]
We also write $\Conf_n^{\fr+}(M)$ when $A = \{1,\dots,n\}$, but it will usually be helpful to keep track of the name of the set we are embedding.
The wreath product  $ \SO(3) \wr\Aut(A) = \Map(A,\SO(3))  \rtimes \Aut(A) $ acts on this space by relabelling points and rotating framings.

\begin{defn}\label{defn:action-on-Conf}
For $(\Gamma,\sigma)\in \Gr_{g,n}$ and irreducible manifolds $P_1,\ldots, P_n$ as above we define 
\[\Conf^{\fr+}_{(\Gamma,\sigma)}(P_1,\ldots,P_n):=\prod_{i=1}^n \Conf_{H_{\sigma(i)}}^{\fr+}(P_i) \times \prod_{v \in V_\Gamma^{\rm sph}} \Conf_{H_v}^{\fr+}(S^3).\]
A point in $\Conf^{\fr+}_{(\Gamma,\sigma)}(P_1,\ldots,P_n)$ is a tuple $\mathbf{x}=(p_h,\theta_h)_{h\in H_\Gamma}$ where $p_h\in P_{r(h)}$, $\theta_h$ is a positive frame at $p_h$, and $p_h\neq p_{h'}$ for $h\neq h'$. An element $\pmb{\gamma}=(\gamma_e)_{e\in E_\Gamma}\in \prod_{E_\Gamma} \SO(3) $ acts on $\mathbf{x}$ by pre-composition:
\[(\pmb{\gamma}.\mathbf{x})_h=(p_h,\theta_h\circ\gamma_{[h]}^{-1}),\]
where $[h] \in E_\Gamma$ denotes the edge containing the half-edge $h$.
An element $\pmb{\alpha}=(\alpha_v)_{v\in V_\Gamma^{\rm sph}}\in\prod_{V_\Gamma^{\rm sph}} \SO(4)$ acts on $\mathbf{x}$ via: 
\[(\pmb{\alpha}.\mathbf{x})_h=\left\{\begin{array}{ll}(\alpha_v(p_h),D_{p_h}(\alpha_v)\circ\theta_h)& r(h)=v\in V_\Gamma^{\rm sph}\\
(p_h,\theta_h), &r(h)\in V_\Gamma^{\rm prime}
\end{array}\right.\]
where $D_{p_h}(\alpha_v)\colon T_{p_h}S^3\rightarrow T_{\alpha_v(p_h)}S^3$ is the derivative. In particular, $\pmb{\alpha}$ acts trivially on framed points in the $P_i$.
Since $\prod_{V_\Gamma^{\rm sph}} \SO(4)$ acts by post-composition, it commutes with the action of $\prod_{E_\Gamma} \SO(3)$. 
We therefore obtain an action of their product which we denote by: 
\[
    \SO(\Gamma,\sigma):=\prod_{E_\Gamma} \SO(3) \times \prod_{V_\Gamma^{\rm sph}} \SO(4).
\]
Lastly, an element $\varphi\in\Aut(\Gamma,\sigma)$ permutes the half-edges $H_\Gamma$ and therefore acts on $\mathbf{x}$ by \[(\varphi.\mathbf{x})_h=(p_{\varphi(h)},\theta_{\varphi(h)}).\]
Since the action of $\Aut(\Gamma,\sigma)$  preserves both $E_{\Gamma}$ and $V_{\Gamma}^{\rm sph}$, it normalises the actions of $\prod_{E_\Gamma}\SO(3)$ and $\prod_{V_\Gamma^{\rm sph}} \SO(4)$. Explicitly, we compute that \[(\varphi\circ\pmb{\gamma}\circ\varphi^{-1})_e=\gamma_{\varphi(e)}~~\text{ and } ~~(\varphi\circ\pmb{\alpha}\circ\varphi^{-1})_v=\alpha_{\varphi(v)}.\]

In summary, we obtain a group action
\[
    \Aut(\Gamma,\sigma) \ltimes \SO(\Gamma,\sigma) 
    \curvearrowright \Conf^{\fr+}_{(\Gamma,\sigma)}(P_1,\ldots,P_n).
\]
Moreover, this action commutes with the $\prod_{i=1}^n \Diff^+(P_i)$-action on  $\prod_{i=1}^n\Conf_{H_{\sigma(i)}}^{\fr+}(P_i)$.
\end{defn}
\begin{rem}
Regarding $S^3$ as the unit quaternions, the left action of $S^3$ on itself induces a faithful representation $\ell\colon S^3\hookrightarrow \SO(4)$, $p\mapsto \ell_p$. 
Equip $S^3$ with the Lie group framing.
Identifying the $\SO(4)$-stabiliser of $1\in S^3$ with $\SO(3)$, an explicit formula for the derivative of $\alpha\in \SO(4)$ at a point $p\in S^3$ is given by:
\[D_p(\alpha)=\ell_{\alpha(p)}^{-1}\circ \alpha\circ\ell_p.\] 
Note that for example that $D_p(\ell_q) = \id_{\bbR^3}$ as we used the left-multiplication to identify the tangent spaces with $\bbR^3$.
\end{rem}

We will break up the proof of \cref{prop:Conf-strict-quotient} into several steps.

\begin{lem}\label{lem:free-action-on-Conf-interior}
    For $n>0$, the action of $\Aut(\Gamma,\sigma)\ltimes \SO(\Gamma,\sigma)$ on $\Conf_{(\Gamma,\sigma)}(P_1,\ldots,P_n)$ is free.
\end{lem}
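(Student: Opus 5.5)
The plan is to show that the stabiliser of an arbitrary point is trivial. I read the statement as referring to the framed configuration space $\Conf^{\fr+}_{(\Gamma,\sigma)}(P_1,\ldots,P_n)$ of \cref{defn:action-on-Conf}. Fix $\mathbf{x}=(p_h,\theta_h)_{h\in H_\Gamma}$ and an element $g=(\varphi,\pmb{\gamma},\pmb{\alpha})\in\Aut(\Gamma,\sigma)\ltimes\SO(\Gamma,\sigma)$ with $g.\mathbf{x}=\mathbf{x}$.

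Unwinding the formulas of \cref{defn:action-on-Conf}, the $h$-th coordinate of $g.\mathbf{x}$ has the following form. If $r(h)$ is marked, it is $(p_{\varphi(h)},\theta_{\varphi(h)}\circ\gamma_e^{-1})$ for an appropriate edge $e$, namely $[h]$ or $[\varphi(h)]$ depending on the order of composition in the semidirect product. If $r(h)=v$ is unmarked, there is an additional post-composition by $\alpha_{v'}$ and its derivative, for the appropriate vertex $v'$. Getting these indices right once is the only bookkeeping needed. The key observation is that every automorphism $\varphi$ fixes the marked vertices, since it preserves $\sigma$. It also maps $H_v$ to $H_{\varphi(v)}$, so whenever $\varphi(v)=v$ the comparison of coordinates at half-edges in $H_v$ takes place inside a single copy of $P_i$ or $S^3$.

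I would then propagate triviality of $g$ through the graph, using that $\Gamma$ is connected. Let $W\subseteq V_\Gamma$ be the set of vertices $v$ with the following three properties: $\varphi(v)=v$; $\varphi(h)=h$ and $\gamma_{[h]}=1$ for all $h\in H_v$; and $\alpha_v=1$ if $v$ is unmarked.

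The base case is where $n>0$ enters. Pick a marked vertex $v=\sigma(1)$. For $h\in H_v$ the equation $g.\mathbf{x}=\mathbf{x}$ gives $p_{\varphi(h)}=p_h$. Since configuration points are pairwise distinct, this forces $\varphi(h)=h$. The frame equation then becomes $\theta_h\circ\gamma_{[h]}^{-1}=\theta_h$, and since $\GL_3^+(\bbR)$ acts freely on frames, $\gamma_{[h]}=1$. Hence $v\in W$.

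For the inductive step, take $v\in W$ and an edge $\{h,i(h)\}$ with $r(h)=v$. Then $\varphi$ fixes $i(h)$, because it commutes with the involution, and therefore fixes $w=r(i(h))$. Also $\gamma_{\{h,i(h)\}}=1$.
\begin{itemize}
\item If $w$ is marked, the base-case argument applies verbatim to $w$.
\item If $w$ is unmarked, the coordinate at $i(h)$ gives $\alpha_w(p_{i(h)})=p_{i(h)}$ and $D_{p_{i(h)}}(\alpha_w)\circ\theta_{i(h)}=\theta_{i(h)}$. An element of $\SO(4)$ acting on the round $S^3$ is an isometry. If it fixes a point and has identity differential there, it is the identity, by the exponential map or simply linear algebra in $\bbR^4$. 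So $\alpha_w=1$, and the base-case argument then shows that $\varphi$ fixes every $h'\in H_w$ and that $\gamma_{[h']}=1$.
\end{itemize}
In either case $w\in W$. By connectedness $W=V_\Gamma$, so $\varphi$ fixes every half-edge and is therefore the identity, while $\pmb{\gamma}$ and $\pmb{\alpha}$ are trivial. Loops and multiple edges cause no trouble, since the argument is entirely half-edge-wise.

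I expect the only real subtlety to be the base case. This is exactly where $n>0$ is needed: with no marked vertex, an unmarked vertex carries a full $\SO(4)$ that can permute its configuration points nontrivially, and one cannot start the induction. Carefully fixing the composition convention in $\Aut(\Gamma,\sigma)\ltimes\SO(\Gamma,\sigma)$, so that the indices $[h]$ versus $[\varphi(h)]$ and $v$ versus $\varphi(v)$ are placed correctly, is routine but should be done once at the start.
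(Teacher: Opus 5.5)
Your proof is correct and follows essentially the same route as the paper's. You start at a marked vertex, where $\pmb{\alpha}$ acts trivially and distinctness of the configuration points forces $\varphi$ to fix each half-edge, so freeness on frames gives $\gamma_{[h]}=1$. You then propagate along edges using that an element of $\SO(4)$ fixing a point and a frame at that point is the identity, and connectedness of $\Gamma$ finishes the argument. The only difference is presentational: the paper phrases the propagation as an induction on distance from $V_\Gamma^{\rm prime}$, while you use a set $W$ closed under taking neighbours.
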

\begin{proof}
    Using the semi-direct product decomposition, any element  $\pmb{\mu} \in \Aut(\Gamma,\sigma)\ltimes \SO(\Gamma,\sigma)$ can be written uniquely as $\pmb{\mu}=(\varphi; \pmb{\gamma},\pmb{\alpha})$, where $\varphi\in \Aut(\Gamma,\sigma)$, $\pmb{\gamma}=(\gamma_e)_{e\in E_\Gamma}$,
    and $\smash{\pmb{\alpha}=(\alpha_v)_{v\in V_\Gamma^{\rm sph}}}$.  Suppose $\mathbf{x}=(p_h,\theta_h)_{h\in H_\Gamma}$ is a point in $\smash{\Conf_{(\Gamma,\sigma)}^{\fr+}(P_1,\ldots,P_n)}$ such that $\pmb{\mu}.\mathbf{x}=\mathbf{x}$. We will show that $\pmb{\mu}$ is trivial.

    First, let $v\in V_{\Gamma}^{\rm prime}$, and let $h_1,\ldots, h_d$ be the half-edges based at $v$.  By definition, $\varphi(v)=v$ and $\pmb{\alpha}$ acts trivially on $p_{h_j}$ for $1\leq j\leq d$. Since $p_{h_1},\ldots,p_{h_d}$ are pairwise distinct, we must have $\varphi(h_j)=h_j$ for $1\leq j\leq d$. Lastly, since $\gamma_{[h]}$ acts freely on $\theta_{h}$ for each $h\in H_\Gamma$, we see that $\gamma_{e_j}=1$ for the edges $e_j = [h_j]$ incident at $v$. 

    Set $V_0=V_{\Gamma}^{\rm prime}$ and for $k\geq 1$, inductively define $V_k=\{v\in V_\Gamma\mid \textrm{dist}_{\Gamma}(v,V_{k-1})\leq 1\}$. We will show, by induction on $k$, that for each $v\in V_k$ and each half-edge $h$ based at $v$:
    \begin{enumerate}
        \item $\varphi(v)=v$ and $\varphi(h)=h$,
        \item $\alpha_v=1$ and $\gamma_{[h]}=1.$
    \end{enumerate}
    The base case $k=0$ of this induction was proved in the previous paragraph. For the inductive step, let $v\in V_k$ for some $k\geq 1$. In particular, $v\in V_\Gamma^{\rm sph}$. Let $h_0$ be a half-edge at $v$ such that $w=r(h_0^\dagger)\in V_{k-1}$. 
    Since $\varphi(w)=w$ and $\varphi(h_0^\dagger)=h_0^\dagger$, we see that $\varphi(v)=v$ and $\varphi(h_0)=h_0$. 
    Because $\varphi$ fixes $h_0$ we must have $\alpha_v(p_{h_0}) = p_{h_0}$.
    But the $\SO(4)$-stabiliser of $p_{h_0}$ acts freely on frames, so we get that $D_{p_{h_0}}(\alpha_v)=1$ as well.  Since $\alpha_v$ fixes the point $p_{h_0}\in S^3$ and the frame $\theta_{h_0}$ at $p_{h_0}$, it must be the identity. It follows that for each half-edge $h$ based at $v$ we have $(\pmb{\mu}.p)_h=p_{\varphi(h)}$, and since the points $p_h$ are pairwise distinct, we conclude that $\varphi(h)=h$ for every $h$ based at $v$. 
    
    We have thus proved (1) and the first half of (2) holds at $v$. Then for each half-edge $h$ based at $v$, we have: \[(\pmb{\mu}.\mathbf{x})_h=(p_h,\theta_h\circ \gamma_{[h]}^{-1}).\] 
    Again using that $\gamma_{[h]}$ acts freely on $\theta_h$, we obtain that $\gamma_{[h]}=1$, which proves the second half of (2) and completes the inductive step. 
    Since $\Gamma$ is finite and connected, $V_\Gamma=V_R$ for some $R\leq \textrm{diam}(\Gamma)$, and thus we have proved that $\pmb{\mu}=1$ when $k=R$.
\end{proof}

\begin{defn}\label{defn:orthogonal-framed-configuration-space}
    For a finite set $A$ and an oriented 3-manifold $M$ equipped with a Riemannian metric define the subspace $\Conf_A^{\SO}(M)\subset \Conf_A^{\fr+}(M)$ to be 
    \[\Conf_A^{\SO}(M)=\{(p_a,\theta_a)\in \Conf^{\fr+}_A(M)\mid \theta_a \text{ is an isometry}\},\]
    meaning that each $\theta_a$ defines an orthonormal frame with respect to the metric.
    We then obtain a corresponding subspace of $\Conf^{\fr+}_{(\Gamma,\sigma)}(P_1,\ldots,P_n)$: \[\Conf^{\SO}_{(\Gamma,\sigma)}(P_1,\ldots P_n):=\prod_{i=1}^n \Conf_{H_{\sigma(i)}}^{\SO}(P_i) \times \prod_{v \in V_\Gamma^{\rm sph}} \Conf_{H_v}^{\SO}(S^3).\]
\end{defn}
    The action of $\Aut(\Gamma,\sigma)\ltimes \SO(\Gamma,\sigma)$ preserves $\Conf^{\SO}_{(\Gamma,\sigma)}(P_1,\ldots,P_n)$ and there is a strong $\Aut(\Gamma,\sigma)\ltimes \SO(\Gamma,\sigma)$-equivariant deformation retraction of $\Conf_{(\Gamma,\sigma)}^{\fr+}(P_1,\ldots, P_n)$ onto $\Conf^{\SO}_{(\Gamma,\sigma)}(P_1,\ldots,P_n)$, \emph{i.e.} $\Conf_{(\Gamma,\sigma)}^{\fr+}(P_1,\ldots, P_n)\simeq \Conf^{\SO}_{(\Gamma,\sigma)}(P_1,\ldots,P_n)$.

The framed configuration space $\Conf_A^{\fr+}(M)$ is the total space of a principal $\prod_A\GL_3^+(\bbR)$-bundle over $\Conf_A(M)$. By replacing each $\GL_3^+(\bbR)$ with $\SO(3)$, we have at least made the fibres of this bundle compact. However, the base, being a configuration space, is still fundamentally noncompact.

Recall that $\Conf_A(M)$ embeds as the interior of a compact manifold with corners $\Conf_A[M]$ known as the \emph{Fulton--MacPherson compactification}. Roughly speaking, a point in the boundary of $\Conf_A[M]$ corresponds to a sequence of points in $\Conf_A(M)$ which limit to the big diagonal in $\prod_AM$.  Each boundary stratum keeps track of the relative rate and tangent direction at which these points collide. For more details and properties of this construction we refer the reader to Sinha's article \cite{Sinha04}. For us, the most relevant properties will be:
\begin{enumerate}[(i)]
\item $\Conf_A[M]$ is a compact manifold with corners and $\Conf_A(M)\hookrightarrow \Conf_A[M]$ embeds as its interior. In particular, this inclusion is a homotopy equivalence  \cite[Theorem 4.4 and Corollary 4.5]{Sinha04}. 
\item There is a quotient map $\rho\colon \Conf_A[M]\rightarrow \prod_AM$ which restricts to a homeomorphism on $\Conf_A(M)$ (this follows directly from \cite[Definition 1.3]{Sinha04}).
\item $\Conf_A[M]$ is functorial in the sense that if $f\colon M\rightarrow N$ is an embedding, then there is an embedding $\Conf_A[M]\rightarrow \Conf_A[N]$. In particular, $\Diff(M)$ acts on $\Conf_A[M]$ by diffeomorphisms \cite[Corollary 4.9]{Sinha04}.
\end{enumerate}

\begin{lem}\label{lem:compactified-action}
    Let $(M,g)$ be an oriented Riemannian 3-manifold. For any finite set $A$, there is a compactification of $\Conf_A^{\SO}(M)$ as the interior of a manifold with corners $\Conf^{\SO}_A[M]$ that fits into a composition of pullback squares
    \[\begin{tikzcd}
        \Conf_A^{\SO}(M)\arrow[r,hook]\arrow[d,"\pi"']&
        \Conf^{\SO}_A[M]\arrow[d,"\pi"'] \rar
        & \prod_A \Fr^{\SO}(M) \arrow[d,"\pi"']
         \\
        \Conf_A(M)\arrow[r,hook,"j"]&
        \Conf_A[M] \rar["\rho"] & 
        \prod_A M
    \end{tikzcd}\]
    In particular, $\pi\colon \Conf_A^{\SO}[M]\rightarrow\Conf_A[M]$ is a principal $\prod_A\SO(3)$-bundle over $\Conf_A[M]$. Moreover, the action of $\Isom^+(M,g)$ on $\Conf^{\SO}_A(M)$ extends smoothly to $\Conf^{\SO}_A[M]$ and acts equivariantly on the diagram. 
\end{lem}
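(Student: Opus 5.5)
The plan is to \emph{define} $\Conf^{\SO}_A[M]$ as the pullback in the right-hand square, namely
\[
\Conf^{\SO}_A[M] := \Conf_A[M] \times_{\prod_A M} \prod_A \Fr^{\SO}(M),
\]
using the map $\rho$ from property (ii) of the Fulton--MacPherson compactification. Since $\prod_A \Fr^{\SO}(M) \to \prod_A M$ is a smooth principal $\prod_A \SO(3)$-bundle and $\rho$ is smooth, its pullback along $\rho$ is again a principal $\prod_A\SO(3)$-bundle over $\Conf_A[M]$. Locally, over a chart of $\Conf_A[M]$ on which the frame bundle is trivialised, it is a product of a manifold-with-corners chart with $\prod_A \SO(3)$. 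Hence $\Conf^{\SO}_A[M]$ is a compact manifold with corners (compact because the base is compact and the fibre is a compact Lie group), and $\pi$ is a principal bundle.

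Next I will check that the left square is a pullback and that $\Conf^{\SO}_A(M)$ is the interior. By property (ii), $\rho\circ j$ is the standard inclusion $\Conf_A(M)\hookrightarrow \prod_A M$. Pasting of pullbacks therefore identifies $\pi^{-1}(j(\Conf_A(M)))$ with $\Conf_A(M)\times_{\prod_A M}\prod_A\Fr^{\SO}(M)$. This is exactly $\Conf^{\SO}_A(M)$, by the defining pullback of framed configurations restricted to orthonormal frames. The outer rectangle is thus a pullback, and since the right square is one, the left square is one too. Because $\pi$ is a locally trivial bundle with closed fibre (no boundary), the boundary strata of $\Conf^{\SO}_A[M]$ are exactly the preimages of those of $\Conf_A[M]$. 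Therefore the preimage of the interior $\Conf_A(M)$ (property (i)) is the interior of $\Conf^{\SO}_A[M]$.

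For equivariance, an orientation-preserving isometry $\phi$ acts on $\prod_A\Fr^{\SO}(M)$ by $(p_a,\theta_a)\mapsto(\phi(p_a),D\phi\circ\theta_a)$. This preserves orthonormality precisely because $\phi$ is an isometry, and positivity because it preserves orientation. By property (iii), $\phi$ also acts smoothly on $\Conf_A[M]$, and I expect that this action is compatible with $\rho$, i.e.\ that $\rho$ is $\Diff(M)$-equivariant. That compatibility holds on the dense interior by (ii), and hence everywhere by continuity. The two actions then induce a smooth action on the fibre product that commutes with all maps in the diagram and extends the given action on $\Conf^{\SO}_A(M)$. The action also commutes with the right $\prod_A\SO(3)$-action, since the two act on opposite sides of $\theta_a$.

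The main obstacle is technical: verifying that $\rho$ is smooth as a map of manifolds with corners, and that the fibre product of manifolds with corners along a submersion is again one. I will handle this by noting that only the fact that $\pi$ is a smooth fibre bundle is needed, so charts of the pullback are products of corner charts with open subsets of $\prod_A\SO(3)$. Smoothness of $\rho$ I will take from Sinha's construction, where $\Conf_A[M]$ is the closure of the image of $\Conf_A(M)$ in a product containing $\prod_A M$ as a factor and $\rho$ is the projection onto that factor.
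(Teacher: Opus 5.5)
Your proposal is correct and follows essentially the same route as the paper: define $\Conf^{\SO}_A[M]$ as the pullback of the orthonormal frame bundle along $\rho$, obtain the left square by pasting pullbacks, and get the isometry action by combining Sinha's functoriality with the derivative action on frames. One difference is in your favour: you build the action on the fibre product from compatible actions on the two factors, with $\rho$-equivariance deduced by density, whereas the paper only says ``by continuity we obtain a smooth extension.'' Your version makes explicit why that extension exists.
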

\begin{proof}
Let $j\colon \Conf_A(M)\hookrightarrow \Conf_A[M]$ be one inclusion, and let $i\colon \Conf_A(M)\hookrightarrow \prod_A M$ be the other. By property (ii) above, there is a natural projection $\rho\colon \Conf_A[M]\rightarrow \prod_A M$ which restricts to a homeomorphism on the interior, hence yields a factorisation $i=\rho\circ j$.   We can regard  $\Conf_A^{\SO}(M)$ as the pullback of the principal $\prod_A\SO(3)$-bundle $\pi\colon \prod_A\Fr^{\SO}(M)\rightarrow \prod_A M$ along $i$, where $\Fr^{\SO}(M)$ is the space of positive orthonormal frames on $M$. Thus if we define $\Conf_A^{\SO}[M]$ as the pullback of $\pi$ along $\rho$, then we obtain a composition of pullback squares as in the statement of the proposition.
In particular,  $\Conf_A^{\SO}[M]$ is a compact manifold with corners whose interior is $\Conf_A^{\SO}(M)$, and $\pi\colon \Conf_A^{\SO}[M]\rightarrow \Conf_A[M]$ is a principal $\prod_A\SO(3)$-bundle. By property (iii), the action of $\Diff^+(M)$, and hence $\Isom^+(M,g)$, extends to $\Conf_A[M]$.  On the other hand, the $\Isom^+(M,g)$ action on $\Conf_A(M)$ also lifts to $\Conf_A^{\SO}(M)$, acting on fibres via the derivative map.  By continuity we obtain a smooth extension to $\Conf_A^{\SO}[M]$  making the left hand square $\Isom^+(M,g)$-equivariant. Equivariance of the outer square is clear, so the proposition follows.
\end{proof}
\begin{rem}\label{rem:free-SO(3)-action}
    Since $\pi\colon \Conf_A^{\SO}[M]\rightarrow \Conf_A[M]$ is a principal $\prod_A\SO(3)$-bundle, it comes with a free right action of $\prod_A\SO(3)$.
\end{rem}

The last ingredient we will need to prove \cref{prop:Conf-strict-quotient} is the following general fact about actions of compact Lie groups on manifolds with corners.
\begin{thm}\label{thm:compact-Lie-action}
    Let $G$ be a compact Lie group acting smoothly on a compact manifold with corners $M$.
    If $G$ acts freely on the interior of $M$, then $G$ acts freely on $M$ and the quotient map $M \to M/G$ is a principal $G$-bundle.
    Moreover, $M/G$ is a compact manifold with corners and hence both it and $M\sslash G$ are homotopy finite.
\end{thm}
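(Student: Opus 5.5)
The plan is to show that the free locus of the action is a closed, $G$-invariant union of orbit-type strata containing a dense open set, and then to invoke the slice theorem for compact group actions on manifolds with corners.

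First, I fix a $G$-invariant Riemannian metric on $M$, obtained by averaging with Haar measure. Each boundary stratum and corner stratum is preserved by $G$, since diffeomorphisms of manifolds with corners preserve the depth stratification. Next, I take a point $x \in \partial M$ with stabiliser $G_x$. By compactness, $G_x$ is a closed subgroup, hence a compact Lie group, and it acts linearly on the tangent cone $T_xM \cong \bbR^{k}_{\ge 0}\times\bbR^{m-k}$ through the isotropy representation. This action preserves the cone, and exponentiating along a small invariant neighbourhood identifies a neighbourhood of $x$ equivariantly with a neighbourhood of $0$ in this cone (the linearisation/Bochner argument, adapted to corners).

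The key point is that the cone $\bbR^k_{\ge0}\times\bbR^{m-k}$ has non-empty interior near $0$. Since $G_x$ acts linearly on it, pick an interior point $v$ of the cone close to $0$. I need some interior point near $x$ with the same stabiliser as $x$, or at least containing it. The fixed subspace of $G_x$ contains the vector $\sum_g g\cdot v$, obtained by averaging; this averaged vector still lies in the interior of the cone, because the interior is convex and $G_x$-invariant. Its image under the exponential chart is then an interior point fixed by $G_x$. Freeness on the interior therefore forces $G_x = 1$. This averaging step, together with the existence of an equivariant chart at corner points, is the main obstacle; if a smooth equivariant chart is awkward, I would instead double $M$ along its boundary, or use an invariant inward-pointing vector field (again obtained by averaging) whose flow moves $x$ into the interior while commuting with $G$, so the flowed point is fixed by $G_x$.

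With the action free, the slice theorem applies: for a free smooth action of a compact Lie group, each orbit has an invariant tubular neighbourhood $G\times S$, where $S$ is a slice. In the corners setting, $S$ is a manifold with corners transverse to the orbit; one can take the normal exponential image, which is well defined since orbits lie in strata and $G$ preserves the stratification. This gives local trivialisations, so $M\to M/G$ is a principal $G$-bundle and $M/G$ is a manifold with corners, compact as the image of a compact space. Compact manifolds with corners are homotopy equivalent to finite CW complexes, for instance via their interiors or a triangulation. Finally, since the action is principal, $M\sslash G\simeq M/G$, because the map $M\times_G EG\to M/G$ is a fibre bundle with contractible fibre $EG$. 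Hence both $M/G$ and $M\sslash G$ are homotopy finite.
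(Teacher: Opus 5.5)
Your proposal is correct and follows essentially the paper's route. An equivariant linear model at a corner point (the paper cites Albin--Melrose, Proposition~5.1) produces an interior point fixed by $G_x$, which forces $G_x=1$. The paper takes the ray through the barycentre of the inward unit normals, where you average an arbitrary interior vector. Your backup via an invariant inward-pointing vector field also works and even avoids the chart. After that, a tube theorem gives the principal bundle structure and the corners structure on $M/G$.

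The one soft spot is your claim that slices come from the normal exponential map ``since orbits lie in strata''. On a manifold with corners, geodesics tangent to a face can leave $M$ unless the faces are totally geodesic, so that reason does not work as stated. You should instead invoke the Albin--Melrose tube theorem. The paper applies it to the identity component $G_0$, which is automatically boundary-intersection free, and then divides by the finite group $G/G_0$, which acts freely and hence as a covering.
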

\begin{proof}
    Since $G$ is compact and $M$ is Hausdorff, the action is automatically proper (see \emph{e.g.} \cite[Ch.III.4, Proposition 2]{Bourbaki-Topologie-I-IV}. We first show that the action of $G$ is free on all of $M$. Suppose $\xi\in \partial M$ is a corner of dimension $k$, and let $G_\xi$ be the stabiliser of $\xi$. By \cite[Proposition 5.1]{Albin-Melrose-2011}, there exists a chart $U\ni \xi$ diffeomorphic to $\bbR^{n,k}$ on which the action of $G_\xi$ is linear, and in fact orthogonal. Thus, on $U$, the action of $G_\xi$ preserves the $k$-simplex of inward pointing unit vectors. In particular, $G_\xi$ fixes the ray corresponding to the barycenter of this simplex pointwise. Since these points are interior, we conclude that $G_\xi=\{1\}$. Thus the action of $G$ on $M$ is free and proper. 
    
    Since $G$ is compact, it has only finitely many connected components. Let $G_0$ be the connected component of the identity. We claim that $G_0$ satisfies the conclusion of the theorem. Indeed, as $G_0$ is connected, the action of $G_0$ on $M$ is boundary-intersection free in the sense of \cite[Definition 1.4]{Albin-Melrose-2011}. Let $\xi\in M$ be a corner of dimension $k$. We apply the tube theorem \cite[Proposition 5.3]{Albin-Melrose-2011} with $G_\xi=\{1\}$ to find a $G_0$-invariant neighbourhood $U$ of $\xi$, an $m$-submanifold with corners $V^+\cong\bbR^{m,k}$ (for $m=n-\dim(G_0)$) containing $\xi$ with trivial $G_0$-stabiliser and a $G_0$-equivariant diffeomorphism $\psi\colon G_0\times V^+\rightarrow U$. This provides a local trivialisation for the quotient map $q_0\colon M\rightarrow M_0 := M/G_0$ in a neighbourhood of $q_0(\xi)$, which maps $V^+$ diffeomorphically to $M_0$.  Since $\xi$ was arbitrary, $M_0$ has the structure of a smooth manifold with corners and $q_0$ is a principal $G_0$-bundle. 

    To finish the proof, observe that $M_0$ inherits a free action of the finite group $F:=G/G_0$. Since $M_0$ is Hausdorff, the quotient $q_1\colon M_0\rightarrow M_1=M_0/F$ is automatically a covering action.  Therefore $M_1$ is also a manifold with corners and composition $q=q_1\circ q_0\colon M\rightarrow M_1$ is a principal $G$-bundle, as required.
\end{proof}

\begin{proof}[Proof of \cref{prop:Conf-strict-quotient}]

By compactifying each factor in the product, we obtain a homotopy equivalent compactification of $\Conf^{\SO}_{(\Gamma,\sigma)}(P_1,\ldots,P_n)$ as: \[\Conf^{\SO}_{(\Gamma,\sigma)}[P_1,\ldots,P_n]:=\prod_{i=1}^n\Conf^{\SO}_{H_{\sigma(i)}}[P_i]\times\prod_{v\in V^{\rm sph}_\Gamma}\Conf^{\SO}_{H_v}[S^3],\]
which is a smooth manifold with corners of dimension \[6\left(\sum_{i=1}^n|H_{\sigma(i)}|+\sum_{v\in V^{\rm sph}_\Gamma}|H_v|\right)=12|E_\Gamma|.\]
By \cref{lem:compactified-action} and \cref{rem:free-SO(3)-action} for $\SO(\Gamma,\sigma)$ and by \cite[Theorem 4.10]{Sinha04} for $\Aut(\Gamma,\sigma)$, the action of $\Aut(\Gamma,\Sigma)\ltimes\SO(\Gamma,\sigma)$ extends smoothly to this compactification.  By \cref{lem:free-action-on-Conf-interior} and \cref{thm:compact-Lie-action}, the extension is free and principal, hence the quotient is a compact manifold with corners. In particular, the homotopy quotient is equivalent to the strict quotient, which, being a homeomorphic to a compact manifold with boundary, has the homotopy type of a finite CW complex. 

Recalling that $\Aut(\Gamma,\sigma)$ is discrete and that as a Lie group there is an isomorphism $\SO(\Gamma,\sigma)\cong\prod_{e\in E_\Gamma}\SO(3)\times\prod_{v\in V^{\rm sph}_\Gamma}\SO(4)$, the quotient has dimension:
\begin{align*}12|E_\Gamma|-(3|E_\Gamma|+6|V_\Gamma^{\rm sph}|)&= 9|E_\Gamma|-6|V_\Gamma^{\rm sph}|\\
&=9|E_\Gamma|-6|V_\Gamma|+6n\\
&=9|E_\Gamma|-6(|E_\Gamma|-(g-1))+6n\\
&=3|E_\Gamma|+6(g-1)+6n,
\end{align*}
where we have used that $1-g=\chi(\Gamma)=|V_\Gamma|-|E_\Gamma|$ on the third line. The maximal number of edges $\Gamma$ can have is $3(g-1)+2n$ by \cref{lem:depth-of-Gr_gn}(i), hence the dimension of the quotient is at most $15(g-1)+12n$, as claimed.
\end{proof}

\subsection{Thickened separating systems}
To prove \cref{prop:C-Gamma configuration model} and certain other descriptions of $\rmC^\Gamma_\sigma$, we will construct a map that roughly takes a separating system $\Sigma \subset M$ and sends it to the various path components of $M\ca \Sigma$.
In defining this on the level of moduli spaces we run into a technical issue: 
points in $\BDiff$ can be described as unparametrised submanifolds of $\bbR^\infty$, but if $M \subset \bbR^\infty$, then there is no canonical way for us to think of $M \ca \Sigma$ as a submanifold of $\bbR^\infty$.
We will resolve this by remembering for each separating system $\Sigma \subset M$ a parametrised tubular neighbourhood $\Sigma^\varepsilon$ of $\Sigma$ in $M$, so that instead of $M \ca \Sigma$ we can simply take the diffeomorphic manifold $M \setminus \Sigma^\varepsilon$ that is literally a submanifold of $\bbR^\infty$.
This is a purely technical point and will not really affect any of our arguments.

\begin{defn}\label{defn:thickened-spheres}
    A \emph{thickened separating system} is a separating system $\Sigma$ together with an equivalence class of embeddings $i\colon \Sigma \times [-1,1] \hookrightarrow M$ satisfying $i(x, 0) = x$ for all $x \in \Sigma$.
    Here we consider two embeddings equivalent if they differ by flipping the $[-1,1]$ factor for some of the components of $\Sigma$.
    We let $\Sigma^\varepsilon_\circ := i(\Sigma\times (-1,1)) \subset M$ and $\Sigma^\varepsilon := i(\Sigma \times [-1,1])$ denote the open and closed tubular neighbourhoods obtained from the thickening.
    By abuse of notation, we often write $\Sigma^\varepsilon$ to denote the entire datum $[\Sigma, i]$.
\end{defn}

\begin{defn} 
    A diffeomorphism $\psi\in \Diff^+(\Sigma\times[-1,1])$ is called \emph{cylindrical} if $\psi(x,t)=(\psi_0(x),t)$ for some $\psi_0\in \Diff^+(\Sigma)$.
    We denote the subgroup of cylindrical diffeomorphisms of $\Sigma^\varepsilon\cong \Sigma\times[-1,1]$ by $\Diff^{\cyl}(\Sigma^{\varepsilon})$.
    Given a thickened separating system $\Sigma^\varepsilon \subset M$,  we let 
    $\Diff^\cyl(M, \Sigma^\varepsilon) \leq \Diff(M, \Sigma^\varepsilon)$
    denote the subgroup of those diffeomorphisms that are cylindrical on $\Sigma^\varepsilon$, \emph{i.e.}~those diffeomorphisms such that $i^{-1}\circ\varphi_{|\Sigma^{\varepsilon}} \circ i$ is cylindrical. In particular, $\varphi(\Sigma)=\Sigma.$
    That is, we take those $\varphi\colon M \cong M$ such that $\varphi(\Sigma) = \Sigma$ and $\varphi(i(x,t)) = i(\varphi(x),t)$ holds for $i$ part of the thickening.
    (Note that it does not matter which $i$ we pick from the equivalence class of embeddings $\Sigma \times [-1,1] \hookrightarrow M$.)
\end{defn}

\begin{lem}\label{lem:cyl==2Sigma}
    The subgroup inclusion $\Diff^\cyl(M, \Sigma^\varepsilon) \subset \Diff^+_{\pi_0(2\Sigma)}(M, \Sigma)$ is admissible and a homotopy equivalence.
\end{lem}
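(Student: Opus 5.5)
We will argue exactly as in the proof of \cref{lem:O(3)-subgroup-equivalence}, comparing two restriction fibre sequences. First note that $\Diff^\cyl(M,\Sigma^\varepsilon)$ is indeed contained in $\Diff^+_{\pi_0(2\Sigma)}(M,\Sigma)$: a cylindrical diffeomorphism preserves the $t$-coordinate, so it preserves each coorientation, and hence each sphere is sent to a sphere with the same coorientation. Strictly, $\varphi$ could still permute components of $\Sigma$ whose coorientations correspond under a graph automorphism. We therefore interpret $\Diff^\cyl$ as intersected with the kernel of $\gamma_\Sigma$, or equivalently we note that the cylindrical condition with a fixed $i$ forces $\varphi_0(S)=S$ when we require $\varphi_0\in\prod_{\pi_0\Sigma}\Diff^+(S^2)$. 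We adopt this reading.

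For the homotopy equivalence, restrict both groups to $\Sigma$. This gives a map of fibre sequences
\[
\Diff_{\Sigma^\varepsilon}(M)\to\Diff^\cyl(M,\Sigma^\varepsilon)\to\textstyle\prod_{\pi_0\Sigma}\Diff^+(S^2),
\]
\[
\Diff^+_\Sigma(M)\to\Diff^+_{\pi_0(2\Sigma)}(M,\Sigma)\to\textstyle\prod_{\pi_0\Sigma}\Diff^+(S^2).
\]
The right-hand map is the identity. Both restriction maps are surjective: any diffeomorphism of each sphere is isotopic to an element of $\SO(3)$ by Smale, and elements of $\SO(3)$ extend cylindrically and then to all of $M$ by an isotopy-extension argument. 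Both maps are also fibrations, since $\Diff$ acts locally retractilely on embeddings of $\Sigma$ and on embeddings of $\Sigma\times[-1,1]$ (Cerf, \cite[Lemma 2.5]{CMRW17}). It therefore suffices to show that the fibre inclusion $\Diff_{\Sigma^\varepsilon}(M)\hookrightarrow\Diff^+_\Sigma(M)$ is an equivalence. This follows verbatim from the collar argument in \cref{lem:O(3)-subgroup-equivalence}: $\Diff^+_\Sigma(M)$ acts transitively and locally retractilely on the contractible space $\Coll_\Sigma$ of collars (Cerf). The stabiliser of the collar $i$ is $\Diff_{\Sigma^\varepsilon}(M)$, so we get a fibre sequence whose base is contractible. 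A small check is that orientation-preserving diffeomorphisms fixing $\Sigma$ pointwise and preserving coorientation act on collars; since we fix $\Sigma$ pointwise and $\varphi$ is orientation preserving, the coorientations are automatically preserved.

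For admissibility, we want $\Diff^+_{\pi_0(2\Sigma)}(M,\Sigma)\to\Diff^+_{\pi_0(2\Sigma)}(M,\Sigma)/\Diff^\cyl(M,\Sigma^\varepsilon)$ to be a bundle. Identify this quotient with an orbit of the space of thickenings of $\Sigma$ that induce the given coorientation up to flips, modulo cylindrical reparametrisation; concretely, the space of images $i(\Sigma\times[-1,1])$ together with the induced product structure. This space is a quotient of the embedding space of $\Sigma\times[-1,1]$ by the free action of $\Diff^{\cyl}$, hence it is locally retractile for $\Diff^+_{\pi_0(2\Sigma)}(M,\Sigma)$. Then \cref{obs:admissible}(\ref{it:adm-stabliser}) applies, since $\Diff^\cyl(M,\Sigma^\varepsilon)$ is exactly the stabiliser of the point $[i]$.

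The main obstacle is this last step: checking local retractility of the space of thickenings, i.e.\ that $\emb(\Sigma\times[-1,1],M)\to\emb(\Sigma\times[-1,1],M)/\Diff^\cyl$ is principal (a slice for the compact-ish $\Diff^+(S^2)$-action). We would handle it via \cref{ex:Principal-action-examples} applied to the free action of $\Diff^+(\Sigma)$ on embeddings restricted along $\Sigma\times\{0\}$, and combine this with Cerf's parametrised isotopy extension.
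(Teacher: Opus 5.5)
Your proof of the homotopy equivalence is correct, and it takes a different route from the paper. You compare the restriction fibrations of both groups over $\prod_{\pi_0\Sigma}\Diff^+(S^2)$ and use the five lemma to reduce to the pointwise statement $\Diff_{\Sigma^\varepsilon}(M)\simeq\Diff^+_\Sigma(M)$, which is already established inside the proof of \cref{lem:O(3)-subgroup-equivalence}. Your component-preserving reading of $\Diff^{\cyl}(M,\Sigma^\varepsilon)$ is also the one the paper effectively uses.

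The admissibility half, however, is not proved. The step ``this space is a quotient of the embedding space by the free action of $\Diff^{\cyl}$, hence it is locally retractile'' does not follow as stated. To push local retractions down to the quotient you would need local sections of that quotient map. You defer exactly this as the main obstacle, with only a vague plan involving slices and parametrised isotopy extension.

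The missing observation is that none of this machinery is needed:
\begin{itemize}
\item Precomposing a thickening $e$ with $(e_{|\Sigma\times\{0\}})^{-1}\times\id_{[-1,1]}$ gives a global slice. So the quotient is just the contractible collar space $\Coll_\Sigma$, with $\Diff^+_{\pi_0(2\Sigma)}(M,\Sigma)$ acting by $\varphi.\iota=\varphi\circ\iota\circ(\varphi_{|\Sigma}^{-1}\times\id_{[-1,1]})$.
\item Restricted to the subgroup $\Diff^+_\Sigma(M)$, this action is ordinary post-composition. That is locally retractile by Cerf, the same fact you already use for the fibres.
\item A local retraction for a subgroup is automatically a local retraction for the larger group.
\item The stabiliser of the collar defining $\Sigma^\varepsilon$ is precisely $\Diff^{\cyl}(M,\Sigma^\varepsilon)$, so part (1) of \cref{obs:admissible} gives admissibility.
\end{itemize}
This is the paper's argument. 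It also yields the equivalence in the same stroke, from the fibre sequence $\Diff^{\cyl}(M,\Sigma^\varepsilon)\to\Diff^+_{\pi_0(2\Sigma)}(M,\Sigma)\to\Coll_\Sigma$ with contractible base. Once the collar action is in place, your comparison of restriction fibrations, while valid, is no longer needed.
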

\begin{proof}
    As in the proof of \cref{lem:O(3)-subgroup-equivalence} we let $\Coll_\Sigma := \emb^+_{\Sigma \times \{0\}}(\Sigma \times [-1,1], M)$ be the (contractible) space of orientation-preserving collars of $\Sigma$.
    The group $\Diff^+_{\pi_0(2\Sigma)}(M, \Sigma)$ acts on $\Coll(\Sigma)$ by
    \[
        \varphi.\iota := \varphi \circ \iota \circ (\varphi_{|\Sigma}^{-1} \times \id_{[-1,1]}).
    \]
    Note this indeed preserves the condition that $\iota_{|\Sigma} = \id_\Sigma$.
    Restricting to the subgroup $\Diff^+_\Sigma(M)$ this action becomes the usual action by post-composition, which we know to be locally retractile.
    Using the same local retraction it follows that the action of the larger group $\Diff^+_{\pi_0(2\Sigma)}(M, \Sigma)$ is also locally retractile.
    The stabiliser of this action at some point $\Sigma^\varepsilon \in \Coll(\Sigma)$ is exactly the group of cylindrical diffeomorphisms, so this subgroup is admissible by \cref{obs:admissible}.
    Moreover, we have a fibre sequence
    \[
        \Diff^\cyl(M, \Sigma^\varepsilon) \longrightarrow \Diff^+_{\pi_0(2\Sigma)}(M, \Sigma) \longrightarrow \Coll(\Sigma)
    \]
    and as the base is contractible it follows that the subgroup inclusion is an equivalence.
\end{proof}

From \cref{lem:cyl==2Sigma} and \cref{obs:admissible} we can deduce that the restriction of a principal $\Diff^+(M)$-action to $\Diff^\cyl(M,\Sigma^\varepsilon)$ is always principal.
\begin{cor}\label{lem:Diff-cyl-principal}
    The action of $\Diff^\cyl(M,\Sigma^\varepsilon)$ on $\emb(M, \bbR^\infty)$ is principal.
\end{cor}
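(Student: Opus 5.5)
The plan is to deduce this directly from \cref{lem:cyl==2Sigma} together with \cref{obs:admissible}(\ref{it:adm-quotient}), by factoring the subgroup inclusion in two steps. By \cref{ex:Principal-action-examples}(3), $\Diff(M)$ acts principally on $\emb(M,\bbR^\infty)$. The first step is to pass to $\Diff^+(M)$. Being a finite-index subgroup (index at most two), it is admissible: $\Diff(M)\to\Diff(M)/\Diff^+(M)$ is a map onto a finite discrete space, hence a fibre bundle. Applying \cref{obs:admissible}(\ref{it:adm-quotient}), the action of $\Diff^+(M)$ on $\emb(M,\bbR^\infty)$ is therefore principal.

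The second step is to descend along the chain
\[
\Diff^\cyl(M,\Sigma^\varepsilon)\le \Diff^+_{\pi_0(2\Sigma)}(M,\Sigma)\le \Diff^+(M).
\]
For the right-hand inclusion, \cref{lem:Diff(M)->sep(M) principal}(2) shows that $\Diff^+_{\pi_0(2\Sigma)}(M,\Sigma)$ is admissible in $\Diff^+(M)$. Hence, again by \cref{obs:admissible}(\ref{it:adm-quotient}), it acts principally on $\emb(M,\bbR^\infty)$.

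For the left-hand inclusion, \cref{lem:cyl==2Sigma} shows that $\Diff^\cyl(M,\Sigma^\varepsilon)$ is admissible in $\Diff^+_{\pi_0(2\Sigma)}(M,\Sigma)$. One more application of \cref{obs:admissible}(\ref{it:adm-quotient}) then yields that $\Diff^\cyl(M,\Sigma^\varepsilon)$ acts principally, as claimed.

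The only point requiring care is one the paper already supplies: we must descend through the chain of admissible subgroups rather than claim admissibility of $\Diff^\cyl(M,\Sigma^\varepsilon)$ in $\Diff(M)$ directly. Since every link in the chain is provided by an earlier lemma, I expect no real obstacle beyond checking that the finite-index step is admissible, which is immediate because the coset space is discrete.
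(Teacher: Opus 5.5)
Your proposal is correct and matches the paper's argument. The paper deduces the corollary in one line from \cref{lem:cyl==2Sigma} and \cref{obs:admissible}, by restricting a principal action along a chain of admissible subgroups. You make explicit the two links it leaves implicit: the finite-index passage from $\Diff(M)$ to $\Diff^+(M)$, and the admissibility of $\Diff^+_{\pi_0(2\Sigma)}(M,\Sigma)$ in $\Diff^+(M)$ from Lemma~\ref{lem:Diff(M)->sep(M) principal}(2).
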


We use the thickened separating systems to define a variant $\Cthick^\Gamma_\sigma$ of $\rmC^\Gamma_\sigma$.
It will be convenient to define this variant directly rather than as a homotopy orbit construction of a variant of $\sep^\Gamma_\sigma(M)$, as this removes the reference to $M$ and also makes it easier to describe the $\Aut(\Gamma,\sigma)$-action.

\begin{notation}\label{notation:dual-graph-indices}
    Consider a separating system $\Sigma \subset M$ with dual graph $G_{\Sigma \subset M}$.
    For every vertex $v$ of $G_{\Sigma \subset M}$ we let $(M \ca \Sigma)_v \subset M \ca \Sigma$ denote the component corresponding to $v$.
    If the vertex $v$ is such that $(M\ca \Sigma)_v$ is not a punctured sphere, we also write $P(\Sigma \subset M)_v \subset P(\Sigma \subset M)$ for the component of the maximal prime piece that contains $(M\ca \Sigma)_v$.
\end{notation}

\begin{defn}\label{defn:thick}
    A point in $\Cthick^\Gamma_\sigma(P_1,\dots, P_n)$ consists of the data 
    \begin{itemize}
        \item $N \subset \bbR^\infty$ a compact, oriented, $3$-dimensional submanifold,
        \item $\Sigma^\varepsilon\subset N$ a thickened separating system,
        \item $\alpha \colon \Gamma \cong G_{\Sigma \subset N}$ an isomorphism of graphs such that for every vertex $v \in V_\Gamma$ the component
            $(N \ca \Sigma)_{\alpha(v)}$ is abstractly diffeomorphic to $S^3 \setminus \amalg_k \interior{D^3}$ if $v$ is unmarked and to $P_i \setminus \amalg_k \interior{D^3}$ if $v=\sigma(i)$.
        \item an embedding $\iota \colon P(\Sigma \subset N) \hookrightarrow \coprod_{i=1}^n P_i$ sending the component $P(\Sigma \subset N)_{\alpha(\sigma(i))}$ to $P_i$.
    \end{itemize}
\end{defn}

If we pick a thickened separating system $\Sigma^\varepsilon \subset M$ and a graph isomorphism $\alpha\colon G_{\Sigma \subset M} \cong \Gamma$, then we can parametrise this space via the map
\begin{align*}
    q\colon \frac{\emb(M, \bbR^\infty) \times \prod_{i=1}^n \emb(P(\Sigma\subset M)_i, P_i)}{\Diff^\cyl(M,\Sigma^\varepsilon)}
    &\xrightarrow[\qquad]{\cong} \Cthick^\Gamma_\sigma(P_1,\dots,P_n)\\
    (j\colon M \hookrightarrow \bbR^\infty, (\iota_i)_{i=1}^n)
    &\longmapsto 
    (j(M), j(\Sigma^\varepsilon), \alpha \circ j_G^{-1}, (\sqcup_{i=1}^n \iota_i) \circ j_P^{-1})
\end{align*}
    where $j_G\colon G_{\Sigma \subset M} \cong G_{j(\Sigma) \subset j(M)}$ is the graph isomorphism induced by $j$ and $j_P\colon P(\Sigma \subset M) \cong P(j(\Sigma) \subset j(M))$ is the diffeomorphism induced on maximal prime pieces.

\begin{lem}
    The map $q$ is a bijection and the topology it induces on $\Cthick^\Gamma_\sigma(P_1,\dots,P_n)$ is independent of the choice of $\Sigma^\varepsilon \subset M$ and $\alpha$.
\end{lem}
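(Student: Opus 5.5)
We must show that $q$ is well defined, injective and surjective, and then that the quotient topology does not depend on the choice of $(\Sigma^\varepsilon,\alpha)$.

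\emph{Well-definedness.} The group $\Diff^\cyl(M,\Sigma^\varepsilon)$ acts on $\emb(M,\bbR^\infty)$ by precomposition. Via $\varphi\mapsto\varphi_P^{-1}$, a homomorphism to $\Diff(P(\Sigma\subset M))$ from \cref{prop: restricting maximal prime piece}, it acts on the markings by precomposition. Elements of $\Diff^\cyl(M,\Sigma^\varepsilon)\le\Diff^+_{\pi_0(2\Sigma)}(M,\Sigma)$ induce the identity on $G_{\Sigma\subset M}$. They also preserve each component $P(\Sigma\subset M)_v$, so $\varphi_P$ respects the decomposition into the pieces $P(\Sigma\subset M)_i$. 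Functoriality of $j\mapsto j_G$ and $j\mapsto j_P$, that is $(j\circ\varphi)_G=j_G\circ\varphi_G$ and similarly for $P$, then shows that $q$ is constant on orbits. Cylindricity is exactly what makes $j(\Sigma^\varepsilon)$ a well-defined thickening: $j\circ\varphi\circ i=j\circ i\circ(\varphi_0\times\id)$, and this differs from $j\circ i$ by a reparametrisation of $\Sigma$ only.

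\emph{Surjectivity.} Start from a point $(N,\Upsilon^\varepsilon,\beta,\kappa)$. The composite $\beta\circ\alpha^{-1}\colon G_{\Sigma\subset M}\cong G_{\Upsilon\subset N}$ is label preserving, because the labels are determined by $\sigma$ via the conditions in \cref{defn:thick}. By \cref{lem:graph-functor-is-full} we get an orientation-preserving $\psi\colon M\cong N$ with $\psi(\Sigma)=\Upsilon$ inducing this isomorphism. The proof of that lemma extends a chosen diffeomorphism of collars. So we may start from the given parametrised thickenings and take $\psi$ to be cylindrical, carrying $\Sigma^\varepsilon$ to $\Upsilon^\varepsilon$, possibly after flipping $[-1,1]$ factors, which the equivalence relation allows. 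Set $j=\psi$, viewed as an embedding into $\bbR^\infty$, and $\iota=\kappa\circ j_P$. Then $q(j,\iota)$ is the given point.

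\emph{Injectivity.} Suppose $q(j,\iota)=q(j',\iota')$. Then $j(M)=j'(M)$, so $\varphi:=j^{-1}\circ j'$ is an orientation-preserving diffeomorphism of $M$. It preserves $\Sigma^\varepsilon$ cylindrically because the thickenings agree up to flips. Flips are excluded, since $\varphi_G=\id$ forces coorientations to be preserved: equality of the graph isomorphisms gives $\alpha\circ j_G^{-1}=\alpha\circ j'^{-1}_G$, hence $\varphi_G=\id$. Thus $\varphi\in\Diff^\cyl(M,\Sigma^\varepsilon)$. Equality of the markings gives $\iota'=\iota\circ\varphi_P$, so $(j',\iota')=(j,\iota)\cdot\varphi$.

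\emph{Independence of choices.} Let $(\Sigma'^\varepsilon,\alpha')$ be another choice. By the surjectivity argument there is $\psi\in\Diff^+(M)$, cylindrical between the two thickenings, with $\psi(\Sigma^\varepsilon)=\Sigma'^\varepsilon$ and $\alpha'\circ\psi_G=\alpha$. Conjugation by $\psi$ identifies $\Diff^\cyl(M,\Sigma^\varepsilon)$ with $\Diff^\cyl(M,\Sigma'^\varepsilon)$. The map $(j,\iota)\mapsto(j\circ\psi^{-1},\iota\circ\psi_P^{-1})$ is a homeomorphism of the numerators; it is continuous since $\psi_P$ is a fixed diffeomorphism. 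It is equivariant for this conjugation, so it descends to a homeomorphism of the quotients that commutes with the two maps $q$ and $q'$. Hence the induced topologies agree.

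The main obstacle I expect is the step producing a cylindrical $\psi$ that respects prescribed thickenings and half-edge orientations. I would handle it by rerunning the proof of \cref{lem:graph-functor-is-full} with the given collars in place of chosen ones. Alternatively, one can first take any $\psi$ from that lemma and then correct it using the transitive, locally retractile action on the contractible space $\Coll_\Sigma$, as in \cref{lem:cyl==2Sigma}.
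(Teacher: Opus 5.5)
Your proof is correct and follows the same route as the paper. It gets surjectivity from \cref{lem:graph-functor-is-full} with a $\psi$ that respects the given thickenings, injectivity by showing $j^{-1}\circ j'$ lies in $\Diff^\cyl(M,\Sigma^\varepsilon)$, and independence by transporting along a thickening-preserving diffeomorphism. You additionally check that $q$ is well defined, spell out the conjugation argument for independence of choices, and handle the flip/coorientation bookkeeping; the paper leaves these points implicit.
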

\begin{proof} 
    For surjectivity, let $(N, \Upsilon^\varepsilon, \beta, \nu) \in \Cthick^\Gamma_\sigma(P_1,\dots,P_n)$.
    The composite $\beta\circ \alpha^{-1}$ defines an isomorphism of dual graphs $G_{\Sigma \subset M} \cong G_{\Upsilon \subset N}$ and hence \cref{lem:graph-functor-is-full} constructs a diffeomorphism $\psi\colon M \cong N$ with $\psi(\Sigma) = \Upsilon$ such that the induced map on dual graphs is exactly $\beta \circ \alpha^{-1}$.
    In fact, the proof of \cref{lem:graph-functor-is-full} constructs $\psi$ such that it preserves the thickenings.
    Then we have 
    \[
        q(\psi\colon M \cong N \subset \bbR^\infty, \nu \circ \psi_P) 
        = (\psi(M), \psi(\Sigma^\varepsilon), \alpha \circ \psi_G^{-1}, (\nu \circ \psi_P) \circ \psi_P^{-1})
        = (N, \Upsilon^\varepsilon, \beta, \nu),
    \]
    which shows that $q$ is surjective.
    Similarly, we see that the quotient topology does not depend on the choice of $\Sigma^\varepsilon \subset M$, as any other choice differs by a diffeomorphism that preserves the thickened separating system.
    
    Two pairs $(j, \iota)$ and $(j', \iota')$ map to the same point in $\Cthick^\Gamma_\sigma(P_1,\dots,P_n)$ if and only if $j(M) = j'(M)$, and the thickened sphere systems, identification of dual graphs, and the maximal prime marking agree.
    Hence we can find a diffeomorphism $\varphi\in \Diff^+(M)$ with $j'= j \circ \varphi$.
    This diffeomorphism automatically lies in $\Diff^\cyl(M,\Sigma^\varepsilon)$ because it must preserve the thickened sphere systems and identification of dual graphs.
    Moreover, $\iota \circ j_P^{-1} = \iota' \circ (j')_P^{-1}$ implies that $\iota \circ \varphi_P = \iota'$, so we have $(j',\iota') = (j \circ \varphi, \iota \circ \varphi_P)$.
    This shows that $q$ is a bijection. 
\end{proof}

The groups $\Aut(\Gamma,\sigma)$ and $\prod_i \Diff^+(P_i)$ act on the space $\Cthick^\Gamma_\sigma(P_1,\dots,P_n)$ by post-composition on the graph isomorphism and the maximal prime embedding, respectively.
The map
\begin{equation}\label{eqn:Cthick-to-Cregular}
    \Cthick^\Gamma_\sigma(P_1,\dots,P_n) \longrightarrow
    \rmC^\Gamma_\sigma(P_1,\dots,P_n)
\end{equation}
that forgets the thickening, sending $\Sigma^\varepsilon$ to $\Sigma$, is equivariant for the action of these groups.

\begin{cor}\label{cor:Cthick-to-C}
    The map in \cref{eqn:Cthick-to-Cregular} is a homotopy equivalence.
\end{cor}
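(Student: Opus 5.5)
We compare both sides via explicit quotient models, using the parametrisation $q$ just established for $\Cthick^\Gamma_\sigma$ and the analogous description of $\rmC^\Gamma_\sigma$. Fix $(\Sigma,\alpha)\in\sep^\Gamma(M)$ with a thickening $\Sigma^\varepsilon$. Unwinding \cref{defn:C-Gamma-sigma} as in the proof of \cref{lem:rmC-simpler}, $\rmC^\Gamma_\sigma(P_1,\dots,P_n)$ is homeomorphic to
\[
    \Big(\emb(M,\bbR^\infty)\times \textstyle\prod_{i=1}^n\emb^{\scl{=}}(P(\Sigma\subset M)_i,P_i)\Big)\big/\Diff^+_{\pi_0(2\Sigma)}(M,\Sigma).
\]
Here we use that $\emb(M,\bbR^\infty)$ is a weakly contractible space with a principal $\Diff^+(M)$-action, so it serves as $E\Diff^+(M)$. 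The two copies of $\Diff^+(M)$ then cancel.

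Under this identification, and the map $q$ on the source, the forgetful map \cref{eqn:Cthick-to-Cregular} becomes the map of quotients induced by the subgroup inclusion $\Diff^\cyl(M,\Sigma^\varepsilon)\le\Diff^+_{\pi_0(2\Sigma)}(M,\Sigma)$, applied to the same space $X:=\emb(M,\bbR^\infty)\times\prod_i\emb^{\scl{=}}(\dots)$. We first check this compatibility directly from the formula for $q$. Forgetting the thickening sends $(j(M),j(\Sigma^\varepsilon),\alpha\circ j_G^{-1},\iota\circ j_P^{-1})$ to the same tuple with $j(\Sigma)$, which is exactly the image of $[j,\iota]$ in the second model. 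Equivariance for $\Aut(\Gamma,\sigma)\times\prod_i\Diff^+(P_i)$ is then clear, since both groups act only on $\alpha$ and $\iota$.

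It then remains to show that $X/H\to X/G$ is a homotopy equivalence, where $H=\Diff^\cyl(M,\Sigma^\varepsilon)$ and $G=\Diff^+_{\pi_0(2\Sigma)}(M,\Sigma)$. The steps are as follows.
\begin{enumerate}
    \item $G$ acts principally on $\emb(M,\bbR^\infty)$ by \cref{lem:Diff(M)->sep(M) principal}(2) and \cref{obs:admissible}(3). Hence it acts principally on $X$ by \cref{obs:admissible}(2).
    \item $H\le G$ is admissible by \cref{lem:cyl==2Sigma}, so $H$ also acts principally on $X$ (\cref{lem:Diff-cyl-principal}). Consequently $X/H\simeq X\hq H$ and $X/G\simeq X\hq G$.
    \item The map $X\hq H\to X\hq G$ is a homotopy equivalence. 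One way to see this is to apply \cref{lucis lemma} to $X\to X\to *$ and $H\to G\to G/H$, noting that $G/H\cong\Coll(\Sigma)$ is contractible. Alternatively, $X/H\to X/G$ is a fibre bundle with fibre $G/H\cong\Coll(\Sigma)$, which is contractible by Cerf.
\end{enumerate}

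The main obstacle is bookkeeping rather than topology. One must ensure that the identification of $\rmC^\Gamma_\sigma$ with $X/G$ is a homeomorphism compatible with $q$, and not merely an equivalence. For this we use the fibre-bundle formulation in step~3: the map $X/H\to X/G$ is literally the quotient by the residual free action, with fibre $G/H$. This makes the argument insensitive to point-set subtleties, and it gives the equivalence directly and equivariantly.
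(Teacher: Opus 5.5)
Your argument is the paper's own proof. Both sides are written as quotients of the same numerator by $\Diff^\cyl(M,\Sigma^\varepsilon)\le\Diff^+_{\pi_0(2\Sigma)}(M,\Sigma)$, both actions are principal, and the equivalence then follows from \cref{lucis lemma} together with \cref{lem:cyl==2Sigma}.

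One small correction to your step 3. Since $H=\Diff^\cyl(M,\Sigma^\varepsilon)$ is not normal in $G$, the quotient $G/H$ is not a group, so \cref{lucis lemma} cannot take $H\to G\to G/H$ as input. Apply it instead to $\{1\}\to H\to G$ (a homotopy fibre sequence because $H\to G$ is an equivalence) together with $*\to X\xrightarrow{=}X$.

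Your alternative argument is also fine: $X/H\cong X\times_G(G/H)\to X/G$ is an associated bundle with contractible fibre $\Coll(\Sigma)$. Note, though, that this map is not a quotient by some residual free action, as your last paragraph suggests.
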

\begin{proof}
    By choosing some $(M,\Sigma^\varepsilon,\alpha)$ we can write this map as
    \[
        \frac{\emb(M, \bbR^\infty) \times \prod_{i=1}^n \emb(P(\Sigma\subset M)_i, P_i)}{\Diff^\cyl(M, \Sigma^\varepsilon)}
        \longrightarrow
        \frac{\emb(M, \bbR^\infty) \times \prod_{i=1}^n \emb(P(\Sigma\subset M)_i, P_i)}{\Diff_{\pi_0(2\Sigma)}^+(M, \Sigma)}.
    \]
    On both sides the action of the group on the numerator is principal:
    on the left this follows from \cref{lem:Diff-cyl-principal} and on the right it follows from \cref{lem:Diff(M)->sep(M) principal}.
    The claim now follows from \cref{lucis lemma} and \cref{lem:cyl==2Sigma}.
\end{proof}

\subsection{Proving the configuration space description}\label{subsec:proof-of-config}
We will now construct the zig-zag of equivalences between $\rmC^\Gamma_\sigma$ and the configuration space model, as promised in \cref{prop:C-Gamma configuration model}.

We will not be able to directly map from $\Cthick^\Gamma_\sigma$ to the configuration space model, but if we construct more flexible models for the configuration spaces, we will be able to map into those.
Instead of considering configurations of points, we will consider configurations of $2$-spheres that can be filled, which will give an equivalent model via Hatcher's proof of the Smale conjecture \cite{Hatcher}.

\begin{defn}\label{defn:emb-D}
    For a finite set $A$ and an irreducible $3$-manifold $P$ we let 
    \[
        \emb^{\rm D}(A \times S^2, P) \subset \emb(A\times S^2, P)
    \]
    denote the subspace of those embeddings $\iota\colon A \times S^2 \hookrightarrow P$ that extend to an \emph{orientation preserving} embedding of $A \times D^3$.
    Since $P$ is irreducible we already know that each of the $\iota_a(S^2)$ bounds a ball, but the above condition requires that it bounds an oriented ball with respect to the preferred orientation on $S^2$.
\end{defn}

\begin{lem}\label{lem:embplus}
    For any $A$ and $P$ as above, there is an $\Aut(A) \wr \SO(3)$ and $\Diff^+(P)$ equivariant zig-zag of equivalences
    \[
        \emb^{\rm D}(A \times S^2, P) \xleftarrow{\simeq} \emb^+(A \times D^3, P) \xrightarrow{\simeq} \Conf_A^{\fr+}(P).
    \]
\end{lem}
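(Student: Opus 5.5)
The two maps in the zig-zag are restriction to the boundary spheres and the $1$-jet at the centre. The left map sends $e\colon A\times D^3\hookrightarrow P$ to $e_{|A\times S^2}$. The right map sends $e$ to $(e(a,0), D_0e_a)_{a\in A}$. Here $D_0e_a$ is a positive frame at $e(a,0)$, because $e$ is orientation preserving, and the points are distinct, because the discs are disjoint. Both maps are visibly equivariant for $\Diff^+(P)$ acting by post-composition. They are also equivariant for $\Aut(A)\wr\SO(3)$, which acts on $A\times D^3$ and on $A\times S^2$ by relabelling and rotating, and on frames by precomposition. So the plan is to show that each map is a fibre bundle with contractible fibres, or an equivalence by another standard route.

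For the right map, the natural route is the usual shrinking argument. The derivative-at-centre map $\emb^+(A\times D^3,P)\to\Conf^{\fr+}_A(P)$ is a homotopy equivalence by the standard fact that the space of embedded discs deformation retracts onto its germs. Concretely, one rescales $e_a$ to $e_a(t\,\cdot)$ for $t\to0$ and uses the linear interpolation $e_a(tx)/t$ in charts. I would cite Palais/Cerf here, namely that the fibre $\emb_{\text{1-jet fixed}}(D^3,P)$ is contractible, and argue equivariantly. Alternatively, the map is $\Diff^+(P)$-equivariant between $\Diff^+(P)$-locally retractile spaces, hence a fibre bundle by \cite[Lemma 2.5]{CMRW17}. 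Its fibre over a framed configuration is the space of disjoint disc embeddings with prescribed $1$-jets, which is contractible by the shrinking homotopy.

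The left map is the main step. The target is $\Diff^+(P)$-locally retractile, since $\emb(A\times S^2,P)$ is, and $\emb^{\rm D}$ is a union of components. Hence restriction is a fibre bundle, again by \cite[Lemma 2.5]{CMRW17}. Surjectivity holds because, by the definition of $\emb^{\rm D}$, each sphere extends to an orientation preserving ball. The fibre over $\iota$ is the set of orientation preserving embeddings of $A\times D^3$ extending $\iota$. Each $\iota_a(S^2)$ bounds a unique ball $B_a$, unique since $P$ is irreducible and not $S^3$. The balls $B_a$ are disjoint: two such spheres are disjoint, so the balls are either nested or disjoint, and nesting is impossible because then the outer ball would contain the inner sphere's image, which is disjoint from the outer sphere. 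Granting this, the fibre is $\prod_a\Diff_\partial(D^3)$ up to a torsor, and it is contractible by Hatcher's theorem, \cref{thm:Smale-Hatcher}(ii).

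The expected obstacle is precisely the nesting issue. Two disjoint embedded discs with images of the spheres disjoint can indeed be nested as images. However, the extensions in the fibre must have disjoint images, so the fibre is either empty or as described. I would therefore restrict $\emb^{\rm D}$ to embeddings whose filling balls are disjoint, which I take to be implicit in the definition of extending to $A\times D^3$. With that reading, the fibre computation goes through as above. Combined with the equivariance noted at the start, this yields the claimed zig-zag.
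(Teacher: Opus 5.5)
Your proof is correct and follows the paper's route. The left map is a fibration, surjective by the definition of $\emb^{\rm D}$, whose fibres are $\Diff_\partial(A\times D^3)$-torsors and hence contractible by Hatcher, and the right map is the standard derivative-at-the-centre equivalence. Your intermediate claim that nesting is impossible is false, since concentric spheres bound nested balls; but it is not needed, because the definition of $\emb^{\rm D}$ asks for an extension to an \emph{embedding} of $A\times D^3$, which already forces the filling balls to be disjoint, as you conclude at the end.
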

\begin{proof}
    The left map is surjective by definition of $\emb^{\rm D}(\dots)$ in \cref{defn:emb-D}.
    It is a Serre fibration by \cite{Palais60,Cerf} and its fibres are $\Diff_\partial(A \times D^3)$, which is contractible by the Smale conjecture \cite{Hatcher}.
    The right map is obtained by taking the derivative at the centre of each disc and is an equivalence by a standard argument: we can construct a homotopy inverse by choosing a metric and using exponential maps. Equivariance is clear by inspection.
\end{proof}

We will also need a specific model for the moduli space of punctured $3$-spheres.
\begin{defn}\label{defn:BDiff_A^sph}
    For $A$ a finite set we define $\BDiff_A^{\rm sph}(S^3)$ to be the space of pairs $(W, \psi)$ of $W \subset \bbR^\infty$ a compact oriented manifold abstractly diffeomorphic a $3$-sphere with open balls removed and $\psi\colon \partial W \cong A \times S^2$ an orientation-preserving diffeomorphism.
    To topologise this, we pick an embedding $\iota\colon A \times D^3 \hookrightarrow S^3$, and require that the canonical bijection
    \[
        \BDiff_A^{\rm sph}(S^3) \cong \frac{\emb(S^3 \setminus \iota(A \times \interior{D^3}), \bbR^\infty)}{\Diff_\partial(S^3 \setminus \iota(A \times (D^3)^\circ))}
    \]
    be a homeomorphism.
\end{defn}

By construction, $\BDiff_A^{\rm sph}(S^3)$ is equivalent to $\BDiff_\partial(S^3 \setminus \iota(A \times \interior{D^3}))$, but we chose the above construction to make it independent of a choice of embedding $\iota$ and to make explicit the action
\[
    \Diff^+(A \times S^2) = \Aut(A) \ltimes \Map(A, \Diff^+(S^2)) \curvearrowright \BDiff_A^{\rm sph}(S^3)
\]
by changing the parametrisation.
Writing $V := S^3 \setminus \iota(A \times \interior{D^3})$, we can identify the above action with the quotient of the principal action $\Diff^+(V) \curvearrowright \emb(V, \bbR^\infty)$ by the admissible, normal subgroup $\Diff_\partial(V) \unlhd \Diff^+(V)$, 
and as a consequence the above action is principal by \cref{obs:admissible}.\ref{it:adm-quotient}.

\begin{lem}\label{lem:punctured-sphere-config}
    There is an $\Aut(A) \wr \SO(3)$ equivariant zig-zag of equivalences
    \[
        \BDiff_A^{\rm sph}(S^3) \xleftarrow{\simeq} 
        \frac{\emb^+(A \times D^3, S^3) \times \emb(S^3, \bbR^\infty)}{\SO(4)}
        \xrightarrow{\simeq} \Conf_A^{\fr+}(S^3)\sslash \SO(4)
    \]
\end{lem}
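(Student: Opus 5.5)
The plan is to analyse the two maps in the zig-zag separately. The middle space is
\[
X := \big(\emb^+(A \times D^3, S^3) \times \emb(S^3, \bbR^\infty)\big)/\SO(4),
\]
where $\SO(4)$ acts by post-composition on the first factor and by pre-composition on the second. This action is free and principal, because $\SO(4) \le \Diff^+(S^3)$ is admissible and $\Diff^+(S^3)$ acts principally on $\emb(S^3,\bbR^\infty)$; by \cref{obs:admissible} the strict quotient therefore models the homotopy quotient.

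For the right map, take the derivative at the centres of the discs. This is the equivariant equivalence $\emb^+(A \times D^3, S^3) \simeq \Conf^{\fr+}_A(S^3)$ from \cref{lem:embplus} (with $P = S^3$; the argument there never used irreducibility). It is $\SO(4)$-equivariant and also $\Aut(A)\wr\SO(3)$-equivariant. The other factor $\emb(S^3,\bbR^\infty)$ is a contractible free $\SO(4)$-space, so it serves as $E\SO(4)$. Hence $X \simeq \emb^+(A\times D^3,S^3) \sslash \SO(4)$, and this maps by an equivalence to $\Conf^{\fr+}_A(S^3)\sslash \SO(4)$, since homotopy orbits preserve equivariant equivalences.

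For the left map, send $[\iota, j]$ to the pair $(W,\psi)$ with
\[
W = j\big(S^3 \setminus \iota(A\times \interior{D^3})\big), \qquad \psi = (\iota|_{A\times S^2})^{-1}\circ j^{-1}|_{\partial W}.
\]
(Orientation conventions on the boundary spheres can be fixed by composing with a reflection.) This is invariant under $\SO(4)$ and equivariant for reparametrisation of $A \times S^2$. To show it is an equivalence:
\begin{enumerate}
\item Replace $\SO(4)$ by $\Diff^+(S^3)$, using Hatcher's theorem that $\SO(4)\to\Diff^+(S^3)$ is an equivalence (equivalent to \cref{thm:Smale-Hatcher}(ii)). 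Via \cref{lucis lemma}, this gives $X \simeq \emb^+(A\times D^3,S^3)\sslash\Diff^+(S^3)$.
\item Fix $\iota_0$ with complement $V$. The orbit map $\Diff^+(S^3) \to \emb^+(A\times D^3,S^3)$ is surjective, since oriented disc embeddings are all isotopic. Its stabiliser is $\Diff_{\iota_0(A\times D^3)}(S^3) \cong \Diff_\partial(V)$.
\item Conclude $\emb^+(A\times D^3,S^3)\sslash\Diff^+(S^3) \simeq \BDiff_\partial(V)$, which is the model $\BDiff^{\rm sph}_A(S^3)$ of \cref{defn:BDiff_A^sph}.
\end{enumerate}

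The main obstacle is checking that this chain of identifications agrees with the geometric left map, rather than only giving an abstract equivalence. Concretely, one should write the left map as the quotient map
\[
\big(\Diff^+(S^3)\times_{\Diff_\partial(V)}\emb(S^3,\bbR^\infty)\big)\big/\SO(4) \longrightarrow \emb(V,\bbR^\infty)/\Diff_\partial(V),
\]
given by restricting $j\circ\varphi$ to $V$. Then:
\begin{itemize}
\item its fibres are identified with $\Diff^+(S^3)/\SO(4)$ times the space of extensions of an embedding of $V$ to one of $S^3$;
\item the first factor is contractible by Hatcher, and the second is contractible by the Smale conjecture, as in \cref{lem:embedding complement of discs} (Alexander trick plus $\Diff_\partial(D^3)\simeq *$);
\item local triviality follows from local retractility as in \cref{lem:sep(M,P)/Diff(P)}.
\end{itemize}

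Finally, equivariance for $\Aut(A)\wr\SO(3)$ throughout is by inspection, since it acts by precomposition on $A\times D^3$ and commutes with all the other actions.
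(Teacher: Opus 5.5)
Your proposal is correct and follows essentially the paper's argument: the right map comes from \cref{lem:embplus}, and the left map is an equivalence by passing from $\SO(4)$ to $\Diff^+(S^3)$ via Hatcher, identifying $\big(\emb^+(A\times D^3,S^3)\times\emb(S^3,\bbR^\infty)\big)/\Diff^+(S^3)\cong\emb(S^3,\bbR^\infty)/\Diff_{\iota(A\times D^3)}(S^3)$ by orbit--stabiliser, and then restricting to the complement $V$ of the discs. The compatibility with the geometric map that you flag as the main obstacle is immediate, because the geometric map literally factors through this chain, and the last step $\emb(S^3,\bbR^\infty)/\Diff_{\iota(A\times D^3)}(S^3)\to\emb(V,\bbR^\infty)/\Diff_\partial(V)$ is an equivalence since both numerators are contractible and the map of groups is an equivalence (so in your fibre check the space of extensions into $\bbR^\infty$ is contractible because these embedding spaces are, not by the Smale conjecture, which is needed only for $\SO(4)\simeq\Diff^+(S^3)$).
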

\begin{proof}
    The right arrow is induced by the homotopy equivalence $\emb^+(A \times D^3, S^3) \to \Conf_A^{\fr+}(S^3)$ that records the derivative at centre of each disc, as in \cref{lem:embplus}.
    We construct the left arrow as
    \[
        [\iota\colon A \times D^3 \hookrightarrow S^3, j\colon S^3 \hookrightarrow \bbR^\infty] \longmapsto
        (j(S^3 \setminus \iota(A \times \interior{D^3})), (j \circ \iota)_{|A \times S^2}^{-1}),
    \]
    which is $\Aut(A) \wr \SO(3)$-equivariant by inspection.
    To see that this is an equivalence, fix some $\iota\colon A \times D^3 \hookrightarrow S^3$ and factor the above map as
    \begin{align*}
        \frac{\emb^+(A \times D^3, S^3) \times \emb(S^3, \bbR^\infty)}{\SO(4)}
        & \longrightarrow
        \frac{\emb^+(A \times D^3, S^3) \times \emb(S^3, \bbR^\infty)}{\Diff^+(S^3)}
        \cong
        \frac{\emb(S^3, \bbR^\infty)}{\Diff_{\iota(A \times D^3)}(S^3)}\\
        & \longrightarrow
        \frac{\emb(S^3 \setminus \iota(A \times \interior{D^3}), \bbR^\infty)}{\Diff_\partial(S^3 \setminus \iota(A \times (D^3)^\circ))}
        \cong \BDiff_A^{\rm sph}(S^3)
    \end{align*}
    where the first map is an equivalence by the Smale conjecture,
    the second map is a homeomorphism, and the third map is an equivalence as both numerators are contractible and the map on denominators is an equivalence.
\end{proof}

Having established these alternative models for configuration spaces and punctured spheres, we can now compare $\Cthick^\Gamma_\sigma$ to a quotient defined in terms of these.
The final piece of notation needed is the group
\[
    \Map^{C_2}(H_\Gamma, \Diff^+(S^2)) \le 
    \Map(H_\Gamma, \Diff^+(S^2)) = \prod_{H_\Gamma} \Diff^+(S^2)
\]
whose elements are those families of diffeomorphisms $(\gamma_h)_{h \in H_\Gamma}$ such that $\gamma_{h^\dagger} = (-\id_{S^2}) \circ \gamma_h \circ (-\id_{S^2})$ holds for every half-edge $h$, where $(-\id_{S^2})$ denotes the antipodal map on $S^2$.
If we choose a preferred half-edge in each edge, \emph{i.e.}~an orientation of the graph, then we obtain an isomorphism between this group and $\prod_{E_\Gamma} \Diff^+(S^2)$, but this identification is not compatible with the $\Aut(\Gamma,\sigma)$-action. Note that, homotopically we can make this identification canonical: the group $\Map^{C_2}(H_\Gamma, \Diff^+(S^2))$ is equivalent to its subgroup $\Map^{C_2}(H_\Gamma, \SO(3))$, and this subgroup is just $\prod_{E_\Gamma} \SO(3)$ because the antipodal map is central in $\Or(3)$.%

\begin{lem}\label{lem:Cthick-to-modified-Conf}
    There is an $\Aut(\Gamma,\sigma) \times \prod_i \Diff^+(P_i)$-equivariant homotopy equivalence 
    \[
        \Theta \colon \Cthick^\Gamma_\sigma(P_1,\dots,P_n) \longrightarrow
        \frac{\prod_{v \in V_\Gamma^{\rm sph}} \BDiff_{H_v}^{\rm sph}(S^3) \times \prod_{i=1}^n \emb^{\rm D}(S^2 \times H_{\sigma(i)}, P_i)}{
            \Map^{C_2}(H_\Gamma, \Diff^+(S^2))
        }
    \]
    where the action by $\Map^{C_2}(H_\Gamma, \Diff^+(S^2))$ re-parametrises, for each $h \in H_\Gamma$, the $h$-labelled sphere in either $\BDiff_{H_v}^{\rm sph}(S^3)$ if $r(h)=v\in H_\Gamma^{\rm sph}$, or in $\emb(S^2 \times H_{\sigma(i)}, P_i)$ if $r(h) = \sigma(i)$.
\end{lem}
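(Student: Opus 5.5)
We construct $\Theta$ by cutting along the thickened system. Take a point $(N,\Sigma^\varepsilon,\alpha,\iota)$ of $\Cthick^\Gamma_\sigma$. For an unmarked vertex $v$, the component $W_v := (N\setminus \Sigma^\varepsilon_\circ)_{\alpha(v)}$ is literally a submanifold of $\bbR^\infty$ and abstractly a punctured $S^3$. Its boundary spheres are indexed by the half-edges $H_v$ via $\alpha$. Choosing a representative $i$ of the thickening, each boundary sphere is $i(S\times\{\pm1\})$, and we parametrise it by $S^2$ using a fixed parametrisation of the sphere $S$. For a marked vertex $\sigma(i)$, we instead take the restriction of $\iota$ to the boundary spheres of $(N\setminus\Sigma^\varepsilon_\circ)_{\alpha(\sigma(i))}\subset P(\Sigma\subset N)$, which gives an embedding $H_{\sigma(i)}\times S^2\hookrightarrow P_i$. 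This embedding lies in $\emb^{\rm D}$ because $\iota$ extends over the complementary discs and is orientation preserving.

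The parametrisations of the spheres of $\Sigma$ are not canonical. Changing them, or flipping the $[-1,1]$ factor of $i$, changes the two half-edge parametrisations of an edge by $\gamma$ and by $(-\id)\gamma(-\id)$ respectively, since the two sides of $S\times[-1,1]$ are identified via reflection. Quotienting by $\Map^{C_2}(H_\Gamma,\Diff^+(S^2))$ therefore makes $\Theta$ well defined. Equivariance under $\Aut(\Gamma,\sigma)$ (relabelling) and under $\prod_i\Diff^+(P_i)$ (post-composition on $\iota$) is then visible from the construction.

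To show $\Theta$ is an equivalence, we use the parametrisation $q$ and write the source as
\[
\bigl(\emb(M,\bbR^\infty)\times\textstyle\prod_i\emb(P(\Sigma\subset M)_i,P_i)\bigr)/\Diff^\cyl(M,\Sigma^\varepsilon).
\]
The plan is to factor $\Theta$ through a homotopy pullback, or equivalently to apply \cref{lucis lemma} twice.

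\textbf{Step 1: the group.} We compare $\Diff^\cyl(M,\Sigma^\varepsilon)$ with the target data via restriction to $M\setminus\Sigma^\varepsilon_\circ$. Cutting gives an isomorphism of $\Diff^\cyl(M,\Sigma^\varepsilon)$ with the subgroup of $\prod_v\Diff^+((M\setminus\Sigma^\varepsilon_\circ)_v)$ whose boundary restrictions satisfy the $C_2$-matching condition. This subgroup fits in a fibre sequence
\[
\prod_v\Diff_\partial((M\setminus\Sigma^\varepsilon_\circ)_v)\to\Diff^\cyl(M,\Sigma^\varepsilon)\to\Map^{C_2}(H_\Gamma,\Diff^+(S^2))
\]
(or its image).

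\textbf{Step 2: the marked vertices.} Here $\emb(P(\Sigma\subset M)_i,P_i)$ is acted on by the boundary-fixing diffeomorphisms of the prime component. Because $P(\Sigma\subset M)_i$ is $(M\ca\Sigma)_i$ with punctured balls attached, \cref{cor:maximal-prime=prime} and the Smale conjecture show that
\[
\emb^{\scl=}(P(\Sigma\subset M)_i,P_i)\hq\Diff_\partial((M\setminus\Sigma^\varepsilon_\circ)_i)\;\simeq\;\emb^{\rm D}(H_{\sigma(i)}\times S^2,P_i).
\]
This follows from the restriction fibration $\Diff_{\partial}$ $\to$ embeddings $\to$ boundary embeddings, whose fibres are spaces of extensions over the punctured prime piece together with discs, and hence torsors for $\Diff_\partial$ times contractible $\Diff_\partial(D^3)$.

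\textbf{Step 3: the spherical vertices.} The contribution $\emb(M,\bbR^\infty)$, restricted to each punctured sphere $W_v$ modulo $\Diff_\partial(W_v)$, is exactly $\BDiff^{\rm sph}_{H_v}(S^3)$ by \cref{defn:BDiff_A^sph}. The prime pieces' embeddings into $\bbR^\infty$ are contractible choices.

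\textbf{Assembly.} The numerator maps to the product of these spaces equivariantly along $\Diff^\cyl\to\Map^{C_2}$, and the fibres match. \cref{lucis lemma} then gives an equivalence of homotopy quotients. The $\Map^{C_2}$-action on the target is principal: on the sphere factors by the remark after \cref{defn:BDiff_A^sph}, and on the embedding factors by \cref{ex:Principal-action-examples}. Hence the target's homotopy quotient agrees with its strict quotient.

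\textbf{Main obstacle.} The hardest part will be the surjectivity-on-$\pi_0$ hypothesis and the exact identification of the image of $\Diff^\cyl\to\Map^{C_2}(H_\Gamma,\Diff^+(S^2))$. Since $\Diff^+(S^2)\simeq\SO(3)$ is connected and every boundary rotation extends over a punctured sphere or a collar, we expect the map to be surjective on $\pi_0$ and in fact onto. We would verify this using Smale's theorem by extending rotations across collars.
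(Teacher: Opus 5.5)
Your proposal is correct and follows essentially the paper's argument: define $\Theta$ by cutting along $\Sigma^\varepsilon$ with parametrisations satisfying the $C_2$-matching condition, present the source via $q$ as a quotient by $\Diff^\cyl(M,\Sigma^\varepsilon)$, and apply \cref{lucis lemma}, identifying the fibre of the map on numerators as a homotopy torsor for the kernel $\Diff_{\Sigma^\varepsilon}(M)\simeq\prod_{v}\Diff_\partial((M\setminus\Sigma^\varepsilon_\circ)_v)$. The step you flag as the main obstacle is not actually one: $\Map^{C_2}(H_\Gamma,\Diff^+(S^2))\cong\prod_{E_\Gamma}\Diff^+(S^2)$ is connected by Smale's theorem, so $\pi_0$-surjectivity is automatic. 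What you should state explicitly, as the paper does, is that the target of $\Theta$ is connected, so that a contractible homotopy fibre makes $\Theta$ an equivalence.
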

\begin{proof}
    We begin by constructing $\Theta$ as a map of sets (continuity is checked below)
    \[
        [\Sigma^\varepsilon \subset M \subset \bbR^\infty, \alpha, \iota\colon P(\Sigma \subset M) \hookrightarrow \amalg_i P_i]
        \longmapsto
        [
           ((M \setminus \Sigma^\varepsilon)_{\alpha(v)}, \amalg_{h \in H_v} \psi_h)_{v \in V_\Gamma^{\rm sph}}, 
           (\iota \circ (\amalg_{h \in H_{\sigma(i)}} \psi_h))_{i=1}^n
        ]
    \]
    where we chose orientation-preserving diffeomorphism $\psi_h \colon S^2 \cong \Sigma_{\alpha(h)}$ for all $h \in H_\Gamma$ such that $\psi_h \circ (-\id_{S^2}) = \psi_{h^\dagger}$.
    The map does not depend on the choice of the $\psi_h$ because we take the quotient by the group $\Map^{C_2}(H_\Gamma, \Diff^+(S^2))$. It follows from the above formula that the map is automatically equivariant for the action of $\Aut(\Gamma, \sigma)$ (which just reindexes) and $\prod_i \Diff^+(P_i)$ (which acts on the embeddings into the $P_i$).

    To check that the map is continuous and that it is a weak equivalence, we pick a preferred $\Sigma^\varepsilon \subset M$, which will allow us to write $\Cthick$ as a quotient of embedding spaces by $\Diff^\cyl(M,\Sigma^\varepsilon)$.
    Similarly, we can write $\Map^{C_2}(H_\Gamma, \Diff^+(S^2)) \cong \Diff^\cyl(\Sigma^\varepsilon)$.
    We further pick $\psi_h\colon S^2 \cong \Sigma_h$ as before.
    In this context the map $\Theta$ described above is 
    \[
        \frac{\emb(M, \bbR^\infty) \times \prod_{i=1}^n \emb(P(\Sigma \subset M)_i, P_i)}{
            \Diff^\cyl(M, \Sigma^\varepsilon)
        }
        \xrightarrow[\qquad]{\Theta}
        \frac{\prod_{v \in V_\Gamma^{\rm sph}} \BDiff_{H_v}^{\rm sph}(S^3) \times \prod_{i=1}^n \emb^{\rm D}(S^2 \times H_{\sigma(i)}, P_i)}{
            \Diff^\cyl(\Sigma^\varepsilon)
        },
    \]
    which comes from a map on the numerators that is equivariant for the denominators.
    Indeed, the map on numerators is
    \[
        [j\colon M \hookrightarrow \bbR^\infty, (\iota_i\colon P(\Sigma \subset M)_i \hookrightarrow P_i)_{i=1}^n]
        \longmapsto
        [
           (j((M \setminus \Sigma^\varepsilon)_{\alpha(v)}), \amalg_{h \in H_v} j \circ \psi_h)_{v \in V_\Gamma^{\rm sph}}, 
           (\iota_i \circ (\amalg_{h \in H_{\sigma(i)}} \psi_h))_{i=1}^n
        ].
    \]
    This is continuous, and hence so is the map on quotients.
    To prove that the map on quotients is an equivalence, we will compute its homotopy fibre.
    The space on the right is connected and 
    both group-actions are principal (see \cref{lem:cyl==2Sigma} and the text below \cref{defn:BDiff_A^sph}). Hence, by \cref{lucis lemma}, the homotopy fibre of our map is the quotient of the fibre of the numerators by the fibre of the denominators.

    On denominators we have a homotopy fibre sequence
    \[
        \prod_{v \in V_\Gamma} \Diff_\partial( (M \setminus \Sigma^\varepsilon_\circ)_v ) \simeq \Diff_{\Sigma^\varepsilon}(M) \longrightarrow 
         \Diff^\cyl(M, \Sigma^\varepsilon) \longrightarrow 
         \Diff^\cyl(\Sigma^\varepsilon) .
    \]
    On numerators the map splits as a product of the map on the left factor and the map on the right factor.
    For the left factor we pick a base-point consisting of some $(W_v,\varphi_v)_{v \in V_\Gamma^{\rm sph}}$ in $\prod_{v \in V_\Gamma^{\rm sph}}\BDiff_{H_v}^{\rm sph}(S^3)$ and we have a homotopy fibre sequence
    \[
    \begin{tikzcd}
        {\prod_{v \in V_\Gamma^{\rm sph}} \emb_{\partial}((M \setminus \Sigma^\varepsilon_\circ)_v, W_v)} \rar &
        {\emb(M, \bbR^\infty)} \rar &
        {\prod_{v \in V_\Gamma^{\rm sph}} \BDiff_{H_v}^{\rm sph}(S^3)}
    \end{tikzcd}
    \]
    For the right term we pick as a base-point embeddings $\iota_i^0\colon S^2 \times H_{\sigma(i)} \hookrightarrow P_i$ and we have a homotopy fibre sequence
    \[
    \begin{tikzcd}
        {\prod_{i=1}^n \emb_{\iota_i^0}(P(\Sigma \subset M)_i, P_i)} \rar &
        {\prod_{i=1}^n \emb(P(\Sigma \subset M)_i, P_i)} \rar &
        {\prod_{i=1}^n \emb^{\rm D}(H_{\sigma(i)} \times S^2, P_i)} 
    \end{tikzcd}
    \]
    where the embeddings $\iota\colon P(\Sigma \subset M)_i \hookrightarrow P_i$ must agree with $\iota_i^0$ on the boundary.
    Let $P_i' \subset P_i$ be the submanifold obtained by removing the open discs bounded by $\iota_i^0(H_{\sigma(i)} \times S^2)$ -- then $\iota$ exactly induces a diffeomorphism $P(\Sigma \subset M)_i \cong P_i$ with a prescribed value on the boundary.
    In summary the homotopy fibre of $\Theta$ can be written as the quotient
    \[
        \frac{
            \prod_{v \in V_\Gamma^{\rm sph}} \emb_{\partial}((M \setminus \Sigma^\varepsilon_\circ)_v, W_v)
            \times 
            \prod_{i=1}^n \emb_\partial(P(\Sigma \subset M)_i, P_i')
        }{
            \prod_{v \in V_\Gamma} \Diff_\partial( (M \setminus \Sigma^\varepsilon_\circ)_v )
        }.
    \]
    Here numerator and denominator are equivalent via the action map, so the quotient is contractible,
    which shows that $\Theta$ is a homotopy equivalence.
\end{proof}

\begin{rem}
    In the above lemma we crucially used that there is an isomorphism $\Or(3) \cong \SO(3) \times \bbZ/2$, which allowed us to pick a central orientation reversing element of order $2$ -- the antipodal map.
    In even dimensions we would instead have to pick a reflection $R$, choose the maps $\psi_h\colon S^2 \cong \Sigma_{\alpha(h)}$ to satisfy $\psi_h \circ R = \psi_{h^\dagger}$, and instead of taking the quotient by $\prod_{E_\Gamma} \Diff^+(S^2)$ we would have to quotient by the group of families $(\gamma_h)_{h \in H_\Gamma}$ with $\gamma_{h^\dagger} = R \circ \gamma_{h} \circ R$.
    Luckily, in dimension $3$ we can just take $R$ to be the antipodal map, which is central in $\Or(3)$.
\end{rem}

We are now ready to prove \cref{prop:C-Gamma configuration model}.
\begin{proof}[Proof of \cref{prop:C-Gamma configuration model}]
    By \cref{cor:Cthick-to-C} and \cref{lem:Cthick-to-modified-Conf} we already have equivalences 
    \[
        \rmC^\Gamma_\sigma(P_1,\dots,P_n) \leftarrow 
        \Cthick^\Gamma_\sigma(P_1,\dots,P_n) \to
        \frac{\prod_{v \in V_\Gamma^{\rm sph}} \BDiff_{H_v}^{\rm sph}(S^3) \times \prod_{i=1}^n \emb^{\rm D}(H_{\sigma(i)} \times S^2, P_i)}{ \prod_{E_\Gamma} \Diff^+(S^2) }
    \]
    that are $\Aut(\Gamma,\sigma) \times \prod_i \Diff(P_i)$ equivariant.
    We can further replace the group in the denominator by the equivalent group $\prod_{E_\Gamma} \SO(3)$.
    Then we can use \cref{lem:punctured-sphere-config} and \cref{lem:embplus} to rewrite the quotient as 
    \[
        \frac{\underline{\prod_{v \in V_\Gamma^{\rm sph}} \left(\Conf_{H_v}^{\fr+}(S^3) \sslash \SO(4) \right) \times \prod_{i=1}^n \Conf_{H_{\sigma(i)}}^{\fr+}(P_i)}}{ \prod_{E_\Gamma} \SO(3) }.
    \]
    This is then equivalent to 
    \[
        \frac{\underline{\prod_{v \in V_\Gamma^{\rm sph}} \Conf_{H_v}^{\fr+}(S^3) \times \prod_{i=1}^n \Conf_{H_{\sigma(i)}}^{\fr+}(P_i)}}{ \prod_{v \in V_\Gamma^{\rm sph}} \SO(4) \times \prod_{E_\Gamma} \SO(3) },
    \]
    which is the desired configuration space model.
\end{proof}

\subsection{\texorpdfstring{$C^\Gamma_\sigma$}{C-Gamma} as a homotopy pullback}\label{subsec:C-pullback}
  
From our discussion in the previous subsection, we can construct two types of maps out of $\Cthick_\sigma^\Gamma$.
\begin{enumerate}
    \item 
    For every $v \in V_\Gamma^{\rm sph}$ we have
    \begin{align*}
        \varphi_v\colon \Cthick_\sigma^\Gamma(P_1,\dots, P_n) & 
        \longrightarrow \frac{\BDiff_{H_v}^{\rm sph}(S^3)}{\Map(H_v, \Diff^+(S^2))}  
        \simeq \dots \simeq \Conf_{H_v}(S^3)\sslash \SO(4)\\
        [\Sigma \subset W, \alpha, \iota] & \longmapsto [(W \setminus \Sigma^\varepsilon_\circ)_{\alpha(v)}, \alpha_{|H_v}]
    \end{align*}
    that takes the component of $W \setminus \Sigma^\varepsilon_\circ$ corresponding to $v$ and remembers the identification of its set of boundary components with the set of half-edges $H_v$ at $v$.
    \item 
    For $1 \le i \le n$ we define a map 
    \begin{align*}
        \psi_i\colon \Cthick_\sigma^\Gamma(P_1,\dots, P_n) & \longrightarrow 
        \frac{\emb^{\rm D}(H_{\sigma(i)} \times S^2, P_i)}{\Map(H_{\sigma(i)}, \Diff^+(S^2))} \simeq \dots \simeq \Conf_{H_{\sigma(i)}}(P_i)\\
        [\Sigma \subset W, \alpha, \iota] & \longmapsto [\iota( \amalg_{h \in H_{\sigma(i)}} \Sigma_{\alpha(h)})]
    \end{align*}
    that records the (unparametrised) bounding spheres obtained as the image of $\Sigma$ in $P_i$.
    \item 
    Similarly to \cref{lem:Cthick-to-modified-Conf}, we have a map 
    \begin{align*}
        \xi\colon \Cthick^\Gamma_\sigma(P_1,\dots,P_n) &\longrightarrow \Map^{C_2}(H_\Gamma, \BDiff^+(S^2)) \\
        [\Sigma \subset W, \alpha, \iota] & \longmapsto (h \mapsto \Sigma_{h})
    \end{align*}
    that records the oriented sphere associated to each half-edge, with the orientation being induced by the coorientation and the orientation of $W$.
    Here $C_2$ acts on $\BDiff^+(S^2)$ by orientation reversal.
    
\end{enumerate}

Together, these maps form the top map in a homotopy pullback square describing $\Cthick^\Gamma_\sigma$ in terms of more local data associated to vertices and edges of $\Gamma$.
This pullback square is closely related to the pullback square that witnesses the Segal condition for the modular $\infty$-operads of $3$-manifolds under connected sum.

\begin{prop}\label{prop:rmC-pullback}
    For all $(\Gamma, \sigma)  \in \Gr_{g,n}$ and $P_1,\dots,P_n$, the $\Aut(\Gamma,\sigma) \times \prod_{i=1}^n \Diff^+(P_i)$-equivariant commutative square 
    \begin{equation}\label{eqn:C-pullback-precise}
    \begin{tikzcd}[column sep=large]
    	\Cthick^\Gamma_\sigma(P_1,\dots,P_n) & 
        \prod\limits_{v \in V^{\rm sph}} \frac{\BDiff_{H_v}^{\rm sph}(S^3)}{\Map(H_v, \Diff^+(S^2))} \times \prod\limits_{i=1}^n \frac{\emb^{\rm D}(H_{\sigma(i)} \times S^2, P_i)}{\Map(H_{\sigma(i)}, \Diff^+(S^2))}\\ 
    	{\Map^{C_2}(H_\Gamma, \BDiff^+(S^2))} & {\prod_{H_\Gamma} \BDiff^+(S^2)}.
    	\arrow[from=2-1, to=2-2, hook, "\Delta^\pm"]
    	\arrow[from=1-1, to=2-1, "\xi"']
    	\arrow[from=1-1, to=1-2, "{(\prod_v \varphi_v, \prod_i \psi_i)}"]
    	\arrow[from=1-2, to=2-2]
    \end{tikzcd}
    \end{equation}
    is a homotopy pullback square.
    Here, the right vertical map records boundary spheres.
\end{prop}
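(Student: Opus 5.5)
The plan is to reduce \cref{prop:rmC-pullback} to a general fact about homotopy quotients: if a group $G$ acts principally on a space $Z$ and $K \le G$ is an admissible subgroup, then the square
\[
\begin{tikzcd}
Z\sslash K \rar \dar & Z\sslash G \dar \\
BK \rar & BG
\end{tikzcd}
\]
is a homotopy pullback. Indeed, both vertical homotopy fibres are equivalent to $Z$ (by \cref{lucis lemma}), and the induced map on fibres is the identity. I intend to apply this with
\[
Z := \prod_{v \in V_\Gamma^{\rm sph}} \BDiff_{H_v}^{\rm sph}(S^3) \times \prod_{i=1}^n \emb^{\rm D}(H_{\sigma(i)} \times S^2, P_i),
\qquad
G := \Map(H_\Gamma, \Diff^+(S^2)),
\qquad
K := \Map^{C_2}(H_\Gamma, \Diff^+(S^2)).
\]

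The key input is \cref{lem:Cthick-to-modified-Conf}. It identifies $\Cthick^\Gamma_\sigma(P_1,\dots,P_n)$ equivariantly with $Z/K$. The action of $G$ on $Z$ is principal: the factors $\BDiff_{H_v}^{\rm sph}(S^3)$ are principal by the remark after \cref{defn:BDiff_A^sph}, and the factors $\emb^{\rm D}$ are principal by Cerf--Palais. Hence the $K$-action is also principal by \cref{obs:admissible}, and strict and homotopy quotients agree. Since every half-edge is rooted at exactly one vertex, we have $G = \prod_v \Map(H_v,\Diff^+(S^2)) \times \prod_i \Map(H_{\sigma(i)},\Diff^+(S^2))$. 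The top right corner of \eqref{eqn:C-pullback-precise} is therefore exactly $Z/G$, and the top map $(\prod_v \varphi_v, \prod_i \psi_i)$ is the quotient map $Z/K \to Z/G$ precomposed with $\Theta$. For admissibility of $K \le G$, I would note that $K$ is the fixed-point subgroup of the involution $(\gamma_h) \mapsto ((-\id)\gamma_{h^\dagger}(-\id))$. After choosing an orientation of $\Gamma$, this makes $G \cong K \times \prod_{E_\Gamma}\Diff^+(S^2)$ as spaces, so $G \to G/K$ is trivial.

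Next I would identify the bottom row with $BK \to BG$. I will model $\BDiff^+(S^2)$ as $\emb(S^2,\bbR^\infty)/\Diff^+(S^2)$, i.e.\ as oriented spheres in $\bbR^\infty$, with $C_2$ acting by reversing orientation. Since $C_2$ acts freely on $H_\Gamma$, the space $\Map^{C_2}(H_\Gamma,\BDiff^+(S^2))$ is homeomorphic to $\prod_{E_\Gamma}\BDiff^+(S^2)$. This is in turn $\emb(H_\Gamma\times S^2,\bbR^\infty)^{C_2}/K$, where the $C_2$ acts by the antipodal map on the source: an element is a family of parametrised spheres $\psi_h$ with $\psi_{h^\dagger} = \psi_h\circ(-\id)$. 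This matches exactly the choice of the $\psi_h$ made in the proof of \cref{lem:Cthick-to-modified-Conf}. The map $\Delta^\pm$ is then the map induced by $K \le G$ on these quotient models. I would then write out explicit $K$- and $G$-equivariant maps from $Z$ to $E := \emb(H_\Gamma\times S^2,\bbR^\infty)$, where $E$ is contractible with a principal $G$-action. These maps record the parametrised boundary spheres: via $j\circ\psi_h$ on the sphere-vertex side and via the thickening on the prime side. After passing to quotients, $\xi$ and the right vertical map become the maps $Z/K\to E/K$ and $Z/G\to E/G$, and the square becomes the square of the general fact above, taken after applying $Z\times E$.

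The main obstacle I expect is exactly this point-set bookkeeping. The spheres recorded on the prime side live in $P_i$ rather than in $\bbR^\infty$, so the map to $E$ has to be taken through the embedding $W \subset \bbR^\infty$ of the thickened separating system, not through the $\emb^{\rm D}$ factors. I also need the square to commute on the nose, or at least up to a homotopy that is compatible with the $\Aut(\Gamma,\sigma) \times \prod_i \Diff^+(P_i)$-action. If strict commutativity is awkward, the fallback is to pass to $Z\times E$, where both routes around the square are projections, and to use that the quotient of $E$ is contractible to insert the needed homotopies. Equivariance then holds by inspection: $\Aut(\Gamma,\sigma)$ only reindexes half-edges, and $\prod_i\Diff^+(P_i)$ acts only on the $\emb^{\rm D}$ factors, trivially on the bottom row.
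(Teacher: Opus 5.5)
Your argument is the paper's: replace $\Cthick^\Gamma_\sigma$ by the quotient $Z/K$ from \cref{lem:Cthick-to-modified-Conf}, treat the two vertical maps as $Z/K\to BK$ and $Z/G\to BG$ for principal actions, and observe that both homotopy fibres are $Z$; your fallback of passing to $Z\times E$ is what makes the paper's phrase ``of the form $X\to *\hq G$'' precise. One small repair is needed: take $E=\prod_{H_\Gamma}\emb(S^2,\bbR^\infty)$ rather than $\emb(H_\Gamma\times S^2,\bbR^\infty)$, because a $C_2$-fixed family has $\psi_h$ and $\psi_{h^\dagger}$ with the same image and so is never an embedding of the disjoint union; with the product model, $E/G=\prod_{H_\Gamma}\BDiff^+(S^2)$ and $E^{C_2}/K=\Map^{C_2}(H_\Gamma,\BDiff^+(S^2))$ hold on the nose, and $\Delta^\pm$ is the map induced by the inclusion $E^{C_2}\subset E$.
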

\begin{proof}
     The square is commutative and equivariant by inspection.
     Indeed, the action of $\Aut(\Gamma,\sigma)$ is given by permuting $S^3$-vertices and half-edges and does not change the underlying manifolds, and the action of $\Diff^+(P_i)$ is trivial on the spaces in the bottom row.

    To see that the square is a homotopy pullback, it suffices to compare the homotopy fibres of the vertical maps. 
    The spaces in the bottom row (and in fact also those in the top row) are connected, so it will suffice to consider the homotopy fibre at one point.
    As $B(-)$ commutes with finite products we may rewrite $\prod_{H_\Gamma} \BDiff^+(S^2) \cong B\big(\prod_{H_\Gamma} \Diff^+(S^2)\big)$ and similarly for $\prod_{E_\Gamma}$.
    Next we replace $\Cthick$ by the homotopy equivalent quotient from \cref{lem:Cthick-to-modified-Conf} -- the maps in the square clearly factor through this.
    Now both vertical maps are of the form $X \to *\hq G$ for a principal $G$-action on $X$ and as such their (homotopy) fibre is $X$.
    In both cases the space $X$ is the same, namely the product of the $\BDiff_{H_v}^{\rm sph}(S^3)$ and the $\emb^{\rm D}(H_{\sigma(i)} \times S^2, P_i)$, and in particular the fibres are indeed the same.
\end{proof}

By taking the homotopy fibre of the top map in \cref{prop:rmC-pullback}, we can fit $\rmC^\Gamma_\sigma(\dots)$ into a homotopy fibre sequence that will be useful for computations.
To better understand the resulting fibre sequence and spectral sequence we first consider the homotopy fibre sequence coming from the bottom map in \cref{prop:rmC-pullback}.

Let $\rho \in H^4(\BDiff^+(S^2))$ denote the generator that corresponds to the first Pontryagin class on $\BSO(3)$, and let $\beta \in H^3(\SO(3);\bbQ)$ denote the generator that transgresses to the first Pontryagin class.
(That is, $\beta$ is a fixed non-zero multiple of the dual of the fundamental class of $\SO(3) \cong \mathbb{RP}^3$, but the exact prefactor will not matter.)

\begin{lem}\label{lem:bottom-fibre-sequence}
    The bottom map in \cref{prop:rmC-pullback} fits into an $\Aut(\Gamma,\sigma)$-equivariant homotopy fibre sequence
    \[
    \Map^{C_2}(H_\Gamma, \SO(3))
    \longrightarrow
    \Map^{C_2}(H_\Gamma, \BDiff^+(S^2)) 
    \xrightarrow[\qquad]{\Delta^\pm}
    \prod_{H_\Gamma} \BDiff^+(S^2),
    \]
    where $C_2$ acts on $\SO(3)$ by inverting.
    The fibre is (non-canonically) equivalent to $\prod_{E_\Gamma} \SO(3)$.
    The cohomology of the fibre and base are
    \[
        \frac{\Lambda_\bbQ \{ \beta_h : h \in H_\Gamma \}}{\langle \beta_h + \beta_{h^\dagger} : h \in H_\Gamma \rangle}
        \qquad\text{ and }\qquad
        \bbQ[\rho_h : h \in H_\Gamma].
    \]
    In the Serre spectral sequence for this fibre sequence $d_2$ and $d_3$ are trivial, and $d_4$ is determined by $d_4(\beta_h) = \rho_h - \rho_{h^\dagger}$.
\end{lem}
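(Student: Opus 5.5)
The plan is to reduce everything to a single edge. Pick an orientation of $\Gamma$, i.e.\ a preferred half-edge $h_e$ in each edge $e$. Evaluation at the preferred half-edges identifies $\Map^{C_2}(H_\Gamma, X)$ with $\prod_{E_\Gamma} X$ for any $C_2$-space $X$. Under this identification $\Delta^\pm$ becomes the product over edges of the maps
\[
\BDiff^+(S^2) \to \BDiff^+(S^2)\times \BDiff^+(S^2),\qquad x\mapsto (x,\bar x),
\]
where $\bar x$ is orientation reversal. On classifying spaces, orientation reversal is $B$ of conjugation by the antipodal map $-\id_{S^2}$. Since $-\id$ is central in $\Or(3)$, this conjugation is homotopic to the identity after replacing $\Diff^+(S^2)$ by $\SO(3)$ via \cref{thm:Smale-Hatcher}. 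So each factor is, up to homotopy, the diagonal $\BSO(3)\to \BSO(3)^2$.

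The homotopy fibre of the diagonal $BG\to BG\times BG$ is $G$, namely $(G\times G)/G$ with $G$ embedded diagonally. Equivariantly for the swap, and using the twisted diagonal, it is $\Map^{C_2}(\{h,h^\dagger\}, \SO(3))$ with $C_2$ acting on $\SO(3)$ by inversion. To keep $\Aut(\Gamma,\sigma)$-equivariance without choosing an orientation, I would write the whole map canonically as
\[
B\Map^{C_2}(H_\Gamma,\SO(3)) \to B\Map(H_\Gamma,\SO(3)).
\]
This map is induced by a subgroup inclusion, and its homotopy fibre is the coset space $\Map(H_\Gamma,\SO(3))/\Map^{C_2}(H_\Gamma,\SO(3))$. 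This coset space is identified with $\Map^{C_2}(H_\Gamma,\SO(3))$ with the inversion action, via $(g_h)\mapsto (g_h g_{h^\dagger}^{-1})$. The identification is visibly natural in $\Aut(\Gamma,\sigma)$, and choosing orientations gives the non-canonical splitting $\prod_{E_\Gamma}\SO(3)$.

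For cohomology, the base is $\prod_{H_\Gamma}\BSO(3)$ with $H^*(\BSO(3);\bbQ)=\bbQ[p_1]$, giving $\bbQ[\rho_h]$. The fibre has one $\SO(3)$ factor per edge. The coordinates $g_h$ and $g_{h^\dagger}=g_h^{-1}$ pull back $\beta$ to $\beta_h$ and $-\beta_h$, because inversion acts by $-1$ on $H^3$. This gives the presentation with the relations $\beta_h+\beta_{h^\dagger}=0$.

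For the differentials, everything is a product over edges, so it suffices to treat the single-edge fibration $\SO(3)\to \BSO(3)\to \BSO(3)^2$. The fibre's rational cohomology lives in degrees $0$ and $3$ and the base's in degrees divisible by $4$, so $d_2$ and $d_3$ vanish for degree reasons, and the only possible differential on $\beta_h$ is $d_4$. By restricting along $\BSO(3)\times * \to \BSO(3)^2$, the fibration pulls back to the path fibration $\SO(3)\to E\to \BSO(3)$. There $\beta$ transgresses to $p_1$ by the choice of normalisation, and $\beta_h$ corresponds to the coordinate $g_h$. Doing the same for the other factor, where the fibre coordinate is inverted, gives $d_4(\beta_h)=\rho_h-\rho_{h^\dagger}$. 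As a consistency check, this is compatible with $\beta_{h^\dagger}=-\beta_h$.

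The main obstacle is bookkeeping of signs and making the equivariant identification of the fibre canonical rather than orientation-dependent. I expect the coset description above to handle this cleanly.
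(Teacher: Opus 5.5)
Your argument is correct. Your treatment of the differentials is essentially the paper's. The paper restricts along the map $j$ that places the standard sphere at every negative half-edge, for all edges at once, and obtains a product of path fibrations. It then uses $\beta_{h^\dagger}=-\beta_h$ to pin down $d_4(\beta_h)=\rho_h-\rho_{h^\dagger}$, which is your edge-by-edge restriction argument.

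The real difference is how you identify the fibre. The paper never passes to Lie groups at this stage. It factors $\Delta^\pm$ $\Aut(\Gamma,\sigma)$-equivariantly in two steps. The first is an equivalence into the space $X$ of tuples $(S_h,\psi_h)$ with $\psi_h\colon S_h\cong S_{h^\dagger}^-$ and $\psi_{h^\dagger}=\psi_h^{-1}$. The second is the Serre fibration $\Delta'$ that forgets the $\psi_h$. It then takes the strict fibre over the $\Aut$-fixed constant tuple at the standard sphere. That fibre consists of orientation-reversing $(\psi_h)$ with $\psi_{h^\dagger}=\psi_h^{-1}$, and the map $\gamma_h\mapsto\gamma_h\circ(-\id_{S^2})$ identifies it with $\Map^{C_2}(H_\Gamma,\SO(3))$.

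You instead apply Smale first and replace $\Delta^\pm$ by $BK\to BG$, where $K=\{g_h=g_{h^\dagger}\}$ and $G=\Map(H_\Gamma,\SO(3))$. You then read the fibre off as $G/K$ via $(g_h)\mapsto(g_hg_{h^\dagger}^{-1})$. This is cleaner group-theoretically and makes the inversion action on the fibre transparent.

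However, your justification for the replacement is non-equivariant: orientation reversal is homotopic to the identity, and you choose an orientation of $\Gamma$. So ``write the whole map canonically'' still needs a construction. One way is to model $\BDiff^+(S^2)$ as $\emb(S^2,\bbR^\infty)/\SO(3)$, with $C_2$ acting by $[e]\mapsto[e\circ(-\id)]$. Then the source of $\Delta^\pm$ is $P/K$ with $P=\{(e_h): e_{h^\dagger}=e_h\circ(-\id)\}$. Centrality of $-\id$ is exactly what makes $P$ a contractible free $K$-space. The target is $Q/G$ with $Q=\emb(S^2,\bbR^\infty)^{H_\Gamma}$. Finally, $P/K\simeq Q/K\to Q/G$ is an $\Aut(\Gamma,\sigma)$-equivariant bundle with fibre $G/K$ over the fixed point $[(e_0,\dots,e_0)]$.

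The paper's concrete factorization delivers directly the fixed base point and honestly equivariant fibre that \cref{prop:C-spectral-sequence} later relies on. Also note a notational point. Your subgroup $K$ is $\Map^{C_2}(H_\Gamma,\SO(3))$ for the trivial action (conjugation by $-\id$), whereas the fibre is the inversion-twisted version. You use the same symbol for both, so make the distinction explicit.
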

\begin{proof}
    The map $\Delta^\pm$ is a subspace inclusion, namely the subspace of those $H_\Gamma$-tuples of oriented spheres $(S_h)_{h \in H_\Gamma}$ where for every  half-edge $h \in H_\Gamma$ we have $S_{h^\dagger}= S_h^-$, \emph{i.e.}~$S_h$ and $S_{h^\dagger}$ are the same submanifold of $\bbR^\infty$, but with opposite orientation.
    To compute the homotopy fibre of $\Delta^\pm$, we factor it $\Aut(\Gamma,\sigma)$-equivariantly as a composite of an equivalence and a Serre fibration
    \begin{align*}
        \Map^{C_2}(H_\Gamma, \BDiff^+(S^2)) 
        & \hookrightarrow
        X = \{ (S_h, \psi_h)_{h \in H_\Gamma}  \;|\; S_h \in \BDiff^+(S^2), \psi_h\colon S_h \cong S_{h^\dagger}^-, \psi_{h^\dagger} = \psi_h^{-1}\} \\
        & \xrightarrow{\Delta'} \prod_{H_\Gamma} \BDiff^+(S^2)
    \end{align*}
    where the first map includes the subspace where $\psi_h$ is the identity map for all $h$, and the second map forgets the $\psi_h$.
    (To see that the inclusion is an equivalence, note that, after choosing an orientation of the graph, $X$ is equivalent $\prod_{E_\Gamma} \BDiff^+(S^2)$.)
    We can now compute the homotopy fibre of $\Delta^\pm$ as the strict fibre of $\Delta'$.

    As a base-point in $\prod_{H_\Gamma} \BDiff^+(S^2)$ we choose the constant tuple at the standard sphere $S^2$ with its standard orientation, this is an $\Aut(\Gamma,\sigma)$ fixed point and hence the fibre 
    \[
        F = \{ (\psi_h)_{h \in H_\Gamma} \in \prod_{H_\Gamma} \Diff(S^2) \;|\; \psi_h \text{ orientation-reversing and } \psi_{h^\dagger} = \psi_h^{-1}\}.
    \]
    still has an $\Aut(\Gamma, \sigma)$-action.
    Using that the antipodal map $(-\id_{S^2})$ is a central element in $\Or(3)$, we can construct an $\Aut(\Gamma,\sigma)$-equivariant map
    \begin{align*}
        \Map^{C_2}(H_\Gamma, \SO(3)) & \longrightarrow F \\
        (\gamma_h)_{h \in H_\Gamma}  & \longmapsto (\psi_h = \gamma_h \circ (-\id_{S^2}))_{h \in H_\Gamma}
    \end{align*}
    which is moreover a homotopy equivalence.
    Here, as in the statement of the lemma, $C_2$ acts on $\SO(3)$ by inverting.
    This identifies the homotopy fibre $\Aut(\Gamma,\sigma)$-equivariantly, as claimed.

    To compute the cohomology, recall that 
        $H^*(\SO(3)) \cong \Lambda_\bbQ\langle \beta \rangle$
        and 
        $H^*(\BSO(3)) \cong \bbQ[\rho]$.
    The inversion map $(-)^{-1}\colon \SO(3) \to \SO(3)$ acts by $\beta \mapsto -\beta$ on the cohomology: we can compute its degree by noting that its derivative at the identity is $(-1)\colon \mathfrak{so}(3) \to \mathfrak{so}(3)$.
    The description of the cohomology of the base and fibre now follows from the K\"unneth formula.

    In the Serre spectral sequence we must have $d_2(\beta_i)=0$ and $d_3(\beta_i)=0$, which via the Leibniz rule implies $d_2 \equiv 0 \equiv d_3$.
    To compute the $d_4$-differentials, it will similarly suffice to determine $d_4(\beta_h)$.
    Choose an orientation on each edge and let $H_\Gamma^+ \subset H_\Gamma$ be the subset of positively oriented half-edges.
    Consider the map of homotopy fibre sequences
    \[
    \begin{tikzcd}
        \prod_{H_\Gamma^+} \SO(3) \rar \dar["\cong", "i"'] & P \rar \dar \ar[dr, phantom, very near start, "\lrcorner"] & \prod_{H_\Gamma^+} \BSO(3) \dar["j"] \\
        \Map^{C_2}(H_\Gamma, \SO(3)) \rar & X \rar & {\prod_{H_\Gamma} \BDiff^+(S^2)}
    \end{tikzcd}
    \]
    where the map $j$ labels every negative half-edge by the standard sphere and $P$ denotes the pullback.
    The top fibre sequence is equivalent to the product, over the set of edges, of the standard fibre sequence $\SO(3) \to \mathrm{ESO}(3) \to \BSO(3)$.
    We defined $\beta$ such that it transgresses to $\rho$ in this fibre sequence, so for every $h \in H_\Gamma^+$ we have $j^*d_4(\beta_h) = d_4(i^*\beta_h) = \rho_h$.
    On cohomology $j^*$ sends $\rho_h$ to $\rho_h$ if $h \in H_\Gamma^+$ and $0$ otherwise.
    Therefore, we get $d_4(\beta_h) = \rho_h + \varepsilon$ where $\varepsilon$ is in the span of $\{\rho_{h'} \;|\; h' \in H_\Gamma^-\}$.
    Repeating the same argument for $H_\Gamma^-$ in place of $H_\Gamma^+$ we see $d_4(\beta_{h^\dagger}) = \rho_{h^\dagger} + \varepsilon'$.
    Since $\beta_{h^\dagger} = -\beta_h$, this implies $d_4(\beta_h) = \rho_h - \rho_{h^\dagger}$ as claimed.
\end{proof}

\begin{cor}\label{cor:C-fibre-sequence}
    For every $(\Gamma,\sigma)$ and $P_1,\dots,P_n$, there is a homotopy fibre sequence
    \[
    \Map^{C_2}(H_\Gamma, \SO(3))
    \longrightarrow
	\rmC^\Gamma_\sigma(P_1,\dots,P_n) 
    \longrightarrow
    \prod\limits_{v \in V_\Gamma^{\rm sph}} \Conf_{H_v}(S^3) \hq \SO(4) \times \prod\limits_{i=1}^n \Conf_{H_{\sigma(i)}}(P_i)
    \]
    whose fibre is (non-canonically) equivalent to $\prod_{E_\Gamma} \SO(3)$.
\end{cor}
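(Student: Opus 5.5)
The plan is to read off the fibre sequence from the homotopy pullback square of \cref{prop:rmC-pullback}, combined with \cref{lem:bottom-fibre-sequence}. In a homotopy pullback square the homotopy fibres of the two horizontal maps agree. The top horizontal map in \eqref{eqn:C-pullback-precise} is therefore a map
\[
\Cthick^\Gamma_\sigma(P_1,\dots,P_n)\longrightarrow \prod_{v} \frac{\BDiff_{H_v}^{\rm sph}(S^3)}{\Map(H_v,\Diff^+(S^2))}\times\prod_i \frac{\emb^{\rm D}(H_{\sigma(i)}\times S^2,P_i)}{\Map(H_{\sigma(i)},\Diff^+(S^2))},
\]
and its homotopy fibre is equivalent to the homotopy fibre of $\Delta^\pm$. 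By \cref{lem:bottom-fibre-sequence}, the latter is $\Map^{C_2}(H_\Gamma,\SO(3))$, which is non-canonically equivalent to $\prod_{E_\Gamma}\SO(3)$.

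The argument then has three steps.

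\emph{Identify the total space.} Replace $\Cthick^\Gamma_\sigma$ by $\rmC^\Gamma_\sigma$ using \cref{cor:Cthick-to-C}.

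\emph{Identify the base.} Rewrite each factor of the base up to homotopy.

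For the $S^3$-factors, use \cref{lem:punctured-sphere-config}:
\[
\BDiff^{\rm sph}_{H_v}(S^3)\simeq \Conf^{\fr+}_{H_v}(S^3)\hq\SO(4).
\]
We then quotient further by the principal action of $\Map(H_v,\Diff^+(S^2))\simeq\Map(H_v,\SO(3))$. The zig-zag in that lemma is $\Aut(H_v)\wr\SO(3)$-equivariant, so we may pass to homotopy quotients by $\prod_{H_v}\SO(3)$. Since this action commutes with the $\SO(4)$-action, this gives
\[
\bigl(\Conf^{\fr+}_{H_v}(S^3)/\textstyle\prod_{H_v}\SO(3)\bigr)\hq\SO(4)\simeq\Conf_{H_v}(S^3)\hq\SO(4).
\]
Here we use that $\Conf^{\fr+}_{H_v}(S^3)\to\Conf_{H_v}(S^3)$ is a principal $\prod\GL_3^+$-bundle, which deformation retracts equivariantly onto orthonormal frames. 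We also use that homotopy quotients by a group that acts principally agree with strict quotients.

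For the $P_i$-factors, argue the same way using \cref{lem:embplus}:
\[
\emb^{\rm D}(H_{\sigma(i)}\times S^2,P_i)\hq\Map(H_{\sigma(i)},\Diff^+(S^2))\simeq\Conf^{\fr+}_{H_{\sigma(i)}}(P_i)\hq\textstyle\prod\SO(3)\simeq\Conf_{H_{\sigma(i)}}(P_i).
\]
The left-hand quotient is a genuine quotient by a principal action (the action is free on parametrisations, and principality follows from \cref{ex:Principal-action-examples}), so it agrees with the homotopy quotient.

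\emph{Assemble.} The base is connected by \cref{cor:C-connected}-type arguments, since positively framed configuration spaces are connected. So the homotopy fibre can be taken at any point, and the fibre sequence follows.

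The main obstacle is the bookkeeping in the first step: replacing $\Diff^+(S^2)$ by $\SO(3)$ and identifying quotients of framed configuration spaces by frame rotation with unframed configuration spaces. The cleanest route is to invoke \cref{lucis lemma} on the fibre sequences $\prod\SO(3)\to\Conf^{\fr+}\to\Conf$, compared with the corresponding sequence of groups $\prod\SO(3)\to\prod\SO(3)\to 1$. Equivariance is not needed for this statement. The only remaining check is that the top map of the square corresponds, under these equivalences, to the map to the product of configuration spaces written in the corollary. This holds by construction of the maps $\varphi_v$ and $\psi_i$.
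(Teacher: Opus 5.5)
Your proposal is correct and follows the paper's proof, which simply notes that the horizontal homotopy fibres in the homotopy pullback square of \cref{prop:rmC-pullback} agree and reads off the fibre of $\Delta^\pm$ from \cref{lem:bottom-fibre-sequence}. The steps you spell out are left implicit in the paper: replacing $\Cthick^\Gamma_\sigma$ by $\rmC^\Gamma_\sigma$ is \cref{cor:Cthick-to-C}, and the identification of the base with the configuration-space quotients is built into the definitions of $\varphi_v$ and $\psi_i$.
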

\begin{proof}
    Because the square in \cref{prop:rmC-pullback} is a homotopy pullback, the horizontal homotopy fibres are equivalent, and we can read of the homotopy fibre of the bottom map from \cref{lem:bottom-fibre-sequence}.
\end{proof}

The Serre spectral sequence for the fibre sequence in \cref{cor:C-fibre-sequence} will be an effective tool for computing the (rational) cohomology of $\rmC^\Gamma_\sigma$.
In \cref{prop:C-spectral-sequence} we record the basic structure of this spectral sequence and show that it is $\Aut(\Gamma,\sigma) \times \prod_i \pi_0\Diff^+(P_i)$-equivariant.
First, we establish a lemma about suitable generalities of such spectral sequences.

\begin{lem}\label{lem:sp-seq-with-action}
    Let $G$ be a topological group and suppose we have a $G$-equivariant square of spaces
    \[\begin{tikzcd}
        E \rar["\pi"] \dar["F"'] & B \dar["f"] \\
        {E'} \rar["{\pi'}"] & {B'}
    \end{tikzcd}\]
    that is also a homotopy pullback square.
    Suppose further that $B'$ is simply connected and admits a $G$-fixed point $b_0 \in B'$, so the homotopy fibre $\hofib_{b_0}(\pi')$ inherits a $G$-action.
    Then there is a convergent multiplicative spectral sequence
    \[
        E_2^{p,q} = H^p(B; H^q(\hofib_{b_0}(\pi')) \Rightarrow H^{p+q}(E)
    \]
    and this spectral sequence is $\pi_0(G)$-equivariant for the diagonal $\pi_0(G)$-action on $H^p(B; H^q(\hofib_{b_0}(\pi'))$ and the usual action on $H^{p+q}(E)$.
\end{lem}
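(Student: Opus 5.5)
The plan is to build the spectral sequence as the Serre spectral sequence of a genuine fibration over $B$ whose fibre is $\hofib_{b_0}(\pi')$, carrying out every construction functorially in $G$-spaces so that the $G$-action survives at each step.

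First, I replace $\pi'$ by a $G$-equivariant fibration using the path-space construction
\[
E'' := \{(e',\gamma) \mid e'\in E',\ \gamma\colon[0,1]\to B',\ \gamma(0)=\pi'(e')\},\qquad \gamma\mapsto\gamma(1),
\]
with $G$ acting diagonally on $E'$ and on paths through its action on $B'$. This is a Hurewicz fibration $p'\colon E''\to B'$, it is $G$-equivariant, and $E'\simeq E''$ equivariantly. Since $b_0$ is $G$-fixed, the strict fibre $(p')^{-1}(b_0)=\hofib_{b_0}(\pi')$ inherits a $G$-action.

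Next, I pull back along $f$ to obtain $\widetilde E := B\times_{B'} E''\to B$. This is a $G$-equivariant Hurewicz fibration with fibre $\hofib_{b_0}(\pi')$ over any point mapping to $b_0$. Because the original square is a homotopy pullback, the canonical map $E\to\widetilde E$, given by $e\mapsto(\pi(e),F(e),\text{const})$, is a weak equivalence, and it is $G$-equivariant. So it suffices to treat the Serre spectral sequence of $\widetilde E\to B$, which is multiplicative and converges because the fibration is genuine.

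Two points need care. The first is the local coefficients. A priori the $E_2$-page is $H^p(B;\mathcal H^q)$ for the local system $\mathcal H^q$ of fibre cohomology. This system is pulled back along $f$ from the corresponding local system on $B'$, and that one is trivial because $B'$ is simply connected. Parallel transport along paths in $B'$ to $b_0$ therefore gives a canonical trivialisation, identifying $\mathcal H^q\cong H^q(\hofib_{b_0}(\pi'))$ as a constant system. I expect this to be the main obstacle: the trivialisation is built by transport along paths in $B'$, and $G$ does not fix those paths, so one has to argue that the identification is still $G$-compatible. The way through is that simple connectivity makes transport along any path in $B'$ from $b$ to $b_0$ independent of the path up to homotopy. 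Acting by $g$ sends such a path to a path from $g\cdot b$ to $g\cdot b_0=b_0$, so the trivialisation intertwines the $g$-action on fibres over $b$ with the $g$-action on $\hofib_{b_0}(\pi')$.

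The second point is equivariance of the spectral sequence itself. Each $g\in G$ acts by a fibre-preserving self-map of $\widetilde E\to B$, covering $g$ on $B$. By naturality of the Serre spectral sequence for maps of fibrations, this induces an endomorphism of the whole spectral sequence. On $E_2$ it is the diagonal action, $g$ on $H^p(B)$ tensored with the induced action on fibre cohomology, via the $G$-compatible trivialisation above. On the abutment it is the given action on $H^*(E)\cong H^*(\widetilde E)$.

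Finally, elements in the same path component of $G$ induce homotopic maps of fibrations, through fibrewise maps. Homotopic maps of fibrations induce the same map of spectral sequences from $E_2$ onwards, so the action factors through $\pi_0(G)$.
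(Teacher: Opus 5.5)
Your proposal is correct and follows essentially the same route as the paper: naturality of the spectral sequence under the $G$-action, identification of the local system on $B$ as the pullback along $f$ of one on $B'$ (constant because $B'$ is simply connected), and use of the fixed point $b_0$ to pin down the action on the coefficients. The differences are cosmetic. You build an honest fibration by path-space replacement of $\pi'$ pulled back along $f$, check $G$-compatibility of the trivialisation directly via transport along translated paths, and explicitly justify why the action factors through $\pi_0(G)$. The paper instead invokes Dress's spectral sequence for an arbitrary map, reads off the coefficient action from the $G$-equivariant fibre-inclusion square at $b_0$, and leaves the $\pi_0(G)$-factorisation implicit.
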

\begin{proof}
    For every map $\pi\colon E \to B$ there is a convergent multiplicative spectral sequence
    \[
        E_2^{p,q} = H^p(B; H^q(\pi)) \Rightarrow H^{p+q}(E)
    \]
    where $H^q(\pi)$ is the local system on $B$ that assigns to each point $b \in B$ the abelian group $H^q(\hofib_b(\pi))$, see \cite{Dress1967}.
    Note that we do not have to pick a base-point in $B$ to make sense of this spectral sequence.
    This spectral sequence is natural by construction:
    any square as above (without assuming that it is a homotopy pullback square) induces a map of local coefficient systems $\alpha_f\colon f^*H^q(\pi') \to H^q(\pi)$ and a map on $E_2$-pages 
    \[
        H^p(B'; H^q(\pi')) \xrightarrow[\qquad]{f^*} H^p(B; f^*H^q(\pi')) 
        \xrightarrow[\qquad]{(\alpha_f)_!} H^p(B; H^q(\pi)),
    \]
    which induces a map of spectral sequences compatible the the map $F^*$ in the abutment.

    Via naturality, left-multiplication $\lambda_g$ by an element $g \in G$ induces an automorphism of the spectral sequence.
    These assemble into the desired $\pi_0(G)$-action on the aforementioned spectral sequence.
    The action on the $E_2$-page comes from maps $\alpha_{\lambda_g}\colon \lambda_g^* H^q(\pi) \to H^q(\pi)$.
    (In other words, $H^q(\pi)$ has $\pi_0(G)$-fixed point data in the $\pi_0(G)$-category $\mathrm{Fun}(\Pi B, \mathrm{Mod}_\bbQ)$ of local systems on $B$.)
    To prove the lemma, it will suffice to show that the local coefficient system $H^p(\pi)$ on $B$ is the trivial coefficient system with value $H^q(\hofib_{b_0}(\pi))$ and the maps $\alpha_{\lambda_g}$ are just the maps coming from acting with $g$ on $\hofib_{b_0}(\pi)$.

    The map $f$ induces a map of local coefficient systems $\beta\colon f^*H^q(\pi') \to H^q(\pi)$.
    Because we assumed that the square is a homotopy pullback square, $f$ induces a homotopy equivalence $\hofib_b(\pi) \simeq \hofib_{f(b)}(\pi')$ for all $b \in B$, which implies that $\beta$ is an isomorphism.
    But since $B'$ is assumed to be simply connected, $H^q(\pi')$ is a constant coefficient system and hence its pullback is also the constant coefficient system with the same value.
    It remains to determine the $\pi_0(G)$-action on this coefficient group.
    Because $b_0$ is fixed by $G$ we have a $G$-equivariant homotopy pullback diagram 
    \[\begin{tikzcd}
        {\hofib_{b_0}(f)} \dar \rar & \{b_0\} \dar["i"', hook] \\
        {E'} \rar["{\pi'}"] & {B'},
    \end{tikzcd}\]
    where the induced map in local coefficient systems is a $\pi_0(G)$-equivariant isomorphism $i^*H^q(\pi') \cong H^q(\hofib_{b_0}(\pi'))$.
    This shows that $H^q(\pi)$ is indeed the constant coefficient system at $H^q(\hofib_{b_0}(\pi'))$ with its usual $\pi_0(G)$-action.
\end{proof}

\begin{prop}\label{prop:C-spectral-sequence}
    There exists a multiplicative $\Aut(\Gamma, \sigma) \times \prod_i \pi_0\Diff^+(P_i)$-equivariant Serre spectral sequence converging to $H^*(\rmC_\sigma^\Gamma(P_1,\dots,P_n); \bbQ)$, with $E_4$-page
    \[
        E_4^{*,*} \cong 
        \frac{\Lambda_\bbQ \{ \beta_h : h \in H_\Gamma \}}{\langle \beta_h + \beta_{h^\dagger} : h \in H_\Gamma \rangle}
        \otimes \bigotimes_{v \in V_\Gamma^{\rm sph}} H^*\left(\frac{\underline{\Conf_{H_v}(S^3)}}{\SO(4)}; \bbQ\right) \otimes \bigotimes_{i=1}^n H^*(\Conf_{H_{\sigma(i)}}(P_i); \bbQ)
    \]
    where each $\beta_h$ has bidegree $(0,3)$ and the other tensor factors are contained in the $0$th row $E_4^{*,0}$.
    The $d_4$ differential is determined by
    \[
        d_4(\beta_h) = \delta_{r(h)} - \delta_{r(h^\dagger)}
    \]
    where we set $\delta_v = 0$ if $v$ is a marked vertex, and otherwise $\delta_v$ is the pullback of the first Pontryagin class $p_1 \in H^4(\BSO(4))$ along the map $\Conf_{H_v}(S^3)\sslash \SO(4) \to * \sslash \SO(4)$.
\end{prop}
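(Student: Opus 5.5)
We apply \cref{lem:sp-seq-with-action} to the homotopy pullback square of \cref{prop:rmC-pullback}, with $G = \Aut(\Gamma,\sigma)\times\prod_i\Diff^+(P_i)$. Take $E = \Cthick^\Gamma_\sigma(P_1,\dots,P_n)$. Let $B$ be the top right corner, which is equivalent to $\prod_{v}\Conf_{H_v}(S^3)\sslash\SO(4)\times\prod_i\Conf_{H_{\sigma(i)}}(P_i)$ by \cref{lem:punctured-sphere-config} and \cref{lem:embplus}. Set $E' = \Map^{C_2}(H_\Gamma,\BDiff^+(S^2))$ and $B' = \prod_{H_\Gamma}\BDiff^+(S^2)\simeq\prod_{H_\Gamma}\BSO(3)$, which is simply connected. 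As $G$-fixed base point $b_0$ we take the constant tuple at the standard sphere, exactly as in the proof of \cref{lem:bottom-fibre-sequence}. Note that $\Diff^+(P_i)$ acts trivially on $B'$. By \cref{cor:Cthick-to-C} the abutment is $H^*(\rmC^\Gamma_\sigma)$, equivariantly. The lemma then gives a multiplicative $\pi_0(G)$-equivariant spectral sequence with $E_2^{p,q}=H^p(B;H^q(\hofib_{b_0}\Delta^\pm))$ and untwisted coefficients. By \cref{lem:bottom-fibre-sequence} the coefficient ring is $\Lambda_\bbQ\{\beta_h\}/\langle\beta_h+\beta_{h^\dagger}\rangle$ in row degree $3$, and the K\"unneth formula gives the stated tensor product. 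This also identifies the $\pi_0(G)$-action on the $\beta_h$.

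Since the fibre has cohomology only in degrees that are multiples of $3$, the differentials $d_2$ and $d_3$ vanish for degree reasons, so $E_4=E_2$. The $\beta_h$ generate the fibre cohomology multiplicatively and the base classes are permanent, so the Leibniz rule reduces $d_4$ to computing $d_4(\beta_h)$. We get this by naturality from the map of spectral sequences induced by the pullback square, viewed as a map of fibrations over $B\to B'$. The comparison map on fibres is the identity, so $d_4(\beta_h)=f^*(\rho_h-\rho_{h^\dagger})$, where $f\colon B\to B'$ is the right vertical map of the square and the right-hand side uses \cref{lem:bottom-fibre-sequence}. This holds because $\beta_h$ is transgressive in the bottom sequence and transgressions are natural.

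It remains to compute $f^*\rho_h$, that is, the first Pontryagin class of the $h$-th boundary sphere bundle, and we expect this to be the main obstacle. There are two cases.

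\emph{Marked vertex $r(h)=\sigma(i)$.} The sphere bundle over $\emb^{\rm D}(H\times S^2,P_i)/\Map(H,\Diff^+(S^2))$ extends to a bundle of balls embedded in $P_i$. Via \cref{lem:embplus} this is the bundle of oriented tangent spaces pulled back from $\Conf_H(P_i)$, so its $\SO(3)$-bundle is the pullback of $TP_i$ along evaluation at the $h$-th point. Then $p_1$ comes from $H^4(P_i)=0$, so the class vanishes.

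\emph{Unmarked vertex $r(h)=v$.} Over $\Conf_H(S^3)\sslash\SO(4)$ the $h$-th sphere is the boundary of a small disc around $p_h$. Its $\SO(3)$-bundle is the tangent bundle at $p_h$, i.e.\ the vertical tangent bundle of $S^3\sslash\SO(4)=\BSO(3)\to\BSO(4)$ pulled back along evaluation. We would check that its $p_1$ equals the restriction of $p_1\in H^4(\BSO(4))$. Indeed, $TS^3\oplus\bbR$ is the pullback of the universal $\bbR^4$-bundle, and rationally $p_1$ is additive, so this restricted class is $\delta_v$.

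Combining the two cases gives $d_4(\beta_h)=\delta_{r(h)}-\delta_{r(h^\dagger)}$. Some care with signs and normalisation of $\rho$ versus $p_1$ will be needed, and the sign convention can be absorbed into the choice of $\beta$.
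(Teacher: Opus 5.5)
Your proposal is correct and follows essentially the same route as the paper: you apply \cref{lem:sp-seq-with-action} to the square of \cref{prop:rmC-pullback}, read off $d_4(\beta_h)=f^*(\rho_h-\rho_{h^\dagger})$ by naturality from \cref{lem:bottom-fibre-sequence}, and identify $f^*\rho_h$ with the first Pontryagin class of the tangent space at the $h$-th configuration point. There are two cosmetic differences: for marked vertices you kill $f^*\rho_h$ using $H^4(P_i;\bbQ)=0$ instead of choosing a framing of $P_i$ (both work), and the sign care you flag at the end is not actually needed, since $p_1$ is unchanged by reversing orientation and by adding a trivial line bundle.
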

\begin{proof}
    We apply \cref{lem:sp-seq-with-action} to the square from \cref{eqn:C-pullback-precise}, by taking vertical homotopy fibres in \cref{prop:rmC-pullback} and taking $G := \Aut(\Gamma, \sigma) \times \prod_{i=1}^n \Diff^+(P_i)$.
    Note that $\prod_{H_\Gamma} \BDiff^+(S^2)$ is indeed simply connected because $\BSO(3)$ is, and that we can pick a $G$-invariant base-point $b_0$ in the bottom right by assigning the same oriented sphere to all half-edges, so that $\mathrm{hofib}(\Delta^\pm)$ inherits a $G$-action.
    (However, we will not generally be able to pick a $G$-invariant base-point in the top right, and thus the top map does not give a (homotopy) fibre sequence in spaces with $G$-action.)
    Nevertheless, \cref{lem:sp-seq-with-action} applies and we obtain a $\pi_0(G)$-equivariant multiplicative Serre spectral sequence
    \[
        E_2^{p,q} = H^p(B ; H^q(\mathrm{hofib}_{b_0}(\Delta^\pm); \bbQ)) \Rightarrow H^{p+q}(E)
    \]
    We saw in \cref{lem:bottom-fibre-sequence} that for the bottom fibre sequence we have
    \[
        \hofib(\Delta^\pm) \simeq 
        \frac{\prod_{H_\Gamma} \SO(3)}{\prod_{E_\Gamma} \SO(3)}
        \qquad\text{and}\qquad
        H^*(\hofib(\Delta^\pm); \bbQ) \cong \frac{\Lambda_\bbQ\{ \beta_h : h \in H_\Gamma\}}{ \langle \beta_h + \beta_{h^\dagger} : h \in H_\Gamma \rangle}.
    \]
    We thus get a $\pi_0(G)$-equivariant spectral sequence converging to the cohomology of $\Cthick^\Gamma_\sigma$, with $E_2$-page
    \[
        E_2^{*,*} \cong 
        \frac{\Lambda_\bbQ \{ \beta_h : h \in H_\Gamma \}}{\langle \beta_h + \beta_{h^\dagger} : h \in H_\Gamma \rangle}
        \otimes \bigotimes_{v \in V_\Gamma^{\rm sph}} H^*\left(\frac{\underline{\Conf_{H_v}(S^3)}}{\SO(4)}; \bbQ\right) \otimes \bigotimes_{i=1}^n H^*(\Conf_{H_{\sigma(i)}}(P_i); \bbQ).
    \]
    Here, to describe the cohomology of the base we used the same identifications as before and then applied the K\"unneth theorem.
    Note that $\pi_0\Diff^+(P_i)$ only acts on $\Conf_{H_{\sigma(i)}}(P_i)$.

    The $d_2$ and $d_3$ differentials are $0$ on $\beta_h$ for degree reasons, and thus the Leibniz rule implies that all $d_2$ and $d_3$ differentials are $0$, so the $E_4^{*,*}$ are still of the above form.
    On the $E_4$-page all differentials are determined, through the Leibniz rule, by $d_4(\beta_h)$, which we will now describe.
    For the bottom fibre sequence, we saw in \cref{lem:bottom-fibre-sequence} that $d_4(\beta_h) = \rho_h - \rho_{h^\dagger}$.

    It hence remains to verify that the pullback of each $\rho_h$ gives $\delta_v$ for $v=r(h)$ as defined in the statement.
    For this, we have to take the pullback of the first Pontryagin class in $H^4(\BDiff^+(S^2))$ (the factor corresponding to $h$) under the right vertical map in \cref{prop:rmC-pullback}.
    First consider the case of an unmarked vertex.
    We have a homotopy commutative diagram where $s_h$ records the sphere associated to $h \in H_v$ (we are trying to show $s_h^*\rho_h = \delta_v$) and $T_h$ records the tangent space at the configuration point associated to $h$.
    \[
    \begin{tikzcd}
        \frac{\BDiff_{H_v}^{\rm sph}(S^3)}{\prod_{H_v} \Diff^+(S^2)} \dar["s_h"] &
        {\Conf_{H_v}(S^3) \sslash \SO(4)} \rar \dar["T_h"] \lar["\simeq"] & {* \sslash \SO(4)} \dar[equal] \\
        {\BDiff^+(S^2)} & 
        {\BSO(3)} \rar["-\oplus \bbR"] \lar["\simeq"] & {\BSO(4)}
    \end{tikzcd}
    \]
    To construct the left square, we proceed as follows:
    the bottom map sends an oriented $3$-dimensional sub vector space $V \subset \bbR^\infty$ to its unit sphere $S(V)$.
    A point in $\Conf_{H_v}(S^3)$ can be thought of as an oriented $4$-dimensional sub vector space $W \subset \bbR^\infty$ together with an embedding $\iota\colon H_v \hookrightarrow S(W)$. 
    The top map takes such a tuple $(W,\iota)$ and sends it to the submanifold of $\bbR^\infty$ obtained by removing open $\varepsilon$-discs around each point of the configuration.
    (We have to vary $\varepsilon$ continuously with the configuration, but this is not an issue.)
    The middle vertical map sends $(W,\iota)$ to the tangent space $T_{\iota(h)}S(W)$ of $S(W)$ at the point $\iota(h)$.
    The right square commutes up to homotopy because the two ways of going around the square either send $(W,\iota)$ to $W$ or to $T_{\iota(h)}S(W) \oplus \bbR$, but these are canonically isomorphic as the normal bundle of $S(W)$ in $W$ is trivial.
    On cohomology this diagram shows us that $s_h^*(\rho_h)$ is indeed the pullback of the first Pontryagin class.
    
    For marked vertices we can similarly argue that the map 
    \[
        s_h\colon \frac{\emb^{\rm D}(H_{\sigma(i)} \times S^2, P_i)}{\prod_{H_{\sigma(i)}} \Diff^+(S^2)} \longrightarrow \BDiff^+(S^2)
    \]
    is equivalent to the map $\Conf_{H_{\sigma(i)}}(P_i) \to \BSO(3)$ that records the tangent space $T_{\iota(h)}P_i$.
    Like every orientable $3$-manifold, $P_i$ admits a framing, so we can choose a (non-canonical) null-homotopy of this map and thus $s_h^*(\rho_h) = 0$ in this case.
\end{proof}

\section{Rational cohomology computations}\label{sec: computation}

The aim of this section is to lay the foundations for a computational  framework that can be used to calculate the rational cohomology of the fibre $\mathcal{H}_g(P_1,\dots,P_n)$ of $\W$. More precisely, we provide explicit presentations for the rational cohomology ring of $\mathcal{H}_\sigma^\Gamma(P_1,\ldots,P_n)$ as an $\Aut(\Gamma,\sigma) \times \prod_{i=1}^n \pi_0\Diff^+(P_i)$-module, together with a description of the induced maps from morphisms in $\Gr_{g,n}$. Throughout this section, all cohomology groups will be with rational coefficients, unless otherwise noted.

We begin by describing the rational cohomology rings of three spaces, with each description building on the previous. First, in \cref{sec:conf-cohomology}, we consider the cohomology ring of the ordered configuration space $\Conf_d(S^3)$ as a representation of the symmetric group $\Sym_d$.  Then in \cref{sec:punctured-moduli}, we use this to compute the cohomology ring of the classifying space of the diffeomorphism group of~$S^3$ fixing $d$ points, which can be identified with the homotopy quotient of $\Conf_d(S^3)\hq \SO(4)$. 
Thirdly, in \cref{subsection:cohomology of D}, we input the previous step into the spectral sequence from \cref{prop:C-spectral-sequence} to obtain a presentation for the rational cohomology of $\mathcal{H}^\Gamma_\sigma(P_1,\ldots,P_n)$ in \cref{thm:DGamma Cohomology Ring}. The final section describes the effect of an edge collapse in terms of the generators of the cohomology ring. This is most concrete in the case when there are no irreducible prime factors (see \cref{rem:beta-ses-splitting}), and in \cref{sec:extended example}, we will apply the results obtained here to calculate the rational cohomology ring of $\BDiff^+(U_2)$, where $U_g=(S^1\times S^2)^{\sharp g}$.

Taking a broader view, configuration spaces of points in $S^3$ assemble into a functor
    \[
        \Conf_\bullet(S^3) \colon \FI^{\rm op} \longrightarrow \Top, \qquad
        A \mapsto \emb(A, S^3) 
    \]
    from the opposite of the category of finite sets and injections to the category of topological spaces.
    This functor sends the set $\{1,\dots, d\}$ to the ordered configuration space $\Conf_d(S^3)$, and remembers the $\Sym_d$-action given by permuting points, as well as the maps $\Conf_d(S^3) \to \Conf_k(S^3)$ given by forgetting some of the points.
    Taking cohomology we obtain a functor
    \[
        H^*(\Conf_\bullet(S^3)) \colon \FI \longrightarrow \mathrm{Ring},
    \]
    \emph{i.e.}~an $\FI$-ring. 
    Similarly, $H^*(\Conf_d(S^3)\hq \SO(4)) \cong H^*(\BDiff^+_{\{1,\dots,d\}}(S^3))$ forms an $\FI$-ring that we study in \cref{sec:punctured-moduli}.
    While the study of configuration spaces of points in manifolds is classical and dates back to early last century (see, for example, \cite{Fadell-Husseini-2001} and the references therein), the $\FI$-perspective is relatively recent \cite{Church-Ellenberg-Farb-2015, Church-Ellenberg-Farb-Nagpal-2014}. In fact, one knows from general results of Church--Ellenberg--Farb \cite{Church-Ellenberg-Farb-2015} that the $\FI$-ring $H^*(\Conf_\bt(S^3))$ is finitely generated and for each fixed degree $q$ the module $H^q(\Conf_\bt(S^3))$ exhibits representation stability as a representation of the symmetric group $\Sym_d$. 
    Although we will not explicitly rely on such results here, the $\FI$-perspective is useful to keep in mind as a guiding principle.
    We will periodically return to it throughout the section, recasting results in these terms.
    For example, we give presentation of $H^*(\Conf_\bt(S^3))$ as an $\FI$-ring with two generators and four relations in \cref{rem:FI}.

\subsection{The rational cohomology ring of $\Conf_d(S^3)$}\label{sec:conf-cohomology}

We first calculate the rational cohomology of the configuration space of $d$ ordered points $\Conf_d(S^3)$ as a representation of the symmetric group $\Sym_d$. Let the $d$ marked points be $\{x_1,\ldots, x_d\}$. Recalling that $S^3\cong \SU_2$, we can use the left action of $\SU_2$ on itself to obtain diffeomorphisms
\begin{equation}\label{eqn:Conf Isomorphism}
    \varphi_i\colon\Conf_d(S^3)\rightarrow S^3\times \Conf_{d-1}(\bbR^3)
\end{equation}
where the $S^3$-factor corresponds to $x_i$, while the remaining coordinates in $\Conf_{d-1}(\bbR^3)$ correspond to $x_i^{-1}x_j$ for $i\neq j$. We will refer to the $\varphi_i$ as \emph{trivialisations}.

For $1\leq i\leq d$ we also have projections $\pi_i\colon \Conf_d(S^3)\rightarrow \Conf_1(S^3)\cong S^3$ that forget all but the point $x_i$. Let $\alpha_i=\pi_i^*(\mu)\in H^3(\Conf_d(S^3))$ be the pullback of the fundamental class $\mu\in H^3(S^3).$

\begin{lem}\label{lem:H3 Class} The classes $\alpha_i$, $\alpha_j$ are equal for all $i,j$. 
\end{lem}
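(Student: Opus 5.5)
It suffices to treat $d=2$: for general $d$, the projections $\pi_i,\pi_j$ both factor through the forgetful map $\Conf_d(S^3)\to\Conf_2(S^3)$ that keeps only $x_i$ and $x_j$. So the plan is to prove $\pi_1^*\mu=\pi_2^*\mu$ in $H^3(\Conf_2(S^3))$ and then pull back along this map.

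First we identify $H^3(\Conf_2(S^3))$. Via the trivialisation $\varphi_1$ we have $\Conf_2(S^3)\cong S^3\times\Conf_1(\bbR^3)=S^3\times\bbR^3$, with $\varphi_1(x_1,x_2)=(x_1,x_1^{-1}x_2)$. Here $\bbR^3$ means $S^3\setminus\{1\}$. Hence $\Conf_2(S^3)\simeq S^3$ and $H^3(\Conf_2(S^3))\cong\bbQ$. Under this equivalence $\pi_1$ becomes the projection onto the $S^3$-factor, so $\alpha_1$ is a generator.

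To compare with $\alpha_2$, we compose $\pi_2$ with a section. Consider the homotopy equivalence $s\colon S^3\to\Conf_2(S^3)$, $s(x)=(x,-x)$. This is legitimate because $x\mapsto(x,x^{-1}x_2)$ with the constant value $x^{-1}x_2=-1\neq 1$ is a section of $\varphi_1$. Then $\pi_1\circ s=\id$ and $\pi_2\circ s$ is the antipodal map on $S^3$, which has degree $(-1)^4=1$. Since $s$ induces an isomorphism on $H^3$, we get $s^*\alpha_1=\mu=s^*\alpha_2$, and therefore $\alpha_1=\alpha_2$.

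The only step needing care is checking that $s$ really is a homotopy equivalence, that is, that it corresponds under $\varphi_1$ to the inclusion $S^3\times\{-1\}$. Also, the degree of the antipodal map must be computed on $S^3$, where it is $+1$, not on an even-dimensional sphere. An alternative route is to note that the swap $(x_1,x_2)\mapsto(x_2,x_1)$ composed with $s$ is again $s$ up to the antipodal map, which gives the same conclusion.
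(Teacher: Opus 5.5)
Your proof is correct and follows the paper's argument essentially verbatim: you reduce to $d=2$, use $\varphi_1$ to see that the sphere $\{(x,-x)\}$ represents a generator of $H_3(\Conf_2(S^3))$, and observe that $\pi_2$ restricts to the antipodal map on it, which has degree $+1$ on $S^3$. The only slip is wording: $x\mapsto(x,-x)$ is not a ``section of $\varphi_1$'' but the map corresponding under $\varphi_1$ to the inclusion $S^3\times\{-1\}$, as you correctly state afterwards.
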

\begin{proof}
    It suffices to prove the lemma when $d=2$ and the two marked points are $\{x_1,x_2\}$.  The trivialisations $\varphi_1,\varphi_2$ in  Equation \ref{eqn:Conf Isomorphism} yield diffeomorphisms 
    \begin{eqnarray*}
      \bbR^3\times S^3\cong\Conf_1(\bbR^3)\times S^3  \xleftarrow{\varphi_2}\Conf_2(S^3)\xrightarrow{\varphi_1} S^3\times \Conf_1(\bbR^3)\cong S^3\times \bbR^3
    \end{eqnarray*}
Using the trivialisation $\varphi_1$, the class $\alpha_1$ is the pullback of $\mu$ under projection onto the $S^3$-factor. Hence $\alpha_1$ is hom dual to the class $S^3\times \{0\}\subseteq S^3\times \bbR^3$. In $\Conf_2(S^3)$, this corresponds to the 3-sphere $\{(x_1,-x_1)\mid x_1\in S^3\}$, which projects diffeomorphically under $\varphi_2$ to $\{0\}\times S^3$. Since the latter is dual to $\alpha_2$, we just need to compute the degree of the induced map on $S^3$. From the description above the composite map $S^3\rightarrow S^3$ is given by $x_1\mapsto -x_1$, \emph{i.e.} the antipodal map. This has degree 1 since $S^3$ is odd-dimensional.  Therefore, $\alpha_1=\alpha_2$, as claimed.
\end{proof}

For $i,j,k$ pairwise distinct consider the composition  \[\Conf_d(S^3)\rightarrow \Conf_3(S^3)\xrightarrow[]{\varphi_k} S^3\times \Conf_2(\bbR^3)\rightarrow \Conf_2(\bbR^3)\]  
where the first map forgets all but $x_i$, $x_j$ and $x_k$, and the $S^3$-factor in the trivialisation corresponds to $x_k$.  In other words, $x_k$ corresponds to the point at infinity in $\Conf_2(\bbR^3)$, while the two points correspond to $x_i$ and $x_j$. Using the action of $\bbR^3$ on itself by translation, we can move the image of $x_i$ to the origin in $\bbR^3$ to obtain a further trivialisation
\begin{eqnarray*}
\tau_i^k\colon\Conf_2(\bbR^3)&\rightarrow& \bbR^3\times (\bbR^3\setminus \{0\})\\
(v_i,v_j)&\mapsto&  (v_i,v_j-v_i)
\end{eqnarray*}
where $v_i$, $v_j$ are the image of $x_i$ and $x_j$ respectively. Similarly, we have a trivialisation $\tau_j^k$ translating the image of $x_j$ to the origin in $\bbR^3.$ Let $\pi_{ij}^k$ denote the composition  \[\Conf_d(S^3)\rightarrow \Conf_2(\bbR^3)\xrightarrow{\tau_i^k} \bbR^3\times (\bbR^3\setminus \{0\})\]  
If $\omega\in H^2(S^2)\cong H^2(\bbR^3\setminus \{0\})$ is the fundamental class, we define $\omega_{ij}^k=(\pi_{ij}^k)^*(\omega)$ to be the pullback. Concretely, $\omega_{ij}^k$ takes that value 1 on the 2-sphere that separates $x_i$ from $x_k$ in $\bbR^3\setminus \{0\}$ with the outward orientation. In particular, we have that \begin{equation}\label{eqn:SwitchInfinity}\omega_{ij}^k=-\omega_{kj}^i\end{equation}
Moreover, the two trivialisations $\tau_i^k, \tau_j^k$ induce a diagram \begin{align*}
(\bbR^3\setminus \{0\})\times \bbR^3&\xleftarrow{\tau_{j}^k}\Conf_2(\bbR^3)\xrightarrow{\tau_i^k}\bbR^3\times (\bbR^3\setminus \{0\})\\
    (v_i-v_j,v_j)&\longmapsfrom (v_i,v_j) \longmapsto (v_i,v_j-v_i)
\end{align*}
Thus the induced map on $\bbR^3\setminus \{0\}$ is multiplication by $-1$, and we obtain \begin{equation}\label{eqn:SwitchBasepoint}
    \omega_{ji}^k=-\omega_{ij}^k
\end{equation}
Therefore any permutation of $\{i,j,k\}$ acts via the sign representation on $\omega_{ij}^k.$ 

\begin{lem}\label{lem:4PointRelation} The second cohomology group $H^2(\Conf_4(S^3))$ is 3-dimensional and generated by the four classes $\omega_{12}^3$, $\omega_{12}^4$, $\omega_{13}^4$ and $\omega_{23}^4$ with the relation 
\begin{equation}\label{eqn:4PointRelation}
    \omega_{12}^4+\omega_{23}^4=\omega_{13}^4+ \omega_{12}^3
\end{equation}
\end{lem}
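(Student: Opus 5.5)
The plan is to compute $H^*(\Conf_4(S^3))$ in low degrees from the trivialisation $\varphi_4\colon \Conf_4(S^3)\cong S^3\times\Conf_3(\bbR^3)$ of \cref{eqn:Conf Isomorphism}, with $x_4$ placed at infinity. By K\"unneth and the classical Arnold/Cohen description of $H^*(\Conf_3(\bbR^3))$, we have $H^2(\Conf_4(S^3))\cong H^2(\Conf_3(\bbR^3))$. This group is free on the three classes $\omega_{ab}$ ($1\le a<b\le 3$) pulled back from the Gauss maps $(v_1,v_2,v_3)\mapsto (v_b-v_a)/|v_b-v_a|\in S^2$. This gives dimension $3$. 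Unwinding the definitions, with $k=4$ the classes $\omega_{12}^4$, $\omega_{13}^4$, $\omega_{23}^4$ are exactly these three Gauss-map classes $\omega_{12},\omega_{13},\omega_{23}$, since translating $v_i$ to the origin and remembering $v_j-v_i$ is the Gauss map up to homotopy. So these three classes already form a basis.

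It then remains to express $\omega_{12}^3$ in this basis. Here $x_3$ is at infinity and the relevant map is $(x_1,x_2)\mapsto x_3^{-1}x_2-x_3^{-1}x_1$. To compute it in the $\varphi_4$-coordinates, I will evaluate both sides of \cref{eqn:4PointRelation} on a basis of $H_2(\Conf_3(\bbR^3))$. The natural basis consists of the three spheres $S_{ab}$, in which $v_b$ orbits $v_a$ in a small sphere while the third point stays far away; the Kronecker pairings $\langle\omega_{cd},S_{ab}\rangle=\delta$ hold, up to the sign conventions \cref{eqn:SwitchInfinity} and \cref{eqn:SwitchBasepoint}.

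\begin{itemize}
\item On the tiny sphere where $x_2$ orbits $x_1$, with $x_3$ and $x_4$ far away, the class $\omega_{12}^3$ evaluates to $1$, since the sphere separates $x_1$ from $x_3$. The same holds for $\omega_{12}^4$, while $\omega_{13}^4$ and $\omega_{23}^4$ evaluate to $0$. This is consistent with the relation: $1+0=0+1$.
\item On the sphere where $x_3$ orbits $x_1$ closely, the class $\omega_{12}^3$ is computed after moving $x_3$ to infinity. In these local coordinates $x_1$ orbits around the point at infinity from the viewpoint of $x_2$, so the degree is read off from the sphere separating $x_1$ from $x_3$, possibly with a sign. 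I expect $\omega_{12}^3$ to evaluate to $-1$ here, with $\omega_{13}^4=1$ and the other classes $0$, matching the relation.
\item The sphere where $x_3$ orbits $x_2$ is handled symmetrically.
\end{itemize}

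Generation by the four classes is then automatic, and the relation is the unique linear dependency.

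The main obstacle is the sign and orientation bookkeeping for $\omega_{12}^3$, where the chart changes from $\varphi_4$ to $\varphi_3$. The left multiplication by $x_3^{-1}$ and the stereographic identification must be tracked carefully. I would handle this by choosing the test spheres concretely in a small coordinate ball away from $x_4$, on which the $\SU_2$ translations are nearly Euclidean. Then each pairing reduces to a degree of a map $S^2\to S^2$ that is a translate or inverse of the identity, whose degree can be fixed using \cref{eqn:SwitchInfinity} and \cref{eqn:SwitchBasepoint}.
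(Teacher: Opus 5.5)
Your proposal is correct and follows essentially the paper's route: identify $H^2(\Conf_4(S^3))$ with $H^2(\Conf_3(\bbR^3))$ via $\varphi_4$, then pair the four classes against orbiting-sphere generators of $H_2$. The paper's $[S_2],[S_3],[T_3]$ are your $S_{12},S_{13},S_{23}$ up to sign, and the only difference is that you quote Arnold/Cohen for the dimension count where the paper uses a fibration over $S^2$. The sign you flag is settled exactly as you suggest: $\omega_{12}^3=-\omega_{13}^2$, and $\omega_{13}^2$ is computed in the chart with the far-away point $x_2$ at infinity, so it pairs to $+1$ with $S_{13}$ and gives $\langle\omega_{12}^3,S_{13}\rangle=-1$; likewise $\omega_{12}^3=\omega_{23}^1$ gives $+1$ on $S_{23}$.
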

\begin{proof}
    The trivialisation $\varphi_4$ identifies $\Conf_4(\bbR^3)$ with $S^3\times \Conf_3(\bbR^3)$, where $x_4$ corresponds to the point at infinity in $\bbR^3.$ Moving $x_3$ to the origin, we have a further identification:\[\Conf_4(S^3)\cong S^3\times \bbR^3\times \Conf_2(\bbR^3\setminus \{0\})\]
    We now use the action of the positive reals $\bbR^+$ on $\bbR^3\setminus \{0\}$ by scaling to move $x_2$ onto the unit sphere $S^2\subset \bbR^3\setminus \{0\}$. The resulting projection \[\Conf_2(\bbR^3\setminus\{0\})\rightarrow S^2\] is a fibre bundle with fibre $\bbR_+\times (\bbR^3\setminus \{\text{2 pts}\})$.  The associated cohomology spectral sequence degenerates on the $E_2$-page, hence $H^2(\Conf_2(\bbR^3\setminus \{0\}))$ is generated by the pullback of the fundamental class from $S^2$,  and $H^2(\bbR^3\setminus \{\text{2 pts}\})$. The pullback of the class on $S^2$ comes from forgetting $x_1$,  hence is just the class $\omega_{32}^4$. 
    
    The fibre $\bbR^3\setminus \{\text{2 pts}\}$ parametrises the position of $x_1$ in the complement of $x_2$ and $x_3$. Let $S_2$ be a 2-sphere containing $x_2$ in its interior, let $S_3$ be a 2-sphere with $x_3$ in its interior, and let $S_4$ be a 2-sphere with both $x_2$ and $x_3$ in its interior. The three homology classes $[S_2]$, $[S_3]$, and $[S_4]$ generate $H_2(\bbR^3\setminus \{\text{2 pts}\})$ but since $S_2\cup  S_3\cup S_4$ bounds 3-dimensional pair of pants, we have a relation \[[S_4]=[S_3]+[S_2]\]
    Exchanging the roles of $x_1$ and $x_2$, we can also consider spheres $T_2$, $T_3$, and $T_4$, by letting $x_2$ vary in a sphere containing $x_1$, $x_3$,  and both $x_1$ and $x_3$, respectively, in their interior. As above we have \[[T_4]=[T_1]+[T_3]\]
    and moreover $[T_1]=-[S_2]$.  Therefore two generate all of $H_2(\Conf_{2}(\bbR^3\setminus \{0\}))$ we will consider the three classes $[S_2]$, $[S_3]$ and $[T_3]$.   
    
    Recall that $\omega_{ij}^k$ takes the value 1 on a sphere formed by $x_j$ separating $x_i$ from $x_k$ with outward normal oriented toward $x_k$, and that by Equations \ref{eqn:SwitchInfinity} and \ref{eqn:SwitchBasepoint}, any odd permutation of the indices induces a sign change. Thus we have 
    \[\begin{pNiceMatrix}[first-row,first-col]
       &\omega_{21}^4 &\omega_{32}^4& \omega_{31}^4 & \omega_{21}^3  \\
       [S_2]& 1 & 0 & 0 & 1  \\
     [S_3]&0 & 0 & 1 & -1  \\
      [T_3]& 0 & 1 & 0 & 1  \\
\end{pNiceMatrix}\]
from which we conclude \[\omega_{21}^4+\omega_{32}^4=\omega_{31}^4+\omega_{21}^3.\]
Equation \ref{eqn:4PointRelation} now follows from Equation \ref{eqn:SwitchBasepoint}.
\end{proof}

We now describe the rational cohomology ring $H^*(\Conf_d(S^3))$ as a representation of $\Sym_d$.
\begin{prop}\label{prop:H*ConfS^3}
    The rational cohomology of the configuration space of $d$ points in $S^3$ is given by
    \[
        H^*(\Conf_d(S^3)) 
        \cong \Lambda_\bbQ\{\alpha\} \otimes \bbQ[\omega_{ij}^k : 1 \le i,j,k \le d]/I
    \]
    where $\alpha$ is of degree $3$,
    the $\omega_{ij}^k$ are of degree $2$,
    and $I$ is the ideal generated by the relations
    \begin{align*}
        \omega_{ij}^k &= -\omega_{ji}^k = \omega_{jk}^i, &
        (\omega_{ij}^k)^2 &= 0 &
        \omega_{ij}^k &= \omega_{ij}^l + \omega_{jk}^l + \omega_{ki}^l.
    \end{align*}
    The action of $\Sym_d$ is given by permuting the indices.
\end{prop}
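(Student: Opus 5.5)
We will use the trivialisation $\varphi_d\colon \Conf_d(S^3)\cong S^3\times\Conf_{d-1}(\bbR^3)$ from \cref{eqn:Conf Isomorphism} and the Künneth formula, together with the classical Arnold--Cohen presentation of $H^*(\Conf_{d-1}(\bbR^3))$. Recall that this ring is generated by degree-$2$ classes $a_{ij}$ ($1\le i\ne j\le d-1$) with relations $a_{ij}=-a_{ji}$, $a_{ij}^2=0$ and the Arnold relation $a_{ij}a_{jk}+a_{jk}a_{ki}+a_{ki}a_{ij}=0$. Since the $S^3$-factor contributes $\Lambda\{\mu\}$ and $\varphi_d$ pulls $\mu$ back to $\alpha_d$, which equals every $\alpha_i$ by \cref{lem:H3 Class}, we obtain $H^*(\Conf_d(S^3))\cong\Lambda\{\alpha\}\otimes H^*(\Conf_{d-1}(\bbR^3))$. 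Under $\varphi_d$ the class $a_{ij}$ is exactly $\omega_{ij}^d$, directly from the definition of $\omega_{ij}^k$ with $x_d$ placed at infinity.

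The next step is to show that the abstract ring $R=\Lambda\{\alpha\}\otimes\bbQ[\omega_{ij}^k]/I$ maps onto $H^*(\Conf_d(S^3))$ and that this map is an isomorphism. First, the relations hold. The symmetry relations are \cref{eqn:SwitchInfinity} and \cref{eqn:SwitchBasepoint}. The relation $(\omega^k_{ij})^2=0$ holds because $\omega^k_{ij}$ is pulled back from $H^2(S^2)$. The linear relation $\omega_{ij}^k=\omega_{ij}^l+\omega_{jk}^l+\omega_{ki}^l$ is pulled back from $\Conf_4(S^3)$ along the map forgetting all points other than $i,j,k,l$. After relabelling, and using the sign rules, it is exactly \cref{eqn:4PointRelation} from \cref{lem:4PointRelation}; one only has to check the signs carefully. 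Surjectivity is then clear, because the $\omega^d_{ij}$ together with $\alpha$ generate the target by the Künneth description above.

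For injectivity, we use the linear relation with $l=d$ to rewrite every $\omega^k_{ij}$ with $d\notin\{i,j,k\}$ as a sum of classes with upper index $d$. Any generator containing the index $d$ in a lower slot is also moved to the upper position by the symmetry relations. Hence $R$ is generated by $\alpha$ and the $\omega^d_{ij}$, and it suffices to check that the Arnold relation among the $\omega_{ij}^d$ holds in $R$. This can be derived by multiplying two instances of the linear relation: for example, square the expression $\omega_{ij}^k=\omega_{ij}^d+\omega_{jk}^d+\omega_{ki}^d$ and use $(\omega^k_{ij})^2=0$ together with $(\omega^d_{ab})^2=0$. Since the three cross terms appear with coefficient $2$, this yields $2(\omega_{ij}^d\omega_{jk}^d+\omega_{jk}^d\omega_{ki}^d+\omega_{ki}^d\omega_{ij}^d)=0$, and we divide by $2$ rationally. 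Consequently $R$ is a quotient of the Arnold ring tensored with $\Lambda\{\alpha\}$. This gives a surjection in the other direction whose composite with the comparison map is the identity on generators, so the two rings are isomorphic.

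Finally, $\Sym_d$-equivariance holds by naturality. A permutation acts on $\Conf_d(S^3)$ by relabelling, and the definition of $\omega^k_{ij}$ is natural in the labels, so it sends $\omega_{ij}^k$ to $\omega^{\tau(k)}_{\tau(i)\tau(j)}$. It fixes $\alpha$ by \cref{lem:H3 Class}. The main obstacle is the bookkeeping of signs when matching the general linear relation with \cref{eqn:4PointRelation}, and when checking that the squaring argument produces exactly the Arnold relation.
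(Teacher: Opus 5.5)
Your proposal is correct and follows essentially the same route as the paper. Both arguments check the relations using the sign rules, \cref{lem:4PointRelation} and the vanishing of squares, reduce to the generators $\omega_{ij}^d$ via the linear relation, obtain the Arnold relation by squaring $\omega_{ij}^k=\omega_{ij}^d+\omega_{jk}^d+\omega_{ki}^d$, and compare with the known presentation of $H^*(\Conf_{d-1}(\bbR^3))$ through the trivialisation $\Conf_d(S^3)\cong S^3\times\Conf_{d-1}(\bbR^3)$. Your surjection-plus-composite argument for injectivity, and your tracking of the factor $2$, are if anything a slightly tidier packaging of the paper's direct algebraic rewriting.
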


\begin{rem}\label{rem:FI}
    In terms of $\FI$-representations, the statement of \cref{prop:H*ConfS^3} may be rephrased as follows.
    The $\FI$-ring $H^*(\Conf_\bullet(S^3))$ is generated by two classes
    \[
        \alpha \in H^3(\Conf_1(S^3)) 
        \qquad \text{and} \qquad 
        \omega_{12}^3 \in H^2(\Conf_3(S^3)) .
    \]
    Moreover, $H^*(\Conf_\bullet(S^3))$ is the free $\FI$-ring on these classes, subject to the following relations:
    \begin{enumerate}
        \item the images of $\alpha$ under the two maps $H^3(\Conf_1(S^3)) \to H^3(\Conf_2(S^3))$ agree,
        \item $\omega_{12}^3 \in H^2(\Conf_3(S^3))$ transforms as the sign representation of $\Sym_3$,
        \item $(\omega_{12}^3)^2 = 0$, and
        \item $\omega_{12}^3 = \omega_{12}^4 + \omega_{23}^4 + \omega_{31}^4$ holds in $H^2(\Conf_4(S^3))$.
    \end{enumerate}
\end{rem}

\begin{proof}[Proof of Proposition \ref{prop:H*ConfS^3}]
    We define a ring homomorphism 
    \[
        \Psi\colon \Lambda_\bbQ\{\alpha\} \otimes \bbQ[\omega_{ij}^k : 1 \le i,j,k \le n]
        \to
        H^*(\Conf_d(S^3)) 
    \]
    by sending $\alpha$ and $\omega_{ij}^k$ to the classes we defined before.
    Next, we need to check that the ideal $I$ lies in the kernel of $\Psi$.
    By \cref{eqn:SwitchBasepoint} and \cref{eqn:SwitchInfinity}, the identities $\omega_{ij}^k = - \omega_{ji}^k = \omega_{jk}^i$ hold in $H^*(\Conf_d(S^3))$.
    By \cref{lem:4PointRelation} we have that 
    \[
        \omega_{12}^4 + \omega_{23}^4 = \omega_{13}^4 + \omega_{12}^3 
        \qquad \text{ and hence } \qquad
        \omega_{12}^3 = \omega_{12}^4 + \omega_{23}^4 + \omega_{31}^4
    \]
    and it also follows that this relation holds if we replace $\{1,2,3,4\}$ by any other set of $4$ distinct indices.
    Finally, the relation $(\omega_{ij}^k)^2=0$ holds since $\omega_{ij}^k$ is pulled back from $\Conf_3(S^3) \simeq S^3 \times S^2$, where $H^4=0$.
    Therefore $\Psi$ descends to a map on the quotient:
    \[
        \overline{\Psi}\colon \Lambda_\bbQ\{\alpha\} \otimes \bbQ[\omega_{ij}^k : 1 \le i,j,k \le d]/I
        \to
        H^*(\Conf_d(S^3)) 
    \]
    
    In order to show that $\overline{\Psi}$ is an isomorphism, we will first have to algebraically rewrite the polynomial algebra $\bbQ[\omega_{ij}^k]/I$.
    Firstly, we can only consider generators $\omega_{ij}^k$ where $1 \leq i < j < k \le d$ by using the permutation relation.
    Next, we can use the relation $\omega_{ij}^k = \omega_{ij}^l + \dots$ 
    to reduce to those generators $\omega_{ij}^k$ with $k=d$.
    This shows
    \[
        \bbQ[\omega_{ij}^k : 1 \le i,j,k \le d]/I
        \cong
        \bbQ[\omega_{ij}^d : 1 \le i<j<d]/J
    \]
    where $J$ is the ideal generated by $(\omega_{ij}^k)^2$ for all $i \le j \le k$.
    In the case that $k \neq d$ this relation amounts to:
    \begin{align*}
        0 &= (\omega_{ij}^k)^2 
        = (\omega_{ij}^n + \omega_{jk}^d + \omega_{ki}^d)^2
        = \omega_{ij}^d\omega_{jk}^d + \omega_{ij}^d\omega_{ki}^d + \omega_{jk}^d\omega_{ki}^d\\
        &= \omega_{ij}^d\omega_{jk}^d - \omega_{ij}^d\omega_{ik}^d - \omega_{jk}^d\omega_{ik}^d
    \end{align*}
    
    By \cite[Proposition 3.1]{Feichtner-Ziegler},
    the rational cohomology algebra of $\Conf_{d-1}(\bbR^3)$ is
    \[
        H^*(\Conf_{d-1}(\bbR^3)) \cong
        \bbQ[\omega_{ij} : 1 \le i<j<d]/( (\omega_{ij})^2, \omega_{ij}\omega_{jk} - \omega_{ij}\omega_{ik} - \omega_{jk}\omega_{ik} )
    \]
    The trivialisation $\Conf_d(S^3) \cong S^3 \times \Conf_{d-1}(\bbR^3)$ is an isomorphism on cohomology and it sends $\omega_{ij}$ to $\omega_{ij}^d$.
    Therefore we can conclude that $\overline{\Psi}$ is an isomorphism.
\end{proof}

\subsection{The cohomology of the moduli space of punctured $3$-spheres}\label{sec:punctured-moduli}
We now compute the rational cohomology of the homotopy quotient of $\Conf_d(S^3)$ by the action of $\SO(4)$. Equivalently, if \smash{$\Diff_{\{1,\dots,d\}}^+(S^3)$} denotes the $\Diff^+(S^3)$-stabiliser of $d$ distinct points $x_1, \dots, x_d \in S^3$, then by Hatcher's theorem \cite{Hatcher}, we have:
\[
    \BDiff^+_{\{1, \dots, d\}}(S^3)
        \simeq \Conf_d(S^3) \hq \SO(4).
\]
This homotopy quotient arose naturally in the base of fibre sequence in \cref{cor:C-fibre-sequence} obtained by forgetting the framing at the foot of each handle attachment in $\rmC^\Gamma_\sigma(P_1,\dots,P_n) $. 

The action of $\SO(4)$ on $\Conf_\bullet(S^3)$ is compatible with the functoriality in $\FI^{\rm op}$, the homotopy quotient $\Conf_\bullet(S^3)\hq \SO(4)$ also defines a functor $\FI^{\rm op} \to \Top$.
This means that the cohomology ring 
\[
    H^*(\Conf_\bullet(S^4)\hq \SO(4)) \colon \FI \to \mathrm{Ring}
\]
is also an $\FI$-ring. Our goal will be to give generators and relations for this ring, just as in \cref{prop:H*ConfS^3}.
Note that the fibre sequence
\[
    \Conf_\bullet(S^3) \to  \Conf_\bullet(S^3) \hq \SO(4) \to  \BSO(4)
\]
induces maps of $\FI$-rings on cohomology, where $H^*(\BSO(4))$ is the constant $\FI$-ring $\bbQ[p_1,e]$.

Before we begin, it is useful to note that
in degree $2$ the restriction map
\[
    H^2(\Conf_\bt(S^3)\hq\SO(4)) \longrightarrow
    H^2(\Conf_\bt(S^3))
\]
is an isomorphism. 
This follows from the Serre spectral sequence since $\BSO(4)$ is simply connected and has no rational cohomology below degree $4$.

\begin{defn}
    We define the classes that will be the generators as follows:
    \begin{enumerate}
        \item The class $c_{12}^3 \in H^2(\Conf_3(S^3)\hq \SO(4))$ is the unique class that restricts to $2\cdot\omega_{12}^3$ under the map
        \[
            \Conf_3(S^3) \hookrightarrow \Conf_3(S^3) \hq \SO(4).
        \]
        (This is well-defined because we observed that this map induces an isomorphism in second rational cohomology.)
        \item The class $\delta \in H^4(\Conf_0(S^3)\hq \SO(4)) \cong H^4(\BSO(4))$ is the first Pontrjagin class.
    \end{enumerate}
\end{defn}

We identify $S^3 = \SU_2$ and $\SO(4) = (\SU_2 \times \SU_2)/(-1,-1)$.
The standard action of $\SO(4)$ on $S^3$ may then be written as the conjugation action:
\[
    (g,h).v := g\cdot v \cdot h^{-1}.
\]
Each factor of $\SU_2$ acts freely, transitively on $S^3$ and the stabiliser of any point is isomorphic to $\SO(3)$. Recall the trivialisation $\varphi_1\colon\Conf_d(S^3)\rightarrow S^3\times \Conf_{d-1}(\bbR^3)$ from Equation \ref{eqn:Conf Isomorphism}. Under this identification, the $\Conf_{d-1}(\bbR^3)$-coordinate corresponds to a collection of distinct vectors in the tangent space of the $S^3$-coordinate.  The action of $\SU_2\times \SU_2$ translates as follows
\[\xymatrix{
(x_1,\ldots,x_d)\ar[rr]^{\varphi_1}\ar[d]_{(g,h)\cdot}&&(x_1,x_1^{-1}x_2,\cdots,x_1^{-1}x_d)\ar[d]_{(g,h)\cdot}\\
(gx_1h^{-1},\ldots, gx_dh^{-1})\ar[rr]^{\varphi_1}&&(gx_1h^{-1}, hx_1^{-1}x_2h^{-1},\ldots, hx_1^{-1}x_dh^{-1})
}\]
In particular, the first factor of $\SU_2$ acts trivially on $\Conf_{d-1}(\bbR^3)$ while the second factor acts by the conjugation action on each tangent space.  
Since the first factor of $\SU_2$ clearly acts freely on $S^3$ and the conjugation action is equivalent to the defining action of $\SO(3)$ on $\bbR^3$, we can  first quotient by the action of the left-hand $\SU_2$ to get
\[
    \Conf_d(S^3)\hq \SO(4) 
    \simeq (\Conf_d(S^3)/\SU_2)\hq \SO(3)
    \simeq \Conf_{d-1}(\bbR^3)\hq \SO(3).
\]

\begin{lem}\label{lem:Conf-for-n=3}
    For $d=3$ let we have a homotopy equivalence $\Conf_3(S^3)\hq \SO(4) \simeq \BSO(2)$ such that $c_{12}^3$ goes to the Euler class in $H^2(\BSO(2))$.
    In particular, $c_{12}^3$ comes from an integral class.
\end{lem}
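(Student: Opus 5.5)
Using the equivalence established just before the lemma, $\Conf_3(S^3)\hq\SO(4)\simeq \Conf_2(\bbR^3)\hq\SO(3)$, where the remaining $\SO(3)$ acts on $\bbR^3$ by the standard linear action. The trivialisation $\varphi_1$ sends $x_1$ to the base point, so the two points of $\Conf_2(\bbR^3)$ are $v_2=x_1^{-1}x_2$ and $v_3=x_1^{-1}x_3$. Since the $\SO(3)$-action is linear, it commutes with translations up to conjugation, but not quite with them. I would instead deformation retract $\Conf_2(\bbR^3)$ $\SO(3)$-equivariantly onto $S^2$. The map $(v_2,v_3)\mapsto (v_3-v_2)/|v_3-v_2|$ is $\SO(3)$-equivariant. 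Its fibre over a direction $u$ is $\{(v_2,v_3): v_3-v_2\in\bbR_{>0}u\}\cong \bbR^3\times\bbR_{>0}$, which is contractible. Hence $\Conf_2(\bbR^3)\simeq S^2$ equivariantly, and
\[
\Conf_3(S^3)\hq\SO(4)\simeq S^2\hq\SO(3)\simeq \SO(3)/\SO(2)\hq\SO(3)\simeq \BSO(2),
\]
because $\SO(3)$ acts transitively on $S^2$ with stabiliser $\SO(2)$. (Alternatively I could use $\SO(3)$-equivariant linear maps, but the above is cleanest.)

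Next I identify the class. Under $\BSO(2)\simeq S^2\hq\SO(3)$, the restriction along the fibre inclusion $S^2\to S^2\hq\SO(3)$ corresponds to $S^2=\SO(3)/\SO(2)\to\BSO(2)$. This map classifies the principal $\SO(2)$-bundle $\SO(3)\to S^2$, i.e.\ the unit tangent bundle of $S^2$, whose Euler class is $\chi(S^2)=2$ times the generator. So the Euler class $e$ restricts to $\pm 2[S^2]^\vee$.

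On the other side, $\omega_{12}^3$ is by definition the pullback of the fundamental class of $\bbR^3\setminus\{0\}\simeq S^2$ under the difference map in the trivialisation with $x_3$ at infinity. This differs from my choice (with $x_1$ at infinity) only by the index relations \eqref{eqn:SwitchInfinity} and \eqref{eqn:SwitchBasepoint}, i.e.\ by a sign: $\omega_{12}^3=\pm\omega_{23}^1$, and $\omega_{23}^1$ is exactly the generator pulled back from my $S^2$. Since $H^2(\Conf_3(S^3)\hq\SO(4))\to H^2(\Conf_3(S^3))$ is an isomorphism, $e$ maps to $\pm2\omega_{12}^3$. Choosing the orientation of $\SO(2)$ (equivalently, composing with the self-equivalence of $\BSO(2)$ inducing $e\mapsto -e$) fixes the sign, so $c_{12}^3\mapsto e$. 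Integrality follows because $e$ is integral.

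The main obstacle is the sign and orientation bookkeeping: checking that the composite of the trivialisation $\varphi_1$, the conjugation action, and the relation between $\omega_{23}^1$ and $\omega_{12}^3$ really produces a factor of exactly $2$ rather than $-2$. I expect to absorb this freedom into the choice of homotopy equivalence, which the statement permits.
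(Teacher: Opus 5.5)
Your proof is correct and follows essentially the paper's route. Both reduce via $\Conf_2(\bbR^3)\simeq S^2$ to $S^2\hq\SO(3)\simeq\BSO(2)$, then show the fibre inclusion $S^2\to S^2\hq\SO(3)$ pulls the Euler class back to $\pm2$ times the generator, which matches the factor $2$ built into the definition of $c_{12}^3$. The only difference is the source of that factor: you read it off $\chi(S^2)=2$, as the Euler class of the principal bundle $\SO(3)\to S^2$, while the paper uses the homotopy long exact sequence with $\pi_2\BSO(3)\cong\bbZ/2$. Absorbing the sign into the choice of equivalence is legitimate.
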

\begin{proof}
    Let $p = (0, e_1, \infty) \in \Conf_3(S^3)$ be the configuration of three points given by the origin, the first basis vector, and the point at $\infty$ in $S^3 = (\bbR^{3})^+$.
    Let $\SO(2) \subset \SO(4)$ be the subgroup of rotations around the first coordinate axis.
    Because $\SO(2)$ fixes $p$, we get a map 
    \[
        \BSO(2) = \{p\} \hq \SO(2) \longrightarrow \Conf_3(S^3) \hq \SO(4),
    \]
    which we will show is a homotopy equivalence.
    Consider the inclusion $S^2 \subset \Conf_2(\bbR^3) \subset \Conf_3(S^3)$ defined by $v \mapsto (0,v)$ and $(a,b) \mapsto (a,b,\infty)$, respectively.
    Then we have a commutative diagram
    \[\begin{tikzcd}
        & S^2 \rar["\simeq"] \dar["i"] & 
        {\Conf_2(\bbR^3)} \rar["{H^2(-;\bbZ)\text{-iso}}"] \dar & 
        {\Conf_3(S^3)} \dar["{H^2(-;\bbQ)\text{-iso}}"] \\
        {\{p\} \hq \SO(2)} \rar["\simeq"] &
        {S^2 \hq \SO(3)} \rar["\simeq"] &
        {\Conf_2(\bbR^3) \hq \SO(3)} \rar["\simeq"] &
        {\Conf_3(S^3) \hq \SO(4)}
    \end{tikzcd}\]
    where first bottom map is an equivalence by the homotopical orbit-stabiliser lemma (\cite[Lemma 2.10]{BoydBregmanSteinebrunner-finiteness}), the second one is the $\SO(3)$-equivariant equivalence $S^2 \simeq \Conf_2(\bbR^3)$, and the third map is an equivalence by the preceding discussion.
    
    The left-most vertical map $i$ is the fibre inclusion of a fibration $S^2\hq \SO(3)\rightarrow \BSO(3)$. From the long exact sequence on homotopy groups, we see that, since $\pi_2 \BSO(3) = \bbZ/2$, the map $i$ must induce multiplication by $2$ on $\pi_2$ and hence on $H_2$.
    Therefore, the generator of $H^2(S^2\hq \SO(3); \bbZ)$ (which is the Euler class in $\BSO(2)$) pulls back to twice a generator on $S^2$.
    The above diagram hence shows that under the isomorphism $H^2(\BSO(2); \bbZ) \cong H^2(\Conf_3(S^3)\hq \SO(4); \bbZ)$ the Euler class maps to some $e'$ whose pullback to $\Conf_3(S^3)$ is $2 \cdot \omega_{12}^3$.
    By the definition of $c_{ij}^k$ we must have $e'=c_{12}^3$.
\end{proof}

\begin{cor}\label{cor:cohomology presentation}
There is a well-defined map of graded rings
\begin{equation}\label{eqn:Cohomology presentation}
    \Phi\colon\bbQ[\delta, c_{ij}^k : 1 \le i,j,k \le d]/I_\delta
    \longrightarrow
    H^*(\Conf_d(S^3)\hq\SO(4))
 \end{equation}
where $I_\delta$ is the ideal generated by the relations
\begin{align*}
    c_{ij}^k &= -c_{ji}^k = c_{jk}^i, &
    (c_{ij}^k)^2 &= \delta &
    c_{ij}^k &= c_{ij}^l + c_{jk}^l + c_{ki}^l.
\end{align*}
\end{cor}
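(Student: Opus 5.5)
To prove \cref{cor:cohomology presentation}, we send $\delta$ to the pullback of $p_1$ along $\Conf_d(S^3)\hq\SO(4)\to\BSO(4)$, and $c_{ij}^k$ to the pullback of $c_{12}^3$ along the map $\Conf_d(S^3)\hq\SO(4)\to\Conf_3(S^3)\hq\SO(4)$ that forgets all but $x_i,x_j,x_k$ (relabelled as $1,2,3$). This defines a ring map from the free polynomial ring. The work is then to check that each generator of $I_\delta$ maps to zero.

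\textbf{Linear relations.} The two linear relations live in degree $2$. We observed above that restriction $H^2(\Conf_d(S^3)\hq\SO(4))\to H^2(\Conf_d(S^3))$ is an isomorphism. This restriction is natural with respect to the $\FI$-structure, so it sends $c_{ij}^k$ to $2\omega_{ij}^k$. Hence the relations $c_{ij}^k=-c_{ji}^k=c_{jk}^i$ and $c_{ij}^k=c_{ij}^l+c_{jk}^l+c_{ki}^l$ follow from the corresponding relations for the $\omega$'s in \cref{prop:H*ConfS^3}, after multiplying by $2$.

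\textbf{Quadratic relation.} The remaining relation $(c_{ij}^k)^2=\delta$ lives in degree $4$. Both sides are pulled back from $\Conf_3(S^3)\hq\SO(4)$: for $\delta$ this holds because $p_1$ is pulled back from $\BSO(4)$ through every stage. So it suffices to check $(c_{12}^3)^2=p_1$ in $H^4(\Conf_3(S^3)\hq\SO(4))$. By \cref{lem:Conf-for-n=3}, this space is equivalent to $\BSO(2)$ via the point $p=(0,e_1,\infty)$ with stabiliser $\SO(2)\subset\SO(4)$, rotating around the first axis. Under this equivalence $c_{12}^3$ corresponds to the Euler class $e$. Also, $p_1$ restricts along $\BSO(2)\to\BSO(4)$ to $p_1$ of the $4$-dimensional representation $\bbR^2\oplus\bbR^2_{\rm triv}$ of $\SO(2)$, since the action fixes the $e_1$-axis and the $\infty$-direction. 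That restriction is $p_1$ of the standard rotation representation, which is $e^2$. Hence $(c_{12}^3)^2=e^2=p_1=\delta$, as required.

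\textbf{Main obstacle.} The delicate step is identifying the restriction of $p_1$ to $\BSO(2)$ correctly, including the sign convention $p_1(L)=e(L)^2$ for an oriented plane bundle. One also has to confirm that the equivalence of \cref{lem:Conf-for-n=3} is compatible with the map to $\BSO(4)$; it is, because that equivalence is induced by the subgroup inclusion $\SO(2)\subset\SO(4)$. The factor of $2$ in the normalisation of $c_{12}^3$ is exactly what makes $c_{12}^3$ the integral Euler class, rather than half of it, so that its square is $p_1$ and not $p_1/4$.
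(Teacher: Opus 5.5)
Your proposal is correct and follows the same route as the paper. You deduce the linear relations from those for the $\omega_{ij}^k$ via the degree-$2$ isomorphism $H^2(\Conf_d(S^3)\hq\SO(4))\cong H^2(\Conf_d(S^3))$, and you reduce the quadratic relation to $\Conf_3(S^3)\hq\SO(4)\simeq\BSO(2)$, where $c_{12}^3$ becomes the Euler class and $\delta$ restricts to $p_1=e^2$; your remarks about compatibility with the map to $\BSO(4)$ and the splitting $\bbR^2\oplus\bbR^2$ simply spell out what the paper calls the stabilisation map $\BSO(2)\to\BSO(4)$.
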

\begin{proof}
    We need to show that the classes $c_{ij}^k$ and $\delta$ that we define above indeed satisfy these relations.
    Since the restriction map
    \[
        H^*(\Conf_d(S^3)\hq\SO(4)) \longrightarrow H^*(\Conf_d(S^3))
    \]
    is an isomorphism for $*=2$, the linear relations between the $c_{ij}^k$ follow from the relations we know for $\omega_{ij}^k$.
    It remains to check that $(c_{12}^3)^2 = \delta$ holds in $H^*(\Conf_3(S^3) \hq \SO(4))$.
    By \cref{lem:Conf-for-n=3} we can verify this relation in $H^*(\BSO(2))$, which is a polynomial ring $\bbQ[e]$ generated by the Euler class $e$ corresponding to $c_{12}^3$ under the isomorphism.
    The class $\delta$ is the restriction of the first Pontryagin class along the stabilisation map $\BSO(2) \to \BSO(4)$, which gives $p_1 = e^2 = (c_{12}^3)^2$.
    The case for general $i,j,k$ follows by restriction along maps that forget and relabel points.
\end{proof}

Let $R_\delta$ denote the ring $\bbQ[\delta, c_{ij}^k : 1 \le i,j,k \le d]/I_\delta$ appearing in \cref{eqn:Cohomology presentation}. In order to establish that $\Phi$ is an isomorphism, we will need the following property of $R_\delta$:

\begin{lem}\label{lem:delta-cancellable}
    The element $\delta \in R_\delta$ is cancellable, \emph{i.e.}~it is not a zero-divisor.
\end{lem}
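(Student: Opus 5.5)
We show that $R_\delta$ is a free module over the polynomial ring $\bbQ[\delta]$. Since $\bbQ[\delta]$ is a domain, multiplication by $\delta$ is then injective on $R_\delta$.

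\emph{Reduction to $k=d$ generators.} First we rewrite $R_\delta$ exactly as in the proof of \cref{prop:H*ConfS^3}. The antisymmetry relations let us keep only generators $c_{ij}^k$ with $i<j<k$. The four-term relation $c_{ij}^k = c_{ij}^d + c_{jk}^d + c_{ki}^d$ expresses every generator in terms of the $c_{ij}^d$ with $i<j<d$. Hence
\[
    R_\delta \cong \bbQ[\delta, c_{ij}^d : 1\le i<j<d]/J_\delta,
\]
where $J_\delta$ is generated by the squared relations. These are $(c_{ij}^d)^2 = \delta$, together with the expansion of $(c_{ij}^k)^2=\delta$ for $k<d$. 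Using $(c_{ij}^d)^2=\delta$, that expansion becomes
\[
    3\delta + 2\big(c_{ij}^d c_{jk}^d - c_{ij}^d c_{ik}^d - c_{jk}^d c_{ik}^d\big) = \delta,
\]
that is, $c_{ij}c_{jk} - c_{ij}c_{ik} - c_{jk}c_{ik} = -\delta$ after dropping the superscript $d$. This is a filtered deformation of the Arnold-type presentation of $H^*(\Conf_{d-1}(\bbR^3))$ from \cite{Feichtner-Ziegler}: setting $\delta=0$ recovers exactly that ring. Write $A$ for $H^*(\Conf_{d-1}(\bbR^3))$.

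\emph{Flatness.} Next we show that $R_\delta$ is free over $\bbQ[\delta]$, with the same basis as $A$. The standard basis of $A$ consists of the monomials $c_{i_1 j_1}\cdots c_{i_r j_r}$ with $j_1<\dots<j_r$ distinct. We show that these monomials span $R_\delta$ over $\bbQ[\delta]$, by running the Arnold straightening algorithm in the deformed ring. A square is replaced by $\delta$, and a product $c_{ij}c_{ik}$ with a repeated larger index $k$ is rewritten using the deformed three-term relation. Each replacement either lowers the number of $c$-factors, paying a factor of $\delta$, or is the usual rewriting step, so the algorithm terminates. The $\delta$ terms only lower the $c$-degree, so the spanning argument is unchanged.

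\emph{Linear independence.} For independence we use a graded dimension count. $R_\delta$ is graded, with $|\delta|=4$ and $|c|=2$, and $R_\delta/\delta R_\delta \cong A$. If the spanning set had a nontrivial $\bbQ[\delta]$-relation, then after dividing by the maximal power of $\delta$ it would reduce to a nontrivial relation in $A$, contradicting independence there. Dividing by $\delta$ is only legitimate if we already know $\delta$ is a non-zero-divisor, so this step is circular, and it is the main obstacle. To avoid it we would exhibit the independence directly by specialising $\delta$. Base-change along $\delta\mapsto 1$ (after adjoining a square root if needed) gives a ring in which $c_{ij}^2=1$. We aim to identify this ring with the group algebra of an elementary abelian $2$-group, or with the algebra of functions on the real points of the deformation, and check that its dimension is $\dim A = \prod_{m=1}^{d-2}(1+m)=(d-1)!$. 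Concretely, we would count the $\bbQ$-points of the affine variety $c_{ij}^2=1$ cut out by the deformed relations. These are sign assignments $c_{ij}=\pm1$ satisfying $c_{ij}c_{jk}-c_{ij}c_{ik}-c_{jk}c_{ik}=-1$, and we expect them to correspond to the $(d-1)!$ total orderings of $\{1,\dots,d-1\}$, with $c_{ij}$ the sign of the comparison. We expect the variety to be reduced, so the quotient is semisimple of dimension $(d-1)!$. A finitely generated graded $\bbQ[\delta]$-module generated by $(d-1)!$ elements whose fibre at $\delta=1$ has dimension $(d-1)!$ is free. Freeness gives the lemma.
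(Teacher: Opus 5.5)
Your approach is correct, but it obtains independence differently from the paper. The paper substitutes $\delta=z^2$ with $z=c^d_{d-2,d-1}$. It then asserts, leaving the Buchberger check to the reader, that the squared and three-term relations form a Gr\"obner basis for a lex order with $z$ smallest. Since no leading term involves $z$, this gives $pz^2\in J\Rightarrow p\in J$.

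You split the problem in two. First, the $(d-1)!$ Arnold monomials span $R_\delta$ over $\bbQ[\delta]$, by straightening with $\delta$-correction terms; note that the product to rewrite is $c_{ik}c_{jk}$, not $c_{ij}c_{ik}$. Second, you get a lower bound from the fibre at $\delta=1$, and the graded structure theorem turns the two into freeness.

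That last step is less conjectural than you write, and you should state it as a proof:
\begin{itemize}
\item $\bbQ[c_{ij}]/(c_{ij}^2-1)$ is already the algebra of functions $\{\pm1\}^{\binom{d-1}{2}}\to\bbQ$. So $R_\delta/(\delta-1)$ is automatically the function algebra of the solution set; there is no reducedness, square root or group algebra to check.
\item For sign values, $(c_{ij}+c_{jk}+c_{ki})^2=1$ says precisely that the three values are not all equal. Equivalently, the tournament with $a\to b\iff c_{ab}=1$ has no cyclic triangle, so it is a total order of $\{1,\dots,d-1\}$.
\item Hence the fibre has dimension exactly $(d-1)!$, and the circular paragraph can be deleted.
\end{itemize}

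What your route buys is a transparent count in place of a deferred Buchberger computation, and a built-in check. It confirms that the deformed relation is $c_{ij}c_{jk}-c_{ij}c_{ik}-c_{jk}c_{ik}=-\delta$, as you wrote. The paper's displayed generator instead ends in $-z^2$, i.e.\ the opposite sign. Taken literally, that typo leaves no points at $\delta=1$ and would make $\delta$ nilpotent. The paper's route, once the Gr\"obner basis is verified, gives a normal form for elements of $R_\delta$ directly and needs no base change.
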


\begin{proof}
    Eliminating all linear relations  and the relation $\delta = (c_{d-2,d-1}^d)^2$ in the presentation of $R_\delta$, we observe that $R$ is isomorphic to $\bbQ[c_{ij}^d\colon 1\leq i< j\leq d-1]/J$, where $J$ is the ideal generated by the set $G$ consisting of 
    \[\begin{array}{cc}(c_{ij}^d)^2-(c_{d-2,d-1}^d)^2,&c_{ij}^dc_{jk}^d-c_{ij}^dc_{ik}^d-c_{jk}^dc_{ik}^d-(c_{d-2,d-1}^d)^2
    \end{array}\]
    Order the generators so that $c_{ij}^d>c_{pq}^d$ if and only if $i<p$, or $i=p$ and $j<q$. With respect to the resulting monomial order, $G$ is a Gr\"obner basis. The verification of this fact is routine and left to the reader (see \cite{CLO}). 
    To simplify notation we denote the minimal generator by $z=c_{d-2,d-1}^d$. Note that $z^2$ represents $\delta$.
    
    To prove that $\delta = [z^2]$ is not a zero-divisor in $\bbQ[c_{ij}^d]/J$ we need to argue that $p \cdot z^2 \in J$ implies $p \in J$ for any polynomial $p \in \bbQ[c_{ij}^d]$.
    To show this, note that no multiple of $z$ is the leading term of an element of $G$.
    Therefore the leading term of $pz^2$ is divisible by the leading term of an element of $G$ if and only if the leading term of $p$ is.
    Moreover, when performing a $G$-reduction for such a leading term the result will still be of the form $p'z^2$.
    Hence $pz^2$ can be $G$-reduced to 0 if and only if $p$ can be $G$-reduced to 0. 
    Since $G$ is a Gr\"obner basis, this shows $p\in J \Leftrightarrow pz^2 \in J$, as claimed.
\end{proof}

We now show that $\Phi$ is in fact an isomorphism.
This means that the $\FI$-ring $H^*(\Conf_\bullet(S^3)\hq\SO(4))$ is generated by $c_{12}^3$ and $\delta$ subject to the relations listed above.

\begin{prop}\label{prop:Conf//SO4}
The map $\Phi$ in \cref{eqn:Cohomology presentation} is an isomorphism of graded rings.
Therefore the rational cohomology of $\BDiff^+_{\{1,\dots,d\}}(S^3) \simeq \Conf_d(S^3)\hq\SO(4)$ is 
\[
    H^*(\Conf_d(S^3)\hq\SO(4)) 
    \cong \bbQ[\delta, c_{ij}^k : 1 \le i,j,k \le d]/I_\delta
\]
and the action of $\Sym_d$ is given by permuting the indices.
\end{prop}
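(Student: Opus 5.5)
We compare the Serre spectral sequence of the fibration $\Conf_d(S^3)\to\Conf_d(S^3)\hq\SO(4)\to\BSO(4)$ with the known answer for $\Conf_d(S^3)$ from \cref{prop:H*ConfS^3}. A cleaner route uses the reduction established above, $\Conf_d(S^3)\hq\SO(4)\simeq \Conf_{d-1}(\bbR^3)\hq\SO(3)$, and the fibration
\[
\Conf_{d-1}(\bbR^3)\longrightarrow \Conf_{d-1}(\bbR^3)\hq\SO(3)\longrightarrow \BSO(3),
\]
whose base has cohomology $\bbQ[p_1]$ and is simply connected. The fibre has cohomology concentrated in even degrees (the Feichtner--Ziegler algebra recalled in the proof of \cref{prop:H*ConfS^3}), and so does the base. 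Hence the spectral sequence degenerates at $E_2$, and $H^*(\Conf_d(S^3)\hq\SO(4))$ is a free $\bbQ[\delta]$-module. Its Poincar\'e series is $P_{\Conf_{d-1}(\bbR^3)}(t)/(1-t^4)$, and restriction to the fibre is surjective. Here $\delta$ is identified with $p_1$ via $\BSO(3)\to\BSO(4)$ up to the sign and normalisation conventions already fixed in \cref{cor:cohomology presentation}.

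\textbf{Surjectivity of $\Phi$.} By the Leray--Hirsch theorem it suffices that the images of the $c_{ij}^k$ generate the fibre cohomology as a ring. The restriction of $c_{ij}^k$ is $2\omega_{ij}^k$, and by \cref{prop:H*ConfS^3} the $\omega_{ij}^d$ generate $H^*(\Conf_{d-1}(\bbR^3))$. Lifting monomials in the $\omega_{ij}^d$ to monomials in the $c_{ij}^d$ therefore gives a $\bbQ[\delta]$-basis of the target lying in the image of $\Phi$.

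\textbf{Injectivity of $\Phi$.} We use \cref{lem:delta-cancellable}, which says $\delta$ is a nonzerodivisor in $R_\delta$. Setting $\delta=0$, the relations of $I_\delta$ become exactly the relations $I$ of \cref{prop:H*ConfS^3} with $\alpha$ omitted, after rescaling $c\mapsto 2\omega$. So $R_\delta/\delta\cong H^*(\Conf_{d-1}(\bbR^3))$, and under this identification $\Phi$ mod $\delta$ is the restriction isomorphism. Since $\delta$ is a nonzerodivisor, the Hilbert series of $R_\delta$ equals that of $R_\delta/\delta$ divided by $(1-t^4)$, which matches the target. A surjection between graded vector spaces of equal finite dimension in each degree is an isomorphism. (Alternatively, a graded Nakayama argument finishes: $\Phi$ is surjective, both sides are $\delta$-torsion-free, and $\Phi\otimes\bbQ[\delta]/\delta$ is an isomorphism.)

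\textbf{Equivariance and main obstacle.} $\Sym_d$-equivariance holds because the classes are defined by pulling back along maps that forget and relabel points. The main obstacle is identifying $R_\delta/\delta$ with the fibre cohomology. This requires checking that the ideal generated by $\delta$ and the relations of $I_\delta$, in which $(c_{ij}^k)^2=\delta$ becomes $(c_{ij}^k)^2=0$, is precisely the Feichtner--Ziegler ideal. That check was essentially done in the proof of \cref{prop:H*ConfS^3}, and nonzerodivisibility of $\delta$ is supplied by \cref{lem:delta-cancellable}.
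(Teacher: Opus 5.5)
Your proof is correct and rests on the same ingredients as the paper's: the $E_2$-collapse of the Serre spectral sequence for $\Conf_{d-1}(\bbR^3)\to\Conf_{d-1}(\bbR^3)\hq\SO(3)\to\BSO(3)$, \cref{lem:delta-cancellable}, and the identification $R_\delta/\delta R_\delta\cong H^*(\Conf_{d-1}(\bbR^3))$ coming from \cref{prop:H*ConfS^3}. The only difference is the final step: you use surjectivity via Leray--Hirsch plus a Hilbert-series count, where the paper compares the $\delta$-adic and Serre filtrations on associated gradeds; note also that once $\Phi$ is surjective onto a $\bbQ[\delta]$-free target, your Nakayama variant already gives $\ker\Phi/\delta\ker\Phi=0$, so \cref{lem:delta-cancellable} is not actually needed.
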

\begin{proof}
    We have already constructed the ($\Sym_d$-equivariant) morphism of graded rings from $R_\delta$ to the cohomology of $\Conf_d(S^3)\hq \SO(4)$, so we only need to show that it is an isomorphism of vector spaces.
    Consider the Serre spectral sequence associated to the fibration
    \[
        \Conf_{d-1}(\bbR^3) \to \Conf_{d-1}(\bbR^3)\hq\SO(3) \to \BSO(3).
    \]
    It has signature
    \[
        E_2^{p,q} \cong
        H^q(\Conf_{n-1}(\bbR^3)) \otimes H^p(\BSO(3))
        \Rightarrow 
        H^{p+q}(\Conf_{d-1}(\bbR^3) \hq \SO(3)).
    \]
    The $E_2$-page is even and therefore the spectral sequence collapses: $E_2^{p,q} = E_\infty^{p,q}$.
    The $E_\infty$-page is the associated graded of a decreasing filtration
    \[
        \dots \subseteq F_a^* \subseteq \dots \subseteq F_0^* =
        H^{*}(\Conf_{d-1}(\bbR^3) \hq \SO(3))
    \]
    with $F_a^* = 0$ for $a>*$ and $F_a^a \subseteq H^a(\Conf_{d-1}(\bbR^3)\hq \SO(3))$ being the image of the cohomology of the base $H^a(\BSO(3))$.
    This filtration is compatible with the multiplicative structure in the sense that $F_a^* \cdot F_b^* \subseteq F_{a+b}^{*}$
    and its associated graded is isomorphic, as a bi-graded ring, to:
    \[ 
        \bigoplus_{p} F_p^*/F_{p+1}^* \cong
        \bigoplus_{p} H^*(\Conf_{d-1}(\bbR^3)) \otimes H^p(B\SO(3))
    \]
    The class $\delta \in H^4(\Conf_{d-1}(\bbR^3)\hq\SO(3))$ comes from a cohomology class in the base $H^4(B\SO(3))$ and therefore it is in filtration degree $4$.
    
    We equip the ring $R_\delta$ with the ``$\delta$-adic filtration'' where an element $x \in R_\delta$ is in $F_{4m}^*$ if and only if we can write $x = y \delta^m$.
    Then $\Phi$ is compatible with the filtration and the associated graded of the $\delta$-adic filtration on $R_\delta$ is isomorphic to $(R_\delta/\delta R_\delta) \otimes \bbQ[\delta]$ because $\delta$ is cancellable by \cref{lem:delta-cancellable}.
    The map induced by $\Phi$ on the associated graded is 
    \[
        \mathrm{gr}(\Phi)\colon (R_\delta/\delta R_\delta) \otimes \bbQ[\delta]
        \to H^*(\Conf_{d-1}(\bbR^3)) \otimes H^*(\BSO(3)).
    \]
    This is an isomorphism since $H^*(\BSO(3)) \cong \bbQ[\delta]$ and $R_\delta/\delta R_\delta = \bbQ[c_{ij}^k]/I \cong H^*(\Conf_{d-1}(\bbR^3))$ by \cref{prop:H*ConfS^3}.
    It follows that the original ring homomorphism $\Phi$ was an isomorphism as claimed.
\end{proof}

\begin{cor}\label{cor:free-delta-module}
    The ring $H^*(\Conf_d(S^3)\hq \SO(4))$ is a free graded $\bbQ[\delta]$-module.
\end{cor}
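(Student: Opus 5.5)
The quickest route is through the proof of \cref{prop:Conf//SO4}, which we now treat as a black box. Identify $H^*(\Conf_d(S^3)\hq\SO(4))$ with $R_\delta$ via the isomorphism $\Phi$. The $\bbQ[\delta]$-module structure is then multiplication by $\delta$ in $R_\delta$. It also agrees with the module structure pulled back along $\Conf_d(S^3)\hq\SO(4)\to\BSO(4)$, since $\delta$ is defined as the first Pontryagin class.

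First, I show the module is torsion-free. By \cref{lem:delta-cancellable}, $\delta$ is not a zero-divisor in $R_\delta$. Hence no nonzero element is annihilated by any nonzero polynomial in $\delta$. Such a polynomial is homogeneous, so it is a power $\delta^m$ up to a scalar, and powers of a non-zero-divisor are again non-zero-divisors.

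Second, I use that a graded module over the graded PID $\bbQ[\delta]$ which is bounded below and torsion-free is free. To make this explicit, I would proceed as follows:
\begin{itemize}
\item Choose a homogeneous $\bbQ$-basis $\{b_\lambda\}$ of the quotient $R_\delta/\delta R_\delta$.
\item Lift each $b_\lambda$ to a homogeneous element $\tilde b_\lambda\in R_\delta$.
\item Surjectivity of $\bigoplus_\lambda\bbQ[\delta]\tilde b_\lambda\to R_\delta$: this is the graded Nakayama argument, by induction on degree, using that $R_\delta$ is concentrated in nonnegative degrees and $\delta$ has degree $4$.
\item Injectivity: take a relation $\sum_\lambda f_\lambda(\delta)\tilde b_\lambda=0$ and cancel the largest common power of $\delta$, which is allowed because $\delta$ is cancellable. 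Then reduce modulo $\delta$; this contradicts the linear independence of the $b_\lambda$.
\end{itemize}

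Alternatively, the associated graded computed in the proof of \cref{prop:Conf//SO4} already gives $(R_\delta/\delta R_\delta)\otimes\bbQ[\delta]$, which is free, and freeness lifts from the $\delta$-adic filtration by the same lifting argument.

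There is no real obstacle here. The only point needing care is that the $\bbQ[\delta]$-action comes from $H^*(\BSO(4))$ through $p_1$. Under $\Phi$ this is exactly multiplication by the generator $\delta$, so everything reduces to the algebraic fact \cref{lem:delta-cancellable}.
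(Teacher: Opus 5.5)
Your argument is correct and is essentially the paper's: you use that multiplication by $\delta$ is injective (\cref{lem:delta-cancellable}, transported along $\Phi$) together with connectivity, then build a basis degree by degree by lifting a basis of $R_\delta/\delta R_\delta$, which is the same as the paper's choice of a complement to $\delta H^{k-4}$ in $H^k$. Your remark that a nonzero polynomial in $\delta$ is homogeneous is not literally true, but this is harmless: the kernel of the graded map $\bigoplus_\lambda \bbQ[\delta]\tilde b_\lambda \to R_\delta$ is graded, so only homogeneous relations need to be checked.
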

\begin{proof}
    We know that $H^* = H^*(\Conf_d(S^3) \hq \SO(4))$ is a (connective) graded $\bbQ[\delta]$-module on which multiplication by $\delta$ is injective.
    Therefore, we can construct a basis inductively, by picking in each degree $k$ a basis of the complement of $\delta H^{k-4} \subset H^k$.
\end{proof}

\subsection{The cohomology of $\rmC^\Gamma_\sigma$}\label{subsection:cohomology of D}

We now describe the rational cohomology ring of $\rmC^\Gamma_\sigma(P_1,\dots,P_n)$ functorially in automorphisms in $(\Gamma,\sigma) \in \Gr_{g,n}$.
We are mainly interested in the example of $M = U_g := (S^1 \times S^2)^{\sharp g}$, where $n=0$ and $\rmC^\Gamma = \rmD(\Gamma)$.
In this case we get a complete description in terms of generators and relations.
When $n>0$ we still obtain a description in terms of the cohomology of the configuration spaces of the $P_i$.
The functoriality in edge contractions will be discussed in the next subsection (see \cref{lem:cohomology-functoriality}).

In the cohomology of $\rmC^\Gamma_\sigma$ we will have a class $c_{ij}^k$ for every triple of distinct half-edges $(i,j,k) \in H_\Gamma$ that are incident at the same vertex.
It will hence be useful to introduce the notation
\[
    H_\Gamma^\tripod := \{ (i,j,k) \in H_\Gamma^3 \;|\; r(i) = r(j) = r(k) \} 
    = \coprod_{v \in V_\Gamma} (H_v)^3.
\]
We will also write $H_1(\Gamma, \sigma)$ for the first cellular homology of $\Gamma$ relative to the subspace $\{\sigma(1),\dots,\sigma(n)\}$, which is an $\Aut(\Gamma,\sigma)$-representation.

\begin{thm}\label{thm:DGamma Cohomology Ring}
    For $(\Gamma,\sigma) \in \Gr_{g,n}$ there is an $\Aut(\Gamma,\sigma)$-equivariant isomorphism of graded rings
    \begin{align*}
        H^*(\rmC_\sigma^\Gamma(P_1,\dots,P_n)) 
        & \cong 
        \Lambda_{\bbQ} \langle H_1(\Gamma, \sigma)[3] \rangle \otimes 
        \bbQ[\delta, c_{ij}^k : (i,j,k) \in H_\Gamma^{\tripod}]/I^\Gamma
        \otimes \bigotimes_{i=1}^n H^*(\Conf_{H_{\sigma(i)}}(P_i)) 
    \end{align*}
    where $H_1(\Gamma, \sigma)[3]$ denotes the first homology of the graph $\Gamma$ relative to the subset of marked vertices, shifted to have degree 3.
    The ideal $I^\Gamma$ is generated by 
\begin{align*}
    I^\Gamma &= 
    \begin{cases}
    \langle c_{ij}^k = -c_{ji}^k = c_{jk}^i, 
    c_{ij}^k = c_{ij}^l + c_{jk}^l + c_{ki}^l, 
    (c_{ij}^k)^2 = \delta\rangle & \text{ for } n=0\\
    \langle c_{ij}^k = -c_{ji}^k = c_{jk}^i, 
    c_{ij}^k = c_{ij}^l + c_{jk}^l + c_{ki}^l, 
    (c_{ij}^k)^2 = \delta = 0\rangle & \text{ for } n>0
    \end{cases}
\end{align*}
    where the indices $i,j,k$ and $l$ run over distinct half-edges incident at the same vertex.
\end{thm}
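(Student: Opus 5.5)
The natural route is the Serre spectral sequence of \cref{prop:C-spectral-sequence}. By \cref{prop:Conf//SO4}, each factor $H^*(\Conf_{H_v}(S^3)\hq\SO(4))$ in the $E_4$-page is $\bbQ[\delta_v, c_{ij}^k : i,j,k\in H_v]/I_{\delta_v}$. Hence the $E_4$-page is the tensor product of the exterior algebra $B := \Lambda_\bbQ\{\beta_h\}/\langle \beta_h+\beta_{h^\dagger}\rangle$, which is an exterior algebra on $E_\Gamma$ up to sign choices, with the base ring
\[
R := \bigotimes_{v\in V_\Gamma^{\rm sph}} R_{\delta_v}\otimes \bigotimes_i H^*(\Conf_{H_{\sigma(i)}}(P_i)).
\]
The only possibly nonzero differential is $d_4$, with $d_4(\beta_h)=\delta_{r(h)}-\delta_{r(h^\dagger)}$. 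Setting $\delta_v=0$ at marked vertices, we can read this as a Koszul complex. The map $\partial\colon \bbQ\langle E_\Gamma\rangle \to \bbQ\langle V^{\rm sph}_\Gamma\rangle$, $e\mapsto \delta_{r(h)}-\delta_{r(h^\dagger)}$, is the cellular boundary map of $\Gamma$ relative to the marked vertices, composed with the linear map $\bbQ\langle V_\Gamma/\sigma\rangle\to R^4$ sending $v\mapsto\delta_v$.

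First I change basis in $B$. Choose a complement to $\ker\partial = H_1(\Gamma,\sigma)$ in $\bbQ\langle E_\Gamma\rangle$. Then $B\cong \Lambda\langle H_1(\Gamma,\sigma)[3]\rangle\otimes\Lambda\langle \operatorname{im}\partial\rangle$, and $d_4$ vanishes on the first factor. The image of $\partial$ in the relative chain group $C_0(\Gamma,\sigma)$ has dimension $|V^{\rm sph}|$ if $n>0$, since $\Gamma$ is connected and $H_0(\Gamma,\sigma)=0$. If $n=0$ it is the augmentation kernel, of codimension $1$. So the $E_4$-page is $\Lambda\langle H_1(\Gamma,\sigma)[3]\rangle$ tensored with the Koszul complex on $R$ of the sequence of differences $\delta_v-\delta_w$ along a spanning tree (for $n=0$), or of all $\delta_v$ for $v$ spherical (for $n>0$; connect each spherical vertex to a marked one along a tree).

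The key algebraic step is that this sequence is regular in $R$. By \cref{cor:free-delta-module}, each $R_{\delta_v}$ is free over $\bbQ[\delta_v]$, so $R$ is free over $\bbQ[\delta_v : v\in V^{\rm sph}]$. The $\delta_v$ are therefore a regular sequence, and so are the linear differences. The Koszul complex is then exact except in homological degree $0$. For $n>0$ we get $E_5 = \Lambda\langle H_1(\Gamma,\sigma)[3]\rangle\otimes R/(\delta_v)$, which is the claimed form with $\delta=0$. For $n=0$ we get $R/(\delta_v-\delta_w)$, where all $\delta_v$ are identified to a single $\delta$. In either case $E_5$ is generated by row $0$ together with the permanent cycles $H_1(\Gamma,\sigma)[3]$ in row $3$. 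There is no room for further differentials: they are determined by the Leibniz rule on these generators, and the classes in $H_1$ survive because their $d_4$ vanishes while higher differentials land in rows that are by then already in the image or zero. More carefully, $E_5$ is concentrated in rows $0$ and $3k$ with a free exterior structure, and $d_r$ for $r\ge5$ on a row-$3$ class would have to hit row $\le -2$, which is impossible, so $E_5=E_\infty$.

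The remaining step, which I expect to be the main obstacle, is resolving the extension problem multiplicatively and equivariantly. Row $0$ injects as a subring via the edge map. Lifting $H_1(\Gamma,\sigma)[3]$ to degree-$3$ classes gives a multiplicative map from the free graded-commutative algebra, and odd classes square to zero rationally. This yields a surjective ring map from the claimed ring, which is an isomorphism by comparison with the associated graded. For $\Aut(\Gamma,\sigma)$-equivariance of the lift, I would average over the finite group, using the equivariance of the spectral sequence from \cref{prop:C-spectral-sequence}. Since $F^1H^3$ is the degree-$3$ part of row $0$, which is zero when the $c$'s and configuration classes contribute nothing in odd degree, there is no ambiguity to worry about there. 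In general, averaging a section suffices rationally.
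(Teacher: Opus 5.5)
Your proposal is correct and is essentially the paper's proof. It uses the Serre spectral sequence of \cref{prop:C-spectral-sequence}, with $d_4$ on $E_4^{0,3}$ identified with the relative cellular boundary map, and the freeness from \cref{cor:free-delta-module} making the image of $d_4$ act regularly on row $0$ (your Koszul/regular-sequence argument is exactly the content of \cref{lem:spectral-sequence-lemma}); this is followed by collapse at $E_5$ and an averaged equivariant lift of $H_1(\Gamma,\sigma)[3]$, as in \cref{rem:beta-ses-splitting}. The only slip is cosmetic: for $r\ge 5$ the differential $d_r$ sends row $3$ to row $4-r\le -1$ (not $\le -2$), which still gives $E_5=E_\infty$.
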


\begin{rem}
    Conceptually, the reason that we have $\delta = 0$ for $n>0$ is that $\delta$ represents the $0$th homology group of $\Gamma$ relative to the set of marked points, which is trivial once there is at least one marked point.
\end{rem}

\begin{ex}
    For $n=0$ and $\Gamma \in \Gr_{g,0}$ we have $\Aut(\Gamma)$-equivariant isomorphisms
    \begin{align*}
            H^*(\calS(\Gamma)) 
            \cong H^*(\rmC^\Gamma) 
            & \cong 
            \Lambda_{\bbQ} \langle H_1(\Gamma) [3]\rangle \otimes 
            \frac{\bbQ[\delta, c_{ij}^k : (i,j,k) \in H_\Gamma^{\tripod}]}{
                \langle c_{ij}^k = -c_{ji}^k = c_{jk}^i, 
                c_{ij}^k = c_{ij}^l + c_{jk}^l + c_{ki}^l, 
                (c_{ij}^k)^2 = \delta\rangle
            },
    \end{align*}
    which in low degrees gives
    \begin{align*}
        H^1(\rmC^\Gamma) &\cong 0, &
        H^2(\rmC^\Gamma) &\cong \frac{\bbQ\langle H_\Gamma^\tripod \rangle}{ 
        \langle c_{ij}^k = -c_{ji}^k = c_{jk}^i,  c_{ij}^k = c_{ij}^l + c_{jk}^l + c_{ki}^l \rangle} , & 
        H^3(\rmC^\Gamma) &\cong H_1(\Gamma).
    \end{align*}
\end{ex}

We first establish a general lemma about spectral sequences that we will use in the proof of \cref{thm:DGamma Cohomology Ring}.
\begin{lem}\label{lem:spectral-sequence-lemma}
    Suppose $(E_*,d_*)$ is a multiplicative cohomological spectral sequence with 
    \[
       E_4^{*,*}\cong \Lambda_\bbQ(E_4^{0,3}) \otimes E_4^{*,0} 
    \]
    and assume that $E_4^{*,0}$ is free as a module over $\bbQ[V]$ where $V = \mathrm{Im}(d_4\colon E_4^{0,3} \to E_4^{4,0})$.
    Then the $E_\infty$-page is 
    \[
        E_\infty^{*,*} \cong \Lambda_\bbQ(\ker(d_4\colon E_4^{0,3} \to E_4^{4,0})) \otimes R
    \]
    where $R$ is the quotient of the ring $E_4^{0,*}$ by the ideal generated by $V \subset E_4^{0,4}$.
    Moreover, there are no multiplicative extensions.
\end{lem}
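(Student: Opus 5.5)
The plan is to run the spectral sequence page by page, using a Koszul-type argument. Since $E_4^{*,*}\cong \Lambda_\bbQ(E_4^{0,3})\otimes E_4^{*,0}$, the $E_4$-page is concentrated in rows $q = 3k$, and $d_4$ is determined by its value on $E_4^{0,3}$ via the Leibniz rule. Choose a splitting $E_4^{0,3} = K \oplus W$, where $K = \ker(d_4|_{E_4^{0,3}})$ and $d_4\colon W \xrightarrow{\cong} V$. Then as a differential graded algebra
\[
(E_4, d_4) \cong \Lambda_\bbQ(K)\otimes \big(\Lambda_\bbQ(W)\otimes E_4^{*,0},\, d\big),
\]
where the differential vanishes on $\Lambda_\bbQ(K)$. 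The second factor is exactly the Koszul complex of the ring $A := E_4^{*,0}$ with respect to the elements $d_4(w_1),\dots,d_4(w_r)$, where $w_1,\dots,w_r$ is a basis of $W$ and $v_m := d_4(w_m)$ is the corresponding basis of $V$.

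(Note that the ring $R$ in the statement should be $E_4^{*,0}/(V)$, that is, the base row modulo $V$; I read ``$E_4^{0,*}$'' as this.)

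Next I use the freeness hypothesis: $A$ is free over the polynomial ring $\bbQ[V] = \bbQ[v_1,\dots,v_r]$. The Koszul complex $K(v_1,\dots,v_r;\bbQ[V])$ is a free resolution of $\bbQ$. Tensoring over $\bbQ[V]$ with the free (hence flat) module $A$ therefore gives a complex whose homology is concentrated in Koszul degree $0$, where it equals $A/(V) = R$. By Künneth over $\bbQ$,
\[
E_5 \cong \Lambda_\bbQ(K)\otimes R,
\]
with $R$ sitting in row $0$ and $\Lambda^k(K)$ in row $3k$.

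It remains to show that all later differentials vanish. Since $E_5$ is generated multiplicatively by row $0$ and the elements of $K\subset E_5^{0,3}$, the Leibniz rule reduces this to $d_r(K)=0$ for $r\ge 5$. But $d_r$ maps $E_r^{0,3}$ into $E_r^{r,4-r}$, which is zero for $r\ge 5$. Hence $E_5 = E_\infty$.

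Finally, for multiplicative extensions: $E_\infty$ is free graded-commutative over $R$ on the odd generators $K$. I lift $K$ to $H^3$ and row $0$ along the edge map; row $0$ is a quotient of $H^*$, namely $F^{*}_{*}$ in top filtration. Every relation in $E_\infty$ is either an exterior relation $k^2 = 0$, which holds automatically in $H^*$ by graded commutativity in odd degree, or a relation in $R$, which holds in $H^*$ because $R$ is the image of the edge map, i.e. a subring of $H^*$ via the filtration. The map $\Lambda(K)\otimes R \to H^*$ is then filtration-compatible and induces an isomorphism on associated graded, so it is a ring isomorphism. The delicate point is precisely this: checking that the relations of $R$ lift strictly, which holds because the bottom row is a genuine subring of $H^*$, not merely a subquotient.
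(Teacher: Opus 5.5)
Your proposal is correct and follows essentially the same route as the paper. The paper also splits $E_4^{0,3}\cong K\oplus V$ and uses freeness of $E_4^{*,0}\cong\bbQ[V]\otimes R$ to write $(E_4,d_4)$ as $\mathrm{grSym}(V[3]\xrightarrow{\id}V[4])\otimes(\Lambda_\bbQ(K)\otimes R,\,0)$; your Koszul-complex/Tor computation is the same step. It then kills all $d_r$ with $r\ge 5$ for degree reasons and rules out multiplicative extensions by lifting the free exterior part, which you spell out more carefully than the paper's one-line justification.

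One slip to fix: the bottom row $E_\infty^{p,0}=F^pH^p$ is a subgroup of $H^p$, not a quotient; it is the column $E_\infty^{0,q}$ that is a quotient. The subring description is what your extension argument actually uses.
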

\begin{proof}
    By assumption we can find an isomorphism of graded $\bbQ[V]$-modules $E_4^{*,0} \cong \bbQ[V] \otimes B$ for some graded vector space $B$.
    Let $R$ be the quotient of the ring $E_4^{*,0}$ by the ideal generated by $V \subset E_4^{4,0}$.
    Then the composite $B \hookrightarrow E_4^{*,0} \twoheadrightarrow R$ is an isomorphism, and we can identify $B \cong R$.
    We can non-canonically decompose $E_4^{0,3} \cong K \oplus V$ where $K = \ker(d_4\colon E_4^{0,3} \to E_4^{4,0})$.
    Then the $E_4$ page is 
    \[
        E_4^{*,*} \cong \Lambda_\bbQ(K^{(0,3)}) \otimes \Lambda_\bbQ(V^{(0,3)}) \otimes \bbQ[V^{(4,0)}] \otimes R^{(*,0)}
    \]
    where the superscripts indicate bidegree.
    (Note that this is not an isomorphism of rings, but it respects the module structure over the first three terms.)
    The $d_4$ differential induces the identity isomorphism $d_{4} = \id \colon V^{(0,3)} \to V^{(4,0)}$ and is $0$ on $K^{(0,3)}$ and $R^{(*,0)}$.
    Thus, if we think of $E_4^{*,*}$ as a chain complex (only recording total degree) it decomposes into a tensor product
    \[
        (E_4^{*,*}, d_4) \cong \mathrm{grSym}(V[3] \xrightarrow{d_4 = \id} V[4]) \otimes \left( \Lambda_\bbQ(K[3]) \otimes R, d = 0\right)
    \]
    where $\mathrm{grSym}(-)$ takes the graded symmetric powers.
    Taking homology, and using that over $\bbQ$ the functor $\mathrm{grSym}(-)$ commutes with taking homology, we see that
    \[
        E_5^{*,*} \cong \Lambda_\bbQ(K^{(3,0)}) \otimes R^{(*,0)}.
    \]
    The ring structure on this is the canonical one, coming from thinking of $R^{(*,0)}$ as a quotient of $E_4^{(*,0)}$ -- namely the quotient by the ideal generated by $V$.
    The $d_5$-differential is trivial on $K^{(3,0)}$ and on $R^{(*,0)}$ for degree reasons, so the multiplicative structure implies that $d_5 \equiv 0$ and similarly for all higher differentials. 
    This shows that the $E_\infty$-page has the desired structure.
    Finally, we note that there cannot be any multiplicative extensions because $E_\infty^{0,*}$ is a free graded symmetric algebra.
\end{proof}

\begin{proof}[Proof of \cref{thm:DGamma Cohomology Ring}]
We will use the spectral sequence described in \cref{prop:C-spectral-sequence}.
Using \cref{prop:Conf//SO4} the $E_2$-page of this spectral sequence is 
\[
    \Lambda_\bbQ[\beta_h : h \in H_\Gamma]/\langle \beta_{h^\dagger} = -\beta_h\rangle 
    \otimes \bigotimes_{v\in V_\Gamma^{\rm sph}} \bbQ[\delta_v, c_{ij}^k : (i,j,k) \in H_v]/I_\delta
    \otimes \bigotimes_{i=1}^n H^*(\Conf_{H_{\sigma(i)}}(P_i)).
\]
From \cref{prop:C-spectral-sequence} we also know that the first non-zero differentials are the $d_4$ differentials, and that on the generators $\beta_h$ these are given by
\[
    d_4(\beta_h) = \delta_{r(h)} - \delta_{r(h^\dagger)}.
\]
The notation here indeed matches that of \cref{prop:C-spectral-sequence}: we interpret $\delta_v = 0$ whenever $v$ is a marked vertex.
The first interesting differential is then 
\[
    d_4\colon E_4^{0,3} \cong \bbQ\langle \beta_h : h \in H_\Gamma \rangle / \langle \beta_{h^\dagger} + \beta_h\rangle
    \longrightarrow
    E_4^{4,0} \cong \bbQ\langle \delta_v : v \in V_\Gamma \rangle \oplus (\text{other terms}).
\]
In fact the formula \cref{prop:C-spectral-sequence} gives for $d_4(\beta_h)$ tells us that this map lands in the $\bbQ\langle \delta_v: v \in V_\Gamma\rangle$ summand and gives us a copy of the relative cellular chain complex
\[
    \partial\colon C_1(\Gamma, \sigma) = \frac{\bbQ\langle H_\Gamma \rangle}{[h] = -[h^\dagger]} 
    \longrightarrow C_0(\Gamma, \sigma) = \frac{\bbQ\langle V_\Gamma \rangle}{\bbQ\langle \sigma(1),\dots, \sigma(n)\rangle}
\]
of $\Gamma$ within the spectral sequence.
We thus see the relative homology of $\Gamma$ on the $E_5$-page as
\[
    E_5^{0,3} \cong H_1(\Gamma, \sigma)
    \qquad\text{ and }\qquad
    E_5^{0,3} \cong H_0(\Gamma, \sigma).
\]
To determine the remainder of the spectral sequence, we use \cref{lem:spectral-sequence-lemma}.
We already know that $E_4^{0,*}$ is an exterior algebra on $C_1(\Gamma, \sigma)[3]$.
To describe the row $E_4^{*,0}$, recall from \cref{cor:free-delta-module} that the tensor factor corresponding to a spherical vertex $v \in V_\Gamma^{\rm sph}$ is free over $\bbQ[\delta_v]$, and thus $E_4^{0,*}$ is free as a module over $\bbQ[\delta_v: v \in V_\Gamma^{\rm sph}]$. 
This also implies that $E_4^{0,*}$ is also free as a module over $\mathrm{grSym}_\bbQ( d_4(E_4^{0,3}))$, as $d_4(E_4^{0,3})$ is a subvector space (of codimension $0$ or $1$) of $\bbQ\langle \delta_v : v \in V_\Gamma^{\rm sph}\rangle$.
\cref{lem:spectral-sequence-lemma} now gives us a formula for the $E_5$-page and tells us that the spectral sequence collapses and has no multiplicative extensions, so that there are isomorphisms of rings with $\Aut(\Gamma,\sigma)$-action
\[
    H^*(\rmC^\Gamma_\sigma(P_1,\dots, P_n)) \cong E_\infty^{*,*} \cong E_5^{*,*} \cong \Lambda_\bbQ\langle H_1(\Gamma, \sigma)[3]\rangle \otimes R
\]
where $R$ is the ring obtained as the quotient of $E_4^{*,0}$ by the ideal generated by the image $d_4(E_4^{0,3}) \subset E_4^{4,0}$.
Inserting our previous description of the cohomology of the base we get
\[
    R = 
    \frac{
    \bigotimes_{v\in V_\Gamma^{\rm sph}} \bbQ[\delta_v, c_{ij}^k : (i,j,k) \in H_v]
    }{d_4(E_4^{0,3}) + \sum_{v \in V_\Gamma^{\rm sph}} I_{\delta_v} }
    \otimes \bigotimes_{i=1}^n H^*(\Conf_{H_{\sigma(i)}}(P_i)).
\]
We need to check that the ideal we quotient out in the first term is exactly $I_\Gamma$.
When $n>0$ the cellular boundary map $\partial\colon C_1(\Gamma,\sigma) \to C_0(\Gamma, \sigma)$ is surjective, so that $d_4(E_4^{0,3})$ contains all $\delta_v$.
If instead $n=0$, then the image of $\partial$ has codimension $1$ so that taking the quotient by $d_4(C_1(\Gamma,\sigma))$ exactly identifies all the $\delta_v$.
\end{proof}

\begin{rem}\label{rem:beta-ses-splitting}
    In the proof of \cref{thm:DGamma Cohomology Ring}, the isomorphism $H^*(\rmC^\Gamma_\sigma(P_1,\dots,P_n)) \cong E_\infty^{*,*}$ is not necessarily canonical.
    Nevertheless, it is still compatible with the multiplicative structure (as $E_\infty^{0,*}$ is a free graded symmetric algebra) and the $\Aut(\Gamma,\sigma)$-action (as the category of $\bbQ$-representations over a finite group is semi-simple).
    This will be sufficient for our applications.
    The subtlety lies in splitting the short exact sequence
    \[
        \bigoplus_{i=1}^n H^3(\Conf_{H_{\sigma(i)}}(P_i))
        \longrightarrow
        H^3(\rmC^\Gamma_\sigma(P_1,\dots,P_n)) 
        \longrightarrow
        H_1(\Gamma,\sigma) 
    \]
    that relates $\bigoplus_{a+b=3} E_3^{a,b}$ to $H^3(\rmC^\Gamma_\sigma)$.
    There always is a splitting as $\Aut(\Gamma,\sigma)$-representations, but this is not necessarily canonical.
    (Note that for $n=0$ the first group is trivial, resolving this issue.)
    However, we believe that one could also obtain this splitting canonically, by constructing for each relative cycle in $(\Gamma,\sigma)$ a map $\rmC^\Gamma_\sigma(P_1,\dots,P_n) \to \SO(3)$ that records how the framing changes along the path.
\end{rem}

\subsection{Functoriality of \texorpdfstring{$\beta_h$ and $c_{ij}^k$}{beta and c-ijk}}\label{sec:graph-morphism-functoriality}
In \cref{thm:DGamma Cohomology Ring} we describe the cohomology ring of $\rmC^\Gamma_\sigma(P_1,\dots,P_n)$ functorially in graph \emph{isomorphisms}.
For our computations it will be necessary to also understand what happens to the generating classes under more general graph maps, such as edge-contractions.
For our applications, we will only need the case when $n=0$, but we will also give a partial description for general $n$.

Let $f\colon \Gamma \to \Lambda$ be a morphism in $\Gr_{g,n}$.
Then for a vertex $v \in \Lambda$, the preimage $f^{-1}(v)$ is necessarily a tree. 
Therefore, given three half-edges $i,j,k \in H_v$ incident at $v$ we can find a unique tripod connecting $f^{-1}(i), f^{-1}(j), f^{-1}(k)$ in $f^{-1}(v)$.
This tripod contains a unique triple of half-edges $i', j', k' \in H_\Gamma$ all incident at the same vertex $u \in \Gamma$, which necessarily satisfies $f(u) = v$.
This defines a map
\[
    f^\tripod\colon H_\Lambda^\tripod \to H_\Gamma^\tripod
\]
and in fact makes $H_{(-)}^\tripod\colon \Gr_{g,n}^{\rm op} \to \mathrm{Set}$ a functor.
(For example, in \cref{fig:c_ijk_functorial} the triple $(i,j,k)$ is mapped to $(h,j',k')$.)

\begin{figure}[ht]
\centering
\def\svgwidth{\linewidth}
\import{}{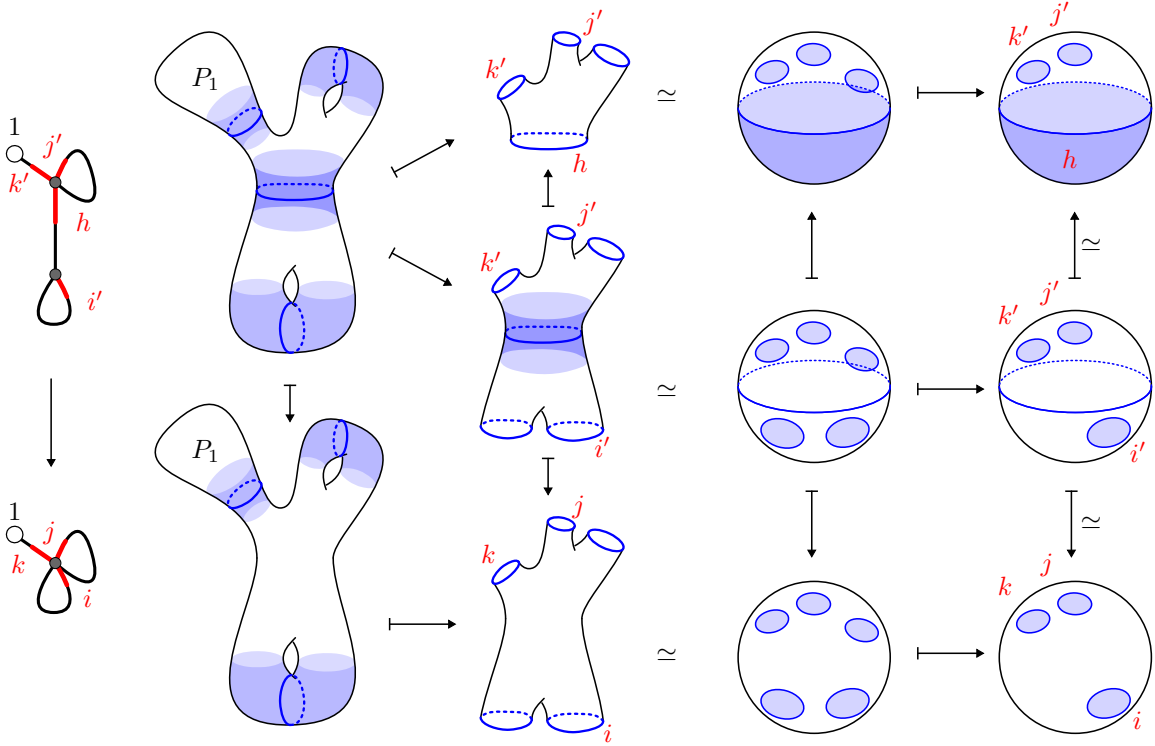}
\caption{An illustration (in $2$D rather than $3$D) of the homotopy commutative diagram in the proof of \cref{lem:cohomology-functoriality}.}
\label{fig:c_ijk_functorial}
\end{figure}

\begin{lem}\label{lem:cohomology-functoriality}
    The map induced by $\rmC^\Gamma_\sigma(P_1,\dots,P_n) \to \rmC^\Lambda_\rho(P_1,\dots, P_n)$ on cohomology maps the $c_{i,j}^k$ classes as
    \[
        c_{(i,j,k)} \mapsto c_{f^\tripod(i,j,k)}
    \]
    for all $(i,j,k) \in H_\Lambda^\tripod$.
\end{lem}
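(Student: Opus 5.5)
We reduce to a single degree-2 class and a homotopy commutative square of forgetful maps. Since the $c_{ij}^k$ for a triple at a vertex $v$ of $\Lambda$ is pulled back along the map $\varphi_v\colon \rmC^\Lambda_\rho \to \Conf_{H_v}(S^3)\hq\SO(4)$ (or $\psi_i$ to $\Conf_{H_{\sigma(i)}}(P_i)$ for a marked vertex), followed by the forgetful map to $\Conf_{\{i,j,k\}}(S^3)\hq\SO(4)\simeq \BSO(2)$ from \cref{lem:Conf-for-n=3}, it suffices to work with the composite $q_{(i,j,k)}\colon \rmC^\Lambda_\rho \to \Conf_3(S^3)\hq \SO(4)$ recording only the three punctures labelled $i,j,k$. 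In the marked case we use instead $\Conf_3(P_i)$ and the class $\omega_{ij}^k$ pulled back via a chart, but the mechanism is the same. Our aim is thus to show that $q_{(i,j,k)}\circ f_*\simeq q_{f^\tripod(i,j,k)}$ as maps $\rmC^\Gamma_\sigma\to \Conf_3(S^3)\hq\SO(4)$. Then the class $c_{ij}^k$ pulls back to $c_{f^\tripod(i,j,k)}$, using also that the labelling of the three punctures is matched, so the signs agree.

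To build the homotopy we work in the thickened model $\Cthick^\Gamma_\sigma$ from \cref{cor:Cthick-to-C}, where $f_*$ simply forgets the spheres of the collapsed forest. Given $(\Sigma^\varepsilon\subset W,\alpha,\iota)$, let $u=\alpha(u)$ be the vertex of $\Gamma$ carrying $(i',j',k')=f^\tripod(i,j,k)$. The component $(W\setminus\Sigma'^\varepsilon_\circ)_v$ for $\Lambda$ is obtained from $(W\setminus\Sigma^\varepsilon_\circ)_u$ by gluing on the punctured spheres of the tree $f^{-1}(v)$ along the forgotten spheres. The three boundary spheres $i,j,k$ lie in distinct branches of the tree away from $u$ (or at $u$ itself). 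Consider the submanifold $B\subset (W\setminus\Sigma'^\varepsilon_\circ)_v$ given by $(W\setminus\Sigma^\varepsilon_\circ)_u$ with the branches through $i',j',k'$ attached and all other boundaries capped. This is a $3$-sphere minus three discs, and $B$ is bounded by the spheres $i,j,k$. Likewise, capping the other boundary spheres of $(W\setminus\Sigma^\varepsilon_\circ)_u$ gives a punctured sphere bounded by $i',j',k'$. Each branch is a punctured sphere with one distinguished outer and inner boundary, which after capping is an annulus $S^2\times I$, so it canonically (up to contractible choice, via Smale/Hatcher) identifies sphere $i'$ with sphere $i$ up to isotopy. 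Hence both recipes produce points of $\BDiff^{\rm sph}_{3}(S^3)$ differing by gluing collars, and these are connected by a canonical path (shrinking the collars). This yields the desired homotopy, continuously in families, as in \cref{fig:c_ijk_functorial}.

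The main obstacle is making this family-wise and equivariantly precise. Capping the extra boundary spheres requires a contractible space of choices (\cref{lem:punctured-sphere-config}, Hatcher), and identifying the branch $S^2\times I$ with a collar uses that $\emb^{\rm D}$ and the moduli of annuli are contractible over $\BDiff^+(S^2)$. One also has to check orientation and coorientation conventions, so that $i'$ is matched to $i$ rather than with a sign; this follows because coorientations point into the component containing $u$ both before and after collapsing. Once the homotopy commutative square is established, the statement follows by pulling back the class $c_{12}^3$ of \cref{lem:Conf-for-n=3}, together with the uniqueness of $c$ in degree $2$.
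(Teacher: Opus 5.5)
Your argument is correct in substance and uses the same geometric fact as the paper: after capping, the region between the sphere of $i'$ and the sphere of $i$ is an annulus, and the Smale conjecture lets you collapse it. The execution differs in two ways.

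First, the paper factors $f$ into single-edge collapses, using that $f^\tripod$ is functorial. It therefore only treats one edge $\{h,h^\dagger\}$ between unmarked $v_1,v_2$, and does one placement of $i,j,k$, leaving the other to the reader. You handle an arbitrary forest at once via the tripod centre $u$, and all placements uniformly.

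Second, the paper works with the models $\BDiff_{\pi_0}(S^3,N)$ for various submanifolds $N$ (discs $D_A\subset D^3_+$, $D_B\subset D^3_-$, and the equator $S^2$). All forgetful maps are then induced by subgroup inclusions, so the right half of its diagram commutes on the nose. The only input is that $\BDiff_{\pi_0}(S^3,S^2\sqcup D_i\sqcup D_j\sqcup D_k)$ maps by equivalences both to $\BDiff_{\pi_0}(S^3,D_i\sqcup D_j\sqcup D_k)$ and to $\BDiff_{\pi_0}(S^3,D^3_-\sqcup D_j\sqcup D_k)$. The second of these (forget $D_i$, keep only the hemisphere containing it) is exactly your ``shrink the annulus $D^3_-\setminus D_i$'' step, but it requires no choices.

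Your submanifold model instead makes you build the homotopy by hand. Caps for the extra boundary spheres, both the standard ones and the big caps formed by fully capped branches at $u$, and parametrisations of the remaining branches as $S^2\times[0,1]$ must be chosen continuously over $\Cthick^\Gamma_\sigma$. This is possible because those spaces of choices are contractible (Hatcher, Cerf), for instance by passing to the equivalent space of points together with choices. Note that ``identifies $i'$ with $i$ up to isotopy'' is not enough on its own: you need this family version, which you rightly flag as the remaining work.

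Separately, drop the marked-vertex case. A chart gives a map $\Conf_3(\bbR^3)\to\Conf_3(P_i)$, not one out of $\Conf_3(P_i)$, so there is no class $\omega_{ij}^k$ on $\Conf_3(P_i)$ to pull back. In \cref{thm:DGamma Cohomology Ring} the $c_{ij}^k$ only come from unmarked vertices. Restricting to $v\in V_\Lambda^{\rm sph}$, as the paper does, also guarantees that the whole tree $f^{-1}(v)$ is unmarked, since $\rho(i)=f(\sigma(i))$. Your argument uses this silently when it asserts each branch is a punctured sphere; a branch containing $\sigma(i)$ would be a punctured $P_i$.
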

\begin{proof}
    We begin by describing the action on the $c_{ij}^k$.
    Consider a graph morphism $f\colon \Gamma \to \Lambda$, $w \in V_\Lambda^{\rm sph}$ an unmarked vertex, and $i,j,k \in H_w$ adjacent half-edges.
    Without loss of generality we may assume that $f^{-1}(w)$ consists of a single edge $e = \{h, h^\dagger\}$ connecting two unmarked vertices $v_1,v_2 \in V_\Gamma^{\rm sph}$.
    (If $f^{-1}(w)$ is a single vertex, then the claim follows essentially from the definition, and if $f^{-1}(w)$ contains multiple edges, we can write $f$ as a composite of morphisms that each only collapse one edge, reducing to the case at hand.)
    We write $H_{v_1} = A \sqcup \{h\}$ and $H_{v_2} = B \sqcup \{h^\dagger\}$ where $A$ and $B$ are finite sets with each at least two elements -- then $H_w = f(A) \sqcup f(B)$.
    Up to symmetry there are two cases we need consider: either $i \in f(B)$ and $j,k \in f(A)$, or all half edges are on one side. 
    We will consider the former case, and leave the easier case to the reader.

    We would like to show that $\Cthick(f)^*(c_{ij}^k) = c_{hj'}^{k'}$ where $j',k' \in A$ are the (unique) preimages of $j$ and $k$ under $f$.
    To show this, we first introduce some notation.
    We let $S^2 \subset S^3$ be the equator and $D^3_+, D^3_- \subset S^3$ the upper and lower hemisphere.
    We pick, $D_A \subset D^3_+$ and $D_B \subset D^3_-$ collections of discs, indexed by $A$ and $B$, and we write $D_i \subset D_A$ and $D_j, D_k \subset D_B$ for the three discs of particular interest.
    For $N \subset S^3$ a compact submanifold (of not necessarily constant dimension) we let $\Diff_{\pi_0}(S^3, N) \subset \Diff(S^3, N)$ denote the subgroup of the diffeomorphisms $\psi$ with $\psi(N)=N$ that induce the identity permutation on $\pi_0(N)$.
    
    With this notation in hand, there is a homotopy commutative diagram as illustrated in \cref{fig:c_ijk_functorial}:
\[\begin{tikzcd}
	{\Cthick^\Gamma_\sigma} && 
    {\BDiff_{\pi_0}(S^3, D_A \sqcup D^3_-)} && {\BDiff_{\pi_0}(S^3, D^3_- \sqcup D_j \sqcup D_k)} \\
	&& {\BDiff_{\pi_0}(S^3, S^2 \sqcup D_A \sqcup D_B)} && {\BDiff_{\pi_0}(S^3, S^2 \sqcup D_i \sqcup D_j \sqcup D_k)} \\
	{\Cthick^\Lambda_\rho} && {\BDiff_{\pi_0}(S^3,D_A \sqcup D_B)} && {\BDiff_{\pi_0}(S^3, D_i\sqcup D_j \sqcup D_k)}
	\arrow["{\varphi_{v_1}}", from=1-1, to=1-3]
	\arrow["\chi"', bend right = 1pc, from=1-1, to=2-3]
	\arrow["{\Cthick(f)}"', from=1-1, to=3-1]
	\arrow[from=1-3, to=1-5]
	\arrow[from=2-3, to=1-3]
	\arrow[from=2-3, to=2-5]
	\arrow[from=2-3, to=3-3]
	\arrow["\simeq", from=2-5, to=1-5]
	\arrow["\simeq"', from=2-5, to=3-5]
	\arrow["{\varphi_w}", from=3-1, to=3-3]
	\arrow[from=3-3, to=3-5]
\end{tikzcd}\]
    The maps $\varphi_{v_i}$ and $\varphi_w$ were described at the start of \cref{subsec:C-pullback}.
    The version used here implicitly uses the inverse of the homotopy equivalence 
    \[ 
        \BDiff_{\pi_0C}(S^3, C \times D^3) \xrightarrow{\;\simeq\;} \BDiff_{\pi_0C}( S^3 \setminus (C \times \interior{D^3})) \cong \frac{\BDiff_{C}^{\rm sph}(S^3)}{\Map(C, \Diff^+(S^2))}
    \]
    that cuts out the interiors of the discs $C \times D^3$.
    The map $\chi$ in the above diagram is constructed by taking $(\Sigma^\varepsilon \subset M, \dots) \in \Cthick^\Gamma_\sigma$, cutting $M$ along all components of $\Sigma^\varepsilon$ except the one corresponding to $\{h,h^\dagger\}$, and then remembering the component corresponding to $v_1$ and $v_2$ together with the $2$-sphere given by $\Sigma_{\{h,h^\dagger\}}$.
    (Again, this map really lands in $\BDiff_{\pi_0\partial}(S^3 \setminus (\interior{D_A}\sqcup \interior{D_B}), S^2)$ and we use the homotopy inverse to the map that cuts out the interiors.)
    The left half of the diagram commutes up to homotopy by inspecting definitions. 
    (Crucially, the bottom square commutes since $\Cthick(f)$ is exactly given by forgetting the component of $\Sigma^\varepsilon$ that corresponds to the edge $\{h,h^\dagger\}$.)
    The right half of the diagram is constructed by maps that just forget part of the constraints we put on the diffeomorphisms (or project to $*$), \emph{i.e.}~they come from subgroup inclusions (or maps to the trivial group), and thus it automatically commutes.
    The two vertical maps on the right side are homotopy equivalences as a consequence of the Smale conjecture, since exactly one of the discs is on one side of the sphere -- this can be seen in the usual manner by repeatedly applying  fibre sequences.

    Given the homotopy commutative diagram we argue as follows.
    The pullback of the preferred generator of $H^3(\BDiff_{\pi_0}(S^3, D_i \sqcup D_j \sqcup D_k))$ along the bottom two maps is by definition $c_{i,j}^k$.
    On the other hand, the pullback of the preferred generator along the top row is by definition $c_{h,j'}^{k'}$ where $f(j') = j$ and $f(k')=k$.
\end{proof}

We can also describe the functoriality of the classes $\beta_a$ for $a \in H_1(\Gamma, \sigma)$.
However, as pointed out in \cref{rem:beta-ses-splitting} for $n>0$ these classes are only well-defined up to splitting a short exact sequence, and hence we will have the same ambiguity in describing their functoriality.
\begin{lem}\label{lem:cohomology-functoriality-beta}
    For every graph map $f\colon (\Gamma,\sigma) \to (\Lambda,\rho)$ in $\Gr_{g,n}$ we have a commutative square
    \[\begin{tikzcd}
        {H^3( \rmC^\Gamma_\sigma(P_1,\dots,P_n) )} \rar &
        H_1(\Gamma,\sigma) \dar["f_*", "\cong"'] \\
        {H^3( \rmC^\Lambda_\rho(P_1,\dots,P_n) )} \uar["\rmC(f)^*"] \rar &
        H_1(\Lambda,\rho)
    \end{tikzcd}\]
    where the horizontal maps are those discussed in \cref{rem:beta-ses-splitting} and they are isomorphisms for $n=0$.
\end{lem}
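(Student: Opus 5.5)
The plan is to obtain the square from naturality of the Serre spectral sequence of \cref{prop:C-spectral-sequence} with respect to graph morphisms. The horizontal maps in the statement come from \cref{rem:beta-ses-splitting}: they are the edge maps $H^3(\rmC^\Gamma_\sigma)\twoheadrightarrow E_\infty^{0,3}\cong E_5^{0,3}\cong H_1(\Gamma,\sigma)$, where $H_1(\Gamma,\sigma)=\ker(d_4)$ sits inside $E_4^{0,3}\cong C_1(\Gamma,\sigma)$. This edge map is the restriction to the homotopy fibre $\Map^{C_2}(H_\Gamma,\SO(3))$ of \cref{cor:C-fibre-sequence}. It therefore suffices to check two things. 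First, $\rmC(f)$ is compatible with the fibre sequences, in the sense that it maps the fibre for $\Lambda$ to the fibre for $\Gamma$, or rather the reverse, with the induced map on fibres going the right way. Second, on $H^3$ of the fibres, that map is dual to the map $C_1(\Gamma,\sigma)\to C_1(\Lambda,\rho)$ given by $f_*$.

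\textbf{Step 1 (reduction).} Every morphism factors as an isomorphism followed by single edge collapses. Isomorphisms are handled by the $\Aut$-equivariance in \cref{prop:C-spectral-sequence}. So I assume $f$ collapses exactly one non-loop edge $e_0=\{h_0,h_0^\dagger\}$.

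\textbf{Step 2 (map of fibrations).} Work with $\Cthick$. The map $\Cthick(f)$ forgets the thickened sphere corresponding to $e_0$ and glues the two adjacent vertex pieces. The map $\xi$ of \cref{subsec:C-pullback} records the oriented spheres indexed by half-edges. Composing $\xi_\Lambda\circ\Cthick(f)$ gives $\xi_\Gamma$ followed by the projection $\Map^{C_2}(H_\Gamma,\BDiff^+(S^2))\to\Map^{C_2}(H_\Lambda,\BDiff^+(S^2))$ that forgets the $e_0$-coordinates, using $H_\Lambda=H_\Gamma\setminus\{h_0,h_0^\dagger\}$.

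To compare fibres, I would use the equivalence between the fibre of $\Cthick\to$ (vertex data), identified with $\Map^{C_2}(H_\Gamma,\SO(3))$ via the homotopy pullback of \cref{prop:rmC-pullback} and \cref{lem:bottom-fibre-sequence}, and the ``gluing framings'' along edges. Forgetting the sphere $e_0$ and gluing the two vertex pieces into one punctured sphere gives a map on base spaces. On fibres it becomes the projection $\prod_{E_\Gamma}\SO(3)\to\prod_{E_\Lambda}\SO(3)$ after choosing orientations. Equivalently, on $H^3$ it sends $\beta_h\mapsto\beta_{f(h)}$ for $h\ne h_0,h_0^\dagger$, with $\beta_{h_0}$ hit by nothing. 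This is precisely the dual of the chain map $f_*\colon C_1(\Gamma,\sigma)\to C_1(\Lambda,\rho)$, which kills $e_0$. Restricting to cycles gives $f_*\colon H_1(\Gamma,\sigma)\to H_1(\Lambda,\rho)$, which is an isomorphism because contracting a tree edge is a homotopy equivalence of pairs. Naturality of the edge homomorphism then gives the commuting square. For $n=0$, the first term of the short exact sequence in \cref{rem:beta-ses-splitting} vanishes, so the horizontal maps are isomorphisms.

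\textbf{Main obstacle.} The hard part is making the map of fibre sequences honest. The natural vertex-level map goes from $\Conf_{H_{v_1}}\hq\SO(4)\times\Conf_{H_{v_2}}\hq\SO(4)$ to $\Conf_{H_w}\hq\SO(4)$ only after remembering the $e_0$ gluing data, so $\rmC(f)$ is not literally a map over a fixed base. My first attempt would be a zig-zag through an intermediate space. This space keeps the $\Gamma$ vertex data together with the fibre coordinate for $e_0$, and it maps to the $\Lambda$ base via the $\chi$-type construction from the proof of \cref{lem:cohomology-functoriality}. One then checks that the resulting square of vertical homotopy fibres, which differ by exactly the $\SO(3)$ factor of $e_0$, commutes up to homotopy. \cref{lem:sp-seq-with-action}-style naturality then applies.
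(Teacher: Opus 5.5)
Your strategy is the right one: the horizontal maps are restriction to the homotopy fibre $\Map^{C_2}(H_\Gamma,\SO(3))$ of the top map in \cref{prop:rmC-pullback}, with image $E_\infty^{0,3}=H_1(\Gamma,\sigma)\subseteq C_1(\Gamma,\sigma)$. So everything reduces to comparing fibres. However, Step~2 compares them in the wrong direction, and as written it fails.

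There is no base map from the $\Gamma$-vertex data to the $\Lambda$-vertex data, as you note yourself. More to the point, no map of fibre sequences $\rmC^\Gamma_\sigma\to\rmC^\Lambda_\rho$ can have the projection $\prod_{E_\Gamma}\SO(3)\to\prod_{E_\Lambda}\SO(3)$ as its fibre component. Its pullback on $H^3$ is the section $C_1(\Lambda)\to C_1(\Gamma)$ of $f_*$ (each $\beta$-class goes to the $\beta$-class of its preimage edge). This section does not carry cycles to cycles, so it cannot commute with $d_4$. For $\pi\colon\Theta\to R_2$, it sends the class of the loop $f_1$, which has $d_4=\delta_v-\delta_v=0$, to $\pm$ the class of $e_1$, which has $d_4=\delta_{v_1}-\delta_{v_2}\neq 0$. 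Relatedly, ``the dual of $f_*$'' conflates $f_*$ with $f^*$. With the identification $H^3(\text{fibre})\cong C_1$ forced by $d_4=\partial$, the square needs $f_*\colon C_1(\Gamma)\to C_1(\Lambda)$ itself. That map arises as pullback along a map from the $\Lambda$-fibre to the $\Gamma$-fibre, not the other way round.

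The missing observation is that such a map exists on the nose. Let $X(\Sigma^\varepsilon\subset M)$ be the space of embeddings $j\colon M\hookrightarrow\bbR^\infty$ that restrict to the inclusion on $M\setminus\Sigma^\varepsilon$. This is a model for the fibre, equivalent to $\BDiff_\partial(\Sigma^\varepsilon)\simeq\Map^{C_2}(H_\Gamma,\SO(3))$. Let $\Upsilon^\varepsilon\subseteq\Sigma^\varepsilon$ consist of the thickened spheres not collapsed by $f$, and let $\Xi^\varepsilon$ be the rest. Then $X(\Upsilon^\varepsilon\subset M)\subseteq X(\Sigma^\varepsilon\subset M)$. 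The composite $X(\Upsilon^\varepsilon\subset M)\hookrightarrow X(\Sigma^\varepsilon\subset M)\to\Cthick^\Gamma_\sigma\xrightarrow{\Cthick(f)}\Cthick^\Lambda_\rho$ is literally the fibre inclusion for $\Lambda$. The inclusion corresponds to extending diffeomorphisms of $\Upsilon^\varepsilon$ by the identity on $\Xi^\varepsilon$, so on $H^3$ it is $f_*\colon C_1(\Gamma,\sigma)\to C_1(\Lambda,\rho)$, killing collapsed edges.

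Take $H^3$ of this triangle and restrict to $E_\infty^{0,3}=H_1$; this gives the square directly. No map of fibre sequences, no spectral-sequence naturality, no reduction to single edge collapses and no intermediate space are needed. Your space $Y$ would recover this argument, since the fibre of $\rmC^\Gamma_\sigma$ over $Y$ is exactly this sub-fibre. But that only works once the fibre comparison is turned around as above.
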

\begin{proof}
    Fix some $(\Sigma^\varepsilon \subset M \subset \bbR^\infty, \alpha, \iota) \in \Cthick^\Gamma_\sigma(P_1,\dots,P_n)$.
    We begin by giving a concrete description of the (homotopy) fibre of the top horizontal map in \cref{prop:rmC-pullback}, 
    which deletes $\Sigma^\varepsilon \subset M$ and records the components of the complement.
    Let $X(\Sigma^\varepsilon \subset M)$ be the space of those 
    embeddings $j\colon M \hookrightarrow \bbR^\infty$
    such that $j_{|M \setminus \Sigma^\varepsilon}$ is the inclusion $M\setminus \Sigma^\varepsilon \subset \bbR^\infty$.
    This space is equivalent to $\BDiff_{\partial}(\Sigma^\varepsilon)$ via the map that sends $j$ to $j(\Sigma^\varepsilon)$.
    (Note that the boundary $\partial j(\Sigma^\varepsilon)$ must be equal to $\partial \Sigma^\varepsilon \subset M \subset \bbR^\infty$.)

    For a graph morphism $f\colon \Gamma \to \Lambda$,
    let $(\Upsilon^\varepsilon \subset M)$ denote the thickened sphere system obtained by forgetting the spheres that correspond to edges collapsed by $f$,
    and let $\Xi^\varepsilon = \Sigma^\varepsilon \setminus \Upsilon^\varepsilon$ be the union of the deleted thickened spheres.
    We have a commutative diagram:
    \[\begin{tikzcd}
        {\BDiff_\partial(\Sigma^\varepsilon)} &
        {X(\Sigma^\varepsilon \subset M)} \lar["\simeq"'] \rar &
        {\Cthick^\Gamma_\sigma(P_1,\dots,P_n)} \dar["{\Cthick(f)}"] \\
        {\BDiff_\partial(\Upsilon^\varepsilon)} \uar["{-\cup \id_{\Xi^\varepsilon}}"] &
        {X(\Upsilon^\varepsilon \subset M)} \lar["\simeq"'] \rar \uar[hook] &
        {\Cthick^\Lambda_\rho(P_1,\dots,P_n)}.
    \end{tikzcd}\]    
    Here the right horizontal maps are the inclusions of the fibres of the top map in \cref{prop:rmC-pullback}.
    The middle map takes the union with the missing thickened spheres $\Xi^\varepsilon$.
    (The right square commutes on-the-nose by inspection.)
    Correspondingly, the left vertical map is induced by the group homomorphism $\Diff_\partial(\Upsilon^\varepsilon) \to \Diff_\partial(\Sigma^\varepsilon)$ that extends a diffeomorphism by the identity.
    Recall that we have equivalences $\BDiff_\partial(\Sigma^\varepsilon) \simeq \Map^{C_2}(H_\Gamma, \SO(3))$.
    In particular, we have a canonical identification between $H^3(\BDiff_\partial(\Sigma^\varepsilon))$ and the cellular $1$-chains $C_1(\Gamma)$.
    Because $\sigma$ consists only of vertices $C_1(\Gamma) = C_1(\Gamma, \sigma)$.
    Taking third cohomology groups we thus get a commutative square
    \[\begin{tikzcd}
        {C_1(\Gamma,\sigma)} \rar["{\cong}"] \dar["f_*"'] &
        {H^3(\BDiff_\partial(\Sigma^\varepsilon))} \dar &
        {H^3(\rmC^\Gamma_\sigma(P_1,\dots,P_n))} \ar[l] \\
        {C_1(\Lambda,\rho)} \rar["{\cong}"] & 
        {H^3(\BDiff_\partial(\Upsilon^\varepsilon))} &
        {H^3(\rmC^\Lambda_\rho(P_1,\dots,P_n))}
        \ar[l] \uar["{\rmC(f)^*}"'] 
    \end{tikzcd}\]    
    where the left map is the map $f\colon \Gamma \to \Lambda$ induces on relative cellular $1$-chains. 
    (Namely, it sends all the edges collapsed by $f$, which correspond to spheres in $\Xi$, to $0$, and is an isomorphism otherwise.)
    Finally,
    the image of the right vertical map
    $H^3(\rmC^\Gamma_\sigma(P_1,\dots,P_n)) \to C_1(\Gamma, \sigma)$,
    is exactly the image of the cohomology of the total space in the cohomology of the fibre, for the fibre sequence for the fibre sequence resulting from the top map in \cref{prop:rmC-pullback}.
    In the Serre spectral sequence this image corresponds to $E_\infty^{0,3} \subseteq E_2^{0,3} = C_1(\Gamma, \sigma)$, which by the proof of \cref{thm:DGamma Cohomology Ring}, is exactly $H_1(\Gamma,\sigma) \subseteq C_1(\Gamma, \sigma)$.
    Hence, the square restricts to the desired commutative square.
\end{proof}

\subsection{The spectral sequence}\label{subsec:spectral-sequence}
Applying the Bousfield--Kan spectral sequence for the cohomology of a homotopy colimit \cite[XII.5.8]{BousfieldKan1972} to the space of handle-attachments from \cref{defn:calH_g} yields the following.

\begin{cor}\label{cor:BK-spectral-sequence}
    There is a convergent spectral sequence in cohomological Serre grading with
    \begin{equation}\label{eq:BK-spectral-sequence}
        E_2^{p,q} = \operatornamewithlimits{lim^{\it p}}_{\Gamma \in \Gr_{g,n}^{\rm op}} H^q(\rmC^\Gamma_\sigma(P_1,\dots,P_n); \bbQ) \Rightarrow H^{p+q}( \calH_g(P_1,\dots, P_n) )
    \end{equation}
    and this spectral sequence is $\prod_{i=1}^n \pi_0 \Diff(P_i)$ equivariant.
\end{cor}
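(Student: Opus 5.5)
The plan is to apply the Bousfield--Kan spectral sequence for the cohomology of a homotopy colimit to the simplicial model of $\calH_g(P_1,\dots,P_n)$ from \cref{defn - hocolim}. Write $E_\bullet$ for the simplicial space with
\[
E_p=\coprod_{(\Gamma_0,\sigma_0)\to\dots\to(\Gamma_p,\sigma_p)}\rmC^{\Gamma_0}_{\sigma_0}(P_1,\dots,P_n).
\]

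\textbf{Step 1: the skeletal spectral sequence.} I would filter $|E_\bullet|$ by simplicial skeleta. This gives a spectral sequence with $E_1^{p,q}=H^q(E_p;\bbQ)$. The $d_1$ differential is the alternating sum of the face maps.

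\textbf{Step 2: identify the $E_2$-page.} Since $H^q(E_p)=\prod_{\text{chains}}H^q(\rmC^{\Gamma_0}_{\sigma_0})$, the complex $(E_1^{*,q},d_1)$ is exactly the cosimplicial replacement (cobar complex) of the functor
\[
\Gr_{g,n}^{\rm op}\to\mathrm{Vect}_\bbQ,\qquad (\Gamma,\sigma)\mapsto H^q(\rmC^\Gamma_\sigma;\bbQ).
\]
Its cohomology is $\lim^p$ of this functor, as in \cite[XI.6, XII.5.8]{BousfieldKan1972}. The one point to check is variance: the first face map applies the map $\rmC(f_1)$, and on cohomology this makes $H^q(\rmC^{-})$ contravariant in $\Gr_{g,n}$. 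Hence the limits are taken over $\Gr_{g,n}^{\rm op}$.

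\textbf{Step 3: convergence.} I would note that $\Gr_{g,n}$ is equivalent to a finite category. It has finitely many isomorphism classes by \cref{lem:depth-of-Gr_gn}(ii), and each object has finitely many automorphisms, since $\Aut(\Gamma,\sigma)$ is a finite group. Because we work over $\bbQ$, Maschke's theorem makes the higher derived functors of the finite automorphism groups vanish. Therefore $\lim^p$ vanishes for $p$ larger than the depth $2(g-1)+n$. I would make this precise by reducing to the poset $\pi_0P(\Gr_{g,n})$ via \cref{thm:Slominska}, or more simply by using that the normalised complex of non-identity chains is bounded.

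The $E_2$-page is therefore concentrated in finitely many columns. This gives strong convergence, and $\lim^1$ issues disappear. This step is the only one needing care.

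\textbf{Step 4: equivariance.} The group $\prod_i\Diff^+(P_i)$ acts on each $\rmC^\Gamma_\sigma$ compatibly with the functoriality in $\Gr_{g,n}$, through post-composition on the prime marking. It therefore acts on $E_\bullet$ by simplicial maps. By naturality of the skeletal spectral sequence, this yields an action on the whole spectral sequence. Isotopic diffeomorphisms act by homotopic maps, so the action factors through $\pi_0$.
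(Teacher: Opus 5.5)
Your proposal follows the same route as the paper, which proves the corollary simply by invoking the Bousfield--Kan spectral sequence \cite[XII.5.8]{BousfieldKan1972}; your Steps 1, 2 and 4 correctly unpack that citation.

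The one misstep is the ``more simply'' shortcut in Step 3. The normalised complex of non-identity chains is \emph{not} bounded, because $\Gr_{g,n}$ has non-trivial automorphisms (e.g.\ $\Aut(R_2)\cong D_8$ in $\Gr_{2,0}$). A chain $(\alpha,\dots,\alpha)$ built from a non-identity automorphism $\alpha$ is non-degenerate in every length. So only the cohomology of that complex is bounded, not the complex itself, and proving this bound needs your other argument: a chain-level version of \cref{thm:Slominska} combined with Maschke.

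Step 3 is in any case unnecessary. The skeletal spectral sequence is first-quadrant. The relative cohomology of the $p$-skeleton modulo the $(p-1)$-skeleton is $H^{*-p}(E_p)$, so the cohomology of the skeleta stabilises in each degree. Hence there is no $\lim^1$ term, and convergence to $H^*(\calH_g(P_1,\dots,P_n))$ is automatic, without any vanishing of $\lim^p$ above the depth.
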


By \cref{cor:fibre-sequence-H_g} the space of handle-attachments is the homotopy fibre of the splitting map $\W$, so there is a Serre spectral sequence
    \[
        E_2^{p,q} = H^p(\BDiff(P_M); H^q(\calH_g(P_1,\dots,P_n); \bbQ)) \Rightarrow H^{p+q}(\BDiff(M);\bbQ)
    \]
whose $E_2$ exactly is cohomology with coefficients in the cohomology of $\calH_g(\dots)$, which is described by \cref{cor:BK-spectral-sequence}.
(If we assume for simplicity that none of the two $P_i$ are diffeomorphic, then $\pi_1 \BDiff(P_M)$ is just $\prod_{i=1}^n \pi_0 \Diff(P_i)$, so \cref{cor:BK-spectral-sequence} also describes the action on the coefficients.)

\begin{ex}
    For $n=0$, that is, in the case of $M = U_g$, the target of the splitting map is a point and \cref{eq:BK-spectral-sequence} already computes the cohomology of $\BDiff(U_g)$.
    In this case \cref{lem:cohomology-functoriality} and \cref{lem:cohomology-functoriality-beta} already completely describe the cohomology of $\rmC^\Gamma$ as a functor of $\Gamma \in \Gr_{g,0}$, so the question of determining the $E_2$-page in \cref{eq:BK-spectral-sequence} is a purely algebraic one.
    However, this is still difficult for larger $g$:
    for example, the $0$th row is the derived limit of the constant functor, which gives the cohomology of the category $\Gr_{g,0}$.
    We have
    \[
        E_2^{p,0} = \operatornamewithlimits{lim^{\it p}}_{\Gamma \in \Gr_{g,n}^{\rm op}} H^0(\dots) 
        \cong H^p(B\Gr_{g,0}) \cong H^p(B\Out(F_g))
    \]
    where the last isomorphism follows from the contractibility of outer space \cite{CullerVogtmann1986}.
    These groups become very hard to compute for $g\gg 0$, and hence we should not expect a simple description of the $E_2$ for larger genus.
\end{ex}

\begin{rem}\label{rem:graph-complex}
    In \cite{Giansiracusa11}, Giansiracusa gives a homotopy colimit formula for $\BDiff^+(K_g)$ where $K_g = (S^1 \times D^2)^{\natural g}$ is the $3$-dimensional handlebody of genus $g$.
    In his setting the spectral sequence must collapse on the $E_2$-page \cite[\S7]{Giansiracusa11} because the diagram on $\Gr_{g,0}$ that he uses is built from the cyclic operad of framed $2$-discs, which is known to be formal.
    As a consequence, he can express the rational homology of $\BDiff^+(K_g)$ as the homology of a graph complex.
    We do not expect a similar result to hold in our setting, as the analogous framed $3$-discs operad is not known to be formal as a cyclic $\infty$-operad.
    (However, it was recently shown to be formal as an $\infty$-operad \cite{BCG-equivariant-formality}.)
\end{rem}

\section{Extended Example: \texorpdfstring{$\BDiff^+\!\left((S^1\times S^2)^{\sharp 2}\right)$}{BDiff+((S1 x S2) sharp (S1 x S2))}}\label{sec:extended example}
In this section, we combine the results of the Section \ref{sec: computation} with our main theorem to compute the rational cohomology ring of $\BDiff^+\left(U_2\right)$, where $U_2=(S^1\times S^2)^{\sharp 2}$.
Note that in this case, since $n=0$, the target of the splitting map is contractible and thus main theorem \cref{thm:fibre-sequence-intro} reduces to the homotopy colimit formula \cref{thm:hocolim-intro}.
We further have $\Gr(U_2) = \Gr_{2,0}$ and for any dual graph $\Gamma\in \Gr(U_2)$ the group $\Aut_{\Gr(U_2)}(\Gamma)$ is the usual group of graph symmetries $\Aut(\Gamma)$. 

By \cref{lem:No-Redudant-Htpy-Final}, we can compute the homotopy colimit over the subcategory $\Gr^{\rm nr}_{2,0}$ consisting of graphs without redundant edges. 
For genus 2, there are exactly two homeomorphism types of graph without redundant edges, namely $\Gamma=R_2$, the wedge of two circles (rose with two petals), and $\Gamma=\Theta$, the graph with two vertices connected by three edges. Moreover, there is only one kind of non-invertible morphism up to symmetry, namely the map $\pi\colon \Theta\rightarrow R_2$ that collapses the middle edge of $\Theta$. 
Therefore, the homotopy colimit can be written as a pushout of homotopy orbit constructions, see \cref{ex:hocolim-one-morphism-EI}.

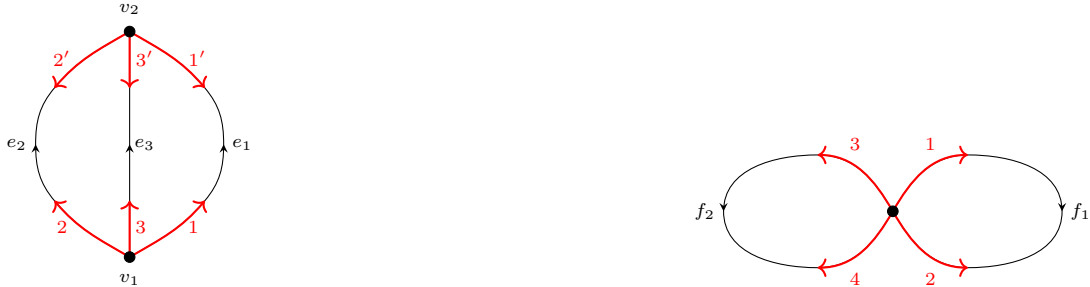
\begin{figure}[h]
    \begin{subfigure}{0.4\textwidth}
     \begin{tikzpicture}
     [decoration={markings, 
    mark= at position 0.5 with {\arrow{stealth}}}] 
     \draw[postaction={decorate}] (0,0) to (0,3);
     \draw[postaction={decorate}] (0,0) to[out=30,in=230] (1,.75) to[out=50,in=270](1.25,1.5) to[out=90,in=-50](1,2.25) to[out=130,in=-30](0,3);
     \draw[postaction={decorate}] (0,0) to[out=150,in=-50] (-1,.75) to[out=130,in=270](-1.25,1.5) to[out=90,in=230](-1,2.25) to[out=50,in=210](0,3);
     
     \draw[thick, red, ->](0,0) to (0,.75);
     \draw[thick, red, ->] (0,0) to[out=30,in=230] (1,.75);
     \draw[thick, red, ->] (0,0) to[out=150,in=-50] (-1,.75);

     \draw[thick, red, ->](0,3) to (0,2.25);
     \draw[thick, red, ->](0,3)[out=-30, in=130] to (1,2.25);
     \draw[thick, red, ->](0,3)[out=210, in=50] to (-1,2.25);
     
     \filldraw (0,0) circle (2pt);
     \filldraw (0,3) circle (2pt);

     \node at (0,-.3) {$\scriptstyle v_1$};
     \node at (0,3.3) {$\scriptstyle v_2$};
     \node at (1.5,1.5){$\scriptstyle e_1$};
     \node at (-1.5,1.5){$\scriptstyle e_2$};
     \node at (0.2,1.5){$\scriptstyle e_3$};

     \node at (.85,.4) {\textcolor{red}{$\scriptstyle 1$}};
     \node at (.15,.4) {\textcolor{red}{$\scriptstyle 3$}};
     \node at (-.9,.4) {\textcolor{red}{$\scriptstyle 2$}};
     \node at (.9,2.65) {\textcolor{red}{$\scriptstyle 1'$}};
     \node at (.2,2.65) {\textcolor{red}{$\scriptstyle 3'$}};
     \node at (-.9,2.65) {\textcolor{red}{$\scriptstyle 2'$}};
    \end{tikzpicture}
    \end{subfigure}
    \hfill
    \begin{subfigure}{0.4\textwidth}
    \begin{tikzpicture}
    [decoration={markings, 
    mark= at position 0.5 with {\arrow{stealth}}}] 
    \draw[postaction={decorate}] (0,0) to[out=60, in=180](1,.75) to[out=0, in=90](2.25,0) to[out=-90,in=0](1,-.75) to[out=-180,in=-60](0,0);
    \draw[postaction={decorate}] (0,0) to[out=120, in=0](-1,.75) to[out=180,in=90](-2.25,0) to[out=-90,in=180](-1,-.75) to[out=0,in=-120](0,0);

    \draw[thick, red, ->] (0,0) to[out=60, in=180](1,.75);
    \draw[thick, red, ->] (0,0) to[out=120, in=0](-1,.75);
    \draw[thick, red, ->] (0,0) to[out=-60,in=-180](1,-.75);
    \draw[thick, red, ->] (0,0) to[out=-120,in=0](-1,-.75);

    \filldraw (0,0) circle (2pt);
    \node at (2.5,0){$\scriptstyle f_1$};
    \node at (-2.5,0){$\scriptstyle f_2$};
    \node at (.5,.9) {\textcolor{red}{$\scriptstyle 1$}};
    \node at (.5,-.9) {\textcolor{red}{$\scriptstyle 2$}};
    \node at (-.5,.9) {\textcolor{red}{$\scriptstyle 3$}};
    \node at (-.5,-.9) {\textcolor{red}{$\scriptstyle 4$}};
    \end{tikzpicture}
    \end{subfigure}
    \caption{Notation for $\Theta$, pictured on the left, and $R_2$, on the right.}
    \label{fig:Graph Notation}
\end{figure}

First we establish notation to refer to the edges and half-edges of these graphs, and compute there automorphism groups. We refer to Figure \ref{fig:Graph Notation} for a summary. When $\Gamma=R_2,$ there are two edges $f_1$ and $f_2$ both of which are loops. Label the half edges $h_1$, $h_2$, $h_3$, $h_4$ so that $h_1=h_2^\dagger$ belong to $f_1$ while $h_3=h_4^\dagger$ belong to $f_2$.  When $\Gamma=\Theta,$ there are two vertices $v_1$, $v_2$ connected by three edges $e_1$, $e_2$ and $e_3$, oriented from $v_1$ to $v_2$.  In this case we label the half edge of $e_i$ at $v_1$ by $h_i$, and the half-edge at $v_2$ by $h_i'=h_i^\dagger$.
Let $\pi\colon \Theta \to R_2$ be the graph morphism that collapses $e_3$.
Concretely, we define $\pi$ on vertices and half-edges by
\begin{align*}
    v_1 &\mapsto v &&& h_1 &\mapsto h_2 &  h_2 &\mapsto h_4 &  h_3 &\mapsto v \\
    v_2 &\mapsto v &&& h_1' &\mapsto h_1 &  h_2' &\mapsto h_3 &  h_3' &\mapsto v.
\end{align*}

We have that $H_1(\Theta)=\langle \beta_1, \beta_2\rangle$
where $\beta_1=[e_3-e_1]$ and $\beta_2=[e_3-e_2]$. 
The map $\pi$ is a homotopy equivalence and identifies $H_1(\Theta)$ with $H_1(R_2)$.  
The orientation of $f_1$ and $f_2$ are chosen so that $\pi^*$ by sends $\beta_1=[e_3-e_1]$ to $[f_1]$ and $\beta_2=[e_3-e_2]$ to $[f_2]$. 
By an abuse of notation we will also write $H_1(R_2)=\langle \beta_1,\beta_2\rangle $ where $\beta_1=[f_1]$ and $\beta_2=[f_2].$

The automorphism group of $R_2$ is isomorphic to $D_8$, the dihedral group of order $8$. We regard this as a subgroup of the symmetric group $\Sym_4$ via its action on the four half-edges $h_1$, $h_2$, $h_3$, and $h_4$.  Since $h_1, h_2$ belong to the edge $f_1$ and $h_3, h_4$ belong to the edge $f_2$, we are identifying $D_8$ with the subgroup of $\Sym_4$ generated by the two elements $\{(12),(13)(24)\}$. 

The automorphism group of $\Theta$ is isomorphic to $\Sym_3\times C_2$. Here the symmetric group $\Sym_3$ (written in cycle notation) acts by permuting the edges, while $C_2\cong \{\pm 1\}$ acts by exchanging the two vertices, and reversing each of the edges. The subgroup of automorphisms that commute with the collapse map $\pi\colon \Theta\rightarrow R_2$ are those which fix $e_3$ setwise.  This subgroup is therefore isomorphic to $C_2\times C_2$, where the first factor is the subgroup $\langle (12)\rangle\leq \Sym_3$ which swaps edges $e_1$ and $e_2$, and the second factor inverts each edge.

Since there is one morphism up to isomorphism in $\Gr_{2,0}^{\rm nr}$, S\l ominska's formula reduces to a homotopy pushout square as in \cref{ex:hocolim-one-morphism-EI}.
Together with the automorphism group calculations above, we obtain:

\begin{prop}\label{prop:U_2-pushout}Let $U_2=(S^1\times S^2)^{\sharp 2}$. Then there exists a homotopy pushout square
    \begin{equation}\label{Eqn:U_2-pushout-square}
     \begin{tikzcd}
     \rmD(\Theta)\hq (C_2\times C_2)&\rmD(\Theta)\hq (\Sym_3\times C_2)\\
     \rmD(R_2)\hq D_8&\BDiff^+(U_2).
     \arrow["\pi",from=1-1,to=2-1]
     \arrow["\iota",from=1-1,to=1-2]
     \arrow[from=1-2,to=2-2]
     \arrow[from=2-1,to=2-2]
     \end{tikzcd}\end{equation}
\end{prop}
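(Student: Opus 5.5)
The argument strings together three earlier results. First, \cref{thm:hocolim-intro} gives $\BDiff^+(U_2)\simeq\hocolim_{\Gamma\in\Gr(U_2)}\rmD(\Gamma)$. For $U_2$ we have $n=0$ and every vertex is labelled $[S^3]$, so $\Gr(U_2)=\Gr_{2,0}$. Second, \cref{lem:No-Redudant-Htpy-Final} says the inclusion $\Gr^{\rm nr}_{2,0}\hookrightarrow\Gr_{2,0}$ is homotopy final, so we may restrict the homotopy colimit to $\Gr^{\rm nr}_{2,0}$. Third, $\Gr^{\rm nr}_{2,0}$ is a finite EI-category in which every morphism is an epimorphism (\cref{rem:Gr-epi}), so \cref{ex:hocolim-one-morphism-EI} applies once its hypotheses are checked.

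\textbf{Classifying the objects.} I will show that $\Gr^{\rm nr}_{2,0}$ has, up to isomorphism, exactly the two objects $R_2$ and $\Theta$. A rank-$2$ connected graph with all vertices at least trivalent has $|E|-|V|=1$ and $2|E|\ge 3|V|$, so $|V|\le 2$.
\begin{itemize}
\item If $|V|=1$, the graph has two loops, giving $R_2$.
\item If $|V|=2$, the graph has three edges, each vertex has valence exactly $3$, and the graph is $\Theta$ or the ``dumbbell'' (two loops joined by a bridge). The bridge of the dumbbell is redundant because $n=0$: either side contains no labelled vertices. So the dumbbell is excluded.
\end{itemize}

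\textbf{Classifying the morphisms.} A non-invertible morphism collapses a nonempty forest. In $R_2$ the only forest is empty. In $\Theta$ any nonempty forest is a single edge, and collapsing it yields $R_2$. So, up to isomorphism of source and target, the only non-isomorphisms are the maps $\Theta\to R_2$ collapsing one edge. Since $\Aut(\Theta)=\Sym_3\times C_2$ acts transitively on the edges, every such morphism has the form $\beta\circ\pi\circ\alpha$ with $\alpha\in\Aut(\Theta)$ and $\beta\in\Aut(R_2)$. This is the hypothesis of \cref{ex:hocolim-one-morphism-EI}.

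\textbf{Applying the pushout.} \cref{ex:hocolim-one-morphism-EI} then gives a homotopy pushout of $\rmD(\Theta)\hq\Aut(\pi)\to\rmD(\Theta)\hq\Aut(\Theta)$ and $\rmD(\Theta)\hq\Aut(\pi)\to\rmD(R_2)\hq\Aut(R_2)$.

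\textbf{Identifying the groups.} I will identify $\Aut(R_2)\cong D_8$ via its action on half-edges: it can swap the two loops and flip each loop. I will identify $\Aut(\Theta)\cong\Sym_3\times C_2$, acting by permuting the edges and exchanging the two vertices. For $\Aut(\pi)$, the group of pairs $(\alpha,\beta)$ with $\pi\alpha=\beta\pi$, the injectivity argument in the proof of \cref{prop:EI-finite} shows it embeds into $\Aut(\Theta)$. Its image consists of the automorphisms that preserve the collapsed edge $e_3$ setwise, since these are exactly the ones for which a compatible $\beta$ exists. That subgroup is $\langle(12)\rangle\times C_2\cong C_2\times C_2$.

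The main point requiring care is this last computation of $\Aut(\pi)$. Existence of $\beta$ for each $\alpha$ fixing $e_3$ follows because $\alpha$ descends to the quotient graph $\Theta/e_3$, and uniqueness of $\beta$ follows because $\pi$ is an epimorphism.
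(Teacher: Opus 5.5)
Your proposal is correct and follows the same route as the paper: you restrict the homotopy colimit from \cref{thm:hocolim-intro} to $\Gr^{\rm nr}_{2,0}$ via \cref{lem:No-Redudant-Htpy-Final}, show that up to isomorphism its only objects are $R_2$ and $\Theta$ and its only non-invertible morphism is the edge collapse $\pi$, and apply \cref{ex:hocolim-one-morphism-EI} with $\Aut(\pi)$ identified as the stabiliser $C_2\times C_2$ of $e_3$ in $\Sym_3\times C_2$. Your valence count excluding the dumbbell and your factorisation $g=\beta\circ\pi\circ\alpha$ fill in details that the paper only asserts.
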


Applying rational cohomology, this means $H^*(\BDiff^+(U_2))$ can be computed via a Mayer--Vietoris sequence from the three terms $H^*(\rmD(\Theta))^{\Sym_3\times C_2}$, $H^*(\rmD(\Theta))^{C_2\times C_2}$, and $H^*(\rmD(R_2))^{D_8}$, where the superscripts denote the invariant part of the cohomology. 

As a consequence of Lemma \ref{lem:pi isomorphism} below we will see that the map $\pi$ in the above diagram is an isomorphism on rational cohomology. This implies that the right hand vertical map is also  an isomorphism.
It follows from \cref{eqn:Aut_Gr(M)-sequence} and \cref{cor:rmD-classifying-space} that $\rmD(\Theta)\hq \Aut(\Theta)$ is equivalent to $\BDiff^+(U_2,\Sigma)$ where $\Sigma\subset U_2$ is any separating system whose dual graph is $\Theta$.  We therefore we conclude:
\begin{cor}
    Let $\Sigma\subset U_2$ be any separating system such that $G_\Sigma\cong \Theta$. Then the induced map on classifying spaces
    \[
        \BDiff^+(U_2,\Sigma) 
        \hookrightarrow
        \BDiff^+( U_2 )
    \]
    is a rational homology isomorphism.
\end{cor}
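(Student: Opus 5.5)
The plan is to deduce the corollary formally from the homotopy pushout square \cref{Eqn:U_2-pushout-square} of \cref{prop:U_2-pushout}, granting the forthcoming \cref{lem:pi isomorphism}.

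\textbf{Step 1: $\pi$ is a rational cohomology isomorphism.} By \cref{lem:pi isomorphism}, the left vertical map $\pi\colon \rmD(\Theta)\hq (C_2\times C_2)\to \rmD(R_2)\hq D_8$ induces an isomorphism on rational cohomology. Concretely, this compares $H^*(\rmD(R_2))^{D_8}$ with $H^*(\rmD(\Theta))^{C_2\times C_2}$; the inputs are \cref{thm:DGamma Cohomology Ring}, together with the functoriality \cref{lem:cohomology-functoriality} and \cref{lem:cohomology-functoriality-beta}.

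\textbf{Step 2: the opposite map is a rational isomorphism.} In a homotopy pushout, the rational Mayer--Vietoris sequence for the square shows that if one leg is a rational cohomology isomorphism, then so is the parallel leg. Equivalently, the homotopy cofibres of $\pi$ and of the right vertical map agree, and the former is rationally acyclic. Hence
\[
\rmD(\Theta)\hq(\Sym_3\times C_2)\longrightarrow \BDiff^+(U_2)
\]
is a rational cohomology isomorphism, and therefore also a rational homology isomorphism.

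\textbf{Step 3: identify the source.} It remains to identify $\rmD(\Theta)\hq \Aut(\Theta)$, where $\Aut(\Theta)\cong\Sym_3\times C_2$, with $\BDiff^+(U_2,\Sigma)$.
\begin{itemize}
\item By \cref{cor:rmD-classifying-space}, $\rmD(\Theta)\simeq \BDiff^+_{\pi_0(2\Sigma)}(U_2,\Sigma)$.
\item The short exact sequence \cref{eqn:Aut_Gr(M)-sequence} has quotient $\Aut_{\Gr(U_2)}(\Theta)=\Aut(\Theta)$.
\item Taking the homotopy quotient by this finite quotient group therefore gives $\BDiff^+(U_2,\Sigma)$.
\end{itemize}
Concretely, this is the identification
\[
\bigl(\emb(U_2,\bbR^\infty)/\Diff^+_{\pi_0(2\Sigma)}(U_2,\Sigma)\bigr)\hq \Aut(\Theta)\simeq \emb(U_2,\bbR^\infty)/\Diff^+(U_2,\Sigma),
\]
using that $\Diff^+(U_2,\Sigma)$ is admissible.

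\textbf{Step 4: the map is the inclusion.} We must check that, under this identification, the right vertical map of the square is the map induced by the inclusion $\Diff^+(U_2,\Sigma)\le\Diff^+(U_2)$. The map in \cref{thm:hocolim-intro} comes from the inclusion of the $\Gamma$-summand into the homotopy colimit, composed with forgetting $(\Sigma,\alpha)$, and on this summand it is exactly that subgroup inclusion.

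The main obstacle is Step 4: we need compatibility of the Słomińska/pushout description with the equivalence of \cref{thm:hocolim-intro}, so that the corner map really is the classifying map of the subgroup inclusion. I expect this to follow by unwinding definitions, since both are induced by the equivariant map from the orbit $\sep^\Theta(U_2)$ to a point.
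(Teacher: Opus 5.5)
Your proposal is correct and follows the paper's argument. The paper likewise uses \cref{lem:pi isomorphism} to see that $\pi$ in \cref{Eqn:U_2-pushout-square} is a rational cohomology isomorphism, concludes the same for the parallel right-hand map, and identifies $\rmD(\Theta)\hq\Aut(\Theta)\simeq\BDiff^+(U_2,\Sigma)$ via \cref{eqn:Aut_Gr(M)-sequence} and \cref{cor:rmD-classifying-space}; your Step 4, which the paper leaves implicit, does go through by unwinding as you expect, because $\sep^\Theta(U_2)\hq\Aut(\Theta)\simeq\sep^\Theta(U_2)/\Aut(\Theta)\cong\Diff^+(U_2)/\Diff^+(U_2,\Sigma)$ and its map to a point, after applying $-\hq\Diff^+(U_2)$, is the map induced by the subgroup inclusion.
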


\subsection{Presentations of $H^*(\rmD(\Gamma))$}\label{subsec:presentations}
We use Theorem \ref{thm:DGamma Cohomology Ring} to find presentations for $H^*(\rmD(\Gamma))$ for $\Gamma=\Theta$ and $\Gamma=R_2$. Since $n=0$, we recall that for every triple of distinct, adjacent half-edges $(i,j,k)$ there is a class $c_{ij}^k\in H^2(\rmD(\Gamma))$, as well as a unique class $\delta\in H^4(\rmD(\Gamma))$ such that  $(c_{ij}^k)^2=\delta$ for every $(i,j,k)$. We also have $\Aut(\Gamma)$-equivariant isomorphisms $H^3(\rmD(\Gamma))\cong H^1(\Gamma)$ for both $\Gamma=\Theta$ and $\Gamma=R_2.$
It will be useful to have the following notation.
\begin{defn}
    For two non-negatively graded $\bbQ$-algebras $R,Q$ with $R_0 = \bbQ\langle 1 \rangle = Q$, we let $R \vee Q \subset R \times Q$ denote the subring with 
    \[
        (R \vee Q)_n = \begin{cases}
            \bbQ\langle 1 \rangle & \text{ if } n=0, \\
            R_n \oplus Q_n & \text{ if } n>0.
        \end{cases}
    \]
    This is chosen such that for connected spaces $X, Y$ we have $H^*(X \vee Y) \cong H^*(X) \vee H^*(Y)$.
\end{defn}

    $\mathbf{\Gamma=\Theta:}$ The vertices $v_1,v_2$ contribute degree 2 classes  $c_1=c_{12}^3$ and $c_2=c_{1'2'}^{3'}$, respectively, where we recall that  swapping any two indices changes the sign.  Moreover we have that $c_1^2=c_2^2=\delta$. Thus \begin{align*}
    H^*(\rmD(\Theta))&=\Lambda_\bbQ\langle H_1(\Theta)\rangle\otimes \bbQ[c_1,c_2]/\langle c_1^2=c_2^2\rangle\\
    &=\Lambda_\bbQ\langle \beta_1,\beta_2\rangle\otimes \bbQ[c_1,c_2]/\langle c_1^2=c_2^2\rangle
    \end{align*}
    where the $\beta_i$ have degree 3. In order to decompose the $\Sym_3\times C_2$ action, it will be convenient to introduce the following change of basis.  Let $u_1=(c_1+c_2)/2$ and $u_2=(c_1-c_2)/2$.  Then $u_1u_2=(c_1^2-c_2^2)/4=0$.  
    Hence we can also write
    \[H^*(\rmD(\Theta))=\Lambda_\bbQ\langle \beta_1,\beta_2\rangle\otimes\left(\bbQ[u_1]\vee \bbQ[u_2]\right). \]
    Since $u_1, u_2$ each have degree 2, it follows that in degree $2n$ we have\[\left(\bbQ[u_1]\vee \bbQ[u_2]\right)_{(2n)}=\langle u_1^n,u_2^n\rangle\] and is zero in all odd degrees.

$\mathbf{\Gamma=R_2:}$ In this case there is only one vertex but four half edges so we have classes $c_{ij}^k$, with $1\leq i,j,k\leq 4$ pairwise distinct. We recall that $c_{ij}^k=-c_{ji}^k=c_{jk}^i$, $(c_{ij}^k)^2=\delta$ and $c_{12}^3=c_{12}^4+ c_{23}^4+c_{31}^4$. The ideal generated by these relations is $I_\delta$ and we obtain
    \begin{align*}
    H^*(\rmD(R_2))&=\Lambda_\bbQ\langle H_1(R_2)\rangle\otimes \bbQ[c_{ij}^k, \delta]/I_\delta\\
    &=\Lambda_\bbQ\langle\beta_1,\beta_2\rangle\otimes \bbQ[c_{ij}^k, \delta]/I_\delta.
    \end{align*}
    As above we seek a basis adapted to the $D_8$-action. We rewrite the relation $c_{12}^3=c_{12}^4-c_{13}^4+ c_{23}^4$ as \[c_{12}^3-c_{12}^4= c_{34}^2-c_{34}^1\]
    Let $w=\frac{c_{12}^3-c_{12}^4}{2}= \frac{c_{34}^2-c_{34}^1}{2}$  and define \begin{align*}
        z_1&=\frac{c_{12}^3+c_{12}^4}{2}, ~~
        z_2=\frac{c_{34}^2+c_{34}^1}{2}.
    \end{align*}
    Thus $\{w, z_1,z_2\}$ are linearly independent and generate $\bbQ[c_{ij}^k,\delta]/I_\delta$ as a ring.  Now observe that 
    \begin{align*}
        z_1w&=\frac{(c_{12}^3)^2-(c_{12}^4)^2}{4}=\frac{\delta-\delta}{4}=0\\
        z_2w&=\frac{(c_{34}^2)^2-(c_{34}^1)^2}{4}=\frac{\delta-\delta}{4}=0.
    \end{align*}
    Moreover, we have \[\frac{\delta-c_{12}^3c_{12}^4}{2}= w^2 = \frac{\delta-c_{34}^2c_{34}^1}{2}\] whence $c_{12}^3c_{12}^4=c_{34}^2c_{34}^1$. This implies that \[z_1^2=\frac{\delta+c_{12}^3c_{12}^4}{2}=\frac{\delta+c_{34}^2c_{34}^1}{2}=z_2^2.\]
    Thus, every element of degree $2n$ can be written as a linear combination of $\{w^n,z_1^n,z_1^{n-1}z_2\}$.
    By induction it follows that for all $n\geq 1$ we have:
    \[w^n=\left\{\begin{array}{cl} \delta^{\frac{n-1}{2}}w& n \text{ odd}\\ \delta^{\frac{n-2}{2}}w^2,&n \text{ even}\end{array}\right.\qquad z_1^n=\left\{\begin{array}{cl} \delta^{\frac{n-1}{2}}z_1& n \text{ odd}\\ \delta^{\frac{n-2}{2}}z_1^2,&n \text{ even}\end{array}\right. 
    \qquad z_1^{n-1}z_2=\left\{\begin{array}{cl} \delta^{\frac{n-1}{2}}z_2& n \text{ odd}\\ \delta^{\frac{n-2}{2}}z_1z_2,&n \text{ even}\end{array}\right.\]
   
   It is not difficult to verify that $\{w^2,z_1^2,z_1z_2\}$ is linearly independent when $n=2$, and since $\delta$ is not a zero divisor by \cref{lem:delta-cancellable}, we conclude that $\{w^n, z_1^n,z_1^{n-1}z_2\}$  is linearly independent for every $n$. With respect to the generating set $\{w,z_1,z_2\}$ we thus obtain a presentation
    \[ H^*(\rmD(R_2))=\Lambda_\bbQ\langle\beta_1,\beta_2\rangle\otimes\left(\bbQ[w]\vee \bbQ[z_1,z_2]/\langle z_1^2=z_2^2\rangle\right).\]
Since $w$, $z_1$, and $z_2$ have degree 2, the degree $2n$ part of the righthand factor is \[\left(\bbQ[w]\vee\bbQ[z_1,z_2]/\langle z_1^2=z_2^2\rangle\right)_{(2n)}=\langle w^n,z_1^n, z_1^{n-1}z_2\rangle\]
and is zero in odd degrees.

\subsection{Computing the invariants}
We now decompose $H^*(\rmD(\Theta))$ and $H^*(\rmD(R_2))$ into irreducible representations, with respect to the actions of $\Sym_3\times C_2$ and $C_2\times C_2$ on the former, and $D_8$ on the latter. 
All notation for the irreducible characters of these groups and their representation rings is contained in Figure \ref{fig:Representation Rings}.

\begin{figure}[h]
    \centering
    \begin{subfigure}[b]{\textwidth}
    \captionsetup{singlelinecheck=off}
    \centering
    $\Sym_3\times C_2$
    \vspace{1em}
    
    $\begin{array}{c|cccccc}
   \text{class} & 1 & (12) & (123) & -1 & -(12) & -(123)\\\hline
    \text{size} & 1 & 3 & 2 & 1 & 3 & 2\\\hline
    \mathds{1} & 1 & 1 & 1 & 1 & 1 & 1\\
    \sigma & 1 & -1 & 1 & 1 & -1 & 1\\
    \tau_3 & 2 & 0 & -1 & 2 & 0 & -1\\
    \rho & 1 & 1 & 1 & -1 & -1 & -1\\
    \sigma\rho & 1 & -1 & 1 & -1 & 1 & -1\\
    \tau_3\rho & 2 & 0 & -1 & -2 & 0 & 1\\
    \end{array}$
    \subcaption{The character table for $\Sym_3\times C_2$, with $\Sym_3$ in cycle notation and $C_2=\{\pm 1\}$.  
    \begin{itemize}
        \item Relations: $\sigma^2=\rho^2=\mathds{1}$, $\tau_3\sigma=\tau_3$, $\tau_3^2=\mathds{1}+\sigma+\tau_3.$
        \item Exterior powers: $\Lambda^2(\tau_3)=\sigma$.
    \end{itemize}}
    \end{subfigure}
\vspace{1em}

   \begin{subfigure}[b]{\textwidth}
   \captionsetup{singlelinecheck=off}
   \centering
   $C_2\times C_2$
   \vspace{1em}
   
   $\begin{array}{c|cccc}
  \text{class} & 1 & (12) & -1 & -(12) \\\hline
   \text{size} & 1 & 1 & 1 & 1 \\\hline
   \mathds{1} & 1 & 1 & 1 & 1 \\
   \kappa_1 & 1 & -1 & 1 & -1\\
   \kappa_2 & 1 & 1 & -1 & -1 \\
   \kappa_3 & 1 & -1 & -1 & 1 \\
   \end{array}$
   \subcaption{The character table for $C_2\times C_2$, regarded as $\langle(12)\rangle\times \{\pm 1\}$. 
   \begin{itemize}
       \item Relations: $\kappa_1^2=\kappa_2^2=1$, $\kappa_1\kappa_2=\kappa_3$.
       \item Exterior powers: none.
   \end{itemize}}
   \end{subfigure}
   \vspace{1em} 
   
   \begin{subfigure}[b]{\textwidth}
   \captionsetup{singlelinecheck=off}
   \centering
   $D_8$
   \vspace{1em}

   $\begin{array}{c|ccccc}
  \text{class} & 1 & (12)(34) & (12) & (13)(24) & (1324)\\\hline
   \text{size} & 1 & 1 & 2 & 2 &  2\\\hline
   \mathds{1} & 1 & 1 & 1 & 1 &  1\\
   \lambda_1 & 1 & 1 & -1 & 1 &  -1\\
   \lambda_2 & 1 & 1 & 1 & -1 &  -1\\
   \lambda_3 & 1 & 1 & -1 & -1 & 1\\
   \tau_4 & 2 & -2 & 0 & 0 & 0\\
   \end{array}$
   \subcaption{The character table for $D_8$ as a subgroup of $\Sym_4$, generated by $(12)$ and $(34)$.  
   \begin{itemize}
       \item Relations: $\lambda_1^2=\lambda_2^2=1$, $\lambda_1\lambda_2=\lambda_3$, $\tau_4\lambda_1=\tau_4\lambda_2=\tau_4$, $\tau_4^2=\mathds{1}+\lambda_1+\lambda_2+\lambda_3.$
       \item Exterior powers: $\Lambda^2(\tau_4)=\lambda_3$.
   \end{itemize}}
   \end{subfigure}

    \caption{Character tables and representation rings of $\Sym_3\times C_2$, $C_2\times C_2$, and $D_8$.}
    \label{fig:Representation Rings}
\end{figure}

\begin{lem}\label{lem:Sigma3C2 Invariants}As a graded commutative ring, the invariants for the action $\Sym_3\times C_2$ on $H^*(\rmD(\Theta))$ has presentation \[ H^*(\rmD(\Theta))^{\Sym_3\times C_2}=\bbQ[\gamma_1,\varepsilon_1]/\langle \varepsilon_1^2\rangle\vee \bbQ[\gamma_2]\]
where $\gamma_1=u_1^2$, $\gamma_2=u_2^2$ have degree 4, and $\varepsilon_1=\beta_1\beta_2u_1$ has degree 8. In particular, the Betti numbers are $b_4=2$ and $b_{4k}=3$ for all $k\geq 2.$

\end{lem}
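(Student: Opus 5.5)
We compute the invariants by determining how $\Sym_3\times C_2$ acts on the generators $\beta_1,\beta_2,u_1,u_2$ of the presentation
\[
H^*(\rmD(\Theta))=\Lambda_\bbQ\langle \beta_1,\beta_2\rangle\otimes\left(\bbQ[u_1]\vee\bbQ[u_2]\right),
\]
then decompose each graded piece into irreducibles via the character table in \cref{fig:Representation Rings}, and finally read off the trivial isotypic parts. Since the presentation of \cref{thm:DGamma Cohomology Ring} is $\Aut(\Theta)$-equivariant, the action on products is determined by the action on generators.

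\textbf{Step 1: the action on generators.} By \cref{thm:DGamma Cohomology Ring}, the degree $3$ part is $H_1(\Theta)$. Its character is that of $C_1(\Theta)$ (with edge orientations) minus that of $C_0(\Theta)$. A permutation of edges fixes the orientations, so $\Sym_3$ acts on $H_1(\Theta)$ as the standard representation $\tau_3$. The generator $-1$ swaps $v_1$ and $v_2$ and reverses every edge, so it acts by $-1$ on $C_1$; on $C_0$ it acts as the swap, whose character is $1+\rho$. This gives $H_1(\Theta)\cong\tau_3\rho$, and hence $\Lambda^2 H_1=\Lambda^2(\tau_3)\rho^2=\sigma$.

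For the degree $2$ classes: $c_1=c_{12}^3$ transforms by the sign of the induced permutation of half-edges at $v_1$, and similarly for $c_2$. A transposition in $\Sym_3$ therefore negates both $c_i$, while a $3$-cycle fixes them. The element $-1$ swaps $c_1\leftrightarrow c_2$, because $h_i\mapsto h_i'$ preserves indices. Consequently $u_1=(c_1+c_2)/2$ spans $\sigma$ and $u_2=(c_1-c_2)/2$ spans $\sigma\rho$. In degree $2n$ this gives $u_1^n\in\sigma^n$ and $u_2^n\in(\sigma\rho)^n$.

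\textbf{Step 2: tensor and pick out invariants.} Write each graded piece as a product of an exterior factor ($\mathds 1$, $\tau_3\rho$, or $\sigma$ in degrees $0$, $3$, $6$) and a polynomial factor spanned by $u_1^n$ and $u_2^n$. We then find the summands containing $\mathds 1$.
\begin{itemize}
\item Exterior degree $0$: we need $\sigma^n=\mathds 1$ for $u_1^n$, i.e.\ $n$ even, giving $u_1^{2k}=\gamma_1^k$; and $(\sigma\rho)^n=\mathds 1$ for $u_2^n$, again $n$ even, giving $\gamma_2^k$.
\item Exterior degree $3$: $\tau_3\rho$ times a one-dimensional character never contains $\mathds 1$, so there is nothing here.
\item Exterior degree $6$: $\sigma\cdot\sigma^n$ is trivial exactly when $n$ is odd, giving $\beta_1\beta_2u_1^{2k+1}=\varepsilon_1\gamma_1^k$. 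On the other hand, $\sigma(\sigma\rho)^n$ always carries the factor $\rho^n\sigma^{n+1}$, which is never trivial.
\end{itemize}
This yields a basis of invariants $\{\gamma_1^k,\ \gamma_2^k,\ \varepsilon_1\gamma_1^k\}$, where $\deg\varepsilon_1\gamma_1^k=8+4k$.

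\textbf{Step 3: ring structure and Betti numbers.} The relation $u_1u_2=0$ gives $\gamma_1\gamma_2=0$ and $\varepsilon_1\gamma_2=0$, and since $\varepsilon_1$ contains $(\beta_1\beta_2)^2=0$ we get $\varepsilon_1^2=0$. No further relations occur, because the basis elements listed are linearly independent in the free presentation. This gives the stated ring $\bbQ[\gamma_1,\varepsilon_1]/\langle\varepsilon_1^2\rangle\vee\bbQ[\gamma_2]$. Counting the basis in each degree gives $b_4=2$ (from $\gamma_1,\gamma_2$) and $b_{4k}=3$ for $k\ge2$ (from $\gamma_1^k,\gamma_2^k,\varepsilon_1\gamma_1^{k-2}$).

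The main obstacle is Step 1. We must get the signs right: how the vertex swap acts on the $c_i$ (through the oriented half-edge labelling) and on $H_1$ (edge reversal combined with vertex swap). If we misidentify $u_1$ as $\sigma\rho$ rather than $\sigma$, the invariant $\varepsilon_1$ would move to the $u_2$ side. To guard against this, we will verify the signs directly on explicit cycles such as $[e_3-e_1]$ and on the labelled tripods at each vertex.
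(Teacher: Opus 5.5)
Your proposal is correct and follows essentially the same route as the paper: identify $H_1(\Theta)\cong\tau_3\rho$, $\langle u_1\rangle\cong\sigma$ and $\langle u_2\rangle\cong\sigma\rho$, tensor with the exterior powers of $H_1(\Theta)$, and read off the trivial summands $\gamma_1^k$, $\gamma_2^k$ and $\varepsilon_1\gamma_1^k$. One small slip: the character of $H_1(\Theta)$ is $\chi_{C_1}-\chi_{C_0}+\chi_{H_0}$, not $\chi_{C_1}-\chi_{C_0}$ (as you wrote it, the formula gives the virtual character $\tau_3\rho-\mathds{1}$). Your conclusion $H_1(\Theta)\cong\tau_3\rho$ is nonetheless right, and can also be seen directly from the fact that $-1$ sends $[e_3-e_1]\mapsto-[e_3-e_1]$.
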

\begin{proof}
    $\Sym_3$ acts by permuting the edges $e_i$, while $C_2$ acts by inverting them.  Therefore the 1-chains in $\Theta$ form a regular representation of $\Sym_3$ which restricts to the 2-dimensional subspace of cycles (the orthogonal complement of $e_1+e_2+e_3$) as the standard representation $\tau_3$.  
    
    The $C_2$-action which inverts each edge acts by $-1$ on $H_1(\Theta) \cong \langle\beta_1,\beta_2\rangle$; 
    the corresponding representation of $\Sym_3\times C_2$ on $\Lambda^1\langle \beta_1,\beta_2\rangle$ is therefore  $\tau_3\rho$, and on $\Lambda^2\langle \beta_1,\beta_2\rangle =\langle \beta_1\beta_2\rangle=\Lambda^2(\tau_3\rho)=\sigma$.
    Each permutation in $\Sym_3$ acts on $c_i$ by the sign representation $\sigma$. 
    On the other hand, the $C_2$ exchanges the two $\Conf_3(S^3)$ factors, mapping $c_1$ to $c_2$ and vice versa. The subspaces spanned by $u_1$ and $u_2$ are preserved so the representation on $\langle c_1,c_2\rangle =\langle u_1,u_2\rangle$ decomposes as two 1-dimensional representations $\langle u_1\rangle \oplus \langle u_2\rangle=\sigma+\sigma\rho$. Each of these 1-dimensional representations squares to the trivial representation $\mathds{1}$. Summarising, we have
\begin{align*}
    \Lambda^1\langle \beta_1,\beta_2\rangle & =\tau_3\rho, &
    \Lambda^2\langle\beta_1,\beta_2\rangle &=\langle \beta_1\beta_2\rangle=\sigma &
    \langle u_1^n,u_2^n\rangle & =\left\{\begin{array}{ll}
     \sigma+\sigma\rho, &n\geq 1 \text{ odd}\\
      2\cdot \mathds{1}, &n\geq 2 \text{ even}
    \end{array}\right.
\end{align*}
From the tensor product description of $H^*(\rmD(\Theta))$ we then have:
\begin{align*}
    \tau_3\rho\otimes(\sigma+\sigma\rho)&=\tau_3\rho+\tau_3 &
    \tau_3\rho\otimes (2\cdot \mathds{1})&=2\cdot \tau_3\rho \\
    \sigma\otimes (\sigma+\sigma \rho)&=\mathds{1}+\rho &
    \sigma\otimes (2\cdot \mathds{1})&=2\cdot \sigma
\end{align*}
The trivial summand in $\sigma \otimes (\sigma + \sigma \rho)$ comes from $ \beta_1\beta_2 \otimes u_1$.  Since $|\beta_i|=3$ and $u_1u_2=0$, setting $\gamma_1=u_1^2$, $\gamma_2=u_2^2,$ and $\varepsilon_1=\beta_1\beta_2u_1$ we have
\[H^*(\rmD(\Theta))^{\Sym_3\times C_2}=\bbQ[\gamma_1,\varepsilon_1]/\langle\varepsilon_1^2\rangle\vee \bbQ[\gamma_2].\]
 Since $|\gamma_1|=|\gamma_2|=4$ and $|\varepsilon_1|=8$, the degree 4 part is 2-dimensional and spanned by $\{\gamma_1,\gamma_2\}$ while for all $k\geq 2$ degree $4k$ part is 3-dimensional and spanned by  spanned by $\{\gamma_1^k, \gamma_1^{k-2}\varepsilon_1,\gamma_2^k\}$ which proves that the Betti numbers are as claimed.
\end{proof}

\begin{lem}\label{lem:C2C2 Invariants}
    As a graded commutative ring, the invariants for the action of $C_2\times C_2$ on $H^*(\rmD(\Theta))$ has presentation
    \[H^*(\rmD(\Theta))^{C_2\times C_2}=\bbQ[\gamma_1,\varepsilon_1]/\langle\varepsilon_1^2\rangle\vee \bbQ[\gamma_2,\mu_2]/\langle\mu_2^2 \rangle\]
    where $\gamma_1=u_1^2$ and $\gamma_2=u_2^2$ have degree 4, $\varepsilon_1=\beta_1\beta_2u_1$ has degree 8, and $\mu_2=(\beta_1-\beta_2)u_2$ has degree 5.
    The Betti numbers are $b_4=2$, $b_{4k+1}=1$ for all $k\geq 1$ and $b_{4k}=3$ for all $k\geq 2.$
\end{lem}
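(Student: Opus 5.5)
We follow the proof of \cref{lem:Sigma3C2 Invariants}, restricting the representations computed there to the subgroup $C_2\times C_2=\langle(12)\rangle\times\{\pm1\}\le \Sym_3\times C_2$, decomposing each graded piece of
\[
H^*(\rmD(\Theta))=\Lambda_\bbQ\langle\beta_1,\beta_2\rangle\otimes\left(\bbQ[u_1]\vee\bbQ[u_2]\right)
\]
into characters $\mathds{1},\kappa_1,\kappa_2,\kappa_3$, and reading off the trivial summands.

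First we restrict the building blocks. By the character tables, $\sigma$ restricts to $\kappa_1$ and $\rho$ to $\kappa_2$. The standard representation $\tau_3$ restricts to $\mathds{1}+\kappa_1$, so $\tau_3\rho$ restricts to $\kappa_2+\kappa_3$. Explicitly, $(12)$ swaps $\beta_1$ and $\beta_2$ and $-1$ negates both. Hence $\beta_1+\beta_2$ spans $\kappa_2$ and $\beta_1-\beta_2$ spans $\kappa_3$. Also $\beta_1\beta_2$ spans $\kappa_1$, since $(12)$ sends it to $\beta_2\beta_1=-\beta_1\beta_2$ and $-1$ fixes it. Next, $u_1$ spans $\kappa_1$ and $u_2$ spans $\kappa_1\kappa_2=\kappa_3$, and even powers of either are trivial.

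Tensoring, we get the following invariants:
\begin{itemize}
\item In $\Lambda^0$, the invariants are $u_1^{2k}$ and $u_2^{2k}$.
\item In $\Lambda^1\otimes u_i^n$, we get $(\kappa_2+\kappa_3)\otimes\kappa_1=\kappa_3+\kappa_2$ for $u_1^{\rm odd}$, which has no invariant. For $u_2^{\rm odd}$ we get $(\kappa_2+\kappa_3)\otimes\kappa_3=\kappa_1+\mathds{1}$, with the invariant spanned by $(\beta_1-\beta_2)u_2^{n}$. For even powers, $\kappa_2+\kappa_3$ has no invariant.
\item In $\Lambda^2$, the piece $\kappa_1\otimes\kappa_1$ gives the invariant $\beta_1\beta_2u_1^{\rm odd}$, while $\beta_1\beta_2u_2^{\rm odd}$ is $\kappa_2$ and $\beta_1\beta_2u^{\rm even}$ is $\kappa_1$.
\end{itemize}
So the invariant basis consists of $\gamma_1^k$ and $\gamma_2^k$ in degree $4k$, $\gamma_1^{k-2}\varepsilon_1$ in degree $4k$ for $k\ge2$, and $\gamma_2^{k-1}\mu_2$ in degree $4k+1$. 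This gives the stated Betti numbers.

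For the ring structure, we use $u_1u_2=0$. This gives $\gamma_1\gamma_2=0$, $\varepsilon_1\gamma_2=\varepsilon_1\mu_2=\gamma_1\mu_2=0$, and $\mu_2^2=0$, the last because the $\beta$'s are odd and $(\beta_1-\beta_2)^2=0$. The two families generated by $\{\gamma_1,\varepsilon_1\}$ and $\{\gamma_2,\mu_2\}$ therefore multiply to zero against each other. Finally, $\varepsilon_1^2$ contains $(\beta_1\beta_2)^2=0$, which gives the wedge presentation. The linear independence of the listed monomials follows from the explicit basis $\{u_1^n,u_2^n\}$ of $\bbQ[u_1]\vee\bbQ[u_2]$.

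The only delicate step is getting the signs in the action on the $\beta_i$ and $u_i$ right. This uses the conventions $\beta_1=[e_3-e_1]$ and $\beta_2=[e_3-e_2]$, which are swapped by $(12)$, together with the fact that $(12)$ acts on each $c_i$ by the sign. We take these from the proof of \cref{lem:Sigma3C2 Invariants}; once they are fixed, the rest is bookkeeping.
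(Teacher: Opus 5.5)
Your proposal is correct and follows essentially the same route as the paper: decompose $\Lambda^\bullet\langle\beta_1,\beta_2\rangle$ and $\langle u_1^n,u_2^n\rangle$ into the characters $\kappa_i$ of $C_2\times C_2$, read off the trivial summands $(\beta_1-\beta_2)u_2$ and $\beta_1\beta_2u_1$, and get the wedge structure from $u_1u_2=0$. Your assignment of odd powers of $u_2$ to $\kappa_3$ is the correct one; the paper's display of $\kappa_1+\kappa_2$ for odd $n$ is a typo for $\kappa_1+\kappa_3$, which is the value its subsequent tensor-product computation actually uses.
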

\begin{proof}
    The stabiliser of the edge $e_3$ in $\Theta$ is the subgroup generated by the permutation $(12)$ which exchanges $e_1$ and $e_2$ together with the involution that flips all of the edges.  Then (12) switches $e_1-e_3$ with $e_2-e_3$, while the involution acts by $-1$ on both.  It follows that $\langle \beta_1,\beta_2\rangle$ decomposes as a sum of two 1-dimensional representations $\langle\beta_1+\beta_2\rangle\oplus \langle\beta_1-\beta_2\rangle=\kappa_2+\kappa_3$, and $\langle \beta_1\beta_2\rangle=\kappa_2\cdot\kappa_3=\kappa_1.$ Similarly, $(12)$ acts as $-1$ on both $c_1$ and $c_2$, while the involution that exchanges both sides also exchanges $c_1$ and $c_2$.  It follows that $\langle c_1,c_2\rangle$ decomposes as $\langle u_1\rangle \oplus \langle u_2\rangle=\kappa_1+\kappa_3$. As each of these are 1-dimensional, we obtain
    \[\langle u_1^n,u_2^n\rangle=\left\{\begin{array}{cl}\kappa_1+\kappa_2,&\text{ $n\geq 1$ odd}\\ 2\cdot\mathds{1},&\text{ $n\geq 2$ even.}\end{array}\right.\]

    Considering tensor products we have
    \begin{align*}
        (\kappa_2+\kappa_3)\otimes (\kappa_1+\kappa_3) & =\kappa_3+\kappa_2+\kappa_1+\mathds{1} &
        (\kappa_2+\kappa_3)\otimes (2\cdot\mathds{1}) & =2\cdot\kappa_2+2\cdot \kappa_3 \\
        \kappa_1\otimes (\kappa_1+\kappa_3) & =\mathds{1}+\kappa_2 &
        \kappa_1\otimes (2\cdot \mathds{1}) & =2\cdot \kappa_1 .
    \end{align*}
The trivial summand in $(\kappa_2 + \kappa_3) \otimes (\kappa_1 + \kappa_3)$ is the 5-dimensional class $(\beta_1-\beta_2)\otimes u_2$, while the trivial summand in $\kappa_1 \otimes (\kappa_1 + \kappa_3)$ is the class $\beta_1\beta_2 u_1=\varepsilon_1$ found previously.
The other generators are $\gamma_1=u_1^2$ and $\gamma_2=u_2^2$. 
Defining $\mu_2=(\beta_1-\beta_2)u_2$, since $u_1u_2=0$ and $\mu_2^2 =0$ (because it has odd degree), we obtain the presentation
 \[H^*(\rmD(\Theta))^{C_2\times C_2}=\bbQ[\gamma_1,\varepsilon_1]/\langle\varepsilon_1^2\rangle\vee \bbQ[\gamma_2,\mu_2]/\langle \mu_2^2 \rangle\]
where $|\gamma_1|=|\gamma_2|=4$, $|\varepsilon_1|=8$, and $|\mu_2|=5$. For the Betti numbers, we obtain additional classes of the form $\gamma_2^{k-1}\mu_2$ in degree $(4k+1)$ for all $k\geq 1$. The other Betti numbers agree with the $\Sym_3\times C_2$ invariants.  
\end{proof}

\begin{lem}\label{lem:D8 Invariants} As a graded commutative ring, the invariants for the action of $D_8$ on $H^*(\rmD(R_2))$ has presentation 
    \[H^*(\rmD(R_2))^{D_8}=\bbQ[\alpha_1,\eta_1]/\langle\eta_1^2\rangle \vee \bbQ[\alpha_2,\nu_2]/\langle \nu_2^2\rangle\]
    where $\alpha_1=w^2$ and $\alpha_2=z_1^2=z_2^2$ have degree 4, $\eta_1=\beta_1\beta_2 w$ has degree 8, and $\nu_2=\beta_1z_1+\beta_2z_2$ has degree 5.  The Betti numbers are $b_4=2$, $b_{4k+1}=1$ for all $k\geq 1$ and $b_{4k}=3$ for all $k\geq 2.$
\end{lem}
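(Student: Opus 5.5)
We follow the strategy of \cref{lem:Sigma3C2 Invariants} and \cref{lem:C2C2 Invariants}: decompose each tensor factor of
\[
H^*(\rmD(R_2))=\Lambda_\bbQ\langle\beta_1,\beta_2\rangle\otimes\left(\bbQ[w]\vee \bbQ[z_1,z_2]/\langle z_1^2=z_2^2\rangle\right)
\]
into irreducible $D_8$-representations, multiply using the relations in \cref{fig:Representation Rings}, and read off the trivial summands.

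First we determine the action on $H_1(R_2)=\langle\beta_1,\beta_2\rangle$, identified with $H^3$ via \cref{thm:DGamma Cohomology Ring} and \cref{lem:cohomology-functoriality-beta}. Here $(12)$ reverses $f_1$, sending $\beta_1\mapsto-\beta_1$ and fixing $\beta_2$. The element $(13)(24)$ swaps $f_1$ and $f_2$ preserving orientations, so $\beta_1\leftrightarrow\beta_2$. The element $(12)(34)$ acts by $-1$. Comparing characters, $\Lambda^1=\tau_4$ and $\Lambda^2=\langle\beta_1\beta_2\rangle=\lambda_3$ (both reflections act on $\beta_1\beta_2$ by $-1$, while $(13)(24)$ gives $\beta_2\beta_1=-\beta_1\beta_2$; we double-check this against the table).

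Next we compute the action on the degree $2$ classes using the sign rule for $c_{ij}^k$. Under $(12)$: $c_{12}^3\mapsto -c_{12}^3$ and $c_{12}^4\mapsto -c_{12}^4$, so $w\mapsto -w$ and $z_1\mapsto -z_1$. Also $c_{34}^a\mapsto c_{34}^{b}$ with $\{a,b\}=\{1,2\}$, so $z_2$ is fixed and $w=(c_{34}^2-c_{34}^1)/2\mapsto -w$, which is consistent. Under $(13)(24)$: $c_{12}^3\mapsto c_{34}^1$ and $c_{12}^4\mapsto c_{34}^2$, so $z_1\leftrightarrow z_2$ and $w\mapsto -w$. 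Hence $\langle w\rangle=\lambda_3$ and $\langle z_1,z_2\rangle=\tau_4$, with the same transformation pattern as $\beta_1,\beta_2$ (namely $\beta_i\leftrightarrow z_i$).

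For $n\ge2$ we treat the parities separately.
\begin{itemize}
\item For $n$ even, $\langle w^n\rangle=\mathds 1$. The classes $z_1^n=\delta^{(n-2)/2}z_1^2$ and $z_1^{n-1}z_2=\delta^{(n-2)/2}z_1z_2$ span $\mathds 1+\lambda$, where $z_1z_2$ is fixed by $(13)(24)$ and negated by $(12)$, so $\lambda=\lambda_1$. Since $\delta$ is invariant and not a zero divisor (\cref{lem:delta-cancellable}), this is valid for all even $n$.
\item For $n$ odd, $\langle w^n\rangle=\lambda_3$ and $\langle z_1^n,z_1^{n-1}z_2\rangle=\tau_4$.
\end{itemize}

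Tensoring with $\Lambda^0=\mathds 1$, $\Lambda^1=\tau_4$, $\Lambda^2=\lambda_3$, and using $\tau_4\otimes\tau_4\ni\mathds 1$ exactly once, the trivial summands are:
\begin{itemize}
\item $\Lambda^0\otimes$ (even degrees): $w^{2k}$ and $z_1^{2k}$, giving $\alpha_1^k$ and $\alpha_2^k$;
\item $\Lambda^1\otimes\tau_4$ ($n$ odd): the invariant $\beta_1z_1^n+\beta_2z_1^{n-1}z_2=\alpha_2^{(n-1)/2}\nu_2$;
\item $\Lambda^2\otimes\lambda_3$ ($n$ odd): $\beta_1\beta_2w^n=\alpha_1^{(n-1)/2}\eta_1$.
\end{itemize}
All other products ($\tau_4\otimes\mathds 1$, $\tau_4\otimes\lambda_1$, $\lambda_3\otimes\lambda_1$, and so on) contain no trivial summand.

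The ring relations then follow: $w z_i=0$ gives $\alpha_1\alpha_2=0$, $\alpha_1\nu_2=0$ and $\alpha_2\eta_1=0$. Both $\eta_1^2$ and $\nu_2^2$ vanish by degree (the square contains $\beta_1^2$, or $\nu_2$ has odd degree), and $\eta_1\nu_2=0$ because it contains a cubic in $\beta$. Counting gives $b_4=2$, $b_{4k+1}=1$, and $b_{4k}=3$ for $k\ge2$.

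The main obstacle is pinning down the signs: the action on $\beta_i$, and on $w$ versus $z_i$ under $(13)(24)$ (via $c_{ij}^k=c_{jk}^i$ and relabelled indices). A sign error there would exchange $\lambda_1$ and $\lambda_3$ and move the odd-degree invariants. We would check this by verifying that $w,z_1,z_2$ are mapped into $\pm$ themselves consistently with both expressions of $w$.
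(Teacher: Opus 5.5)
Your proposal is correct and follows the paper's own proof: decompose $\Lambda^\bullet\langle\beta_1,\beta_2\rangle$ and $\langle w^n, z_1^n, z_1^{n-1}z_2\rangle$ into irreducibles as $\tau_4,\lambda_3$ and $\lambda_3+\tau_4$ (odd $n$) or $2\cdot\mathds{1}+\lambda_1$ (even $n$). Then tensor, and read off the invariants $\alpha_1^k,\alpha_2^k,\alpha_2^{k}\nu_2,\alpha_1^{k}\eta_1$. The only difference is that you derive the sign pattern on $w,z_1,z_2$ directly from the relations $c_{ij}^k=-c_{ji}^k=c_{jk}^i$ and check the product relations explicitly, where the paper simply asserts that $\langle z_1,z_2\rangle$ transforms by the same matrices as $\langle\beta_1,\beta_2\rangle$.
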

\begin{proof} With respect to the basis $\{\beta_1,\beta_2\}$, the action of $D_8$ has the following representatives for the four nontrivial conjugacy classes:
\[\begin{array}{rcrc}
(12)\colon&\left(\begin{array}{cc}-1&0\\0&1\end{array}\right)&(13)(24)\colon&\left(\begin{array}{cc}0&1\\1&0\end{array}\right)\\
&&&\\
(12)(34)\colon&\left(\begin{array}{cc}-1&0\\0&-1\end{array}\right)&(1324)\colon&\left(\begin{array}{cc}0&-1\\1&0\end{array}\right)
\end{array}\]
which clearly corresponds to the standard 2-dimensional representation $\tau_4$. It follows that $\Lambda^2\langle \beta_1,\beta_2\rangle=\Lambda^2(\tau_4)=\lambda_3$ as a $D_8$-representation.

On $\langle w,z_1,z_2\rangle$, the $D_8$-action preserves the subspace spanned by $w$ as the 1-dimensional representation $\lambda_3$, while it acts on $\langle z_1,z_2\rangle$ according to the same matrices above ($z_i\leftrightarrow \beta_i$). Thus $\langle w, z_1, z_2\rangle=\lambda_3+\tau_4$.  It follows that $\langle w^2\rangle=\lambda_3^2=\mathds{1}$, while $\langle z_1^2=z_2^2\rangle=\mathds{1}$ and $\langle z_1z_2\rangle=\lambda_1$ (note the difference from $\langle \beta_1\beta_2\rangle$ since $z_1$ and $z_2$ commute). Hence we have 
\[\langle w^n,z_1^n, z_1^{n-1}z_2\rangle=\left\{\begin{array}{cl}\lambda_3+\tau,&\text{ $n\geq 1$ odd}\\ 2\cdot\mathds{1}+\lambda_1,&\text{ $n\geq 2$ even}\end{array}\right.\]
Incorporating tensor products with $\Lambda^1\langle\beta_1,\beta_2\rangle = \tau$ and $\Lambda^2\langle \beta_1,\beta_2 \rangle = \lambda_3$ we have
 \begin{align*}
        \tau\otimes (\lambda_3+\tau)&=\tau+(\mathds{1}+\lambda_1+\lambda_2+\lambda_3) &
        \tau\otimes (2\cdot\mathds{1}+\lambda_1)&=3\cdot \tau \\
        \lambda_3\otimes (\lambda_3+\tau)&=\mathds{1}+\tau &
        \lambda\otimes (2\cdot \mathds{1}+\lambda_1)&=2\cdot \lambda_3+\lambda_2
\end{align*}
The additional trivial summand on the first line comes from $\beta_1z_1+\beta_2z_2$ since $D_8$ acts the same way on $\beta_i$ as $z_i$.  The trivial summand on line three is $\beta_1\beta_2w$. Therefore if we write $\alpha_1=w^2$, $\eta_1=\beta_1\beta_2w$, $\alpha_2=z_1^2=z_2^2$, and $\nu_2=\beta_1z_1+\beta_2z_2$ we obtain the presentation
\[H^*(\rmD(R_2))^{D_8}=\bbQ[\alpha_1,\eta_1]/\langle\eta_1^2\rangle \vee \bbQ[\alpha_2,\nu_2] /\langle \nu_2^2 \rangle\]
since $z_1w=z_2w=0$ and the $\beta_i$ have odd degree. Here $|\alpha_i|=4,$ $|\eta_1|=8$ and $|\nu_2|=5.$ Since this ring is clearly isomorphic to $H^*(\rmD(\Theta))^{C_2\times C_2}$, the Betti numbers are the same in each degree.
\end{proof}

\subsection{Finishing the computation} We can now complete the calculation of $H^*(\BDiff^+(U_2))$, using the pushout in \cref{prop:U_2-pushout}. This is simplified considerably by the following lemma, which describes the effect of $\pi^*$ on cohomology. 
\begin{lem}\label{lem:pi isomorphism}
    The map $\pi\colon \Theta\rightarrow R_2$ induces an isomorphism of rings \[\pi^*\colon H^*(\rmD(R_2))^{D_8}\rightarrow H^*(\rmD(\Theta))^{C_2\times C_2}.\]
\end{lem}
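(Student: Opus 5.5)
We compute $\pi^*$ on the ring generators of $H^*(\rmD(R_2))$, restrict to invariants, and compare with the presentations of \cref{lem:C2C2 Invariants} and \cref{lem:D8 Invariants}. The map $\pi$ induces $\rmD(\Theta)\to\rmD(R_2)$. It is equivariant along the homomorphism $C_2\times C_2\to D_8$ given by the stabiliser of $e_3$ acting on $R_2$ through $\pi$, so $\pi^*$ carries $D_8$-invariants into $C_2\times C_2$-invariants. Since both invariant rings have the same Betti numbers in every degree, it suffices to show that $\pi^*$ is a ring map that is injective, or surjective, on these invariants. In fact it is enough to check that it sends generators to generators.

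First the degree $3$ classes. By \cref{lem:cohomology-functoriality-beta} with $n=0$, $\pi^*$ on $H^3$ is the identification $H_1(\Theta)\cong H_1(R_2)$. With the orientation conventions fixed above, $\beta_1=[f_1]\mapsto[e_3-e_1]=\beta_1$ and $\beta_2\mapsto\beta_2$.

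Next the degree $2$ classes. Here I use \cref{lem:cohomology-functoriality}: $c_{(i,j,k)}\mapsto c_{\pi^\tripod(i,j,k)}$. For the triple $(h_1,h_2,h_3)$ at $v$, the preimages are $h_1',h_1,h_2'$. Two of them ($h_1',h_2'$) lie at $v_2$ and one ($h_1$) at $v_1$, so the tripod meets at $v_2$ with the half-edges $h_1',h_3',h_2'$. Up to the ordering sign this gives $c_{12}^3\mapsto \pm c_2$. Similarly $c_{12}^4$ (half-edges $h_1',h_1,h_3$) has two preimages at $v_1$ and one at $v_2$, so it goes to $\pm c_1$. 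Doing the same for $c_{34}^1,c_{34}^2$ and carefully tracking signs via the antisymmetry relations, I expect $w\mapsto \pm u_2$ (or $\pm u_1$), $z_1,z_2\mapsto\pm u_1$ (or $\pm u_2$), with one of $\{z_1,z_2\}$ possibly picking up a sign, and $\delta\mapsto\delta$. The bookkeeping of these signs, and making sure the resulting assignment is consistent with $(z_1)^2=(z_2)^2$ and $wz_i=0$ (which it is automatically, since $\pi^*$ is a ring map), is the main obstacle. I would first fix explicit orderings from \cref{fig:Graph Notation} and use $c_{ij}^k=-c_{ji}^k=c_{jk}^i$. Note that the invariance requirement itself pins down the answer. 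The only $C_2\times C_2$-invariant degree $5$ class is $\mu_2=(\beta_1-\beta_2)u_2$, up to scalar, and the images must be invariant. This forces $z_1\mapsto \pm u_2$ and $z_2\mapsto\mp u_2$, and then $w\mapsto\pm u_1$. Then $\alpha_1=w^2\mapsto u_1^2=\gamma_1$ and $\alpha_2=z_1^2\mapsto u_2^2=\gamma_2$.

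Consequently $\eta_1=\beta_1\beta_2w\mapsto\pm\beta_1\beta_2u_1=\pm\varepsilon_1$ and $\nu_2=\beta_1z_1+\beta_2z_2\mapsto\pm(\beta_1-\beta_2)u_2=\pm\mu_2$. Thus $\pi^*$ sends the generators $\alpha_1,\alpha_2,\eta_1,\nu_2$ to $\gamma_1,\gamma_2,\pm\varepsilon_1,\pm\mu_2$, which generate the target. Therefore $\pi^*$ is surjective on invariants, and by the equality of Betti numbers it is an isomorphism. The relations match on both sides, so it is a ring isomorphism.
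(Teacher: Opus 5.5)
Your proposal is correct and follows the paper's route. You compute $\pi^*$ on the $\beta_i$ via \cref{lem:cohomology-functoriality-beta} and on the $c_{ij}^k$ via the tripod map $\pi^\tripod$, then check that $\alpha_1,\alpha_2,\eta_1,\nu_2$ go to $\gamma_1,\gamma_2,\pm\varepsilon_1,\pm\mu_2$. The one difference is that the paper tracks signs explicitly (getting $w\mapsto -u_1$, $z_1\mapsto u_2$, $z_2\mapsto -u_2$), whereas you let $C_2\times C_2$-invariance of $\pi^*(\nu_2)$ fix the assignment. That shortcut is valid, since $(\beta_1-\beta_2)u_1$ is not invariant. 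One small slip: the preimages of the triple for $c_{12}^4$ are $h_1',h_1,h_2$, not $h_1',h_1,h_3$, though your conclusion $c_{12}^4\mapsto \pm c_1$ is right.
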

\begin{proof} 
Recall that $\pi\colon \Theta\rightarrow R_2$ is the map which collapses the edge $e_3$.
We have set up our notation so that $\pi^*$ sends $\beta_i\in H^3(\rmD(R_2))$ to $\beta_i\in H^3(\rmD(\Theta))$, as this is exactly the map $\pi^*\colon H^1(R_2)\rightarrow H^1(\Theta)$ according to \cref{lem:cohomology-functoriality-beta}.
It remains to compute the effect of $\pi^*$ on the $c_{ij}^k$. 
For notational clarity we will write $d_1=d_{12}^3$ and $d_2=d_{1'2'}^{3'}$ for the two 2-dimensional classes of $H^*(\rmD(\Theta))$.
Thus $u_1=(d_1+d_2)/2$ and $u_2=(d_1-d_2)/2$. 

The functoriality of the classes $c_{ij}^k$ is described in \cref{lem:cohomology-functoriality} in terms of the map $\pi^\tripod$ on triples of half-edges.
Let $i,j,k \in \{1,2,3,4\}$ be distinct, corresponding to half-edges in $R_2$, each of which has a unique preimage under $\pi$.
(Recall that $\pi$ maps $h_1' \mapsto h_1$, $h_1 \mapsto h_2$, $h_2' \mapsto h_3$, and $h_2 \mapsto h_4$.)
Of the three preimages, two are at the same vertex; for these we take the preimage and for the remaining half-edge we take either $h_3$ or $h_3'$, depending on which vertex the other half-edges are at.
That is, the map $\pi^\tripod$ is
\begin{align*}
(h_1,h_2,h_3)&\mapsto (h_1', h_3', h_2') & (h_1,h_2,h_4) &\mapsto (h_3, h_1, h_2)\\
(h_3,h_4,h_2)&\mapsto (h_3,h_2,h_1)& (h_3,h_4,h_1)&\mapsto (h_2', h_3', h_1'),
\end{align*}
which on cohomology classes gives
\begin{align*}
c_{12}^3&\mapsto d_{1'3'}^{2'}=-d_2,& c_{12}^4&\mapsto d_{31}^{2}=d_1, &
c_{34}^2&\mapsto d_{32}^{1}=-d_1,& c_{34}^1&\mapsto d_{2'3'}^{1'}=d_2.
\end{align*}
Recalling now that $w=(c_{12}^3-c_{12}^4)/2$, $z_1=(c_{12}^3+c_{12}^4)/2$, and $z_2=(c_{34}^2+c_{34}^1)/2$, we see that $\pi^*(w)=(-d_2-d_1)/2=-u_1$,  $\pi^*(z_1)=(-d_2+d_1)/2=u_2$, and $\pi^*(z_2)=(-d_1+d_2)/2=-u_2$.

It follows that $\pi^*\colon H^*(\rmD(R_2))^{D_8}\rightarrow H^*(\rmD(\Theta))^{C_2\times C_2}$ is defined on generators by
\begin{align*}
    \alpha_1&\mapsto\pi^*(w^2)=(-u_1)^2=\gamma_1 &
    \eta_1&\mapsto \pi^*(\beta_1\beta_2w)=\beta_1\beta_2(-u_1)=-\varepsilon_1\\
    \alpha_2&\mapsto \pi^*(z_1^2)=\pi^*(z_2^2)=u_2^2=\gamma_2 &
    \nu_2&\mapsto \pi^*(\beta_1z_1+\beta_2z_2)=\beta_1u_2+\beta_2(-u_2)=(\beta_1-\beta_2)u_2=\mu_2
\end{align*}
which is clearly an isomorphism of rings by Lemmas \ref{lem:C2C2 Invariants} and \ref{lem:D8 Invariants}.
\end{proof}

\begin{thm}\label{thm:H*BDiffU2}
The rational cohomology ring of $\BDiff^+\!\left((S^1\times S^2)^{\sharp2}\right)$ has presentation
\[H^*\left(\BDiff^+\!\left((S^1\times S^2)^{\sharp2}\right)\right)\cong \bbQ[\gamma_1,\varepsilon]/\langle\varepsilon^2\rangle \vee \bbQ[\gamma_2]\]
where $|\gamma_1|=|\gamma_2|=4$ and $|\varepsilon|=8$.
\end{thm}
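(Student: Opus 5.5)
The plan is to feed the homotopy pushout square of \cref{prop:U_2-pushout} into the rational Mayer--Vietoris sequence. Since the groups $\Sym_3\times C_2$, $C_2\times C_2$ and $D_8$ are finite, the rational cohomology of each homotopy orbit space is the invariant part of $H^*(\rmD(\Gamma))$. The relevant invariant rings are computed in \cref{lem:Sigma3C2 Invariants}, \cref{lem:C2C2 Invariants} and \cref{lem:D8 Invariants}. The Mayer--Vietoris sequence reads
\[
\cdots \to H^*(\BDiff^+(U_2)) \to H^*(\rmD(\Theta))^{\Sym_3\times C_2} \oplus H^*(\rmD(R_2))^{D_8} \xrightarrow{\iota^* - \pi^*} H^*(\rmD(\Theta))^{C_2\times C_2} \to \cdots .
\]

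By \cref{lem:pi isomorphism}, $\pi^*$ is an isomorphism of rings. Consequently $\iota^*-\pi^*$ is surjective in every degree, which kills all connecting homomorphisms. Its kernel projects isomorphically onto the first summand via the map $x\mapsto (x,(\pi^*)^{-1}\iota^* x)$. This shows that restriction along $\rmD(\Theta)\hq(\Sym_3\times C_2)\to\BDiff^+(U_2)$ is an isomorphism
\[
H^*(\BDiff^+(U_2))\cong H^*(\rmD(\Theta))^{\Sym_3\times C_2}.
\]
This is in fact an isomorphism of rings, because the restriction map is induced by a map of spaces. Equivalently, the standard argument that a homotopy pushout along a cohomology isomorphism yields a cohomology isomorphism on the opposite edge applies here.

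It then remains to read off the ring structure from \cref{lem:Sigma3C2 Invariants}, which gives $\bbQ[\gamma_1,\varepsilon_1]/\langle\varepsilon_1^2\rangle\vee\bbQ[\gamma_2]$. Setting $\varepsilon=\varepsilon_1$ yields the claimed presentation. The wedge notation matches the intro presentation: the relations $\gamma_1\gamma_2=0$ and $\gamma_2\varepsilon=\beta_1\beta_2u_1u_2^2=0$ both follow from $u_1u_2=0$.

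The only step needing care is checking that $\iota^*$ and $\pi^*$ are ring maps compatible with the identifications made in the lemmas. For $\iota$ this is simply inclusion of invariants. For $\pi^*$ the compatibility is \cref{lem:pi isomorphism}. I expect this to be routine, since the real work (the functoriality computation for $\pi$) was already carried out in that lemma.
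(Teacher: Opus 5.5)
Your proposal is correct and is essentially the paper's own proof: Mayer--Vietoris on the pushout of \cref{prop:U_2-pushout}, with Lemma~\ref{lem:pi isomorphism} forcing the restriction to $\rmD(\Theta)\hq(\Sym_3\times C_2)$ to be a ring isomorphism, followed by reading off Lemma~\ref{lem:Sigma3C2 Invariants}. Your relations $\gamma_1\gamma_2=0=\gamma_2\varepsilon$ are the correct ones for this labelling, since $\gamma_1\varepsilon=\beta_1\beta_2u_1^3\neq 0$; the introduction's relation $\gamma_1\varepsilon=0$ therefore matches the wedge presentation only after swapping the names $\gamma_1\leftrightarrow\gamma_2$, not literally as your remark suggests.
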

\begin{proof} Applying cohomology to the homotopy pushout in \cref{Eqn:U_2-pushout-square}, we get a commutative diagram
 \[  \begin{tikzcd}
    H^*(\rmD(\Theta))^{C_2\times C_2} & H^*(\rmD(\Theta))^{\Sym_3\times C_2} \ar[l, "{\iota^*}"']\\
    H^*(\rmD(R_2))^{D_8}\ar[u, "{\pi^*}"] &H^*\left(\BDiff^+(U_2)\right)\ar[u]\ar[l]
\end{tikzcd}\]
The maps in this diagram can be assembled into a Mayer-Vietoris-type long exact sequence.
The left vertical map $\pi^*$ is an isomorphism by Lemma \ref{lem:pi isomorphism}, hence the right  vertical map is an isomorphism as well.  By Lemma \ref{lem:Sigma3C2 Invariants}, we conclude that
\[H^*\left(\BDiff^+(U_2)\right)\cong H^*(\rmD(\Theta))^{\Sym_3\times C_2}= \bbQ[\gamma_1,\varepsilon]/\langle\varepsilon^2\rangle \vee \bbQ[\gamma_2]\]
as desired.
\end{proof}

\subsection{Circle actions}\label{subsec:circle-action}
Cohomology classes on $\BDiff^+(U_2)$ define characteristic classes for oriented smooth $U_2$ bundles.
In this section we evaluate them in two examples that distinguish $\gamma_1$ and $\gamma_2$.
Concretely, we construct two smooth circle actions $\xi_1, \xi_2\colon \SO(2) \to \Diff^+(U_2)$ on $U_2$, which yield $U_2$ bundles $\pi_i\colon E_i \to \BSO(2)$, and we calculate the pullback of $\gamma_1$ and $\gamma_2$ to the cohomology of $\BSO(2)$.

\begin{defn}
    Let $M := D^3 \setminus (D^3 \amalg D^3)$ be the manifold obtained by removing two small disjoint open balls on the first coordinate axis of $D^3 \subset \bbR^3$.
    The group $\SO(2)$ acts on $M$ by rotations around the first coordinate axis.
    We define
    \[
        N_1 := M \cup_{\amalg_3 S^2} M^- 
        \qquad \text{and} \qquad
        N_2 := M \cup_{\amalg_3 S^2}^\varphi M^-
    \]
    where in the first case we double $M$ by gluing along $\id_{\partial M}$, and in the second case we glue along the diffeomorphism $\varphi$ that swaps the inner two spheres.
    (Note that the right copy $M^-$ of $M$ must have the reversed orientation in both cases.)
    Both $N_1$ and $N_2$ are diffeomorphic to $U_2$ and after fixing identifications the smooth $\SO(2)$-actions on $N_i$ induces continuous group homomorphisms
    \[
        \xi_1 \colon \SO(2) \longrightarrow \Diff^+(N_1) \cong \Diff^+(U_2)
        \qquad \text{and} \qquad
        \xi_2 \colon \SO(2) \longrightarrow \Diff^+(N_2) \cong \Diff^+(U_2).
    \]
\end{defn}

\begin{rem}
    Note that the $\SO(2)$-actions on $N_1$ and $N_2$ are indeed qualitatively different: on $N_1$ the fixed submanifold consists of three circles, and on $N_2$ it consists of one circle.
\end{rem}

The maps on classifying spaces $B\xi_i\colon \BSO(2) \to \BDiff^+(U_2)$, classify $U_2$-bundles over $\BSO(2)$, which we can describe concretely as
\[
    E_i := (S^\infty \times N_i)/\SO(2) \longrightarrow S^\infty / \SO(2) = \CP^\infty
\]
where $S^\infty$ is the unit sphere in $\bbC^\infty$, on which $\SO(2)$ acts by complex multiplication.
Note that the restriction 
\[
    (E_i)_{|\CP^n} := (S^{2n+1} \times N_i)/\SO(2) \longrightarrow S^{2n+1}/\SO(2) = \CP^n
\]
is a smooth submersion from a closed $(2n+3)$-manifold to a $2n$-manifold with fibre $U_2$.
The proposition below evaluates the characteristic classes for $U_2$-bundles, $\gamma_1, \gamma_2, \varepsilon$ on these two bundles.

\begin{prop}
    Under the maps $B\xi_i\colon \BSO(2) \to \BDiff^+(U_2)$ the cohomology classes from \cref{thm:H*BDiffU2} restrict as 
    \begin{align*}
        B\xi_1^*(\gamma_j) &= \begin{cases}
            0 & j=1, \\
            e^2 & j=2,
        \end{cases}&
        B\xi_2^*(\gamma_j) &= \begin{cases}
            e^2  & j=1, \\
            0& j=2,
        \end{cases}
    \end{align*}
    where $e$ is the Euler class in $H^*(\BSO(2)) = \bbQ[e]$.
    The class $\varepsilon$ restricts to $0$ in both cases.
\end{prop}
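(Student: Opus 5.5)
The plan is to factor each $B\xi_i$ through $\rmD(\Theta)$, where the cohomology is known explicitly. Both $N_1$ and $N_2$ carry an $\SO(2)$-invariant separating system $\Sigma$ consisting of the three gluing spheres $\amalg_3 S^2 = \partial M$. Its dual graph is $\Theta$: the two vertices are the two copies $M$, $M^-$ (each a thrice-punctured $S^3$), and the three edges are the three spheres. Since $\SO(2)$ is connected, it preserves each sphere together with its coorientation. Hence $\xi_i$ lands in $\Diff^+_{\pi_0(2\Sigma)}(N_i,\Sigma)$, and $B\xi_i$ factors through $\rmD(\Theta)\simeq \BDiff^+_{\pi_0(2\Sigma)}(N_i,\Sigma)$ by \cref{cor:rmD-classifying-space}. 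By the proof of \cref{thm:H*BDiffU2}, the classes $\gamma_1,\gamma_2,\varepsilon$ pull back to $u_1^2$, $u_2^2$, $\beta_1\beta_2u_1$ in $H^*(\rmD(\Theta))$. It therefore suffices to compute the pullbacks of $c_1=c_{12}^3$ and $c_2=c_{1'2'}^{3'}$ to $H^2(\BSO(2))=\bbQ e$.

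The class $\varepsilon$ is immediate: it contains the odd-degree factors $\beta_1\beta_2$, which vanish because $H^{\mathrm{odd}}(\BSO(2))=0$. For the $c$-classes, I would use the maps $\varphi_{v}$ of \cref{subsec:C-pullback}. They send $\rmD(\Theta)$ to $\Conf_3(S^3)\hq\SO(4)\simeq\BSO(2)$ (\cref{lem:Conf-for-n=3}) by recording the component at $v$. Composed with $B\xi_i$, this is the classifying map of the $\SO(2)$-action on the capped copy of $M$ (resp.\ $M^-$): a rotation of $S^3$ about an axis through the three puncture points. This is exactly the subgroup $\SO(2)\subset\SO(4)$ fixing the configuration $p=(0,e_1,\infty)$ in \cref{lem:Conf-for-n=3}. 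Each composite is therefore an equivalence, and $c_{v}\mapsto \pm e$, where the sign depends on how the half-edge labelling at $v$ matches the oriented ordering of the punctures along the axis and on the orientation of the rotation.

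The main obstacle is this sign bookkeeping. In $N_1$ the gluing is the identity, so the half-edge labels $h_j$, $h_j'$ at the two vertices correspond to the same spheres in the same order. The copy $M^-$, however, carries the reversed orientation, which I expect flips the induced rotation direction relative to the orientation of $S^3$; this gives $c_2=-c_1$, hence $u_1=0$ and $u_2=\pm e$. In $N_2$ the swap $\varphi$ of the two inner spheres contributes an additional transposition of labels, and since $c_{ij}^k$ is alternating this yields $c_2=c_1$, hence $u_2=0$ and $u_1=\pm e$. Squaring then gives the claimed values $\gamma_j\mapsto 0$ or $e^2$.

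To make the signs rigorous, I would compare with the reflection of $S^3$ through the plane containing the axis. This reflection conjugates the rotation to its inverse and reverses orientation, and I would track its effect on $e$ and on the ordering of half-edges, using the explicit description of $c_{12}^3$ via $\omega_{12}^3$ from \cref{lem:Conf-for-n=3}. As a consistency check, the answer must be invariant under the $\Sym_3\times C_2$ relabellings, which only change the signs of $u_1$ and $u_2$.
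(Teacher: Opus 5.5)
Your proposal is correct and follows essentially the same route as the paper. You factor $B\xi_i$ through $\rmD(\Theta)$ via the three gluing spheres, and restricting to each vertex gives the equivalence $\BSO(2)\simeq\Conf_3(S^3)\hq\SO(4)$ of \cref{lem:Conf-for-n=3}, so $c_1,c_2\mapsto\pm e$. You then fix the relative sign using the orientation-preserving reflection $M\cong M^-$ through a plane containing the axis (which inverts the rotation and so acts by $-1$ on $e$), together with the extra label transposition in $N_2$. The only cosmetic difference is that you kill $\varepsilon$ because $\beta_1,\beta_2$ land in odd degree, while the paper notes $\varepsilon^2=0$ in the domain $\bbQ[e]$; both arguments are immediate.
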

\begin{proof}
    By construction $N_i$ has a separating system $\Sigma_i \subset N_i$ consisting of the three boundary spheres of $M$.
    We identify the dual graph in each case with $\Theta$ in such a way that in both cases the spheres on the the left copy of $M \sqcup M^- \cong (N_i \ca \Sigma)$ are are identified with the half-edges $\{1,2,3\}$ in the same way.
    (This means that the identification of the spheres on the right copy with $\{1',2',3'\}$ is the identity for $N_1$ and a transposition for $N_2$.)
    We let $r\colon M \cong M^-$ denote the reflection of the third coordinate axis, this is an orientation-preserving diffeomorphism as we have the opposite orientation on $M^-$, but it reverses the rotation.
    
    The rotations preserve these sphere systems and their dual graphs, so they define maps $\BSO(2) \to \BDiff_{\pi_0(2\Sigma)}^+(N_i,\Sigma_i) \simeq \rmD(\Theta)$.
    Using the notation from \cref{subsec:presentations}, we  need to determine the ring homomorphism
    \[
        H^*(\rmD(\Theta)) \cong \Lambda_\bbQ\langle \beta_1,\beta_2\rangle\otimes \bbQ[c_1,c_2]/\langle c_1^2=c_2^2\rangle
        \longrightarrow \bbQ[e] = H^*(\BSO(2)).
    \]
    For degree reasons, the $\beta_i$ go to $0$.
    It will suffice to determine the image of $c_1 = c_{12}^3$ and $c_2 = c_{1'2'}^{3'}$, from which we can read off the value of $\gamma_1 = (\tfrac{c_1 + c_2}{2})^2$ and $\gamma_2 = (\tfrac{c_1 - c_2}{2})^2$.
    These classes are defined by pulling back, along the restriction to either $M$ or $M^-$, a chosen (integral) generator of $H^2$ of the space
    \[
        \BDiff_{\pi_0(\partial M)}^+(M) \simeq \BDiff_{\pi_0(\amalg_3 S^2)}^+(S^3 \setminus \amalg_3 \interior{D^3}) \simeq \Conf_3(S^3)\sslash\SO(4) .
    \]
    Consider the four maps
    \begin{align*}
        f_i^{\rm L} &\colon \BSO(2) \xrightarrow{B\xi_i} \BDiff_{\pi_0(2\Sigma)}^+(N_i,\Sigma_i) 
        \xrightarrow{\text{left}} \BDiff^+_{\pi_0(\partial M)}(M) \\
        f_i^{\rm R} &\colon \BSO(2) \xrightarrow{B\xi_i} \BDiff_{\pi_0(2\Sigma)}^+(N_i,\Sigma_i) 
        \xrightarrow{\text{right}} \BDiff^+_{\pi_0(\partial M^-)}(M^-) \simeq \BDiff^+_{\pi_0(\partial M)}(M)
    \end{align*}
    where the second map restricts to the left or right factor.
    (In the case of $f_i^{\rm R}$ the last equivalence uses the reflection $r\colon M \cong M^-$.)
    Each of these composites is a homotopy equivalence by \cref{lem:Conf-for-n=3}, and hence they pull back $c_{1,2}^3$ (or $c_{1',2'}^{3'}$) to $\pm e$.
    We will need to work out how these signs differ.
    The maps $f_1^{\rm L}$ and $f_2^{\rm L}$ are identical, because we identified the left copy of $M\sqcup M^- = (N_i \ca \Sigma_i)$ in the same way,
    but the maps $f_1^{\rm R}$ and $f_2^{\rm R}$ differ by a the involution on $\Conf_3(S^3)\sslash \SO(4)$ that swaps two of the points.
    (This introduces a sign as $c_{2'1'}^{3'} = -c_{1'2'}^{3'}$.)
    Moreover, the maps $f_1^{\rm L}$ and $f_1^{\rm R}$ differ the reflection $r$, which reverses the $\SO(2)$-action and hence acts by $-1$ on $H^2$.
    In summary, we have
    \begin{align*}
        (f_2^{\rm L})^* c_{1'2'}^{3'} = (f_1^{\rm L})^* c_{12}^3 &= - (f_1^{\rm R})^* c_{12}^3 = (f_2^{\rm R})^* c_{1'2'}^{3'}
    \end{align*}
    and all of these classes are $\pm e$.
    It follows that 
    \begin{align*}
        B\xi_1^*(\gamma_2) &= B\xi_1^*(\tfrac{c_1 - c_2}{2})^2 = (\pm e)^2 = e^2 &
        B\xi_2^*(\gamma_1) &= B\xi_2^*(\tfrac{c_1 + c_2}{2})^2 = (\pm e)^2 = e^2
    \end{align*}
    and $B\xi_1^*(\gamma_1) = 0 = B\xi_2^*(\gamma_2)$.
    Finally, $B\xi_i^*(\varepsilon)$ must be $0$ as its square is $0$.
\end{proof}

\bibliography{mybib}{}
\bibliographystyle{alpha}

\end{document}